\title{Solving the Odd Perfect Number Problem:\\ Some Old and New Approaches}
\author{Jose Arnaldo B. Dris}
\begin{document}
\newtheorem{thm}{Theorem}[section]
\newtheorem{cor}{Corollary}[section]
\newtheorem{lemm}{Lemma}[section]
\newtheorem{conj}{Conjecture}[section]
\theoremstyle{definition}
\newtheorem{defn}{Definition}[section]
\newtheorem{exmpl}{Example}[section]
\newtheorem{remrk}{Remark}[section]
\doublespacing
\begin{preliminary}
\maketitle
\begin{acknowledgments}
The author of this thesis wishes to express his heartfelt gratitude to the following:
\begin{multicols}{2}
\begin{itemize}
\item The Mathematics Department, DLSU-Manila
\item The Commission on Higher Education - Center of Excellence
\item Dr.~Severino V.~Gervacio
\item Dr.~Leonor Aquino-Ruivivar
\item Dr.~Fidel Nemenzo
\item Ms.~Sonia Y.~Tan
\item Dr.~Blessilda Raposa
\item Dr.~Ederlina Nocon
\item Ms.~Gladis Habijan
\item Dr.~Isagani B.~Jos
\item Dr.~Arlene Pascasio
\item Dr.~Jose Tristan Reyes
\item Dr.~Yvette F.~Lim
\item Mr.~Frumencio Co
\item Dr.~Julius Basilla
\item Dr.~Rizaldi Nocon
\item Dr.~John McCleary
\item Dr.~Carl Pomerance
\item Dr.~Douglas Iannucci
\item Dr.~Judy Holdener
\item Prof.~Richard P.~Brent
\item Prof.~Richard F.~Ryan
\item Ms.~Laura Czarnecki
\item Mr.~William Stanton
\item Mr.~James Riggs
\item Mr.~Tim Anderton
\item Mr.~Dan Staley
\item Mr.~William Lipp
\item Mr.~Tim Roberts
\item Mr.~Rigor Ponsones
\item Mr.~Gareth Paglinawan
\item Mr.~Christopher Thomas Cruz
\item Ms.~Michele Tan
\item Mr.~Mark Anthony Garcia
\item Mr.~John Ruero
\item Mr.~Vincent Chuaseco
\item Mrs.~Abigail Arcilla
\item Mr.~Mark John Hermano
\end{itemize}
\end{multicols}
\end{acknowledgments}
\addcontentsline{toc}{chapter}{Table of Contents}
\tableofcontents\newpage
\begin{listofnotations}
\begin{tabular}{ll}
$\mathbb N, {\mathbb Z}^+$ & the set of all natural numbers/positive integers \\
$a\mid b$ & $a$ divides $b$, $a$ is a divisor/factor of $b$, $b$ is a multiple of $a$ \\
$P^{\alpha} \| N$ & $P^{\alpha}$ is the largest power of $P$ that divides $N$, i.e. $P^{\alpha}\mid N$ but $P^{\alpha + 1}\nmid N$ \\
$a \equiv b \pmod{n}$ & $a$ is congruent to $b$ modulo $n$ \\
$\gcd(a, b)$ & the greatest common divisor of $a$ and $b$ \\
$d(n)$ & the number of positive divisors of $n$ \\
$\sigma(n)$ & the sum of the positive divisors of $n$ \\
$\phi(n)$ & the number of positive integers less than or equal to $n$ which are also \\ & relatively prime to $n$ \\
$\omega(n)$ & the number of distinct primes that divide $n$ \\
$\Omega(n)$ & the number of primes that divide $n$, counting multiplicities \\
${{\sigma}_{-1}}(n), I(n)$ & the abundancy index of $n$, i.e. the sum of the reciprocals of all the \\ & positive divisors of $n$ \\
$\displaystyle\prod_{i=1}^r {{p_i}^{{\alpha}_i}}$ & the product ${p_1}^{\alpha_1}{p_2}^{\alpha_2}{p_3}^{\alpha_3}\cdots{p_{r-1}}^{\alpha_{r-1}}{p_r}^{\alpha_r}$ \\
$\displaystyle\sum_{j=0}^s {{q_i}^j}$ & the sum $1 + q_i + {q_i}^2 + {q_i}^3 + \ldots + {q_i}^{s - 1} + {q_i}^s$ \\
\end{tabular}
\end{listofnotations}
\begin{abstract}
\begin{paragraph}\indent A perfect number is a positive integer $N$ such that the sum of all the positive divisors of $N$ equals $2N$, denoted by $\sigma(N) = 2N$.  The question of the existence of odd perfect numbers (OPNs) is one of the longest unsolved problems of number theory.  This thesis presents some of the old as well as new approaches to solving the OPN Problem.  In particular, a conjecture predicting an injective and surjective mapping $X = \frac{\sigma(p^k)}{p^k}, \hspace{0.1in} Y = \frac{\sigma(m^2)}{m^2}$ between OPNs $N = {p^k}{m^2}$ (with Euler factor $p^k$) and rational points on the hyperbolic arc $XY = 2$ with $1 < X < 1.25 < 1.6 < Y < 2$ and $2.85 < X + Y < 3$, is disproved. Various results on the abundancy index and solitary numbers are used in the disproof.  Numerical evidence against the said conjecture will likewise be discussed.  We will show that if an OPN $N$ has the form above, then $p^k < {\frac{2}{3}}{m^2}$ follows from \cite{D10}.  We will also attempt to prove a conjectured improvement of this last result to $p^k < m$ by observing that ${\frac{\sigma(p^k)}{m}} \neq 1$ and ${\frac{\sigma(p^k)}{m}} \neq {\frac{\sigma(m)}{p^k}}$ in all cases.  Lastly, we also prove the following generalization: If $N = \displaystyle\prod_{i=1}^r {{p_i}^{{\alpha}_i}}$ is the canonical factorization of an OPN $N$, then $\sigma({p_i}^{{\alpha}_i}) \leq {\frac{2}{3}}{\frac{N}{{p_i}^{{\alpha}_i}}}$ for all $i$.  This gives rise to the inequality $N^{2 - r} \leq (\frac{1}{3})(\frac{2}{3})^{r - 1}$, which is true for all $r$, where $r = \omega(N)$ is the number of distinct prime factors of $N$. \\
\end{paragraph}
\end{abstract}
\end{preliminary}
\chapter{The Problem and Its Background}
\section{Introduction}
Number theory is that branch of pure mathematics concerned with the properties of integers.  It consists of various results and open problems that are easily understood, even by non-mathematicians.  More generally, the field has developed to a point where it could now tackle wider classes of problems that arise naturally from the study of integers.
\begin{paragraph}\indent A particular example of an unsolved problem in number theory, one which has long captured the interest of both amateur and professional mathematicians, is to determine whether an odd perfect number exists.  A positive integer $n$ which is equal to the sum of its positive proper divisors (that is, excluding $n$ itself) is called a \emph{perfect number}.  The smallest known example of a perfect number is $6$ since $1 + 2 + 3 = 6$, where $1$, $2$ and $3$ are the positive proper divisors of $6$.  The Pythagoreans considered such numbers to possess mystical properties, thus calling them perfect numbers. \\
\end{paragraph}
\begin{paragraph}\indent The sum of all positive divisors of a positive integer $n$ is called the sigma function of $n$, denoted by $\sigma(n)$.  The definition of a perfect number is thus equivalent to determining those $n$ for which $\sigma(n) - n = n$. We formalize this definition as follows:
\begin{defn}\label{perfect} A positive integer $N$ is perfect if $\sigma(N) = 2N$.
\end{defn}
\end{paragraph}
\begin{paragraph}\indent For example, $\sigma(6) = 1 + 2 + 3 + 6 = 12 = 2(6)$.
\end{paragraph}
\begin{paragraph}\indent Even though the Greeks knew of only four perfect numbers, it was Euclid who proved that if the sum $1 + 2 + 2^2 + \ldots + 2^{p - 2} + 2^{p - 1} = 2^p - 1$ is prime, then $2^{p - 1}(2^p - 1)$ is perfect.  Consider the sum $1 + 2 = 3$ as an example.  Since $3$ is prime, then $2^{2 - 1}(2^2 - 1) = 2(3) = 6$ is perfect.
\end{paragraph}
\begin{paragraph}\indent Euler subsequently proved the following theorem about (even) perfect numbers:
\begin{thm}\label{theorem1} Every even perfect number is of the form $N = 2^{P - 1}(2^P - 1)$, where $P$ and $2^P - 1$ are primes.
\end{thm}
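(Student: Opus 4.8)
The plan is to start from the defining equation $\sigma(N) = 2N$ and exploit the multiplicativity of $\sigma$ together with the hypothesis that $N$ is even. First I would write $N = 2^{k-1}m$ with $k \geq 2$ and $m$ odd; this decomposition is possible precisely because $2 \mid N$. Since $\gcd(2^{k-1}, m) = 1$ and $\sigma$ is multiplicative, $\sigma(N) = \sigma(2^{k-1})\sigma(m) = (2^k - 1)\sigma(m)$, and the perfectness of $N$ then gives the Diophantine relation $(2^k - 1)\sigma(m) = 2^k m$.

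Next I would extract divisibility information from this relation. Because $\gcd(2^k - 1, 2^k) = 1$, we must have $(2^k - 1) \mid m$, so write $m = (2^k - 1)d$ for some positive integer $d$; substituting back yields $\sigma(m) = 2^k d$. Now observe that both $d$ and $m$ are divisors of $m$, and since $k \geq 2$ we have $2^k - 1 \geq 3$, so $d < m$ and these two divisors are genuinely distinct. Their sum is $d + m = d + (2^k - 1)d = 2^k d = \sigma(m)$, meaning the sum of these two particular divisors already accounts for the entire divisor sum of $m$. Hence $m$ can have no divisors other than $d$ and $m$; in particular $d = 1$, so $m = 2^k - 1$ is prime and $N = 2^{k-1}(2^k - 1)$.

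Finally I would check that the exponent itself is prime. If $k = ab$ with $1 < a < k$, then $2^a - 1$ is a proper nontrivial divisor of $2^k - 1$ (via the factorization of $x^{b} - 1$), contradicting the primality of $2^k - 1$. Therefore $k$ is prime, and setting $P = k$ gives the asserted form with both $P$ and $2^P - 1$ prime.

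The step I expect to be the crux is the divisor-counting argument that forces $d = 1$: it is essential there that $d$ and $m$ are distinct divisors of $m$, which is exactly where the evenness of $N$ (equivalently $k \geq 2$, equivalently $2^k - 1 > 1$) enters in an indispensable way. This is also the precise point at which the argument would break down for an odd perfect number, which is the reason the odd case remains open.
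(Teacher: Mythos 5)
Your proposal is correct and follows essentially the same route as the paper: the paper proves the Euler direction in Theorem \ref{theorem9} by writing $N = 2^{n-1}m$, deducing $(2^n-1)\mid m$, setting $m = (2^n-1)k$ to get $\sigma(m) = m + k$ and hence $k=1$ with $m = 2^n - 1$ prime, and the primality of the exponent is exactly the paper's Cataldi--Fermat Lemma \ref{lemma6} via the factorization of $x^{b}-1$. Your divisor-counting step forcing $d=1$ is the same crux the paper identifies, and it is sound since $m = (2^k-1)d \geq 3$ guarantees $d$ and $m$ are distinct divisors.
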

\end{paragraph}
\begin{paragraph}\indent It is easy to prove that if $C$ is composite, then $2^C - 1$ is also composite.  However, if $P$ is prime, it does not necessarily follow that $2^P - 1$ is also prime.  (Consider the case $P = 11: 2^{11} - 1 = 2047 = 23\cdot89$, which is composite.)  Primes of the form $2^P - 1$ are called \emph{Mersenne primes}, after the French monk and mathematician Marin Mersenne.  In view of Theorem \ref{theorem1}, the problem of searching for even perfect numbers is thus reduced to looking for Mersenne primes, since the theorem essentially says that the even perfect numbers are in one-to-one correspondence with the Mersenne primes.
\end{paragraph}
\begin{paragraph}\indent As of this writing, 44 perfect numbers are known \cite{V55}, the last few of which have been found with the aid of high-speed computers.  Following is a list of all the known exponents $p$ for which $M_p = 2^p - 1$ is prime, along with other pertinent information (note that $P_p$ refers to the perfect number $N = 2^{p - 1}(2^p - 1)$):
\end{paragraph}
\vspace{0.03truein}
{\small
\begin{center}
\begin{tabular}{|l|l|l|l|l|l|}
\hline
number & $p$ (exponent) & digits in $M_p$ & digits in $P_p$ & year & discoverer \\
\hline
1   & 2 	  		&	1 	  		&	1 	   		 &	---- &	----              					\\
\hline
2   &	3 	  		&	1 	  		&	2 	   		 &	---- &	----   											\\
\hline
3   &	5 	  		&	2 	  		&	3 	   		 &	---- &	----   											\\
\hline
4   &	7 	  		&	3 	  		&	4 	   		 &	---- &	----   											\\
\hline
5   & 13 	  		&	4 	  		&	8 	   		 &	1456 &	anonymous										\\
\hline
6   & 17 	  		&	6 	  		&	10 	   		 &	1588 &	Cataldi   									\\
\hline
7   &	19 	  		&	6 	  		& 12 	   		 & 	1588 &	Cataldi   									\\
\hline
8   & 31 	  		&	10 	  		&	19 	   		 &	1772 &	Euler   										\\
\hline
9   & 61 	  		&	19 	  		&	37 	   		 & 	1883 &	Pervushin   								\\
\hline
10  &	89 	  		&	27 	  		&	54 	   		 & 	1911 &	Powers   										\\
\hline
11  &	107 	  	&	33 	  		&	65 	   		 & 	1914 &	Powers  										\\
\hline
12  &	127 	  	&	39 	  		&	77 	   		 &  1876 &	Lucas   										\\
\hline
13  &	521 	  	&	157 	  	&	314 	   	 &	1952 &	Robinson   									\\
\hline
14  &	607 	  	&	183 	  	&	366 	  	 &	1952 &	Robinson   									\\
\hline
15  &	1279 	  	&	386 	  	&	770 		   &	1952 &	Robinson   									\\
\hline
\end{tabular}
\end{center}
}
\newpage
{\small
\begin{center}
\begin{tabular}{|l|l|l|l|l|l|}
\hline
number & $p$ (exponent) & digits in $M_p$ & digits in $P_p$ & year & discoverer \\
\hline
16  &	2203 	  	&	664 	  	&	1327	 	   &	1952 &	Robinson   									\\
\hline
17  &	2281 	  	&	687 	  	&	1373       &	1952 &	Robinson   									\\
\hline
18  &	3217 	  	&	969 	  	&	1937       &	1957 &	Riesel   										\\
\hline
19  &	4253 	  	&	1281 	  	&	2561       &	1961 &	Hurwitz   									\\
\hline
20  &	4423 	  	&	1332 	  	&	2663       &	1961 &	Hurwitz   									\\
\hline
21  &	9689 	  	&	2917  		&	5834       &	1963 &	Gillies   									\\
\hline
22  &	9941 	  	&	2993  		&	5985 	     &	1963 &	Gillies   									\\
\hline
23  &	11213 	  &	3376 	  	&	6751 	   	 &	1963 &	Gillies   									\\
\hline
24  &	19937 	  &	6002 	  	&	12003 	   &	1971 &	Tuckerman										\\
\hline
25  &	21701 	  &	6533 	  	&	13066 	   &	1978 &	Noll, Nickel								\\
\hline
26  &	23209 	  &	6987 	  	&	13973 	   &	1979 &	Noll 												\\
\hline
27  &	44497 	  &	13395 	  &	26790 	   &	1979 &	Nelson, Slowinski						\\
\hline
28  &	86243 	  &	25962 	  &	51924 	   &	1982 &	Slowinski										\\
\hline
29  &	110503 	  &	33265 	  &	66530 	   &	1988 &	Colquitt, Welsh							\\
\hline
30  &	132049 	  &	39751 	  &	79502 	   &	1983 &	Slowinski   								\\
\hline
31  &	216091 	  &	65050 	  &	130100 	   &	1985 &	Slowinski   								\\
\hline
32  &	756839 	  &	227832 	  &	455663 	   &	1992 &	Slowinski, Gage \emph{et al.} 			\\
\hline
33  &	859433 	  &	258716 	  &	517430 	   &	1994 &	Slowinski, Gage   					\\
\hline
34  &	1257787   &	378632 	  &	757263 	   &	1996 &	Slowinski, Gage							\\
\hline
35  &	1398269   &	420921 	  &	841842     &	1996 &	Armengaud, Woltman, \emph{et al.}	\\
\hline
\end{tabular}
\end{center}
}
\newpage
{\small
\begin{center}
\begin{tabular}{|l|l|l|l|l|l|}
\hline
number & $p$ (exponent) & digits in $M_p$ & digits in $P_p$ & year & discoverer \\
\hline
36  &	2976221   &	895932 	  &	1791864    &	1997 &	Spence, Woltman, \emph{et al.} 		\\
\hline
37  &	3021377   &	909526 	  &	1819050    &	1998 &	Clarkson,Woltman, \emph{et al.}		\\
\hline
38  &	6972593   &	2098960   &	4197919    &	1999 &  Hajratwala, Woltman, \emph{et al.}	\\
\hline
39  &	13466917  & 4053946   &	8107892    &	2001 &	Cameron, Woltman, \emph{et al.} 		\\
\hline
??  &	20996011  &	6320430   &	12640858   &	2003 &	Shafer, Woltman, \emph{et al.} 		\\
\hline
??  &	24036583  &	7235733   &	14471465   &	2004 &	Findley, Woltman, \emph{et al.}		\\
\hline
??  &	25964951  &	7816230	  &	15632458   &	2005 &	Nowak, Woltman, \emph{et al.}			\\
\hline
??  &	30402457  &	9152052	  &	18304103   &	2005 &	Cooper, Woltman, \emph{et al.} 		\\
\hline
??  &	32582657  &	9808358   &	19616714   & 	2006 &	Cooper, Woltman, \emph{et al.} 		\\
\hline
\end{tabular}
\end{center}
}
\vspace{0.1truein} ~																															 \\
Question marks (??) were used instead of a number for the the last of the Mersenne primes because it will not be known if there are other Mersenne primes in between these until a check and double check is done by the Great Internet Mersenne Prime Search (GIMPS) \cite{V55} or other similar coordinated computing projects.
\begin{paragraph}\indent Note that all of the known perfect numbers are even.
\end{paragraph}
\subsection{Statement of the Problem}
Our primary object of interest would be odd perfect numbers, though some of the results that would be discussed apply to even perfect numbers as well. It is unknown whether there are any odd perfect numbers. Various results have been obtained, but none has helped to locate one or otherwise resolve the question of their existence.
\begin{paragraph}\indent This thesis explores some of the old as well as new approaches used in trying to solve the Odd Perfect Number (OPN) Problem, namely:
\end{paragraph}
\begin{itemize}
\item the use of the abundancy index to derive conditions for the existence of odd perfect numbers, in such instances as:
\begin{itemize}
\item bounding the prime factors of an OPN;
\item determining whether a particular rational number may be an abundancy index of a positive integer;
\item increasing the lower bound for the number of distinct prime factors, $\omega(N)$, that an OPN $N$ must have;
\end{itemize}
\item proving the inequality $p^k < {\frac{2}{3}}{m^2}$ where $N = {p^k}{m^2}$ is an OPN with $p^k$ the Euler factor of $N$, $p \equiv k \equiv 1 \pmod{4}$, and $\gcd(p, m) = 1$;
\item attempting to prove the conjectured improvement of the result $p^k < {\frac{2}{3}}{m^2}$ to $p^k < m$ by observing that ${\frac{\sigma(p^k)}{m}} \neq 1$ and ${\frac{\sigma(p^k)}{m}} \neq {\frac{\sigma(m)}{p^k}}$ apply in all cases;
\item generalizing the result $p^k < {\frac{2}{3}}{m^2}$ to: if $N = \displaystyle\prod_{i=1}^r {{p_i}^{{\alpha}_i}}$ is the canonical factorization of an OPN $N$, then $\sigma({p_i}^{{\alpha}_i}) \leq {\frac{2}{3}}{\frac{N}{{p_i}^{{\alpha}_i}}}$ for all $i$;
\item showing that $N^{2 - r} \leq (\frac{1}{3})(\frac{2}{3})^{r - 1}$ follows from $\sigma({p_i}^{{\alpha}_i}) \leq {\frac{2}{3}}{\frac{N}{{p_i}^{{\alpha}_i}}}$ for all $i$, where $r = \omega(N)$ is the number of distinct prime factors of an OPN $N$;
\item disproving the conjectured injectivity and surjectivity of the mapping \\
${X = \frac{\sigma(p^k)}{p^k}}, \hspace{0.1in} {Y = \frac{\sigma(m^2)}{m^2}}$ between OPNs $N = {p^k}{m^2}$ (with Euler factor $p^k$) and rational points on the hyperbolic arc $XY = 2$ with\\
$1 < X < 1.25 < 1.6 < Y < 2 < 2.85 < X + Y < 3$;
\item a host of other interesting results on perfect numbers, including:
\begin{itemize}
\item establishing that two consecutive positive integers cannot be both perfect;
\item the proof that an OPN is a sum of two squares.
\end{itemize}
\end{itemize}
Specifically, this thesis, among other things, aims to present (partial) expositions of the following articles/notes that deal with some of the above concerns:
\begin{itemize}
\item ``\emph{Abundancy 'Outlaws' of the Form} $\frac{\sigma(N) + t}{N}$" by W. G. Stanton, which was a joint research project with J. A. Holdener at Kenyon College, Gambier, OH ($2007$);
\item ``\emph{Conditions Equivalent to the Existence of Odd Perfect Numbers}" by J. A. Holdener, which appeared in Mathematics Magazine $\emph{79(5)}$ ($2006$);
\item ``\emph{Bounding the Prime Factors of Odd Perfect Numbers}", an undergraduate paper of C. Greathouse at Miami University ($2005$);
\item ``\emph{Hunting Odd Perfect Numbers: Quarks or Snarks?}" by J. McCleary of Vassar College, Poughkeepsie, NY, which consists of lecture notes first presented as a seminar to students of Union College, Schenectady, NY ($2001$);
\item ``\emph{Consecutive Perfect Numbers (actually, the Lack Thereof!)}" by J. Riggs, which was an undergraduate research project with J. A. Holdener at Kenyon College ($1998$).
\end{itemize}
\subsection{Review of Related Literature}
Benjamin Peirce was the first to prove (in $1832$) that an OPN $N$ must have at least four distinct prime factors (denoted $\omega(N) \geq 4$) \cite{P42}. Seemingly unaware of this result, James Joseph Sylvester published a paper on the same result in $1888$, hoping that research along these lines would pave the way for a general proof of the nonexistence of an OPN \cite{S55}. Later that same year, Sylvester established $\omega(N) \geq 5$ \cite{S56}. This marked the beginning of the modern era of research on OPNs. It was only in $1925$ that Gradstein was able to improve Sylvester's result to $\omega(N) \geq 6$ \cite{G13}. In the early $1970$s, Robbins and Pomerance independently established that $\omega (N) \geq 7$ \cite{P44}. Then, Chein demonstrated that $\omega(N) \geq 8$ in his $1979$ doctoral thesis \cite{C4}, which was verified independently by Hagis in a $1980$ $200$-page manuscript \cite{H16}. Recently, in a $2006$ preprint titled ``\emph{Odd perfect numbers have at least nine distinct prime factors}", Pace Nielsen was able to prove that $\omega(N) \geq 9$, and if $N_2$ is an OPN which is not divisible by $3$, that $\omega(N_2) \geq 12$. ``The proof ultimately avoids previous computational results for odd perfect numbers." \cite{N40}
\begin{paragraph}\indent Sylvester also showed (later in the year $1888$) that an OPN cannot be divisible by $105$ \cite{S56}, and Servais (in the same year) proved that the least prime divisor of a perfect number with $r$ distinct prime factors is bounded above by $r + 1$ \cite{S50}.  Gr$\ddot{u}$n \cite{G16}, Cohen and Hendy \cite{C6}, and McDaniel \cite{M38} established improvements and extensions to this last result later on.
\end{paragraph}
\begin{paragraph}\indent Dickson showed in $1913$ that there can only be finitely many OPNs with $r$ distinct prime factors, for a given positive integer $r$ \cite{D12}. Kanold then generalized Dickson's theorem in $1956$ to include any positive integer $n$ satisfying ${\frac{\sigma(n)}{n}} = {\frac{a}{b}}$, where $a$ and $b$ are positive integers and $b \neq 0$. \cite{K34} 
\end{paragraph}
\begin{paragraph}\indent (Call a number $n$ non-deficient if ${\frac{\sigma(n)}{n}} \geq 2$ .  Dickson called a number \emph{primitive non-deficient} provided that it is not a multiple of a smaller non-deficient number.) Mathematicians considered Dickson's approach (i.e. first delineating all of the finitely many primitive odd non-deficient numbers associated with a particular $r$-value and then determining which among them are equal to the sum of their positive proper divisors) to the OPN question to be impractical for most values of $r$, making it necessary to explore alternative approaches to examining the possible structure of an OPN.  Pomerance suggested the following class of theorems in $1974$: \emph{An OPN is divisible by $j$ distinct primes $> N$} \cite{P44}. Kanold was successful with $j = 1$, $N = 60$ in $1949$ and used only elementary techniques \cite{K33}.  In $1973$, with the aid of computation, Hagis and McDaniel improved Kanold's finding to $j = 1$, $N = 11200$ \cite{H18}. This was pushed to $j = 1$, $N = 100110$ by the same authors in $1975$ \cite{H19}. Pomerance showed that $j = 2$, $N = 138$ in the same year. \cite{P45}
\end{paragraph}
\begin{paragraph}\indent In a recent preprint (titled ``\emph{Odd perfect numbers have a prime factor exceeding ${10}^8$}") that appeared $2006$, authors Takeshi Goto and Yasuo Ohno report that the largest prime factor of an OPN exceeds ${10}^8$ \cite{G12}.  It betters the previous bound of ${10}^7$ established by Jenkins in $2003$ \cite{J31}.  New bounds for the second and third largest prime factors of an OPN were given by Iannucci in $1999$ \cite{I28} and $2000$ \cite{I29}, when he announced that they are larger than ${10}^4$ and ${10}^2$, respectively.
\end{paragraph}
\begin{paragraph}\indent Mathematicians also began considering estimates on the overall magnitude of an OPN by imposing lower bounds.  Turcaninov obtained the classical lower bound of $2 \cdot {10}^6$ in $1908$ \cite{B4}. The following table summarizes the development of ever-higher bounds for the smallest possible odd perfect number:
\begin{center}
\begin{tabular}{|c|c|}
\hline
Author & Bound \\
\hline
Kanold ($1957$) &	${10}^{20}$ \\
\hline
Tuckerman ($1973$) & ${10}^{36}$ \\
\hline
Hagis ($1973$) & ${10}^{50}$ \\
\hline
Brent and Cohen ($1989$) & ${10}^{160}$ \\
\hline
Brent \emph{et al.} ($1991$)	& ${10}^{300}$ \\
\hline
\end{tabular} \\
\end{center}
\vspace{0.08truein}
There is a project underway at http://www.oddperfect.org (organized by William Lipp) seeking to extend the bound beyond ${10}^{300}$.  A proof for ${10}^{500}$ is expected very soon, as all the remaining factorizations required to show this are considered ``easy", by Lipp's standards. \cite{G15}
\end{paragraph}
\begin{paragraph}\indent It would also be possible to derive upper bounds on the overall size of an OPN in terms of the number of its distinct prime factors.  Heath-Brown was able to show, in $1994$, that if $n$ is an odd number with $\sigma(n) = an$, then $n < {{(4d)}^4}^r$, where $d$ is the denominator of $a$ and $r$ is the number of distinct prime factors of $n$ \cite{H23}.  Specifically, this means that for an OPN $n$, $n < {{4}^4}^r$ which sharpens Pomerance's previous estimate of $n < {{{(4r)}^{(4r)}}^2}^{r^2}$ in $1977$ \cite{P45}. Referring to his own finding, Heath-Brown remarked that it still is too big to be of practical value. Nonetheless, it is to be noted that if it is viewed alongside the lower bound of ${10}^{300}$ given by Brent et. al. \cite{B3}, then Sylvester's $1888$ result that $\omega(n) \geq 5$ could then be demonstrated by no longer than a footnote.~\footnote{${10}^{300} < n < {{4}^4}^r$ implies that $r > 4.48$.} In $1999$, Cook enhanced Heath-Brown's result for an OPN with $r$ distinct prime factors to $n < {{D}^4}^r$ where $D = {(195)}^{1/7} \approx 2.124$ \cite{C8}. In $2003$, Pace Nielsen refined Cook's bound to $n < {{2}^4}^r$ \cite{N39}.
\end{paragraph}
\begin{paragraph}\indent Addressing the OPN question from a congruence perspective on the allowable exponents for the non-Euler prime factors, Steuerwald showed in $1937$ that if
\begin{center}
$n = {p^{\alpha}}{{q_1}^{2{\beta}_1}}{{q_2}^{2{\beta}_2}}\cdots{{q_s}^{2{\beta}_s}}$
\end{center}
was an OPN where $p, q_1, q_2, \ldots, q_s$ are distinct odd primes and $p \equiv \alpha \equiv 1 \pmod{4}$, then not all of the $\beta_i$'s can equal $1$ \cite{S52}. Further, Kanold discovered in $1941$ that it is neither possible for all $\beta_i$'s to equal $2$ nor for one of the $\beta_i$'s to be equal to $2$ while all the rest are equal to $1$ \cite{K32}. Hagis and McDaniel proved in $1972$ that not all the $\beta_i$'s can be equal to $3$ \cite{H17}. Then in $1985$, Cohen and Williams summarized all previous work done on this area by eliminating various  possibilities for the $\beta_i$'s, on the assumption that either some or all of the $\beta_i$'s are the same \cite{C7}.
\end{paragraph}
\begin{paragraph}\indent In $2003$, Iannucci and Sorli placed restrictions on the $\beta_i$'s  in order to show that $3$ cannot divide an OPN if, for all $i$, $\beta_i \equiv 1 \pmod{3}$ or $\beta_i \equiv 2 \pmod{5}$.  They also provided a slightly different analysis by giving a lower bound of 37 on the total number of prime divisors (counting multiplicities) that an OPN must have (i.e. they proved that if $n = {p^{\alpha}}{\displaystyle\prod_{i=1}^s {{q_i}^{2{\beta}_i}}}$ is an OPN, then $\Omega(n) = \alpha + 2{\displaystyle\sum_{i=1}^s {{\beta}}_i} \geq 37$) \cite{I30}. This was extended by Hare later in the year $2003$ to $\Omega(n) \geq 47$ \cite{H21}. In $2005$, Hare submitted the preprint titled ``\emph{New techniques for bounds on the total number of prime factors of an odd perfect number}" to the journal Mathematics of Computation for publication, where he announced a proof for $\Omega(n) \geq 75$ \cite{H22}.
\end{paragraph}
\begin{paragraph}\indent In order to successfully search for perfect numbers, it was found necessary to consider a rather interesting quantity called the \emph{abundancy index} or  \emph{abundancy ratio} of $n$, defined to be the quotient $\displaystyle\frac{\sigma(n)}{n}$.  Obviously, a number $n$ is perfect if and only if its abundancy index is $2$.  Numbers for which this ratio is greater than (less than) $2$ are called \emph{abundant} (\emph{deficient}) numbers.
\end{paragraph}
\begin{paragraph}\indent It can be shown that the abundancy index takes on arbitrarily large values. Also, we can make the abundancy index to be as close to 1 as we please because ${\displaystyle\frac{\sigma(p)}{p}} = {\displaystyle\frac{p + 1}{p}}$ for all primes $p$. In fact, Laatsch showed in $1986$ that the set of abundancy indices $\displaystyle\frac{\sigma(n)}{n}$ for $n > 1$ is dense in the interval $(1, \infty)$ \cite{L35}. (Let ${I(n) = \displaystyle\frac{\sigma(n)}{n}}$, and call a rational number greater than 1 an \emph{abundancy outlaw} if it fails to be in the image of the function $I$. \cite{H26}) Interestingly, Weiner proved that the set of abundancy outlaws is also dense in $(1, \infty)$! \cite{W58} It appears then that the implicit scenarios for abundancy indices and outlaws are both complex and interesting.
\end{paragraph}
\begin{paragraph}\indent In $2006$, Cruz \cite{C9} completed his M.~S.~ thesis titled ``Searching for Odd Perfect Numbers" which contained an exposition of the results of Heath-Brown \cite{H23} and Iannucci/Sorli \cite{I30}.  Cruz also proposed a hypothesis that may lead to a disproof of the existence of OPNs.
\end{paragraph}
\chapter{Preliminary Concepts}
The concept of divisibility plays a central role in that branch of pure mathematics called the theory of numbers.  Indeed, mathematicians have used divisibility and the concept of unique factorization to establish deep algebraic results in number theory and related fields where it is applied.  In this chapter, we survey some basic concepts from elementary number theory, and use these ideas to derive the possible forms for even and odd perfect numbers.
\section{Concepts from Elementary Number Theory}
For a better understanding of the topics presented in this thesis, we recall the following concepts.
\begin{defn}\label{definition1} An integer $n$ is said to be \emph{divisible} by a nonzero integer $m$, denoted by $m \mid n$, if there exists some integer $k$ such that $n = km$. The notation $m \nmid n$ is used to indicate that $n$ is \emph{not divisible} by $m$.
\end{defn}
\begin{paragraph}\indent For example, $143$ is divisible by $11$ since $143 = 11\cdot13$. In this case, we also say that $11$ and $13$ are \emph{divisors/factors} of $143$, and that $143$ is a \emph{multiple} of $11$ (and of $13$). On the other hand, $143$ is not divisible by $3$ since we will not be able to find an integer $k$ that will make the equation $3k = 143$ true.
\end{paragraph}
\begin{paragraph}\indent If $n$ is divisible by $m$, then we also say that $m$ \emph{divides} $n$.
\end{paragraph}
\begin{paragraph}\indent We list down several properties of divisibility in Theorem 2.1.1.
\end{paragraph}
\begin{thm}\label{theorem2} For integers $k$, $l$, $m$, and $n$, the following are true:
\begin{itemize}
\item{$n \mid 0$, $1 \mid n$, and $n \mid n$. (Any integer is a divisor of $0$, $1$ is a divisor of any integer, and any integer has itself as a divisor.)}
\item{$m \mid 1$ if and only if $m = \pm 1$. (The only divisors of $1$ are itself and $-1$.)}
\item{If $k \mid m$ and $l \mid n$, then $kl \mid mn$. (Note that this statement is one-sided.) }
\item{If $k \mid l$ and $l \mid m$, then $k \mid m$. (This means that divisibility is transitive.)}
\item{$m \mid n$ and $n \mid m$ if and only if $m = \pm n$. (Two integers which divide each other can only differ by a factor of $\pm 1$.)}
\item{If $m \mid n$ and $n \neq 0$, then $|m| \leq |n|$. (If the multiple of an integer is nonzero, then the multiple has bigger absolute value than the integer.)}
\item{If $k \mid m$ and $k \mid n$, then $k \mid (am + bn)$ for any integers $a$ and $b$. (If an integer divides two other integers, then the first integer divides any linear combination of the second and the third.)}
\end{itemize}
\end{thm}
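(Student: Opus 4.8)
The plan is to prove every item by unwinding Definition~\ref{definition1}: the statement $m \mid n$ means precisely that $n = cm$ for some integer $c$. Each assertion then reduces either to exhibiting the right witness $c$ or to a one-line algebraic manipulation, so the whole theorem is a sequence of routine verifications; the only subtlety is deciding the order, since a couple of the items depend on another.

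First I would dispatch the three trivial identities: $n \mid 0$ with witness $0$ (as $0 = 0 \cdot n$), $1 \mid n$ with witness $n$ (as $n = n \cdot 1$), and $n \mid n$ with witness $1$. Next come the closure properties. If $k \mid m$ and $l \mid n$, say $m = ak$ and $n = bl$, then $mn = (ab)(kl)$, so $kl \mid mn$. If $k \mid l$ and $l \mid m$, say $l = ak$ and $m = bl$, then $m = (ab)k$, so $k \mid m$; this is transitivity. The linear-combination property is the same idea: from $m = ck$ and $n = dk$ one gets $am + bn = (ac + bd)k$, hence $k \mid (am + bn)$.

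The step carrying the actual mathematical content --- everything above being bookkeeping --- is the absolute-value bound: if $m \mid n$ and $n \neq 0$, then $|m| \leq |n|$. Writing $n = cm$, one has $c \neq 0$ (otherwise $n = 0$), so $|c| \geq 1$ because $c$ is a nonzero integer, and therefore $|n| = |c|\,|m| \geq |m|$. The one non-formal ingredient here is that a nonzero integer has absolute value at least $1$; this is where the discreteness of $\mathbb{Z}$ enters, and I would either adopt it as a basic property of the integers or deduce it from the well-ordering of $\mathbb{Z}^+$. I expect this to be the main (indeed, essentially the only) obstacle, such as it is.

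Because the two biconditionals rely on this bound, I would prove them last. For $m \mid 1$ if and only if $m = \pm 1$: the reverse direction follows from $1 \mid n$ with $n = 1$ together with $1 = (-1)(-1)$; for the forward direction, $m \mid 1$ forces $m \neq 0$ (since $0 \nmid 1$) and $|m| \leq 1$ by the bound, hence $|m| = 1$. For $m \mid n$ and $n \mid m$ if and only if $m = \pm n$: the reverse direction uses the witnesses $\pm 1$; for the forward direction, either both integers are $0$ and we are done, or each being a divisor of the other forces both to be nonzero, and then the bound gives $|m| \leq |n|$ and $|n| \leq |m|$, so $|m| = |n|$ and thus $m = \pm n$. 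This fixes the overall order: trivial identities, then products/transitivity/linear combinations, then the absolute-value bound, then the two biconditionals.
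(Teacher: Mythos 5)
Your proof is correct and complete; the paper simply states these divisibility properties without proof, so there is no argument to compare against, and your unwinding of Definition~\ref{definition1} together with the observation that a nonzero integer has absolute value at least $1$ is exactly the standard justification. The only quibble is definitional: since the paper's Definition~\ref{definition1} requires the divisor to be nonzero, the claims $n \mid 0$ and $n \mid n$ implicitly assume $n \neq 0$, and the ``both are $0$'' branch of your fifth item cannot actually occur --- but this is an imprecision in the theorem statement itself, not in your proof.
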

\begin{paragraph}\indent A very useful concept in the theory of numbers is that of the $GCF$ or $GCD$ of two integers.
\end{paragraph}
\begin{defn}\label{definition2} Let $m$ and $n$ be any given integers such that at least one of them is not zero.  The \emph{greatest common divisor} of $m$ and $n$, denoted by $\gcd(m, n)$, is the positive integer $k$ which satisfies the following properties:
\begin{itemize}
\item{$k \mid m$ and $k \mid n$; and}
\item{If $j \mid m$ and $j \mid n$, then $j \mid k$.}
\end{itemize}
\end{defn}
\begin{exmpl}\label{example1} The positive divisors of $36$ are $1$, $2$, $3$, $4$, $6$, $9$, $12$, $18$ and $36$.  For $81$ they are $1$, $3$, $9$, $27$ and $81$. Thus, the positive divisors common to $36$ and $81$ are $1$, $3$ and $9$. Since $9$ is the largest among the common divisors of $36$ and $81$, then $\gcd(36, 81) = 9$.
\end{exmpl}
\begin{paragraph}\indent Another concept of great utility is that of two integers being relatively prime.
\end{paragraph}
\begin{defn}\label{definition3} Let $m$ and $n$ be any integers. If $\gcd(m, n) = 1$, then $m$ and $n$ are said to be \emph{relatively prime}, or \emph{coprime}.
\end{defn}
\begin{exmpl}\label{example2} Any two consecutive integers (like $17$ and $16$, or $8$ and $9$) are relatively prime.  Note that the two consecutive integers are of opposite parity (i.e. one is odd, the other is even).  If two integers are of opposite parity, but not consecutive, it does not necessarily follow that they are relatively prime. (See Example \ref{example1}.)
\end{exmpl}
\begin{paragraph}\indent It turns out that the concept of divisibility can be used to partition the set of positive integers into three classes: the unit $1$, primes and composites.
\end{paragraph}
\begin{defn}\label{definition4} An integer $P > 1$ is called a \emph{prime number}, simply a \emph{prime}, if it has no more positive divisors other than $1$ and $P$.  An integer greater than 1 is called a \emph{composite number}, simply a \emph{composite}, if it is not a prime.
\end{defn}
\begin{paragraph}\indent There are only $23$ primes in the range from $1$ to $100$, as compared to $76$ composites in the same range.  Some examples of primes in this range include $2$, $7$, $23$, $31$, $41$, and $47$.  The composites in the same range include all the larger multiples of the aforementioned primes, as well as product combinations of two or more primes from the range $1$ to $10$. (Note that we get $100$ by multiplying the two composites $10$ and $10$.) We casually remark that the integer $2$ is the only even prime.  The integer $1$, by definition, is neither prime nor composite.  We shall casually call $1$ the \emph{unit}.
\end{paragraph}
\begin{paragraph}\indent From the preceding discussion, we see that the set of prime numbers is \emph{not closed} with respect to multiplication, in the sense that multiplying two prime numbers gives you a composite. On the other hand, the set of composite numbers is closed under multiplication. On further thought, one can show that both sets are not closed under addition.  (It suffices to consider the counterexamples $2 + 7 = 9$ and $4 + 9 = 13$.)
\end{paragraph}
\begin{paragraph}\indent If $P^{\alpha}$ is the largest power of a prime $P$ that divides an integer $N$, i.~e.~ $P^{\alpha} \mid N$ but $P^{\alpha + 1} \nmid N$, then this is denoted by $P^{\alpha} || N$.
\end{paragraph}
\begin{paragraph}\indent We now list down several important properties of prime numbers as they relate to divisibility.
\end{paragraph}
\begin{thm}\label{theorem3} If $P$ is a prime and $P \mid mn$, then either $P \mid m$ or $P \mid n$.
\end{thm}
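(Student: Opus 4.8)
The statement is Euclid's lemma, and the plan is to derive it from B\'ezout's identity together with the linear-combination property already recorded in Theorem \ref{theorem2}. Suppose $P \mid mn$. If $P \mid m$ we are done, so assume $P \nmid m$; the goal is then to force $P \mid n$.

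First I would argue that $\gcd(P, m) = 1$. By Definition \ref{definition4} the only positive divisors of the prime $P$ are $1$ and $P$ itself. Any common divisor of $P$ and $m$ is, in particular, a positive divisor of $P$, hence is either $1$ or $P$; it cannot be $P$, since that would contradict $P \nmid m$. Therefore the only positive common divisor of $P$ and $m$ is $1$, and by Definition \ref{definition2} this says $\gcd(P, m) = 1$, i.e. $P$ and $m$ are relatively prime in the sense of Definition \ref{definition3}.

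Next I would invoke B\'ezout's identity: since $\gcd(P, m) = 1$, there exist integers $a$ and $b$ with $aP + bm = 1$. Multiplying through by $n$ gives $aPn + bmn = n$. Now $P \mid aPn$ trivially, and $P \mid bmn$ because $P \mid mn$ by hypothesis; hence, by the linear-combination property in Theorem \ref{theorem2} (its last bullet, applied with the two multiples $Pn$ and $mn$), $P$ divides the sum $aPn + bmn = n$. Thus $P \mid n$, which completes the proof.

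The one step that does not follow directly from the material already set up is B\'ezout's identity itself. Establishing it needs more than the bulleted divisibility facts of Theorem \ref{theorem2}: one uses the division algorithm and the well-ordering principle, applied to the set $\{\, xP + ym : x, y \in \mathbb{Z} \,\}$ of integer combinations, to show that its least positive element equals $\gcd(P, m)$. I expect this to be the main obstacle, and it should be isolated as a separate lemma beforehand; alternatively one can bypass it with a strong induction on the product $mn$, assuming the claim for all smaller products, which simply reproduces the Euclidean-algorithm argument without naming B\'ezout. Either route leaves the final assembly above unchanged.
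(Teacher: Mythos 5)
Your argument is the classical Bézout-based proof of Euclid's lemma and it is correct: reducing to the case $P \nmid m$, showing $\gcd(P,m)=1$ from the definition of a prime, writing $1 = aP + bm$, multiplying by $n$, and applying the linear-combination property of Theorem \ref{theorem2} is exactly the standard route, and you rightly flag that B\'ezout's identity is the one ingredient not already available from the paper's bulleted divisibility facts and would need the division algorithm and well-ordering as a separate lemma. There is nothing to compare against in the source: the thesis states Theorem \ref{theorem3} without proof, treating it as a known fact from elementary number theory in its preliminaries chapter, so your write-up supplies a proof where the paper supplies none.
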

\begin{cor}\label{corollary1} If $P$ is a prime number and $P \mid m_1 m_2 \cdots m_n$, then $P \mid m_i$ for some $i$, $1 \leq i \leq n$.
\end{cor}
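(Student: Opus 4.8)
The plan is to prove the statement by induction on $n$, using Theorem \ref{theorem3} (Euclid's Lemma for two factors) as the engine of the argument. For the base case I would take $n = 1$, where the hypothesis $P \mid m_1$ already gives the conclusion with $i = 1$ (alternatively one could start at $n = 2$, which is precisely the assertion of Theorem \ref{theorem3}).

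For the inductive step, suppose the claim holds for any product of $n - 1$ factors, and assume $P$ is prime with $P \mid m_1 m_2 \cdots m_n$. The key move is to regroup the product as $P \mid \left(m_1 m_2 \cdots m_{n-1}\right) m_n$, treating $m_1 m_2 \cdots m_{n-1}$ as a single integer. Applying Theorem \ref{theorem3} to this two-factor product yields two cases: either $P \mid m_n$, in which case we are done with $i = n$; or $P \mid m_1 m_2 \cdots m_{n-1}$, in which case the induction hypothesis supplies an index $i$ with $1 \leq i \leq n - 1$ such that $P \mid m_i$. In either case there is an $i$ with $1 \leq i \leq n$ and $P \mid m_i$, completing the induction.

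There is no real obstacle here; the argument is a routine induction and the only point requiring the slightest care is the bookkeeping in the inductive step — namely recognizing that $m_1 m_2 \cdots m_n$ must be parenthesized as a product of exactly two integers before Theorem \ref{theorem3} can be invoked, and then correctly transferring the index bound from the hypothesis (which gives $i \leq n - 1$) to the desired bound $i \leq n$. The primality of $P$ is used only through Theorem \ref{theorem3} and is not needed again elsewhere in the proof.
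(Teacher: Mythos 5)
Your proof is correct: the induction on $n$, reducing each step to the two-factor case via Theorem \ref{theorem3}, is exactly the standard derivation this corollary calls for, and the paper itself states the result without proof as an immediate consequence of that theorem. Nothing is missing.
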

\begin{cor}\label{corollary2} If $P, Q_1, Q_2, \ldots, Q_n$ are all primes and $P \mid Q_1 Q_2 \cdots Q_n$, then $P = Q_i$ for some $i$, $1 \leq i \leq n$.
\end{cor}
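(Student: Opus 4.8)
The plan is to deduce this statement directly from the preceding corollary rather than from scratch. First I would invoke Corollary~\ref{corollary1} with the choice $m_i = Q_i$ for each $i$: since $P$ is a prime dividing the product $Q_1 Q_2 \cdots Q_n$, that corollary immediately produces an index $i$ with $1 \leq i \leq n$ such that $P \mid Q_i$. This reduction is legitimate because the hypotheses of Corollary~\ref{corollary1} ask nothing of $m_1, \ldots, m_n$ beyond their being integers, and the primes $Q_1, \ldots, Q_n$ are in particular integers.

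It then remains to upgrade the divisibility $P \mid Q_i$ to the equality $P = Q_i$. Here I would appeal to the defining property of the prime $Q_i$ from Definition~\ref{definition4}: the only positive divisors of $Q_i$ are $1$ and $Q_i$ itself. Since $P$ is prime we have $P > 1$, so $P \neq 1$; hence $P$ must be the remaining positive divisor, namely $Q_i$. This yields $P = Q_i$ for that same index $i$, which is precisely the assertion of the corollary.

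The argument carries essentially no obstacle — the step that does the real work, an induction on $n$ via Theorem~\ref{theorem3}, has already been performed in the proof of Corollary~\ref{corollary1}, so all that is genuinely new here is the ``divisibility forces equality'' observation for two primes. For completeness one could instead give a self-contained induction on $n$: the base case $n = 1$ is that observation, and the inductive step applies Theorem~\ref{theorem3} to the factorization $Q_1 \cdots Q_n = Q_1 \cdot (Q_2 \cdots Q_n)$, concluding either $P = Q_1$ or, by the inductive hypothesis applied to $Q_2 \cdots Q_n$, that $P = Q_i$ for some $i$ with $2 \leq i \leq n$. Routing through Corollary~\ref{corollary1} is the cleaner presentation, so that is the route I would take.
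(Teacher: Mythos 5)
Your proof is correct and is the standard argument; the paper states this corollary without proof, so there is nothing to compare against, but your route (Corollary~\ref{corollary1} to get $P \mid Q_i$, then the fact that the prime $Q_i$ has only $1$ and $Q_i$ as positive divisors while $P > 1$) is exactly the intended one.
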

\begin{paragraph}\indent All roads now lead to the Fundamental Theorem of Arithmetic.
\end{paragraph}
\begin{thm}\label{FTA} Fundamental Theorem of Arithmetic \\
Every positive integer $N > 1$ can be represented uniquely as a product of primes, apart from the order in which the factors occur.
\end{thm}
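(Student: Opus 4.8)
The plan is to split the statement into its two natural halves — first that every integer $N > 1$ admits \emph{some} representation as a product of primes (existence), and then that this representation is unique up to the order of the factors (uniqueness) — and to prove each half by induction on $N$. The existence half will use only Definition \ref{definition4}, while the uniqueness half will lean on Corollary \ref{corollary2}, which is itself the offspring of Theorem \ref{theorem3}.

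For existence I would argue by strong induction on $N$. The base case $N = 2$ is immediate since $2$ is prime. For the inductive step, assume the claim holds for every integer strictly between $1$ and $N$. If $N$ is prime there is nothing to prove. Otherwise $N$ is composite, so by Definition \ref{definition4} it has a positive divisor $a$ with $1 < a < N$; writing $N = ab$ then forces $1 < b < N$ as well. By the induction hypothesis each of $a$ and $b$ is a product of primes, and juxtaposing those two products exhibits $N$ itself as a product of primes.

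For uniqueness, suppose $N = p_1 p_2 \cdots p_s = q_1 q_2 \cdots q_t$ are two factorizations of $N$ into primes, with $s \le t$ say. Since $p_1 \mid q_1 q_2 \cdots q_t$, Corollary \ref{corollary2} yields $p_1 = q_j$ for some index $j$; after relabelling the $q$'s we may take $p_1 = q_1$. Cancelling this common prime factor leaves $p_2 \cdots p_s = q_2 \cdots q_t$, a shorter instance of the same situation, so by induction on $s$ we peel off the primes one at a time and conclude that $s = t$ and that, after a suitable permutation of the $q_i$, we have $p_i = q_i$ for all $i$. (Were $s < t$, the process would terminate with $1 = q_{s+1} \cdots q_t$, contradicting $q_i > 1$ for each $i$.)

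The existence half is routine bookkeeping with the divisibility facts in Theorem \ref{theorem2}; the only substantive step in the whole argument is the appeal to Corollary \ref{corollary2} — equivalently, to Euclid's lemma (Theorem \ref{theorem3}) — which is exactly what guarantees that $p_1$ must \emph{coincide} with one of the $q_j$ rather than merely divide their product. I expect that to be the crux: without it the induction in the uniqueness half has nothing to grip on.
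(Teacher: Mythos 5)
Your argument is correct: existence by strong induction on $N$ using the definition of a composite number, and uniqueness by induction using Corollary \ref{corollary2} (Euclid's lemma in the form needed), with the $s < t$ case correctly dispatched by the impossibility of $1 = q_{s+1}\cdots q_t$. Note, however, that the paper itself states Theorem \ref{FTA} without proof, treating it as standard background alongside Theorem \ref{theorem3} and its corollaries, so there is no in-paper argument to compare against; what you have written is the standard textbook proof, and you have correctly identified Corollary \ref{corollary2} as the one non-routine ingredient.
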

\begin{paragraph}\indent The ``lexicographic representation" of a positive integer as a product of primes may be achieved via what is called the \emph{canonical factorization}.
\end{paragraph}
\begin{cor}\label{corollary3} Any positive integer $N > 1$ can be written uniquely in the canonical factorization
\begin{displaymath} N = {P_1}^{{\alpha}_1} {P_2}^{{\alpha}_2} \cdots {P_r}^{{\alpha}_r} = \prod_{i=1}^r {{P_i}^{{\alpha}_i}}
\end{displaymath}
where, for $i = 1, 2, \ldots, r$, each ${\alpha}_i$ is a positive integer and each $P_i$ is a prime, with $P_1 < P_2 < \ldots < P_r$.
\end{cor}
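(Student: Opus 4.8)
The plan is to deduce this corollary entirely from the Fundamental Theorem of Arithmetic (Theorem \ref{FTA}), treating it as a matter of organizing a prime factorization rather than proving anything genuinely new. First I would invoke Theorem \ref{FTA} to write $N = q_1 q_2 \cdots q_s$ as a product of (not necessarily distinct) primes. Then I would collect repeated factors: let $P_1 < P_2 < \cdots < P_r$ be the distinct primes occurring in this list, and for each $i$ let $\alpha_i$ be the number of times $P_i$ appears among $q_1, \ldots, q_s$. Regrouping the product accordingly yields $N = \prod_{i=1}^r {P_i}^{\alpha_i}$ with each $\alpha_i \geq 1$ and the $P_i$ strictly increasing, which settles existence.

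For uniqueness, I would suppose $N = \prod_{i=1}^r {P_i}^{\alpha_i} = \prod_{j=1}^t {Q_j}^{\beta_j}$ are two such canonical factorizations, with $P_1 < \cdots < P_r$, $Q_1 < \cdots < Q_t$, and all exponents positive. Expanding each side back into a ``flat'' list of primes --- $P_1$ written $\alpha_1$ times, then $P_2$ written $\alpha_2$ times, and so on --- exhibits two factorizations of $N$ into primes, so Theorem \ref{FTA} forces these lists to agree up to order; equivalently, they determine the same collection of primes counted with multiplicity. Hence the underlying sets of distinct primes coincide, giving $r = t$ and $\{P_1, \ldots, P_r\} = \{Q_1, \ldots, Q_t\}$; since both sequences list this common set in increasing order, $P_i = Q_i$ for every $i$. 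Matching the multiplicity of $P_i = Q_i$ then gives $\alpha_i = \beta_i$ for every $i$, completing the argument.

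I do not expect a real obstacle here, since the substance is already carried by Theorem \ref{FTA}; the only point needing a little care is the passage between the two bookkeeping languages --- ``product of primes, unique up to reordering'' versus ``list of prime--exponent pairs'' --- and in particular the remark that once the distinct primes are pinned down as a set, the stipulation $P_1 < P_2 < \cdots < P_r$ removes all remaining ambiguity in their order. If one preferred to avoid informal talk of collections-with-multiplicity, the cleanest alternative would be induction on $N$: peel off the smallest prime factor (well-defined by the well-ordering principle), use Corollary \ref{corollary2} to identify it on both sides, divide it out, and apply the inductive hypothesis to $N/P_1$. The multiset argument above is shorter, but this variant is equally rigorous and could be substituted if desired.
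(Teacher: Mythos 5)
Your proposal is correct, and it takes the same route the paper intends: the paper states this result as an immediate corollary of Theorem \ref{FTA} and supplies no written proof, so your grouping argument for existence and your ``flatten back to a prime list and invoke uniqueness up to order'' argument for uniqueness are exactly the standard bookkeeping the paper leaves implicit. Nothing is missing; the only substance is indeed carried by the Fundamental Theorem of Arithmetic.
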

\begin{paragraph}\indent We illustrate these with some examples.
\end{paragraph}
\begin{exmpl}\label{example3} The canonical factorization of the integer $36$ is $36 = 2^2 \cdot 3^2$. Meanwhile, the canonical factorization for the integer $1024$ is $1024 = 2^{10}$, while for $2145$ it is $2145 = 3^1 \cdot 5^1 \cdot 11^1 \cdot 13^1$, written simply as $2145 = 3 \cdot 5 \cdot 11 \cdot 13$.
\end{exmpl}
\begin{paragraph}\indent Functions which are defined for all positive integers $n$ are called \emph{arithmetic functions}, or \emph{number-theoretic functions}, or \emph{numerical functions}. Specifically, a \emph{number-theoretic function} $f$ is one whose domain is the positive integers and whose range is a subset of the complex numbers.
\end{paragraph}
\begin{paragraph}\indent We now define three important number-theoretic functions.
\end{paragraph}
\begin{defn}\label{definition5} Let $n$ be a positive integer.  Define the number-theoretic functions $d(n), \sigma(n), \phi(n)$ as follows:
\begin{center}
$d(n)$ = the number of positive divisors of $n$, \\
$\sigma(n)$ = the sum of the positive divisors of $n$, \\
{
\small
$\phi(n)$ = the number of positive integers at most $n$ which are also relatively prime to $n$.
}
\end{center}
\end{defn}
\begin{paragraph}\indent It would be good to illustrate with some examples.
\end{paragraph}
\begin{exmpl}\label{example4} Consider the positive integer $n = 28$. Since the positive divisors of $28$ are $1$, $2$, $4$, $7$, $14$ and $28$, then by definition:
\begin{center} $d(28) = 6$
\end{center}
and
\begin{displaymath}
\sigma(28) = \sum_{d \mid 28}{d} = 1 + 2 + 4 + 7 + 14 + 28 = 56.
\end{displaymath}
Note that the following list contains all the positive integers less than or equal to \\
$n = 28$ which are also relatively prime to $n$: $L = \{1, 3, 5, 9, 11, 13, 15, 17, 19, 23, 25, 27\}$. By definition, $\phi(28) = 12$.
\end{exmpl}
\begin{paragraph}\indent For the first few integers,
\begin{center} $d(1) = 1$ \hspace{0.01truein} $d(2) = 2$ \hspace{0.01truein} $d(3) = 2$ \hspace{0.01truein} $d(4) = 3$ \hspace{0.01truein} $d(5) = 2$ \hspace{0.01truein} $d(6) = 4$
\end{center}
while
\begin{center} $\sigma(1) = 1$ \hspace{0.01truein} $\sigma(2) = 3$ \hspace{0.01truein} $\sigma(3) = 4$ \hspace{0.01truein} $\sigma(4) = 7$ \hspace{0.01truein} $\sigma(5) = 6$ \hspace{0.01truein} $\sigma(6) = 12$
\end{center}
and
\begin{center} $\phi(1) = 1$ \hspace{0.01truein} $\phi(2) = 1$ \hspace{0.01truein} $\phi(3) = 2$ \hspace{0.01truein} $\phi(4) = 2$ \hspace{0.01truein} $\phi(5) = 4$ \hspace{0.01truein} $\phi(6) = 3$.
\end{center}
\end{paragraph}
\begin{paragraph}\indent Note that the functions $d(n), \sigma(n)$, and $\phi(n)$ are not monotonic, and their functional values at $n = 1$ is also $1$.  Also, for at least the first $3$ primes $p = 2$, $3$ and $5$, $d(p) = 2$, $\sigma(p) = p + 1$, and $\phi(p) = p - 1$.
\end{paragraph}
\begin{paragraph}\indent We shall now introduce the notion of a \emph{multiplicative} number-theoretic function.
\end{paragraph}
\begin{defn}\label{definition6} A function $F$ defined on $\mathbb N$ is said to be multiplicative if for all $m, n \in \mathbb N$ such that $\gcd(m, n) = 1$, we have
\begin{center} $F(mn) = F(m)F(n)$. \\
\end{center}
\end{defn}
\begin{exmpl}\label{example5} Let the function $F$ be defined by $F(n) = n^k$ where $k$ is a fixed positive integer. Then $F(mn) = (mn)^k = m^k n^k = F(m)F(n)$. We have therefore shown that $F$ is multiplicative. Moreover, the condition $\gcd(m, n) = 1$ is not even required for the series of equalities above to hold.  We call $F$ in this example a \emph{totally multiplicative function}.
\end{exmpl}
\begin{paragraph}\indent It turns out that the three number-theoretic functions we introduced in Definition \ref{definition5} provide us with more examples of multiplicative functions.
\end{paragraph}
\begin{thm}\label{theorem4} The functions $d, \sigma$ and $\phi$ are multiplicative functions.
\end{thm}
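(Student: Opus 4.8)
The plan is to treat the two divisor-counting functions $d$ and $\sigma$ together by means of a single combinatorial fact about divisors of a product of coprime integers, and then to handle $\phi$ separately by a direct counting argument on a rectangular array. Throughout, the ambient tool is the Fundamental Theorem of Arithmetic (Theorem \ref{FTA}) and its corollary on canonical factorizations (Corollary \ref{corollary3}).

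First I would prove the key lemma: if $\gcd(m,n)=1$, then the map $(a,b)\mapsto ab$ is a bijection from the set of pairs $(a,b)$ with $a\mid m$ and $b\mid n$ onto the set of divisors of $mn$. That $ab\mid mn$ whenever $a\mid m$ and $b\mid n$ is immediate from Theorem \ref{theorem2}. Injectivity follows because $a\mid m$, $b\mid n$ and $\gcd(m,n)=1$ force $\gcd(a,b)=1$, so by comparing prime factorizations (Theorem \ref{FTA}, Corollary \ref{corollary3}) one recovers $a=\gcd(ab,m)$ and $b=\gcd(ab,n)$ from the product $ab$. For surjectivity, given $e\mid mn$ one sets $a=\gcd(e,m)$, $b=\gcd(e,n)$ and checks, again using unique factorization together with $\gcd(m,n)=1$, that $e=ab$ with $a\mid m$ and $b\mid n$. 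Granting this lemma, counting the two sides gives $d(mn)=d(m)d(n)$, and summing the value $ab$ over the bijection and invoking the distributive law gives
\[
\sigma(mn)=\sum_{e\mid mn}e=\sum_{a\mid m}\sum_{b\mid n}ab=\Bigl(\sum_{a\mid m}a\Bigr)\Bigl(\sum_{b\mid n}b\Bigr)=\sigma(m)\sigma(n),
\]
so $d$ and $\sigma$ are multiplicative in the sense of Definition \ref{definition6}.

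For $\phi$ I would use a rectangular-array argument. Assuming $\gcd(m,n)=1$, list the integers $1,2,\ldots,mn$ in $n$ rows and $m$ columns, placing $rm+c$ in row $r$ (with $0\le r\le n-1$) and column $c$ (with $1\le c\le m$). Since $\gcd(m,n)=1$, an integer is coprime to $mn$ if and only if it is coprime to both $m$ and $n$; here I would invoke Theorem \ref{theorem3} (equivalently Corollaries \ref{corollary1}–\ref{corollary2}) to justify that a common prime factor of $mn$ with the integer would have to divide $m$ or $n$. Now $rm+c\equiv c\pmod m$, so the entries coprime to $m$ are exactly those lying in the $\phi(m)$ columns whose index $c$ satisfies $\gcd(c,m)=1$. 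Within any such column the entries $c,\,m+c,\,\ldots,\,(n-1)m+c$ form a complete residue system modulo $n$, hence exactly $\phi(n)$ of them are coprime to $n$. Therefore the number of integers in $\{1,\ldots,mn\}$ coprime to $mn$ equals $\phi(m)\phi(n)$, i.e. $\phi(mn)=\phi(m)\phi(n)$.

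The routine parts are the counting and the distributive law; the two places demanding care are the injectivity/surjectivity bookkeeping in the divisor lemma — which is purely an exercise in comparing prime factorizations and so reduces to Theorem \ref{FTA} — and, for $\phi$, the claim that $\{c,m+c,\ldots,(n-1)m+c\}$ is a complete residue system modulo $n$. This last point is the real crux: it amounts to showing that $im\equiv jm\pmod n$ with $0\le i,j\le n-1$ forces $i\equiv j\pmod n$, which is exactly where the hypothesis $\gcd(m,n)=1$ is used, since it permits cancellation of $m$ modulo $n$. Once that is in hand, the three assertions follow without further difficulty.
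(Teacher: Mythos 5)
Your proof is correct and complete: the divisor-pair bijection $(a,b)\mapsto ab$ cleanly yields both $d(mn)=d(m)d(n)$ and $\sigma(mn)=\sigma(m)\sigma(n)$, and the rectangular-array argument for $\phi$ — including the crux that $\{c,\,m+c,\,\ldots,\,(n-1)m+c\}$ is a complete residue system modulo $n$ precisely because $\gcd(m,n)=1$ permits cancellation of $m$ — is the standard and fully rigorous one. The paper states Theorem \ref{theorem4} without any proof and immediately uses it to derive Theorem \ref{theorem5}, so there is no argument of the paper's to compare against; your write-up supplies exactly the justification the paper implicitly relies on.
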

\begin{paragraph}\indent Multiplicative functions are completely determined by their values at prime powers. Given a positive integer $n$'s canonical factorization
\begin{displaymath} n = \prod_{i=1}^r {{P_i}^{{\alpha}_i}},
\end{displaymath}
then if $F$ is a multiplicative function, we have
\begin{displaymath} F(n) = \prod_{i=1}^r {F({P_i}^{{\alpha}_i})}. \end{displaymath}
This last assertion follows from the fact that prime powers derived from the canonical factorization of $n$ are pairwise relatively prime.
\end{paragraph}
\begin{paragraph}\indent The next theorem follows from Definition \ref{definition5}, and Theorem \ref{theorem4} as well.
\end{paragraph}
\begin{thm}\label{theorem5} If $n = \displaystyle\prod_{i=1}^r {{P_i}^{{\alpha}_i}}$ is the canonical factorization of $n > 1$, then
\begin{center}
$d(n) = \displaystyle\prod_{i=1}^r \left({\alpha}_i + 1\right)$, \\
$\sigma(n) = \displaystyle\prod_{i=1}^r {\sigma({P_i}^{{\alpha}_i})} = \prod_{i=1}^r \left(\frac{{P_i}^{{\alpha}_i + 1} - 1}{P_i - 1}\right)$, \\
$\phi(n) = n \displaystyle\prod_{i=1}^r \left(1 - {\frac{1}{P_i}}\right)$. \\
\end{center}
\end{thm}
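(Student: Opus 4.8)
The plan is to lean entirely on Theorem \ref{theorem4}, which asserts that $d$, $\sigma$, and $\phi$ are multiplicative, together with the remark recorded just above the statement: if $F$ is multiplicative and $n = \prod_{i=1}^r {P_i}^{\alpha_i}$ is the canonical factorization, then $F(n) = \prod_{i=1}^r F({P_i}^{\alpha_i})$, because the prime powers ${P_i}^{\alpha_i}$ are pairwise relatively prime (this pairwise coprimality follows from the definition of the canonical factorization in Corollary \ref{corollary3} and from Corollary \ref{corollary2}, and one applies multiplicativity iteratively). This reduces the entire theorem to evaluating each of the three functions at a single prime power $P^\alpha$ with $\alpha \geq 1$.

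First I would identify the positive divisors of $P^\alpha$. By the Fundamental Theorem of Arithmetic (Theorem \ref{FTA}) no divisor of $P^\alpha$ can contain a prime other than $P$, so the divisors are exactly $1, P, P^2, \ldots, P^\alpha$. There are $\alpha + 1$ of them, giving $d(P^\alpha) = \alpha + 1$; hence $d(n) = \prod_{i=1}^r ({\alpha}_i + 1)$. Summing that same list of divisors is a finite geometric series, $\sigma(P^\alpha) = 1 + P + \cdots + P^\alpha = \dfrac{P^{\alpha+1} - 1}{P - 1}$, which is valid since $P \geq 2$ forces $P - 1 \neq 0$; multiplying over $i$ yields $\sigma(n) = \prod_{i=1}^r \sigma({P_i}^{{\alpha}_i}) = \prod_{i=1}^r \dfrac{{P_i}^{{\alpha}_i + 1} - 1}{P_i - 1}$.

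For $\phi(P^\alpha)$ I would count the integers in $\{1, 2, \ldots, P^\alpha\}$ that are \emph{not} coprime to $P^\alpha$; since the only prime dividing $P^\alpha$ is $P$, these are precisely the multiples of $P$, namely $P, 2P, \ldots, P^{\alpha-1}\cdot P$, of which there are $P^{\alpha-1}$. Thus $\phi(P^\alpha) = P^\alpha - P^{\alpha-1} = P^\alpha\bigl(1 - \tfrac{1}{P}\bigr)$. Multiplying over $i$ and factoring out $\prod_{i=1}^r {P_i}^{{\alpha}_i} = n$ from the product gives $\phi(n) = n\prod_{i=1}^r \bigl(1 - \tfrac{1}{P_i}\bigr)$.

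No step here is a genuine obstacle; once multiplicativity is available the proof is essentially bookkeeping. The two points that deserve a moment's care are (i) the reduction step itself, i.e. confirming that one may legitimately iterate the two-factor multiplicativity across all $r$ prime powers — this rests on the pairwise coprimality noted above and an easy induction — and (ii) the final rearrangement for $\phi$, where one must reconcile the multiplicative form $\prod_i \phi({P_i}^{{\alpha}_i})$ with the arithmetic factorization of $n$ by checking $\prod_i {P_i}^{{\alpha}_i - 1}(P_i - 1) = \bigl(\prod_i {P_i}^{{\alpha}_i}\bigr)\prod_i\bigl(1 - \tfrac{1}{P_i}\bigr)$, which is immediate. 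Everything else is the routine evaluation of $d$, $\sigma$, $\phi$ on prime powers.
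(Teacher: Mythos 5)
Your proposal is correct and follows exactly the route the paper intends: the paper states that Theorem \ref{theorem5} ``follows from Definition \ref{definition5}, and Theorem \ref{theorem4}'' together with the preceding remark that a multiplicative function satisfies $F(n) = \prod_{i=1}^r F({P_i}^{\alpha_i})$ on the canonical factorization, and leaves the prime-power evaluations unstated. You have simply written out in full the details the paper omits (the divisor list of $P^\alpha$, the geometric series for $\sigma$, the count of multiples of $P$ for $\phi$, and the iteration of two-factor multiplicativity), all of which are sound.
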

\begin{paragraph}\indent We illustrate with several examples, continuing from Example \ref{example3}.
\end{paragraph}
\begin{exmpl}\label{example6} The integer $36 = 2^2\cdot3^2$ has
\begin{center} $d(36) = (2 + 1)(2 + 1) = 3\cdot3 = 9$ and $\sigma(36) = (\frac{2^3 - 1}{2 - 1})(\frac{3^3 - 1}{3 - 1}) = 7\cdot13 = 91$
\end{center}
and $\phi(36) = 36(1 - \frac{1}{2})(1 - \frac{1}{3}) = 12$, while for the integer $1024 = 2^{10}$ one has
\begin{center} $d(1024) = 10 + 1 = 11$, $\sigma(1024) = \frac{2^{11} - 1}{2 - 1} = 2047 = 23\cdot89$
\end{center}
and $\phi(1024) = 1024(1 - \frac{1}{2}) = 512 = 2^5$. 
\begin{paragraph}\indent Lastly, we have for the integer $2145 = 3\cdot5\cdot11\cdot13$ the following:
\begin{center} $d(2145) = (1 + 1)(1 + 1)(1 + 1)(1 + 1) = 2^4 = 16$
\end{center}
and
\begin{center} $\sigma(2145) =(3 + 1)(5 + 1)(11 + 1)(13 + 1) = 4032$
\end{center}
while
\begin{center} $\phi(2145) = 2145(1 - \frac{1}{3})(1 - \frac{1}{5})(1 - \frac{1}{11})(1 - \frac{1}{13}) = 960$.
\end{center}
\end{paragraph}
\end{exmpl}
\begin{paragraph}\indent The following corollary follows immediately from Theorem \ref{theorem5}:
\end{paragraph}
\begin{cor}\label{corollary4} Let $P$ be a prime number and $k$ a fixed positive integer. Then
\begin{displaymath} d(P^k) = k + 1, \sigma(P^k) = \sum_{i=0}^k {P^i} = \frac{P^{k + 1} - 1}{P - 1} \\
$$and$$ \phi(P^k) = P^k (1 - \frac{1}{P}) = P^{k - 1}(P - 1).
\end{displaymath}
\end{cor}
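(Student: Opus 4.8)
The plan is to obtain all three identities by specializing Theorem \ref{theorem5} to the canonical factorization of $P^k$, which consists of the single prime power $P^k$ itself. Since $P$ is prime and $k \geq 1$, the canonical factorization $P^k = \prod_{i=1}^r {P_i}^{\alpha_i}$ has $r = 1$, $P_1 = P$, and $\alpha_1 = k$. Substituting these values directly into the three formulas of Theorem \ref{theorem5} yields $d(P^k) = \alpha_1 + 1 = k + 1$, then $\sigma(P^k) = \frac{P^{\alpha_1 + 1} - 1}{P_1 - 1} = \frac{P^{k+1} - 1}{P - 1}$, and finally $\phi(P^k) = P^k\left(1 - \frac{1}{P}\right)$. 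This already accounts for the leftmost expressions in each displayed equality.

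Next I would record the elementary geometric-series identity $\sum_{i=0}^k {P^i} = \frac{P^{k+1} - 1}{P - 1}$, which is legitimate because $P$ is prime and hence $P \geq 2 \neq 1$, so the denominator is nonzero. The standard verification is to multiply the left-hand side by $(P - 1)$ and observe the telescoping cancellation, which leaves $P^{k+1} - 1$; alternatively one argues by induction on $k$. This supplies the middle expression in the formula for $\sigma(P^k)$. For the function $\phi$, the last equality follows from the routine simplification $P^k\left(1 - \frac{1}{P}\right) = P^k \cdot \frac{P - 1}{P} = P^{k-1}(P - 1)$, again valid since $P \neq 0$.

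I do not anticipate any genuine obstacle: the corollary is simply the $r = 1$ instance of Theorem \ref{theorem5} combined with two one-line algebraic identities. The only point deserving (minimal) care is that every division appearing above is by $P - 1$ or by $P$, each of which is nonzero precisely because $P$ is prime; so it is worth stating explicitly that primality is exactly what licenses these manipulations, and then the proof is complete.
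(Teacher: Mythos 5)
Your proposal is correct and matches the paper exactly: the paper states that this corollary ``follows immediately from Theorem \ref{theorem5}'', which is precisely your specialization to the single-prime-power case $r = 1$, $P_1 = P$, $\alpha_1 = k$, supplemented by the routine geometric-series and algebraic simplifications.
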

Note from Corollary \ref{corollary4} that for odd prime powers, the number and sum of divisors may or may not be prime, while $\phi(P^k)$ is always composite for $k > 1$.
\begin{paragraph}\indent Divisibility gives rise to an equivalence relation on the set of integers, defined by the \emph{congruence relation}.
\end{paragraph}
\begin{defn}\label{definition7} Let $m$ be a fixed positive integer. Two integers $A$ and $B$ are said to be congruent modulo $m$, written as $A \equiv B \pmod m$, if $m \mid (A - B)$; that is, provided that $A - B = km$ for some integer $k$. When $m \nmid (A - B)$, we say that $A$ is incongruent to $B$ modulo $m$, and we denote this by $A \not\equiv B \pmod m$.
\end{defn}
\begin{exmpl}\label{example7} Let us take $m = 3$. We can see that
\begin{center} $14 \equiv 5 \pmod 3$, $-9 \equiv 0 \pmod 3$, and $35 \equiv -7 \pmod 3$
\end{center}
because $14 - 5 = 3\cdot3$, $-9 - 0 = (-3)\cdot3$, and $35 - (-7) = 14\cdot3$.
\begin{paragraph}\indent On the other hand, $1200 \not\equiv 2 \pmod{3}$ because $3$ does not divide \\ $1200 - 2 = 1198$.
\end{paragraph}
\end{exmpl}
\begin{paragraph}\indent We now introduce two more additional number-theoretic functions.
\end{paragraph}
\begin{defn}\label{definition8} Let $n$ be a positive integer. Then $\omega(n)$ is the number of distinct prime factors of $n$, i.e. $\omega(n) = \sum_{{P_i} \mid n}{1}$ where each $P_i$ is prime. Furthermore, $\Omega(n)$ is the number of primes that divide $n$, counting multiplicities. That is, if $n$ has canonical factorization $n = \displaystyle\prod_{i=1}^r {{P_i}^{{\alpha}_i}}$, then
\begin{displaymath}
\Omega(n) = \alpha_1 + \alpha_2 + \ldots + \alpha_r = \sum_{i=1}^r {\alpha_i} = \sum_{{P_i}^{\alpha}||N}{\alpha}.
\end{displaymath}
\end{defn}
\begin{exmpl}\label{example8} Let us consider $n = 36 = 2^2 \cdot 3^2$. Since it has two distinct prime factors (namely $2$ and $3$), we have $\omega(36) = 2$. On the other hand, $\Omega(36) = 4$ since its total number of prime factors, counting multiplicities, is four. For $m = 1024 = 2^{10}$, we have $\omega(1024) = 1$ and $\Omega(1024) = 10$, while for $k = 2145 = 3 \cdot 5 \cdot 11 \cdot 13$, the functions have values $\omega(2145) = 4$ and $\Omega(2145) = 4$.
\end{exmpl}
\begin{paragraph}\indent Notice in Example \ref{example8} that the number of distinct prime factors is less than or equal to the total number of prime factors (counting multiplicities).  In general, it is true that $\Omega(n) \geq \omega(n)$ for all positive integers $n$.
\end{paragraph}
\begin{paragraph}\indent In number theory, \emph{asymptotic density} or \emph{natural density} is one of the possibilities to measure how large is a subset of the set of natural numbers $\mathbb N$.  Intuitively, we feel that there are ``more" odd numbers than perfect squares; however, the set of odd numbers is not in fact ``bigger" than the set of perfect squares: both sets are infinite and countable and can therefore be put in one-to-one correspondence. Clearly, we need a better way to formalize our intuitive notion.
\end{paragraph}
\begin{paragraph}\indent Let $A$ be a subset of the set of natural numbers $\mathbb N$.  If we pick randomly a number from the set $\left\{1, 2, \ldots, n\right\}$, then the probability that it belongs to $A$ is the ratio of the number of elements in the set $A \bigcap \left\{1, 2, \ldots, n\right\}$ and $n$.  If this probability tends to some limit as $n$ tends to infinity, then we call this limit the \emph{asymptotic density} of $A$. We see that this notion can be understood as a kind of probability of choosing a number from the set $A$. Indeed, the asymptotic density (as well as some other types of densities) is studied in \textbf{probabilistic number theory}.
\end{paragraph}
\begin{paragraph}\indent We formalize our definition of \emph{asymptotic density} or simply \textbf{density} in what follows:
\end{paragraph}
\begin{defn}\label{AsymptoticDensity}
A sequence $a_1, a_2, \ldots, a_n$ with the $a_j$ positive integers and \\ 
$a_j < a_{j + 1}$ for all $j$, has \textbf{natural density} or \textbf{asymptotic density} $\alpha$, where \\ 
$0 \leq \alpha \leq 1$, if the proportion of natural numbers included as some $a_j$ is asymptotic to $\alpha$.  More formally, if we define the counting function $A(x)$ as the number of $a_j$'s with $a_j < x$ then we require that $A(x) \sim {\alpha}x$ as $x \rightarrow +{\infty}$.
\end{defn}
\section{The Abundancy Index}
\begin{paragraph}\indent As discussed in the literature review, the search for perfect numbers led mathematicians to consider the rather interesting quantity called the \emph{abundancy index/ratio}.
\end{paragraph}
\begin{defn}\label{definition9} The abundancy index/ratio of a given positive integer $n$ is defined as $I(n) = \displaystyle\frac{\sigma(n)}{n}$.
\end{defn}
\begin{exmpl}\label{example9} $I(36) = \displaystyle\frac{\sigma(36)}{36} = \displaystyle\frac{91}{36}$, while
\begin{center} 
$I(1024) = \displaystyle\frac{\sigma(1024)}{1024} = \displaystyle\frac{2047}{1024}$ 
\end{center}
and 
\begin{center}
$I(2145) = \displaystyle\frac{\sigma(2145)}{2145} = \displaystyle\frac{4032}{2145}$.
\end{center}
\end{exmpl}
\begin{paragraph}\indent We note that the abundancy index is also a multiplicative number-theoretic function because $\sigma$ is multiplicative.
\end{paragraph}
\begin{paragraph}\indent Looking back at Definition \ref{perfect}, it is clear that a number $N$ is perfect if and only if its abundancy index $I(N)$ is $2$. It is somewhat interesting to consider the cases when $I(N) \neq 2$.
\end{paragraph}
\begin{defn}\label{definition10} If the abundancy index $I(N) < 2$, then $N$ is said to be \emph{deficient}, while for $I(N) > 2$, $N$ is said to be \emph{abundant}.
\end{defn}
\begin{exmpl}\label{example10} Referring to Example \ref{example9}, $36$ is abundant since $I(36) = \frac{91}{36} > 2$ while $1024$ and $2145$ are deficient since $I(1024) = \frac{2047}{1024} < 2$ and $I(2145) = \frac{4032}{2145} < 2$.
\end{exmpl}
\begin{remrk}\label{remark1} Giardus Ruffus conjectured in $1521$ that most odd numbers are deficient. In $1975$, C. W. Anderson \cite{A1} proved that this is indeed the case by showing that the density of odd deficient numbers is at least $\frac{48 - 3{\pi}^2}{32 - {\pi}^2} \approx 0.831$.  On the other hand, Marc Del$\ddot{e}$glise (in $1998$ \cite{D11}) gave the bounds $0.2474 < A(2) < 0.2480$ for the density $A(2)$ of abundant integers. Kanold ($1954$) \cite{K331} showed that the density of odd perfect numbers is 0.
\end{remrk}
\begin{paragraph}\indent We list down several important lemmas describing useful properties of the abundancy index.
\end{paragraph}
\begin{lemm}\label{lemma1} $\displaystyle\frac{\sigma(n)}{n} = \displaystyle\sum_{d \mid n}{\frac{1}{d}}$
\end{lemm}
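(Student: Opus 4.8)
\section*{Proof proposal for Lemma \ref{lemma1}}

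The plan is to exploit the familiar pairing of divisors: if $d$ runs over the positive divisors of $n$, then so does $n/d$, and this correspondence is a bijection. First I would observe that the map $d \mapsto n/d$ sends each positive divisor of $n$ to a positive divisor of $n$ (since $d \mid n$ means $n = d\cdot(n/d)$ with $n/d$ a positive integer, hence $n/d \mid n$), and that it is its own inverse, so it permutes the set $\{\,d : d \mid n\,\}$. Consequently, summing any expression indexed by the divisors of $n$ is unchanged if we replace $d$ by $n/d$ throughout.

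Next I would apply this reindexing to the definition of $\sigma$ from Definition \ref{definition5}: writing $\sigma(n) = \displaystyle\sum_{d \mid n} d$ and substituting $d \mapsto n/d$ yields $\sigma(n) = \displaystyle\sum_{d \mid n} \frac{n}{d}$. Since $n$ is constant with respect to the summation index, it factors out, giving $\sigma(n) = n \displaystyle\sum_{d \mid n} \frac{1}{d}$. Dividing both sides by the positive integer $n$ produces $\displaystyle\frac{\sigma(n)}{n} = \sum_{d \mid n} \frac{1}{d}$, which is exactly the claimed identity; recalling Definition \ref{definition9}, the left-hand side is precisely $I(n)$.

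There is essentially no obstacle here: the only point that warrants care is the justification that $d \mapsto n/d$ is a genuine bijection of the divisor set onto itself rather than merely a map into some larger set, and that each $n/d$ is an integer — both of which follow directly from Definition \ref{definition1}. A small remark worth including is that the identity makes transparent why $I(n)$ is often called the sum of reciprocals of the divisors of $n$, matching the description of $\sigma_{-1}(n)$ in the list of notations, and that it immediately gives $I(n) > 1$ for $n > 1$ (the terms $d = 1$ and $d = n$ already contribute $1 + \tfrac{1}{n}$).
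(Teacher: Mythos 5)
Your proof is correct and is essentially the same as the paper's: both rewrite $\sigma(n)=\sum_{d\mid n}d$ as $\sum_{d\mid n}\frac{n}{d}$ via the divisor-pairing bijection $d\mapsto n/d$ and then divide by $n$. Your version merely spells out the bijection justification that the paper leaves implicit.
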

\begin{proof} 
Straightforward: $\displaystyle\frac{\sigma(n)}{n} = {\displaystyle\frac{1}{n}}{\sum_{d \mid n}{d}} = {\displaystyle\frac{1}{n}}{\sum_{d \mid n}{\frac{n}{d}}} = \displaystyle\sum_{d \mid n}{\frac{1}{d}}$.
\end{proof}
\begin{lemm}\label{lemma2} If $m \mid n$ then $\displaystyle\frac{\sigma(m)}{m} \leq \displaystyle\frac{\sigma(n)}{n}$, with equality occurring if and only if $m = n$.
\end{lemm}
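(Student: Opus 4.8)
The plan is to reduce this to Lemma \ref{lemma1}, which rewrites the abundancy index as a sum of reciprocals of divisors. First I would record the elementary fact that underlies everything: if $m \mid n$, then every positive divisor of $m$ is also a positive divisor of $n$, so that
\[
\{\, d \in {\mathbb Z}^+ : d \mid m \,\} \subseteq \{\, d \in {\mathbb Z}^+ : d \mid n \,\}.
\]
This is just transitivity of divisibility (Theorem \ref{theorem2}): $d \mid m$ together with $m \mid n$ yields $d \mid n$.

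Next I would invoke Lemma \ref{lemma1} on both $m$ and $n$ to write $\sigma(m)/m = \sum_{d \mid m} 1/d$ and $\sigma(n)/n = \sum_{d \mid n} 1/d$. Since the first sum ranges over a subset of the index set of the second and every summand $1/d$ is strictly positive, the inequality $\sigma(m)/m \leq \sigma(n)/n$ follows at once. The forward direction of the equality clause, namely $m = n \Rightarrow \sigma(m)/m = \sigma(n)/n$, is immediate since the two expressions are then identical.

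For the reverse direction I would argue by contrapositive: assume $m \mid n$ with $m \neq n$. By the divisibility bound in Theorem \ref{theorem2} (if $m \mid n$ and $n \neq 0$ then $|m| \le |n|$), combined with $m \neq n$ and positivity, we get $m < n$, hence $n \nmid m$; thus $n$ is a divisor of $n$ that is absent from the divisor list of $m$. Consequently
\[
\frac{\sigma(n)}{n} - \frac{\sigma(m)}{m} \;\geq\; \frac{1}{n} \;>\; 0,
\]
so the inequality is strict whenever $m \neq n$, which completes the proof. The argument is entirely routine; the only step meriting a second's attention is this last one — confirming that $m \mid n$ and $m \neq n$ genuinely force the divisor set of $m$ to be a \emph{proper} subset of that of $n$, which is precisely the observation that $n \mid n$ but $n \nmid m$.
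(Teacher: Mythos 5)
Your proof is correct. The paper states this lemma without proof, but the route you take --- writing $\sigma(m)/m = \sum_{d \mid m} 1/d$ via Lemma \ref{lemma1}, observing that the divisors of $m$ form a subset of those of $n$, and extracting strictness from the fact that $n \mid n$ while $n \nmid m$ whenever $m$ is a proper divisor --- is exactly the argument the paper's setup (Lemma \ref{lemma1} immediately preceding) is designed to support, and every step, including the lower bound of $1/n$ on the gap, checks out.
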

\begin{paragraph}\indent Essentially, Lemma \ref{lemma2} says that any (nontrivial) multiple of a perfect number is abundant and every (nontrivial) divisor of a perfect number is deficient.
\end{paragraph}
\begin{lemm}\label{lemma3} The abundancy index takes on arbitrarily large values.
\end{lemm}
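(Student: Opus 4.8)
The plan is to exploit Lemma \ref{lemma1}, which rewrites the abundancy index as the sum of the reciprocals of the divisors, and then to choose $n$ so that this sum contains an initial segment of the harmonic series. Concretely, for a positive integer $k$ set $n = k!$. Every integer $j$ with $1 \leq j \leq k$ divides $k!$, so by Lemma \ref{lemma1},
\[
I(k!) = \sum_{d \mid k!} \frac{1}{d} \;\geq\; \sum_{j=1}^{k} \frac{1}{j} = H_k,
\]
the $k$-th partial sum of the harmonic series. Since $H_k \to +\infty$ as $k \to \infty$, for any prescribed bound $M > 0$ one can find $k$ with $H_k > M$, whence $I(k!) > M$. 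This already shows that $I$ is unbounded above, which is the assertion.

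First I would recall (or cite) the divergence of the harmonic series --- the only external ingredient needed; everything else is immediate from Lemma \ref{lemma1}. If one prefers to avoid factorials, Lemma \ref{lemma2} permits replacing $k!$ by the much smaller $L_k = \operatorname{lcm}(1, 2, \ldots, k)$: each $j \leq k$ still divides $L_k$, so the same chain of inequalities gives $I(L_k) \geq H_k$. A further alternative takes $n$ to be the product of the first $k$ primes, so that $I(n) = \prod_{i=1}^{k}\left(1 + \frac{1}{p_i}\right)$; since $\log I(n) = \sum_{i=1}^{k} \log\!\left(1 + \frac{1}{p_i}\right)$ is bounded below by a positive constant times $\sum_{i=1}^{k} \frac{1}{p_i}$, the divergence of $\sum 1/p$ again forces $I(n) \to \infty$. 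This last variant, however, rests on a deeper fact than the factorial argument does, so I would present the factorial (or $\operatorname{lcm}$) version as the main proof.

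There is essentially no serious obstacle: the statement is a direct consequence of Lemma \ref{lemma1} together with the unboundedness of the harmonic partial sums. The one point requiring care is the logical form of the conclusion --- ``takes on arbitrarily large values'' means that for every $M$ there exists some $n$ (not a single fixed $n$) with $I(n) > M$ --- and the explicit family $\{\,k!\,\}_{k \geq 1}$ (equivalently $\{\,L_k\,\}_{k \geq 1}$) witnesses exactly this.
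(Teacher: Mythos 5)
Your main argument is exactly the paper's proof: take $n = k!$, apply Lemma \ref{lemma1} to write $I(k!)$ as the sum of reciprocals of divisors, bound it below by the $k$-th harmonic partial sum, and invoke the divergence of the harmonic series. The additional variants you sketch (using $\operatorname{lcm}(1,\ldots,k)$ or the product of the first $k$ primes) are correct but not needed; the factorial version you lead with is precisely what the paper does.
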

\begin{proof} Consider the number $n!$. By Lemma \ref{lemma1}, we have $\displaystyle\frac{\displaystyle\sigma(n!)}{n!} = \displaystyle\sum_{d \mid n!}{\frac{1}{d}} \geq \displaystyle\sum_{i=1}^n {\frac{1}{i}}$. Since the last quantity is a partial sum of a harmonic series which diverges to infinity, $\displaystyle\frac{\displaystyle\sigma(n!)}{n!}$ can be made as large as we please.
\end{proof}
\begin{lemm}\label{lemma4} For any prime power $P^{\alpha}$, the following inequalities hold:
\begin{center}
$1 < \displaystyle\frac{P + 1}{P} < \displaystyle\frac{\displaystyle\sigma(P^{\alpha})}{P^{\alpha}} < \displaystyle\frac{P}{P - 1}$.
\end{center}
\end{lemm}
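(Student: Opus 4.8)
The claim is the chain of inequalities $1 < \frac{P+1}{P} < \frac{\sigma(P^\alpha)}{P^\alpha} < \frac{P}{P-1}$ for any prime power $P^\alpha$ (here $\alpha \geq 1$). The plan is to handle the three inequalities separately, each by a direct computation using the closed form $\sigma(P^\alpha) = \frac{P^{\alpha+1}-1}{P-1} = \sum_{i=0}^\alpha P^i$ from Corollary \ref{corollary4}. The leftmost inequality $1 < \frac{P+1}{P}$ is immediate since $P$ is a positive integer, so $\frac{P+1}{P} = 1 + \frac{1}{P} > 1$.

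For the second inequality $\frac{P+1}{P} < \frac{\sigma(P^\alpha)}{P^\alpha}$, I would use Lemma \ref{lemma1}, which gives $\frac{\sigma(P^\alpha)}{P^\alpha} = \sum_{d \mid P^\alpha} \frac{1}{d} = 1 + \frac{1}{P} + \frac{1}{P^2} + \cdots + \frac{1}{P^\alpha}$. Comparing term by term with $\frac{P+1}{P} = 1 + \frac{1}{P}$, the difference is $\frac{1}{P^2} + \cdots + \frac{1}{P^\alpha}$, which is a sum of strictly positive terms whenever $\alpha \geq 2$. When $\alpha = 1$ the two quantities are equal, so I should state the lemma for $\alpha \geq 2$, or else note that the strict inequality as written in the lemma presupposes $\alpha \geq 2$ (indeed the Euler prime of an OPN has exponent $k \equiv 1 \pmod 4$, so $k \geq 1$, but the relevant application likely has $\alpha \geq 2$; in any case this is the one point where the exact hypothesis on $\alpha$ must be pinned down).

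For the rightmost inequality $\frac{\sigma(P^\alpha)}{P^\alpha} < \frac{P}{P-1}$, the cleanest route is again via Lemma \ref{lemma1}: $\frac{\sigma(P^\alpha)}{P^\alpha} = \sum_{i=0}^\alpha P^{-i}$ is a partial sum of the geometric series $\sum_{i=0}^\infty P^{-i} = \frac{1}{1 - 1/P} = \frac{P}{P-1}$, and since every omitted term $P^{-i}$ for $i > \alpha$ is strictly positive, the partial sum is strictly less than the full sum. Alternatively one can clear denominators: $\frac{\sigma(P^\alpha)}{P^\alpha} < \frac{P}{P-1}$ is equivalent to $(P-1)(P^{\alpha+1}-1) < P^{\alpha+1}(P-1)$, i.e.\ to $P^{\alpha+1} - 1 < P^{\alpha+1}$, which is trivially true.

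I do not anticipate a genuine obstacle here; the only point requiring care is the boundary case $\alpha = 1$ in the second inequality, where equality rather than strict inequality holds, so the statement should be read with $\alpha \geq 2$ (or the middle strict inequality weakened to $\leq$ for $\alpha = 1$). The geometric-series viewpoint via Lemma \ref{lemma1} makes all three bounds transparent and is the approach I would present.
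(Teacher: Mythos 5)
Your proof is correct and takes essentially the same route as the paper, which disposes of the lemma in one line by saying it ``follows directly from Corollary \ref{corollary4}'' --- i.e., from the closed form $\sigma(P^{\alpha}) = \frac{P^{\alpha+1}-1}{P-1} = \sum_{i=0}^{\alpha}P^{i}$, which is exactly the computation you flesh out via the partial geometric series. Your observation that the middle inequality degenerates to equality when $\alpha = 1$ is a correct catch that the paper silently glosses over at this point but tacitly concedes later, where Lemma \ref{IndexInequalities1} states the bound as $\frac{p+1}{p} \leq \frac{\sigma(p^k)}{p^k}$ with a non-strict inequality.
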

\begin{paragraph}\indent The proof of Lemma \ref{lemma4} follows directly from Corollary \ref{corollary4}.
\end{paragraph}
\begin{paragraph}\indent Certainly, we can find abundancy indices arbitrarily close to $1$ because \\ $I(p) = \displaystyle\frac{p + 1}{p}$ for all primes $p$. By Lemma \ref{lemma3}, and since the abundancy index of a positive integer is a rational number, one would then desire to know the ``distribution" of these ratios in the interval $(1, \infty)$. The next few results summarize much of what is known about the ``distribution" of these ratios.
\end{paragraph}
\begin{thm}\label{theorem6} (Laatsch) The set of abundancy indices $I(n)$ for $n > 1$ is dense in the interval $(1, \infty)$.
\end{thm}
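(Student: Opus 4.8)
The plan is to show that every real $r>1$ lies in the closure of $\{I(n):n>1\}$; since each $I(n)$ with $n>1$ already satisfies $1<I(n)<\infty$, this is precisely density in $(1,\infty)$. So I would fix $r>1$ and $\epsilon>0$ and aim to produce an integer $n$ with $r-\epsilon<I(n)<r$. The only algebraic fact needed is that $I$ is multiplicative with $I(p)=\frac{p+1}{p}$, so that for distinct primes $p_1<\dots<p_j$ one has $I(p_1\cdots p_j)=\prod_{i=1}^{j}\bigl(1+\tfrac1{p_i}\bigr)$; multiplying the current value by one more fresh prime $p$ scales it by $1+\tfrac1p$, which can be made as close to $1$ as we please by taking $p$ large.

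First I would run a greedy construction. Set $n_0=1$, so $I(n_0)=1<r$. Given $n_j=p_1\cdots p_j$ with $I(n_j)<r$, let $p_{j+1}$ be the smallest prime larger than $p_j$ with $I(n_j)\bigl(1+\tfrac1{p_{j+1}}\bigr)<r$, and put $n_{j+1}=n_j p_{j+1}$. Such a prime exists: since $I(n_j)<r$ we have $\frac{r}{I(n_j)}>1$, so every sufficiently large prime satisfies the inequality and infinitely many primes exceed $p_j$. Hence the process never terminates, the $p_j$ strictly increase, and $I(n_j)$ is a strictly increasing sequence bounded above by $r$; write $L=\lim_j I(n_j)\le r$.

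The heart of the argument is to rule out $L<r$, and here the one external input is that $\sum_p \tfrac1p$ diverges (equivalently $\prod_p(1+\tfrac1p)=\infty$), which can itself be extracted from the divergence of the harmonic series already used in the proof of Lemma \ref{lemma3}. I would split into cases according to whether the construction ``skips'' primes. If $p_{j+1}$ equals the prime immediately following $p_j$ for all large $j$, then the $p_j$ are eventually consecutive primes, so $\prod_{i>j_0}\bigl(1+\tfrac1{p_i}\bigr)$ diverges and $I(n_j)\to\infty$, contradicting $I(n_j)<r$. Otherwise there are infinitely many steps $j$ at which some prime $q$ with $p_j<q<p_{j+1}$ is passed over; minimality of $p_{j+1}$ then forces $I(n_j)\bigl(1+\tfrac1q\bigr)\ge r$, hence $I(n_j)\ge\frac{rq}{q+1}>\frac{r p_j}{p_j+1}$. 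Since $p_j\to\infty$, along this infinite set of steps $I(n_j)$ is squeezed up to $r$, so $L\ge r$ and therefore $L=r$. Consequently $I(n_j)\to r$, and $I(n_j)\in(r-\epsilon,r)\subseteq(r-\epsilon,r+\epsilon)$ for all large $j$, which yields the density claim.

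The step I expect to be the real obstacle is exactly this convergence argument — proving that the greedy sequence cannot stall at some $L<r$. Existence of the next prime at each stage, the monotonicity and boundedness of $I(n_j)$, and the product formula for $I$ on a squarefree number are all routine; the substance lies in the dichotomy ``finitely many skips forces divergence, infinitely many skips pins $I(n_j)$ against $r$,'' resting on the classical divergence of $\sum_p 1/p$.
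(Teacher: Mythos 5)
Your argument is correct, but there is nothing in the paper to compare it against: Theorem \ref{theorem6} is stated there with attribution to Laatsch and no proof is supplied (it is later re-listed among the ``known results'' in Lemma \ref{lemma13}, again without proof). Your greedy construction is sound: the next admissible prime always exists because $r/I(n_j)>1$ while $1+\tfrac1p\to1$; the sequence $I(n_j)=\prod_i\bigl(1+\tfrac1{p_i}\bigr)$ is increasing and bounded by $r$; and the dichotomy is exhaustive and each horn works --- finitely many skips would make the tail a product over consecutive primes, which diverges, contradicting the bound $r$, while infinitely many skips give $I(n_j)\ge rq/(q+1)>rp_j/(p_j+1)\to r$ along a subsequence, pinning the limit at $r$. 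The one quibble is your sourcing of the key input: the divergence of $\sum_p\tfrac1p$ (equivalently of $\prod_p(1+\tfrac1p)$) does \emph{not} follow merely from the divergence of the harmonic series as used in Lemma \ref{lemma3} --- that lemma only needs $\sum_{i\le n}\tfrac1i\to\infty$, whereas $\sum_p\tfrac1p=\infty$ is Euler's strictly stronger theorem and needs its own argument (e.g.\ via the Euler product $\prod_p(1-\tfrac1p)^{-1}\ge\sum_{n\le N}\tfrac1n$). Since that is a classical fact, invoking it is perfectly legitimate; just cite it as such rather than as a corollary of Lemma \ref{lemma3}. As a point of comparison, the ``simpler'' proof of Ryan cited in the bibliography also ultimately rests on the same divergence, so your route is in line with the standard ones.
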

\begin{paragraph}\indent However, not all of the rationals from the interval $(1, \infty)$ are abundancy indices of some integer.  This is due to the following lemma from Weiner:
\end{paragraph}
\begin{lemm}\label{lemma5} (Weiner) If $\gcd(m, n) = 1$ and $n < m < \sigma(n)$, then $\frac{m}{n}$ is not the abundancy index of any integer.
\end{lemm}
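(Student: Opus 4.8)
The plan is to argue by contradiction, exploiting the coprimality hypothesis together with the monotonicity of the abundancy index recorded in Lemma \ref{lemma2}. First I would suppose that $\frac{m}{n} = I(N) = \frac{\sigma(N)}{N}$ for some positive integer $N$. The first key step is to observe that, since $\gcd(m, n) = 1$, the fraction $\frac{m}{n}$ is already in lowest terms; writing $g = \gcd(\sigma(N), N)$, the equality $\frac{\sigma(N)/g}{N/g} = \frac{m}{n}$ between two fractions in lowest terms forces $N/g = n$, and in particular $n \mid N$.

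Having established $n \mid N$, the second step is an immediate appeal to Lemma \ref{lemma2}: from $n \mid N$ we get $\frac{\sigma(n)}{n} \leq \frac{\sigma(N)}{N} = \frac{m}{n}$, and multiplying through by $n > 0$ yields $\sigma(n) \leq m$. This contradicts the hypothesis $m < \sigma(n)$, which finishes the argument. The remaining hypothesis $n < m$ is not actually needed to force the contradiction, but it is what makes the statement a clean one: since $I(N) \geq 1$ for every positive integer $N$, any fraction $\frac{m}{n} < 1$ is automatically not an abundancy index, so the genuinely interesting content of the lemma concerns ratios exceeding $1$.

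I do not anticipate a serious obstacle here; the only point requiring care is the passage from $I(N) = \frac{m}{n}$ to $n \mid N$, which rests on the uniqueness of the lowest-terms representation of a rational number rather than on any deeper fact, and everything else reduces to Lemma \ref{lemma2}. It is worth remarking that this lemma is a genuine source of abundancy outlaws precisely because $n < \sigma(n)$ whenever $n > 1$, so the open interval $(n, \sigma(n))$ is nonempty and, for suitable $n$, contains integers coprime to $n$; each such pair $(m,n)$ then furnishes an explicit rational in $(1,\infty)$ lying outside the image of $I$, complementing Laatsch's density result (Theorem \ref{theorem6}).
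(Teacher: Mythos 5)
Your proposal is correct and follows essentially the same route as the paper's proof: both arguments use $\gcd(m,n)=1$ to conclude that $n$ divides any purported solution $N$ (the paper via cross-multiplying to $Nm = n\sigma(N)$ and applying Euclid's lemma, you via uniqueness of the lowest-terms representation, which is the same fact), and then both invoke Lemma \ref{lemma2} to get $\sigma(n) \leq m$, contradicting $m < \sigma(n)$. Your closing observations about the role of the hypothesis $n < m$ and the source of outlaws are accurate but not needed for the proof itself.
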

\begin{proof} Suppose $\displaystyle\frac{m}{n} = \displaystyle\frac{\sigma(k)}{k}$ for some integer $k$. Then $km = n \sigma(k)$ which implies that $n \mid km$, and so $n \mid k$ since $m$ and $n$ are coprime. Hence, by Lemma \ref{lemma2} we have
\begin{displaymath}
\frac{\sigma(n)}{n} \leq \frac{\sigma(k)}{k} = \frac{m}{n}
\end{displaymath}
which yields $\sigma(n) \leq m$ - a contradiction to the initial assumption that $m < \sigma(n)$.
\end{proof}
\begin{paragraph}\indent It is now natural to define the notion of an \emph{abundancy outlaw}.
\end{paragraph}
\begin{defn}\label{definition11} A rational number greater than $1$ is said to be an \emph{abundancy outlaw} if it fails to be in the range of the function $I(n)$.
\end{defn}
\begin{paragraph}\indent One can use the previous lemmas to establish an equally interesting theorem about the distribution of abundancy outlaws.
\end{paragraph}
\begin{thm}\label{theorem7} (Weiner, Ryan) The set of abundancy outlaws is dense in the interval $(1, \infty)$.
\end{thm}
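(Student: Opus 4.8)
The plan is to derive the theorem entirely from Weiner's Lemma~\ref{lemma5}. That lemma says that if $\gcd(m,n)=1$ and $n<m<\sigma(n)$, then $m/n$ is not in the range of $I$; since such an $m/n$ exceeds $1$, it is an abundancy outlaw. Rewriting the condition as $1<\frac{m}{n}<I(n)$, it therefore suffices to prove: every open interval $(\alpha,\beta)$ with $1<\alpha<\beta$ contains a fraction $\frac{m}{n}$ in lowest terms with $\alpha<\frac{m}{n}<\beta$ and $\beta\le I(n)$. (Every nonempty open subset of $(1,\infty)$ contains such an interval, so this gives density.) So I would fix one such interval $(\alpha,\beta)$ and construct a suitable $n$ and $m$.

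For the \emph{denominator}, I would first pick any $n_0$ with $I(n_0)>\beta$; this is possible because $I$ is unbounded by Lemma~\ref{lemma3} (concretely one may take a primorial $n_0=2\cdot3\cdot5\cdots p_k$ with $k$ large, since $I(n_0)\ge\prod_{i=1}^{k}\frac{p_i+1}{p_i}\to\infty$). By Lemma~\ref{lemma2}, every multiple $n$ of $n_0$ then has $I(n)\ge I(n_0)>\beta$, so taking $n=n_0 M$ with $M$ arbitrarily large produces denominators $n$ that are as large as we please while still satisfying $I(n)>\beta$.

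For the \emph{numerator}, note that any integer $m$ with $\alpha n<m<\beta n$ automatically satisfies $n<\alpha n<m$ (because $\alpha>1$) and $m<\beta n<I(n)\,n=\sigma(n)$; so it only remains to find such an $m$ coprime to $n$. The quickest route is to take $m$ to be a prime $q$ with $\alpha n<q<\beta n$: since $q>\alpha n>n$, $q$ exceeds every prime factor of $n$ and is hence coprime to $n$, and for $n$ large such a prime exists because $\beta/\alpha$ is a fixed constant $>1$, so the number of primes in $(\alpha n,\beta n)$ is $\sim(\beta-\alpha)n/\log n\to\infty$. If one prefers to avoid the prime number theorem, fix $k$ with $\prod_{i=1}^{k}\frac{p_i}{p_i-1}>\beta$, take $n=(p_1p_2\cdots p_k)^{j}$ with $j\to\infty$ (so $I(n)\to\prod_{i=1}^{k}\frac{p_i}{p_i-1}>\beta$ while $\omega(n)=k$ stays bounded), and count the integers in $(\alpha n,\beta n)$ coprime to $n$ by inclusion--exclusion: this count equals $(\beta-\alpha)n\prod_{i=1}^{k}(1-1/p_i)+O(2^{k})$, which tends to infinity. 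Either way we obtain $m$ with $\gcd(m,n)=1$ and $\alpha<\frac{m}{n}<\beta$, whence Lemma~\ref{lemma5} shows $\frac{m}{n}$ is an abundancy outlaw lying in $(\alpha,\beta)$. Since $(\alpha,\beta)$ was arbitrary, the outlaws are dense in $(1,\infty)$, which alongside Theorem~\ref{theorem6} exhibits abundancy indices and outlaws densely interleaved.

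The only genuinely delicate point is the last one: locating a numerator coprime to the highly composite denominator $n$ inside the narrow window $(\alpha n,\beta n)$. Everything hinges on that window being long ($(\beta-\alpha)n\to\infty$) relative to the obstruction to coprimality; the prime-in-an-interval route pays for this with the prime number theorem, while the elementary route pays for it by keeping $\omega(n)$ bounded so that the inclusion--exclusion error $O(2^{\omega(n)})$ cannot swamp the main term $(\beta-\alpha)n\cdot\phi(n)/n$. By contrast, the two remaining ingredients---producing an arbitrarily large $n$ with $I(n)>\beta$, and checking $n<m<\sigma(n)$---are immediate from Lemmas~\ref{lemma2} and \ref{lemma3}.
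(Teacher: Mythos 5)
Your proof is correct. The paper itself states this theorem without proof, attributing it to Weiner and Ryan and remarking only that ``one can use the previous lemmas'' to establish it; your argument is a complete instantiation of exactly that plan, deriving density from Lemma \ref{lemma5} together with the unboundedness of $I$ (Lemma \ref{lemma3}) and its monotonicity under divisibility (Lemma \ref{lemma2}). Both of your routes to a numerator coprime to $n$ in the window $(\alpha n,\beta n)$ --- a prime exceeding $n$, or the inclusion--exclusion count with $\omega(n)$ held fixed --- are sound, so you have supplied the proof the paper omits.
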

\begin{paragraph}\indent Upon inspecting the results of Theorems \ref{theorem6} and \ref{theorem7}, it appears that the scenario for abundancy indices and outlaws is both complex and interesting. We shall take a closer look into the nature of abundancy outlaws in Chapter $4$.
\end{paragraph}
\section{Even Perfect Numbers}
\begin{paragraph}\indent The Greek mathematician \emph{Euclid} was the first to categorize the perfect numbers. He noticed that the first four perfect numbers have the very specific forms:
\begin{center}
$6 = 2^1 (1 + 2) = 2\cdot3$ \\
$28 = 2^2 (1 + 2 + 2^2) = 4\cdot7$ \\
$496 = 2^4 (1 + 2 + 2^2 + 2^3 + 2^4) = 16\cdot31$ \\
$8128 = 2^6 (1 + 2 + 2^2 + \ldots + 2^6) = 64\cdot127$.
\end{center}
\end{paragraph}
\begin{paragraph}\indent Notice that the numbers $90 = 2^3 (1 + 2 + 2^2 + 2^3) = 8\cdot15$ and \\
$2016 = 2^5 (1 + 2 + 2^2 + \ldots + 2^5) = 32\cdot63$ are missing from this list. Euclid pointed out that this is because $15 = 3\cdot5$ and $63 = 3^2\cdot7$ are both composite, whereas the numbers $3$, $7$, $31$ and $127$ are all prime.
\end{paragraph}
\begin{paragraph}\indent According to Book IX, proposition $36$ of Euclid's \emph{Elements}: ``\emph{If as many numbers as we please beginning from a unit be set out continuously in double proportion, until the sum of all becomes a prime, and if the sum multiplied into the last make some number, the product will be perfect.}" \cite{O41}
\end{paragraph}
\begin{paragraph}\indent 	This observation is stated in a slightly more compact form as follows:
\end{paragraph}
\begin{thm}\label{theorem8} (Euclid) If $2^n - 1$ is prime, then $N = 2^{n-1}(2^n - 1)$ is perfect.
\end{thm}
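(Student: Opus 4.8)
The plan is to use the multiplicativity of $\sigma$ together with the closed form for $\sigma$ of a prime power given in Corollary~\ref{corollary4}. First I would set $q = 2^n - 1$ and observe that, by hypothesis, $q$ is an odd prime, so $\gcd(2^{n-1}, q) = 1$. This coprimality is exactly what licenses the use of Theorem~\ref{theorem4} (the multiplicativity of $\sigma$) in the form $\sigma(N) = \sigma(2^{n-1})\,\sigma(q)$.

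Next I would compute each factor separately. For the power of $2$, Corollary~\ref{corollary4} gives $\sigma(2^{n-1}) = \sum_{i=0}^{n-1} 2^i = 2^n - 1 = q$. For the prime $q$, since a prime $P$ has $\sigma(P) = P + 1$, we get $\sigma(q) = q + 1 = 2^n$. Multiplying, $\sigma(N) = q \cdot 2^n = (2^n - 1)\cdot 2^n = 2 \cdot 2^{n-1}(2^n - 1) = 2N$, and by Definition~\ref{perfect} this is precisely the statement that $N$ is perfect.

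There is essentially no hard step here; the only thing to be careful about is the bookkeeping on exponents in the geometric sum $\sigma(2^{n-1}) = 2^{(n-1)+1} - 1 = 2^n - 1$, and the verification that the two prime-power factors are genuinely relatively prime (which is immediate since $2^n - 1$ is odd). If anything deserves a remark, it is that the primality of $2^n - 1$ is used only to pin down $\sigma(2^n - 1) = 2^n$; the identity $\sigma(2^{n-1}) = 2^n - 1$ holds unconditionally, which is why the converse direction (Euler's theorem, Theorem~\ref{theorem1}) requires genuinely more work than this one.
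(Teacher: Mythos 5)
Your proposal is correct and follows essentially the same route as the paper's proof: both decompose $\sigma(N)=\sigma(2^{n-1})\sigma(2^n-1)$ by multiplicativity, evaluate $\sigma(2^{n-1})=2^n-1$ via the geometric sum and $\sigma(2^n-1)=2^n$ via primality, and multiply to obtain $2N$. No differences worth noting.
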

\begin{proof} Clearly the only prime factors of $N$ are $2^n - 1$ and $2$. Since $2^n - 1$ occurs as a single prime, we have simply that $\sigma(2^n - 1) = 1 + (2^n - 1) = 2^n$, and thus
\begin{center}
$\sigma(N) = \sigma(2^{n-1})\sigma(2^n - 1) = (\frac{2^n - 1}{2 - 1}) 2^n = 2^n (2^n - 1) = 2N$.
\end{center}
Therefore, $N$ is perfect.
\end{proof}
\begin{paragraph}\indent The task of finding perfect numbers, then, is intimately linked with finding primes of the form $2^n - 1$. Such numbers are referred to as Mersenne primes, after the $17$th-century monk Marin Mersenne, a contemporary of Descartes, Fermat, and Pascal. He investigated these unique primes as early as $1644$. Mersenne knew that $2^n - 1$ is prime for $n = 2$, $3$, $5$, $11$, $13$, $17$, and $19$ - and, more brilliantly, conjectured the cases $n = 31$, $67$, $127$, $257$. It took almost two hundred years to test these numbers.
\end{paragraph}
\begin{paragraph}\indent There is one important criterion used to determine the primality of Mersenne numbers:
\end{paragraph}
\begin{lemm}\label{lemma6} (Cataldi-Fermat)
If $2^n - 1$ is prime, then $n$ itself is prime.
\end{lemm}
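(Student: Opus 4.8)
The plan is to prove the contrapositive: if $n$ is composite, then $2^n - 1$ is composite, which immediately yields that primality of $2^n - 1$ forces $n$ to be prime. (One should also dispose of the trivial edge case $n = 1$, where $2^1 - 1 = 1$ is not prime, so the hypothesis is vacuously consistent with the conclusion.)

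First I would assume $n$ is composite and write $n = ab$ with $1 < a < n$ and $1 < b < n$. Then I would invoke the elementary polynomial factorization $x^b - 1 = (x - 1)(x^{b-1} + x^{b-2} + \cdots + x + 1)$, valid for any integer $b \geq 1$. Substituting $x = 2^a$ gives
\begin{center}
$2^n - 1 = 2^{ab} - 1 = (2^a)^b - 1 = (2^a - 1)\left(\displaystyle\sum_{j=0}^{b-1} 2^{aj}\right)$.
\end{center}
Next I would check that this exhibits $2^n - 1$ as a product of two integers each strictly between $1$ and $2^n - 1$: since $a > 1$ we have $2^a - 1 \geq 3 > 1$, and since $a < n$ we have $2^a - 1 < 2^n - 1$; consequently the cofactor $\sum_{j=0}^{b-1} 2^{aj}$ is also a proper divisor exceeding $1$ (indeed it is at least $1 + 2^a > 1$). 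Hence $2^n - 1$ is composite by Definition \ref{definition4}, completing the contrapositive and therefore the lemma. This is essentially the same observation already noted in the Introduction (``if $C$ is composite, then $2^C - 1$ is also composite'').

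There is really no main obstacle here; the only points requiring a word of care are (i) making sure the factorization identity is stated for integer $b \geq 1$ rather than assuming familiarity with the geometric-series formula over $\mathbb{Q}$, and (ii) verifying the two factors are \emph{proper} (neither equals $1$ nor $2^n - 1$), which is where the strict inequalities $1 < a < n$ are used. I would present the argument in this order: edge case, contrapositive setup, factorization, properness of factors, conclusion.
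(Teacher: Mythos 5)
Your proposal is correct and follows essentially the same route as the paper's proof: factor $2^{ab}-1 = (2^a-1)\bigl(\sum_{j=0}^{b-1}2^{aj}\bigr)$ for a composite exponent and observe that $2^a-1$ is a nontrivial divisor. The only difference is cosmetic (contrapositive versus contradiction) plus your added care about the $n=1$ edge case and the properness of the factors, which the paper leaves implicit.
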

\begin{proof}
Consider the factorization of $x^n - 1 = (x - 1)(x^{n - 1} + \ldots + x + 1)$. Suppose $n = rs$, where $r, s > 1$. Then $2^n - 1 = (2^r)^s - 1 = (2^r - 1)((2^r)^{s - 1} + \ldots + 2^r + 1)$, so that $(2^r - 1) \mid (2^n - 1)$ which is prime, a contradiction.
\end{proof}
\begin{paragraph}\indent 	Note that the converse of Lemma \ref{lemma6} is not true - the number $2^{11} - 1$ which is equal to $2047 = 23\cdot89$ is composite, yet $11$ is prime, for instance.
\end{paragraph}
\begin{paragraph}\indent 	Should all perfect numbers be of Euclid's type?  Leonard Euler, in a posthumous paper, proved that every \emph{even} perfect number is of this type. \cite{M37}
\end{paragraph}
\begin{thm}\label{theorem9} (Euler) If $N$ is an even perfect number, then $N$ can be written in the form $N = 2^{n-1}(2^n - 1)$, where $2^n - 1$ is prime.
\end{thm}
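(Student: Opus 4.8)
The plan is to reduce the statement to the structure of the odd part of $N$ by brute multiplicativity, then squeeze that odd part with a divisor-counting argument. Since $N$ is even, first write $N = 2^{k}q$ with $k \geq 1$ and $q$ odd. Because $\sigma$ is multiplicative (Theorem \ref{theorem4}) and $\gcd(2^{k}, q) = 1$, Corollary \ref{corollary4} gives $\sigma(N) = \sigma(2^{k})\sigma(q) = (2^{k+1} - 1)\sigma(q)$; combining this with perfection (Definition \ref{perfect}), namely $\sigma(N) = 2N = 2^{k+1}q$, yields
\[
(2^{k+1} - 1)\,\sigma(q) = 2^{k+1} q .
\]

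Next I would exploit the coprimality $\gcd(2^{k+1} - 1,\, 2^{k+1}) = 1$: it forces $2^{k+1} \mid \sigma(q)$, so we may write $\sigma(q) = 2^{k+1} m$ for some positive integer $m$. Substituting back and cancelling $2^{k+1}$ gives $q = (2^{k+1} - 1)m$, so that $m \mid q$ and, since $k \geq 1$ makes $2^{k+1} - 1 \geq 3$, also $m < q$. Adding $m$ to both sides of $q = (2^{k+1} - 1)m$ rewrites the key relation as $\sigma(q) = q + m$; in other words, the sum of the \emph{proper} divisors of $q$ equals exactly $m$.

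The heart of the proof is then to show $m = 1$. Note first that $q \neq 1$: otherwise $N = 2^{k}$, which is deficient (Lemma \ref{lemma4}) and hence not perfect. So $q > 1$ and $1$ is a proper divisor of $q$. If $m > 1$, then $m$ is a proper divisor of $q$ distinct from $1$, so the proper divisors already contribute at least $m + 1$, contradicting $\sigma(q) - q = m$. Hence $m = 1$, which means $q$ has $1$ as its only proper divisor, i.e.\ $q$ is prime, and $q = 2^{k+1} - 1$. Putting $n = k + 1$ gives $N = 2^{n-1}(2^{n} - 1)$ with $2^{n} - 1$ prime; one may add that $n$ is itself prime by Lemma \ref{lemma6}.

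I expect the only point needing care to be the $m = 1$ step — specifically, pinning down that $q > 1$ so that $1$ genuinely serves as a second proper divisor once $m > 1$ is assumed. Everything else is routine manipulation resting on the multiplicativity of $\sigma$ and the coprimality of $2^{k+1} - 1$ with $2^{k+1}$.
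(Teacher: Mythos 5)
Your proposal is correct and follows essentially the same route as the paper: write $N=2^{k}q$ with $q$ odd, use multiplicativity to get $(2^{k+1}-1)\sigma(q)=2^{k+1}q$, extract a divisor $m$ of $q$ with $\sigma(q)=q+m$, and conclude $m=1$ by counting proper divisors. If anything, your version is slightly more careful than the paper's at the final step, since you explicitly verify $q>1$ and $m<q$ before invoking the divisor count.
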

\begin{proof}
Let $N = 2^{n - 1} m$ be perfect, where $m$ is odd; since $2$ does not divide $m$, it is relatively prime to $2^{n - 1}$, and
\begin{center}
$\sigma(N) = \sigma(2^{n - 1} m) = \sigma(2^{n - 1})\sigma(m) = (\frac{2^n - 1}{2 - 1})\sigma(m) = (2^n - 1)\sigma(m)$.
\end{center}
$N$ is perfect so $\sigma(N) = 2N = 2(2^{n - 1} m) = 2^n m$, and with the above, \\
$2^n m = (2^n - 1)\sigma(m)$. Since $2^n - 1$ is odd, $(2^n - 1) \mid m$, so we can write $m = (2^n - 1) k$. Now $(2^n - 1) \sigma(m) = 2^n (2^n - 1) k$, which implies $\sigma(m) = 2^n k = (2^n - 1) k + k = m + k$.  But $k \mid m$ so $\sigma(m) = m + k$ means $m$ has only two (2) divisors, which further implies that $k = 1$. Therefore, $\sigma(m) = m + 1$ and $m$ is prime. Since $(2^n - 1) \mid m$, $2^n - 1 = m$. Consequently, $N = 2^{n - 1}(2^n - 1)$ where $2^n - 1$ is prime.
\end{proof}
\begin{paragraph}\indent Even perfect numbers have a number of nice little properties.  We list down several of them here, and state them without proof \cite{V56}:
\end{paragraph}
\begin{itemize}
\item If $N$ is an even perfect number, then $N$ is triangular.
\item If $N = 2^{n - 1}(2^n - 1)$ is perfect then $N = 1^3 + 3^3 + \ldots + (2^{\frac{n - 1}{2}} - 1)^3$.
\item If $N = 2^{n - 1}(2^n - 1)$ is perfect and $N$ is written in base $2$, then it has $2n - 1$ digits, the first $n$ of which are unity and the last $n - 1$ are zero.
\item Every even perfect number ends in either $6$ or $8$.
\item (\emph{Wantzel}) The iterative sum of the digits (i.e. \emph{digital root}) of an even perfect number (other than 6) is one.
\end{itemize}
\begin{paragraph}\indent Today $44$ perfect numbers are known, $2^{88}(2^{89} - 1)$ being the last to be discovered by hand calculations in $1911$ (although not the largest found by hand calculations), all others being found using a computer.  In fact computers have led to a revival of interest in the discovery of Mersenne primes, and therefore of perfect numbers.  At the moment the largest known Mersenne prime is $2^{32582657} - 1$.  It was discovered in September of $2006$ and this, the 44th such prime to be discovered, contains more than $9.8$ million digits.  Worth noting is the fact that although this is the $44$th to be discovered, it may not correspond to the $44$th perfect number as not all smaller cases have been ruled out.
\end{paragraph}
\section{Odd Perfect Numbers}
\begin{paragraph}\indent The Euclid-Euler theorem from Section $2.3$ takes care of the even perfect numbers.  What about the \emph{odd perfect numbers}?
\end{paragraph}
\begin{paragraph}\indent Euler also tried to make some headway on the problem of whether odd perfect numbers existed.  He proved that any odd perfect number $N$ had to have the form
\end{paragraph}
\begin{center} $N = (4m + 1)^{4k + 1} b^2$
\end{center}
where $4m + 1$ is prime and $\gcd(4m + 1, b) = 1$.
\begin{paragraph}\indent In Section $2.3$, we have followed some of the progress of finding even perfect numbers but there were also attempts to show that an odd perfect number could not exist. The main thrust of progress here has been to show the minimum number of distinct prime factors that an odd perfect number must have.  Sylvester worked on this problem and wrote:
\end{paragraph}
\begin{center} $\ldots$ \emph{the existence of [an OPN] - its escape, so to say, from the complex \\
web of conditions which hem it in on all sides - would be little short of a miracle}.
\end{center}
(The reader is referred to Section $1.1.2$ of this thesis for a survey of the most recent conditions which an OPN must satisfy, if any exists.)
\begin{paragraph}\indent We give a proof of Euler's characterization of OPNs here \cite{M37}:
\end{paragraph}
\begin{thm}\label{theorem10} (Euler) Let $N$ be an OPN. Then the prime factorization of $N$ takes the form $N = q^{4e + 1}{p_1}^{2a_1}\cdots{p_r}^{2a_r}$, where $q \equiv 1 \pmod 4$. 
\end{thm}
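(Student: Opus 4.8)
\section*{Proof proposal}

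The plan is to exploit the defining identity $\sigma(N) = 2N$ together with the multiplicativity of $\sigma$ (Theorem~\ref{theorem4}) and the prime-power formula $\sigma(P^k) = 1 + P + \cdots + P^k$ from Corollary~\ref{corollary4}, reading everything modulo powers of $2$. Since $N$ is odd, $2N \equiv 2 \pmod 4$; so the first thing to record is that $\sigma(N)$ is divisible by $2$ but not by $4$, i.e. its $2$-adic valuation is exactly $1$.

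Next I would write the canonical factorization $N = \prod_{i=1}^{s} P_i^{k_i}$ (all $P_i$ odd) and use $\sigma(N) = \prod_{i=1}^s \sigma(P_i^{k_i})$. For an odd prime $P$, every summand of $\sigma(P^k) = 1 + P + \cdots + P^k$ is odd, so $\sigma(P^k) \equiv k+1 \pmod 2$; hence $\sigma(P^k)$ is odd precisely when $k$ is even. Because the product $\sigma(N)$ has $2$-adic valuation exactly $1$, exactly one factor $\sigma(P_i^{k_i})$ can be even (and that one must contribute valuation exactly $1$), so exactly one exponent $k_i$ is odd; call that prime $q$, with odd exponent, and note every other exponent is even. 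This already delivers the shape $N = q^{k}\prod p_j^{2a_j}$.

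The one genuinely delicate step is pinning down $q \bmod 4$ and $k \bmod 4$, and here I would refine the computation to modulus $4$. Writing $k$ for the odd exponent on $q$, we must have $\sigma(q^k) \equiv 2 \pmod 4$. If $q \equiv 3 \pmod 4$, then as $i$ runs from $0$ to $k$ the residues $q^i$ run through $1,3,1,3,\dots$, and since $k$ is odd these pair off into $(k+1)/2$ blocks, each $\equiv 1+3 \equiv 0 \pmod 4$, forcing $\sigma(q^k) \equiv 0 \pmod 4$ --- impossible. Hence $q \equiv 1 \pmod 4$, whence $q^i \equiv 1 \pmod 4$ for every $i$ and $\sigma(q^k) \equiv k+1 \pmod 4$; the requirement $\sigma(q^k) \equiv 2 \pmod 4$ then forces $k \equiv 1 \pmod 4$, i.e. $k = 4e+1$. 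Relabelling the remaining even exponents as $2a_1,\dots,2a_r$ yields exactly the claimed form. I expect the modular bookkeeping of this last paragraph --- not any deep idea --- to be the only place demanding care; the argument is otherwise Euler's classical one.
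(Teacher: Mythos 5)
Your proposal is correct and follows essentially the same route as the paper's proof: isolate the single odd exponent by noting $\sigma(P^k)\equiv k+1\pmod 2$ for odd $P$ and that $\sigma(N)=2N$ admits exactly one factor of $2$, then work modulo $4$ to rule out $q\equiv 3\pmod 4$ and force $k\equiv 1\pmod 4$. The only cosmetic difference is that you state the key fact as ``$\sigma(N)$ has $2$-adic valuation exactly $1$'' where the paper phrases it as getting ``only one factor of $2$''; the modular bookkeeping is identical.
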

\begin{proof} Let $N = {l_1}^{e_1}{l_2}^{e_2}\cdots{l_s}^{e_s}$ for some primes $l_1, l_2, \ldots, l_s$. Since $N$ is odd, all $l_i$ are odd. Finally, $\sigma(N) = 2N$. Since $\sigma(N) = \sigma({l_1}^{e_1}{l_2}^{e_2}\cdots{l_s}^{e_s}) = \sigma({l_1}^{e_1})\sigma({l_2}^{e_2})\cdots\sigma({l_s}^{e_s})$, we take a look at $\sigma(l^e) = 1 + l + l^2 + \ldots + l^e$, a sum of $e + 1$ odd numbers. This is odd only if $e$ is even. Since $\sigma({l_1}^{e_1}{l_2}^{e_2}\cdots{l_s}^{e_s}) = \sigma({l_1}^{e_1})\sigma({l_2}^{e_2})\cdots\sigma({l_s}^{e_s}) = 2 {l_1}^{e_1}{l_2}^{e_2}\cdots{l_s}^{e_s}$, we can only get \emph{one} factor of 2. So the $e_i$ are even, all except one, say $e_1$. So $N = {l_1}^{e_1} {p_1}^{2a_1}\cdots{p_r}^{2a_r}$. \\
\begin{paragraph}\indent We have $2 \mid \sigma({l_1}^{e_1})$ but $4 \nmid \sigma({l_1}^{e_1})$. Since $l_1$ is odd, $e_1$ is odd. Now, modulo 4, we see that either $l_1 \equiv 1 \pmod 4$ or $l_1 \equiv {-1} \pmod 4$. But if $l_1 \equiv {-1} \pmod 4$, then
\begin{center}
$\sigma({l_1}^{e_1}) = 1 + l_1 + {l_1}^2 + {l_1}^3 + \ldots + {l_1}^{e_1 - 1} + {l_1}^{e_1}$    \\ $\equiv 1 + (-1) + 1 + (-1) + \ldots + 1 + (-1) \equiv 0 \pmod 4$,
\end{center}
which is clearly a contradiction since $4 \nmid \sigma({l_1}^{e_1})$. Thus, $l_1 \equiv 1 \pmod 4$. Now, $\sigma({l_1}^{e_1}) = 1 + l_1 + {l_1}^2 + {l_1}^3 + \ldots + {l_1}^{e_1 - 1} + {l_1}^{e_1} \equiv 1 + 1 + 1 + 1 + \ldots + 1 + 1 \equiv {e_1 + 1} \pmod 4$. Since $e_1$ is odd, either $e_1 + 1 \equiv 0 \pmod 4$ or $e_1 + 1 \equiv 2 \pmod 4$. If $e_1 + 1 \equiv 0 \pmod 4$, then $4 \mid \sigma({l_1}^{e_1})$ which is again a contradiction. So $e_1 + 1 \equiv 2 \pmod 4$ $\Leftrightarrow$ $e_1 + 1 = 4e + 2$, that is, $e_1 = 4e + 1$.  Consequently, $N = q^{4e + 1}{p_1}^{2a_1}\cdots{p_r}^{2a_r}$, for $q \equiv 1 \pmod 4$.
\end{paragraph}
\end{proof}
\begin{paragraph}\indent We call $q$ in Theorem \ref{theorem10} the \textbf{special/Euler prime} of $N$, while $q^{4e + 1}$ will be called the \textbf{Euler's factor} of $N$.
\end{paragraph}
\begin{paragraph}\indent Interestingly, it is possible to show that no two consecutive integers can be both perfect, using the Euclid-Euler theorem on the form of even perfect numbers from Section $2.3$ and Euler's characterization of odd perfect numbers in this section.  We shall give a discussion of the proof of this interesting result in Chapter $3$.  Moreover, we shall give there the (easy) proof of the fact that an odd perfect number must be a sum of two squares.  We also give congruence conditions for the existence of odd perfect numbers in the next chapter.  For the most part, the proofs will be elementary, requiring only an intermediate grasp of college algebra and the concepts introduced in this chapter.
\end{paragraph}
\chapter{OPN Solution Attempts 1: \\ Some Old Approaches}
In the previous chapters, we derived the possible forms of even and odd perfect numbers, and also surveyed the most recent results on the conditions necessitated by the existence of OPNs. In this chapter, we introduce the reader to the flavor of the mathematical techniques used to formulate theorems about OPNs by researchers who lived prior to the $21$st century.
\begin{paragraph}\indent The following are some of the traditional attempts made by mathematicians (both amateur and professional) to prove or disprove the OPN Conjecture in the pre-$21$st century:
\end{paragraph}
\begin{itemize}
\item Increasing the lower bound for the number of distinct prime factors, $\omega(N)$, that an OPN $N$ must have;
\item Increasing the lower bound for the magnitude of the smallest possible OPN, if one exists;
\item Deriving congruence conditions for the existence of OPNs.
\end{itemize}
\begin{paragraph}\indent All these itemized approaches attempt to derive a contradiction amongst the stringent conditions that an OPN must satisfy.
\end{paragraph}
\section{Increasing the Lower Bound for $\omega(N)$}
Recall that, from Lemma \ref{lemma4}, we have the following strict inequality for the abundancy index of a prime power: $\displaystyle\frac{\sigma(p^{\alpha})}{p^{\alpha}} < \displaystyle\frac{p}{p - 1}$.
\begin{paragraph}\indent This gives rise to the following lemma:
\end{paragraph}
\begin{lemm}\label{lemma7} If $N$ is a perfect number with canonical factorization $N = \displaystyle\prod_{i = 1}^{\omega(N)} {{P_i}^{{\alpha}_i}}$, then
\begin{displaymath} 2 < \prod_{i = 1}^{\omega(N)}{\frac{P_i}{P_i - 1}} = \prod_{i = 1}^{\omega(N)}\left(1 + \frac{1}{P_i - 1}\right).
\end{displaymath}
\end{lemm}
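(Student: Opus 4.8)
The plan is to combine the multiplicativity of the abundancy index with the prime-power bound from Lemma \ref{lemma4}. Since $N$ is perfect, Definition \ref{perfect} gives $\sigma(N) = 2N$, equivalently $I(N) = \sigma(N)/N = 2$. Because $\sigma$ is multiplicative (Theorem \ref{theorem4}) and the prime powers ${P_i}^{{\alpha}_i}$ appearing in the canonical factorization of $N$ are pairwise relatively prime, I would first write
\[
2 = \frac{\sigma(N)}{N} = \prod_{i = 1}^{\omega(N)} \frac{\sigma({P_i}^{{\alpha}_i})}{{P_i}^{{\alpha}_i}}.
\]

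Next I would invoke Lemma \ref{lemma4}, which asserts $\sigma(P^{\alpha})/P^{\alpha} < P/(P - 1)$ for every prime power $P^{\alpha}$. Applying this bound to each of the (positive) factors on the right-hand side above and multiplying the resulting strict inequalities gives
\[
2 = \prod_{i = 1}^{\omega(N)} \frac{\sigma({P_i}^{{\alpha}_i})}{{P_i}^{{\alpha}_i}} < \prod_{i = 1}^{\omega(N)} \frac{P_i}{P_i - 1}.
\]
The equality displayed in the statement is then just the elementary algebraic identity $\frac{P}{P - 1} = 1 + \frac{1}{P - 1}$ applied factor by factor.

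The only point deserving a word of care is the passage from termwise strict inequalities to a strict inequality for the whole product; this is valid precisely because each factor is a positive real number and the product is nonempty, since $N > 1$ forces $\omega(N) \geq 1$. I do not expect any genuine obstacle here: the result is an immediate consequence of Lemma \ref{lemma4} and the multiplicativity of $\sigma$, and the complete proof should occupy only a few lines.
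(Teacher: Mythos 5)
Your proposal is correct and follows exactly the route the paper intends: the paper's proof is the one-line remark that the lemma ``follows as an immediate consequence of Lemma \ref{lemma4} and the definition of perfect numbers,'' and your write-up simply makes explicit the multiplicativity step $2 = \prod_{i} \sigma({P_i}^{\alpha_i})/{P_i}^{\alpha_i}$ and the termwise application of the bound $\sigma(P^{\alpha})/P^{\alpha} < P/(P-1)$. No discrepancy to report.
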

\begin{proof} This follows as an immediate consequence of Lemma \ref{lemma4} and the definition of perfect numbers.
\end{proof}
\begin{paragraph}\indent Note that Lemma \ref{lemma7} applies to both even and odd perfect numbers.
\end{paragraph}
\begin{paragraph}\indent The following is another very useful lemma:
\end{paragraph}
\begin{lemm}\label{lemma8} If $N$ is a perfect number with canonical factorization $N = \displaystyle\prod_{i = 1}^{\omega(N)} {{P_i}^{{\alpha}_i}}$, then
\begin{displaymath} 2 \geq \prod_{i = 1}^{\omega(N)}\left(1 + \frac{1}{P_i} + \ldots + \frac{1}{{P_i}^{{\beta}_i}}\right),
\end{displaymath}
where $0 \leq \beta_i \leq \alpha_i \hspace{0.02truein} \forall i$.
\end{lemm}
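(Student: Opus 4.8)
The plan is to read off the product $\prod_{i}\bigl(1 + \tfrac{1}{P_i} + \ldots + \tfrac{1}{P_i^{\alpha_i}}\bigr)$ as the abundancy index $I(N)$ itself, and then observe that truncating each inner geometric sum at $\beta_i \le \alpha_i$ can only decrease the value. First I would invoke the perfectness hypothesis: by Definition \ref{perfect}, $\sigma(N) = 2N$, so $I(N) = \sigma(N)/N = 2$. Next, since $\sigma$ is multiplicative (Theorem \ref{theorem4}) and the prime powers ${P_i}^{\alpha_i}$ coming from the canonical factorization of $N$ are pairwise coprime, $I$ is multiplicative as well, giving
\[
2 = I(N) = \prod_{i=1}^{\omega(N)} \frac{\sigma({P_i}^{\alpha_i})}{{P_i}^{\alpha_i}}.
\]
By Corollary \ref{corollary4}, $\sigma({P_i}^{\alpha_i})/{P_i}^{\alpha_i} = \sum_{j=0}^{\alpha_i} {P_i}^{-j} = 1 + \tfrac{1}{P_i} + \ldots + \tfrac{1}{{P_i}^{\alpha_i}}$, so the right-hand side is exactly the $\beta_i = \alpha_i$ case of the claimed product.

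It then remains to pass from $\alpha_i$ to an arbitrary $\beta_i$ with $0 \le \beta_i \le \alpha_i$. For each fixed $i$, the terms $\tfrac{1}{{P_i}^{\beta_i + 1}}, \ldots, \tfrac{1}{{P_i}^{\alpha_i}}$ omitted from the truncated sum are all strictly positive, so
\[
1 + \frac{1}{P_i} + \ldots + \frac{1}{{P_i}^{\beta_i}} \;\le\; 1 + \frac{1}{P_i} + \ldots + \frac{1}{{P_i}^{\alpha_i}}.
\]
Since both sides of each such inequality are positive, I may multiply these $\omega(N)$ inequalities together without reversing the direction, obtaining
\[
\prod_{i=1}^{\omega(N)}\left(1 + \frac{1}{P_i} + \ldots + \frac{1}{{P_i}^{\beta_i}}\right) \;\le\; \prod_{i=1}^{\omega(N)}\left(1 + \frac{1}{P_i} + \ldots + \frac{1}{{P_i}^{\alpha_i}}\right) = I(N) = 2,
\]
which is the desired conclusion.

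Honestly there is no serious obstacle here; the only point requiring a word of care is the legitimacy of multiplying the term-by-term inequalities, which is fine precisely because every factor involved is a finite sum of positive rationals and hence positive. I would also remark, as with Lemma \ref{lemma7}, that the argument only uses $\sigma(N) = 2N$ and the multiplicativity of $\sigma$, so the statement applies equally to even and odd perfect numbers; the flexibility in choosing the $\beta_i$ is what makes this form convenient for later casework on the exponents of an OPN.
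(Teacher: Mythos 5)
Your proposal is correct and is exactly the argument the paper intends: its one-line proof cites "the definition of perfect numbers and the fact that the abundancy index for prime powers is an increasing function of the exponents," which is precisely your identification of the full product with $I(N)=2$ followed by the termwise truncation inequality. You have simply written out the details the paper leaves implicit.
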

\begin{proof} This immediately follows from the definition of perfect numbers and the fact that the abundancy index for prime powers is an increasing function of the exponents.
\end{proof}
\begin{paragraph}\indent When considering OPNs, Lemmas \ref{lemma7} and \ref{lemma8} are very useful because they can yield lower bounds for $\omega(N)$. Indeed, it was in using these lemmas (together with some ingenuity) that pre-$21$st century mathematicians were able to successfully obtain ever-increasing lower bounds for the number of distinct prime factors of an OPN $N$.
\end{paragraph}
\begin{paragraph}\indent We now prove the classical result: ``\emph{An OPN must have at least three distinct prime factors}".
\end{paragraph}
\begin{thm}\label{theorem11} If $N$ is an OPN, then $\omega(N) \geq 3$.
\end{thm}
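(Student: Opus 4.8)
The plan is to argue by contradiction, using Lemma \ref{lemma7}: any perfect number $N$ with canonical factorization $N = \prod_{i=1}^{\omega(N)} {P_i}^{{\alpha}_i}$ satisfies $2 < \prod_{i=1}^{\omega(N)} \frac{P_i}{P_i - 1}$. Since $N$ is odd, every $P_i$ is at least $3$, and the $P_i$ are pairwise distinct. The key auxiliary observation is that the map $x \mapsto \frac{x}{x-1} = 1 + \frac{1}{x-1}$ is strictly decreasing for $x > 1$, so each factor $\frac{P_i}{P_i - 1}$ is maximized by taking $P_i$ as small as the distinctness constraint allows. Thus for small $\omega(N)$ the product on the right of Lemma \ref{lemma7} is forced to stay below $2$, which is the contradiction we seek.

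First I would dispose of the case $\omega(N) = 1$. Here $N = {P_1}^{{\alpha}_1}$ with $P_1 \geq 3$, so by Lemma \ref{lemma4} we have $I(N) = \frac{\sigma({P_1}^{{\alpha}_1})}{{P_1}^{{\alpha}_1}} < \frac{P_1}{P_1 - 1} \leq \frac{3}{2} < 2$, so $N$ is deficient, contradicting $\sigma(N) = 2N$. Next I would handle $\omega(N) = 2$, writing $N = {P_1}^{{\alpha}_1}{P_2}^{{\alpha}_2}$ with $3 \leq P_1 < P_2$, hence $P_1 \geq 3$ and $P_2 \geq 5$. By the monotonicity noted above, $\frac{P_1}{P_1 - 1}\cdot\frac{P_2}{P_2 - 1} \leq \frac{3}{2}\cdot\frac{5}{4} = \frac{15}{8} < 2$, again contradicting Lemma \ref{lemma7}. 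Since neither $\omega(N) = 1$ nor $\omega(N) = 2$ is possible, any OPN $N$ must satisfy $\omega(N) \geq 3$.

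There is no substantive obstacle here; the argument is entirely elementary. The one point deserving care is that in the case $\omega(N) = 2$ the two prime factors, being distinct odd primes, cannot both be as small as $3$: the extremal configuration is $\{3,5\}$, not $\{3,3\}$. This matters because with $\{3,3\}$ one would only get $\frac{3}{2}\cdot\frac{3}{2} = \frac{9}{4} > 2$ and the bound of Lemma \ref{lemma7} would be vacuous; it is precisely the forced jump from $3$ to $5$ for the second prime that drives the product below $2$. (The same mechanism is what eventually fails when one tries to push this method to $\omega(N) \geq 3$ directly, since $\frac{3}{2}\cdot\frac{5}{4}\cdot\frac{7}{6} > 2$, which is why sharper arguments are needed beyond this theorem.)
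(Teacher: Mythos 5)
Your proposal is correct and follows essentially the same route as the paper: dismiss $\omega(N)=1$ because prime powers are deficient, then rule out $\omega(N)=2$ by applying Lemma \ref{lemma7} with $P_1 \geq 3$, $P_2 \geq 5$ to obtain $2 < \frac{3}{2}\cdot\frac{5}{4} = \frac{15}{8} < 2$, a contradiction. Your added remarks on the monotonicity of $x \mapsto \frac{x}{x-1}$ and on why the method stalls at three primes are accurate but not needed for the theorem itself.
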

\begin{proof} Let $N$ be an OPN. Since prime powers are deficient, $\omega(N) \geq 2$. Suppose $\omega(N) = 2$. Since $N$ is odd, $P_1 \geq 3$ and $P_2 \geq 5$ where $N = {P_1}^{{\alpha}_1}{P_2}^{{\alpha}_2}$ is the canonical factorization of $N$. Using Lemma \ref{lemma7}:
\begin{displaymath}
2 < \prod_{i = 1}^{\omega(N)}{\frac{P_i}{P_i - 1}} = \prod_{i = 1}^2 {\frac{P_i}{P_i - 1}}
\end{displaymath}
\begin{displaymath} = \left(\frac{P_1}{P_1 - 1}\right)\left(\frac{P_2}{P_2 - 1}\right) = \frac{1}{1 - \frac{1}{P_1}}\frac{1}{1 - \frac{1}{P_2}} \leq \frac{1}{1 - \frac{1}{3}}\frac{1}{1 - \frac{1}{5}}
\end{displaymath}
\begin{displaymath} = \frac{1}{\frac{2}{3}}\frac{1}{\frac{4}{5}} = \frac{3}{2}\frac{5}{4} = \frac{15}{8} = 1.875 < 2
\end{displaymath}
\begin{paragraph}\indent Thus, the assumption $\omega(N) = 2$ for an OPN $N$ has resulted to the contradiction $2 < 2$. This contradiction shows that $\omega(N) \geq 3$.
\end{paragraph}
\end{proof}
\begin{paragraph}\indent More work is required to improve the result of Theorem \ref{theorem11} to $\omega(N) \geq 4$, if we are to use a similar method.
\end{paragraph}
\begin{thm}\label{theorem12} If $N$ is an OPN, then $\omega(N) \geq 4$.
\end{thm}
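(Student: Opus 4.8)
The plan is to push the argument of Theorem~\ref{theorem11} one notch further, reinforcing Lemma~\ref{lemma7} with Lemma~\ref{lemma8} and with Euler's characterization of OPNs (Theorem~\ref{theorem10}); together these suffice to rule out $\omega(N) = 3$. So I would assume, for contradiction, that $N$ is an OPN with $\omega(N) = 3$ and canonical factorization $N = {P_1}^{{\alpha}_1}{P_2}^{{\alpha}_2}{P_3}^{{\alpha}_3}$, $3 \le P_1 < P_2 < P_3$, and work toward contradicting $I(N) = 2$.

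First I would use Lemma~\ref{lemma7} to cut the problem down to finitely many shapes, just as in Theorem~\ref{theorem11}. If $P_1 \ge 5$ then $\prod_{i=1}^{3}\frac{P_i}{P_i-1} \le \frac{5}{4}\cdot\frac{7}{6}\cdot\frac{11}{10} < 2$, so $P_1 = 3$; with $P_1 = 3$, $P_2 \ge 11$ gives $\frac{3}{2}\cdot\frac{11}{10}\cdot\frac{13}{12} < 2$, so $P_2 \in \{5,7\}$; and $P_2 = 7$ forces $\frac{3}{2}\cdot\frac{7}{6}\cdot\frac{P_3}{P_3-1} > 2$, i.e.\ $P_3 < 8$, which is impossible since $P_3 > 7$. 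Hence $P_2 = 5$, and then $\frac{3}{2}\cdot\frac{5}{4}\cdot\frac{P_3}{P_3-1} > 2$ forces $P_3 < 16$, so $P_3 \in \{7,11,13\}$. It remains to rule out $N = 3^{a}5^{b}{P_3}^{c}$ in these three cases.

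Next I would extract exponent information and close each case with Lemma~\ref{lemma8}. By Theorem~\ref{theorem10}, exactly one prime of $N$ has exponent $\equiv 1 \pmod 4$ (and that prime is $\equiv 1\pmod 4$), while every other prime has an even exponent; since $3 \equiv 7 \equiv 11 \equiv 3 \pmod 4$, this forces $a$ even, and also $c$ even whenever $P_3 \in \{7,11\}$ (so that $5$ is the Euler prime and $b \equiv 1 \pmod 4$). Feeding the strict bounds of Lemma~\ref{lemma4} into $I(N) = 2$ then yields lower bounds on the exponents: for $P_3 \in \{11,13\}$ one gets $\frac{\sigma(3^{a})}{3^{a}} > \frac{8(P_3-1)}{5P_3} > \frac{13}{9} = \frac{\sigma(3^{2})}{3^{2}}$, so $a \ge 4$, and $\frac{\sigma(5^{b})}{5^{b}} > \frac{4(P_3-1)}{3P_3} > \frac{6}{5} = \frac{\sigma(5)}{5}$, so $b \ge 2$, hence $b \ge 5$ whenever $5$ is the Euler prime. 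With these facts I would, in each case, pick a tuple $(\beta_1,\beta_2,\beta_3)$ with $\beta_i \le \alpha_i$ and verify $\prod_{i}\frac{\sigma({P_i}^{{\beta}_i})}{{P_i}^{{\beta}_i}} > 2$, contradicting Lemma~\ref{lemma8}: the tuple $(2,1,2)$ works for $P_3 = 7$; $(4,5,2)$ works for $P_3 = 11$; and for $P_3 = 13$ the tuple $(4,2,2)$ works when $5$ is the Euler prime, while $(4,4,1)$ (if $b \ge 4$) or $(4,2,5)$ (if $c \ge 5$) works when $13$ is the Euler prime. These are finitely many, if slightly tedious, numerical checks.

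The one configuration untouched by the above is $N = 3^{a}5^{2}\cdot 13$, and here the abundancy inequalities are not sharp enough — this is where I expect the real difficulty. For it I would switch to an arithmetic argument: $I(N) = 2$ forces $\frac{\sigma(3^{a})}{3^{a}} = \dfrac{2}{\frac{31}{25}\cdot\frac{14}{13}} = \dfrac{325}{217}$; but $\sigma(3^{a}) \equiv 1 \pmod 3$, so $\frac{\sigma(3^{a})}{3^{a}}$ is already in lowest terms with denominator $3^{a}$, whereas $217 = 7 \cdot 31$ is not a power of $3$ — a contradiction. (Equivalently, $\sigma(3^{a}) \mid \sigma(N) = 2N$ together with $31 \mid \sigma(5^{2}) \mid 2N$ would force $31 \mid N$, absurd.) Having exhausted every case, $\omega(N) = 3$ is impossible, so $\omega(N) \ge 4$.
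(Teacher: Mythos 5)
Your proof is correct and follows essentially the same strategy as the paper's: use Lemma \ref{lemma7} to force $P_1 = 3$, $P_2 = 5$, $P_3 \in \{7, 11, 13\}$, derive lower bounds on the exponents, and contradict Lemma \ref{lemma8} with a suitable tuple $(\beta_1,\beta_2,\beta_3)$ in each case. The only differences are cosmetic: where you invoke Euler's characterization (Theorem \ref{theorem10}) for exponent parities and abundancy monotonicity to get $a \geq 4$, the paper uses divisibility facts such as $4 \nmid \sigma(N)$, $13 \nmid N$, and $7 \nmid N$; and your terminal case $N = 3^a 5^2 \cdot 13$ is killed via $31 = \sigma(5^2) \nmid 2N$, whereas the paper disposes of $c = 1$ outright via $7 \mid \sigma(13) = 14$ and $7 \nmid 2N$ — both are valid sigma-chain terminations.
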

\begin{proof} Let $N$ be an OPN. By Theorem \ref{theorem11}, $\omega(N) \geq 3$. Assume $\omega(N) = 3$. Since $N$ is odd, $P_1 \geq 3$, $P_2 \geq 5$, and $P_3 \geq 7$ where $N = {P_1}^{{\alpha}_1}{P_2}^{{\alpha}_2}{P_3}^{{\alpha}_3}$ is the canonical factorization of $N$. Using Lemma \ref{lemma7}:
\begin{displaymath}
2 < \prod_{i = 1}^{\omega(N)}{\frac{P_i}{P_i - 1}} = \prod_{i = 1}^3 {\frac{P_i}{P_i - 1}}
\end{displaymath} 
\begin{displaymath} = \left(\frac{P_1}{P_1 - 1}\right)\left(\frac{P_2}{P_2 - 1}\right)\left(\frac{P_3}{P_3 - 1}\right) = \frac{1}{1 - \frac{1}{P_1}}\frac{1}{1 - \frac{1}{P_2}}\frac{1}{1 - \frac{1}{P_3}} \leq \frac{1}{1 - \frac{1}{3}}\frac{1}{1 - \frac{1}{5}}\frac{1}{1 - \frac{1}{7}}
\end{displaymath}
\begin{displaymath} = \frac{1}{\frac{2}{3}}\frac{1}{\frac{4}{5}}\frac{1}{\frac{6}{7}} = \frac{3}{2}\frac{5}{4}\frac{7}{6} = \frac{105}{48} = \frac{35}{16} = 2.1875
\end{displaymath}
whence we do not arrive at a contradiction.  Now, suppose $P_1 \geq 5$, $P_2 \geq 7$, and $P_3 \geq 11$. Using Lemma \ref{lemma7} again, we get $2 < (\frac{5}{4})(\frac{7}{6})(\frac{11}{10}) = \frac{77}{48}$, which is a contradiction. Consequently, we know that $P_1 = 3$. Then, with this additional information about $P_1$, if we assume that $P_2 \geq 7$, we arrive at $2 < (\frac{3}{2})(\frac{7}{6})(\frac{11}{10}) = \frac{231}{120} = \frac{77}{40}$ (by use of Lemma \ref{lemma7}) which is again a contradiction. Hence, we also know that $P_2 = 5$. Thus, $P_3 \geq 7$. Furthermore, using Lemma \ref{lemma7} again, the following inequality must be true:
\begin{displaymath} 2 < \frac{3}{2}\frac{5}{4}\frac{P_3}{P_3 - 1} \Longleftrightarrow \frac{16}{15} < \frac{P_3}{P_3 - 1}.
\end{displaymath}
Solving this last inequality, we get $P_3 < 16$. This inequality, together with the fact that $P_3$ is prime, gives us $3$ possible cases to consider: \\
\emph{Case 1}: $N = 3^{\alpha_1}5^{\alpha_2}7^{\alpha_3}$ \\
Since $N$ is odd, $4$ cannot divide $\sigma(N) = \sigma(3^{\alpha_1})\sigma(5^{\alpha_2})\sigma(7^{\alpha_3}) = 2N = 2\cdot3^{\alpha_1}5^{\alpha_2}7^{\alpha_3}$. In particular, $4 \nmid \sigma(3^{\alpha_1})$ and $4 \nmid \sigma(7^{\alpha_3})$. But $\sigma(3^{\alpha_1}) = 4$ for $\alpha_1 = 1$ and $\sigma(7^{\alpha_3}) = 8$ for $\alpha_3 = 1$. Consequently, $\alpha_1 \geq 2$ and $\alpha_3 \geq 2$. Now, by using Lemma \ref{lemma8} with $\beta_1 = 2$, $\beta_2 = 1$ and $\beta_3 = 2$, we have:
\begin{displaymath}
2 \geq \left(1 + \frac{1}{3} + \frac{1}{3^2}\right)\left(1 + \frac{1}{5}\right)\left(1 + \frac{1}{7} + \frac{1}{7^2}\right) = \frac{494}{245},
\end{displaymath}
which is a contradiction. Hence, there is no OPN of the form $N = 3^{\alpha_1}5^{\alpha_2}7^{\alpha_3}$. \\
\emph{Case 2}: $N = 3^{\alpha_1}5^{\alpha_2}{11}^{\alpha_3}$ \\
\emph{A.} Using Lemma \ref{lemma7} with $\alpha_2 = 1$ gives $2 < \displaystyle\frac{3}{2}\frac{\displaystyle\sigma(5^{\alpha_2})}{5^{\alpha_2}}\frac{11}{10} = \frac{3}{2}\frac{6}{5}\frac{11}{10} = \frac{99}{50}$, which is a contradiction. \\
\emph{B.} Let $\alpha_2 \geq 2$. For $\alpha_1 = 1, 2, 3$, we have $\sigma(3^{\alpha_1}) = 4, 13, 40$. Since $4 \nmid \sigma(N)$, $\alpha_1 \neq 1$ and $\alpha_1 \neq 3$.  Also, $13 \nmid \sigma(N) = 2N = 2\cdot3^{\alpha_1}5^{\alpha_2}{11}^{\alpha_3}$ and thus, $13 \nmid \sigma(3^{\alpha_1})$. Hence, $\alpha_1 \neq 2$, which implies that $\alpha_1 \geq 4$. Similarly, $\sigma({11}^{\alpha_3}) = 12$ for $\alpha_3 = 1$, which contradicts the fact that $4 \nmid \sigma({11}^{\alpha_3})$. Thus, $\alpha_3 \geq 2$. Now, using Lemma \ref{lemma8} with 
$\beta_1 = 4$, $\beta_2 = 2$ and $\beta_3 = 2$, we get:
\begin{displaymath}
2 \geq \left(1 + \frac{1}{3} + \frac{1}{3^2} + \frac{1}{3^3} + \frac{1}{3^4}\right)\left(1 + \frac{1}{5} + \frac{1}{5^2}\right)\left(1 + \frac{1}{11} + \frac{1}{{11}^2}\right) = \frac{4123}{2025},
\end{displaymath}
which is a contradiction. Hence, there is no OPN of the form $N = 3^{\alpha_1}5^{\alpha_2}{11}^{\alpha_3}$. \\
~\\
\emph{Case 3}: $N = 3^{\alpha_1}5^{\alpha_2}{13}^{\alpha_3}$ \\
\emph{A.} Using Lemma \ref{lemma7} with $\alpha_2 = 1$ gives $2 < \displaystyle\frac{3}{2}\frac{\displaystyle\sigma(5^{\alpha_2})}{5^{\alpha_2}}\frac{13}{12} = \frac{3}{2}\frac{6}{5}\frac{13}{12} = \frac{39}{20}$, which is a contradiction. \\
\emph{B.} Let $\alpha_2 \geq 2$. Similar to what we got from \emph{Case 2B}, we have $\alpha_1 \neq 1, 3$. \\
\emph{1.} $\alpha_1 = 2$ \\
Using Lemma \ref{lemma7}, we have: $2 < \displaystyle\frac{\displaystyle\sigma(3^{\alpha_1})}{3^{\alpha_1}}\frac{5}{4}\frac{13}{12} = \frac{13}{9}\frac{5}{4}\frac{13}{12} = \frac{845}{432}$, which is a contradiction. \\
\emph{2.} $\alpha_1 \geq 4$ \\
Suppose $\alpha_3 = 1$. Then $\sigma({13}^{\alpha_3}) = 14 = 2\cdot7$, yet $7 \nmid \sigma(N) = 2N = 2\cdot3^{\alpha_1}5^{\alpha_2}{13}^{\alpha_3}$. Therefore, $7 \nmid \sigma({13}^{\alpha_3})$, and then $\alpha_3 \geq 2$. Now, by using Lemma \ref{lemma8} with $\beta_1 = 4$, $\beta_2 = 2$ and $\beta_3 = 2$, we get:
\begin{displaymath}
2 \geq \left(1 + \frac{1}{3} + \frac{1}{3^2} + \frac{1}{3^3} + \frac{1}{3^4}\right)\left(1 + \frac{1}{5} + \frac{1}{5^2}\right)\left(1 + \frac{1}{13} + \frac{1}{{13}^2}\right) = \frac{228811}{114075},
\end{displaymath}
which is again a contradiction. Hence, there is no OPN of the form $N = 3^{\alpha_1}5^{\alpha_2}{13}^{\alpha_3}$.
\begin{paragraph}\indent Therefore, there is no OPN with exactly three ($3$) distinct prime factors, i.e. an OPN must have at least four ($4$) distinct prime factors.
\end{paragraph}
\end{proof}
\begin{remrk}\label{remark2} Using more recent findings on an upper bound for OPNs by Nielsen \cite{N39} and on a lower bound by Brent \emph{et al.} \cite{B3}, it is possible to extend the results in this section. Thus, ${10}^{300} < N < 2^{4^{\omega(N)}}$, and this gives
\begin{center}
$\omega(N) > \left(\displaystyle\frac{2 + \log(3) - \log(\log(2))}{\log(4)}\right) > 4.9804$,
\end{center} 
which implies that $\omega(N) \geq 5$ since $\omega(N)$ should be an integer. (There is a project underway at http://www.oddperfect.org, organized by William Lipp, which hopes to extend the lower bound for OPNs to ${10}^{500}$, or ${10}^{600}$ even.) Indeed, even more recently ($2006$), Nielsen \cite{N40} was able to show that $\omega(N) \geq 9$, ``ultimately [avoiding] previous computational results for [OPNs]".  
\end{remrk}
\section{Increasing the Lower Bound for an OPN $N$}
We begin with a very useful inequality (which we shall not prove here) that can yield our desired estimates for a lower bound on OPNs:
\begin{lemm}\label{lemma9} The Arithmetic Mean-Geometric Mean Inequality \\
Let $\left\{X_i\right\}$ be a sequence of (not necessarily distinct) positive real numbers. Then the following inequality must be true:
\begin{displaymath}
\frac{1}{n}\sum_{i=1}^n {X_i} \geq \left({\prod_{i=1}^n {X_i}}\right)^{\frac{1}{n}}.
\end{displaymath}
Equality holds if and only if all of the $X_i$'s are equal.
\end{lemm}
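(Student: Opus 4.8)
\section*{Proof proposal}

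The plan is to establish the AM--GM inequality by Cauchy's classical ``forward--backward'' induction, which keeps the argument elementary and in line with the rest of the exposition (no convexity or calculus is required).

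First I would dispose of the base case $n = 2$: the assertion $(X_1 + X_2)/2 \geq \sqrt{X_1 X_2}$ is, after squaring and rearranging, equivalent to $(\sqrt{X_1} - \sqrt{X_2})^2 \geq 0$, which holds trivially, with equality exactly when $X_1 = X_2$. Next is the \emph{forward step}: assuming the inequality for $n$ variables, I would prove it for $2n$ by splitting $X_1, \dots, X_{2n}$ into two blocks of $n$, applying the $n$-variable case to each block to get geometric means $G_1$ and $G_2$, and then applying the $n = 2$ case to $G_1$ and $G_2$; since the arithmetic mean of all $2n$ numbers equals the average of the two block-means, a short chain of inequalities yields $\frac{1}{2n}\sum_{i=1}^{2n} X_i \geq \frac{1}{2}(G_1 + G_2) \geq \sqrt{G_1 G_2} = (X_1 \cdots X_{2n})^{1/(2n)}$. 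Iterating gives the inequality whenever $n$ is a power of $2$.

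Then comes the \emph{backward step}: assuming the inequality for $n$ variables, I would derive it for $n - 1$. Given positive reals $X_1, \dots, X_{n-1}$, put $A = (X_1 + \cdots + X_{n-1})/(n-1)$ and apply the $n$-variable inequality to the augmented list $X_1, \dots, X_{n-1}, A$. The left-hand side collapses to $A$, so $A \geq (X_1 \cdots X_{n-1}\, A)^{1/n}$; raising to the $n$-th power and cancelling one factor of $A$ (legitimate since $A > 0$) gives $A^{n-1} \geq X_1 \cdots X_{n-1}$, which is precisely the $(n-1)$-variable statement. For a general $n$ one chooses $k$ with $2^k \geq n$, uses the forward step to reach $2^k$ variables, and applies the backward step $2^k - n$ times. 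The equality clause is obtained by tracing the equality conditions through each step: the base case forces two entries equal, the forward step forces all entries in each block equal and the two block-means equal, and the backward step forces $X_1 = \cdots = X_{n-1} = A$.

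The step I expect to demand the most care is keeping track of the equality case through the backward induction (the plain-inequality and forward parts are routine). A slicker alternative that bypasses this bookkeeping is to apply the elementary bound $t \leq e^{t-1}$ (with equality iff $t = 1$) to each ratio $t = X_i/A$, where $A$ is the arithmetic mean of the $X_i$; multiplying the resulting inequalities gives $\prod_{i=1}^n (X_i/A) \leq e^{\sum_{i=1}^n (X_i/A) - n} = e^0 = 1$, equivalently $\prod_{i=1}^n X_i \leq A^n$, with equality iff every $X_i$ equals $A$. This route is shorter but leans on a small lemma about the exponential function, which is arguably less in the spirit of the surrounding elementary material.
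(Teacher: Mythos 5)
The paper states this lemma without proof (``a very useful inequality (which we shall not prove here)''), so there is no in-text argument to compare yours against. Your proposal is a correct and complete outline of Cauchy's forward--backward induction: the base case, the doubling step, and the descent step are all stated accurately, the reduction of a general $n$ to a power of $2$ is handled properly, and the equality conditions do propagate through each stage exactly as you describe. The alternative via $t \leq e^{t-1}$ applied to $t = X_i/A$ is likewise sound, including its equality clause. Either route would serve as a valid proof of the lemma; no gaps.
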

\begin{paragraph}\indent We now derive a crude lower bound for an OPN $N = \displaystyle\prod_{i=1}^{\omega(N)} {{P_i}^{\alpha_i}}$ in terms of the $\alpha_i$'s:
\end{paragraph}
\begin{lemm}\label{lemma10} Let $N = \displaystyle\prod_{i=1}^{\omega(N)} {{P_i}^{\alpha_i}}$ be an OPN. Then \\
\begin{center}
$N > \displaystyle\left(\frac{\displaystyle\prod_{i=1}^{\omega(N)}{\left(\alpha_i + 1\right)}}{\displaystyle 2}\right)^2$.
\end{center}
\end{lemm}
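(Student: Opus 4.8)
The plan is to read the numerator $\displaystyle\prod_{i=1}^{\omega(N)}\left(\alpha_i + 1\right)$ as $d(N)$, the total number of positive divisors of $N$ (Theorem \ref{theorem5}), and then to apply the Arithmetic Mean--Geometric Mean Inequality (Lemma \ref{lemma9}) to the finite sequence made up of the reciprocals $1/d$ of all the positive divisors $d$ of $N$. This sequence has exactly $d(N)$ terms, so its arithmetic mean is $\displaystyle\frac{1}{d(N)}\sum_{d \mid N}\frac{1}{d}$; by Lemma \ref{lemma1} the sum here equals $\displaystyle\frac{\sigma(N)}{N}$, and since $N$ is perfect this is $2$ by Definition \ref{perfect}. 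Thus the arithmetic mean of the $1/d$ is exactly $2/d(N)$.

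Next I would compute the geometric mean of the same terms. The key elementary observation is that the divisors of $N$ pair off as $d \leftrightarrow N/d$, each pair multiplying to $N$, so that $\displaystyle\prod_{d \mid N} d = N^{d(N)/2}$; this identity holds whether or not $d(N)$ is even, the case of $N$ a perfect square simply contributing an unpaired middle factor $\sqrt{N}$. Hence $\displaystyle\prod_{d \mid N}\frac{1}{d} = N^{-d(N)/2}$, and its $d(N)$-th root is $N^{-1/2}$. Substituting these two computations into Lemma \ref{lemma9} yields
\[
\frac{2}{d(N)} \;=\; \frac{1}{d(N)}\sum_{d \mid N}\frac{1}{d} \;\geq\; \left(\prod_{d \mid N}\frac{1}{d}\right)^{1/d(N)} \;=\; N^{-1/2},
\]
which rearranges to $2\sqrt{N} \geq d(N)$, i.e. $N \geq \left(d(N)/2\right)^2$. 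To upgrade this to the strict inequality claimed, I would invoke the equality clause of Lemma \ref{lemma9}: equality would force all the $1/d$ to be equal, hence $N$ to have a single divisor, hence $N = 1$; but an OPN has $\omega(N) \geq 3$ by Theorem \ref{theorem11} and in particular more than one divisor, so the inequality is strict.

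I do not expect a serious obstacle here. The only points requiring any care are the closed form $\prod_{d\mid N} d = N^{d(N)/2}$ and the treatment of the equality case, both routine. The one mild "trick" is recognizing that the abundancy index, once rewritten as a sum of reciprocals of divisors (Lemma \ref{lemma1}), is precisely the arithmetic mean that AM--GM is built to bound from below — after which the geometric mean collapses to $N^{-1/2}$ and the result falls out.
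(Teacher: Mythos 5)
Your proof is correct, and it takes a genuinely different route from the paper's. The paper applies the AM--GM inequality \emph{componentwise}: for each prime power it writes $\sigma({P_i}^{\alpha_i}) = \sum_{j=0}^{\alpha_i} {P_i}^j$ and bounds this sum below by $(\alpha_i + 1){P_i}^{\alpha_i/2}$ (strictly, since the powers ${P_i}^0, \ldots, {P_i}^{\alpha_i}$ are distinct), then multiplies over all $i$, using $\sqrt{N} = \prod_i {P_i}^{\alpha_i/2}$ and $\sigma(N) = 2N$ to finish. You instead apply AM--GM \emph{once, globally}, to the $d(N)$ reciprocals of all divisors of $N$, feeding in Lemma \ref{lemma1} for the arithmetic mean and the divisor-pairing identity $\prod_{d \mid N} d = N^{d(N)/2}$ for the geometric mean. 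Both arguments are really proving the same multiplicative inequality $\sigma(N) > d(N)\sqrt{N}$; the paper assembles it prime power by prime power from the explicit geometric series, while you get it in one stroke without ever invoking the canonical factorization (your equality analysis needs only that $N > 1$, which is even weaker than the $\omega(N) \geq 3$ you cite). What your version buys is brevity and a cleaner equality case; what the paper's componentwise version buys is that it makes visible exactly where the subsequent sharpenings come from --- the remarks following the lemma improve the bound by using $\alpha_i \geq 2$ for the non-Euler primes, which is a per-factor refinement that drops straight out of the paper's factored form $\prod_i (\alpha_i + 1) \geq 3^{\omega(N)-1} \cdot 2$. Like the paper's proof, yours nowhere uses that $N$ is odd, so it too applies to even perfect numbers.
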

\begin{proof} $\displaystyle\sigma({P_i}^{\alpha_i}) = \displaystyle\sum_{j=0}^{\alpha_i}{{P_i}^j}$. Applying Lemma \ref{lemma9} and noting that prime powers of the $P_i$'s are distinct, we have for each $i$:
\begin{center} 
$\displaystyle\sum_{j=0}^{\alpha_i}{{P_i}^j} > \displaystyle(\alpha_i + 1)\prod_{j=0}^{\alpha_i}{{P_i}^{\displaystyle\frac{j}{\alpha_i + 1}}} = \displaystyle(\alpha_i + 1){\displaystyle P_i}^{\displaystyle\sum_{j=0}^{\alpha_i}{\frac{j}{\alpha_i + 1}}}$ \\ 
$= \displaystyle(\alpha_i + 1){P_i}^{\displaystyle\frac{\alpha_i(\alpha_i + 1)}{2(\alpha_i + 1)}} = \displaystyle(\alpha_i + 1){P_i}^{\displaystyle\frac{\alpha_i}{2}}$. 
\end{center}
Consequently, by multiplying across all $i$:
\begin{center}
$\displaystyle\prod_{i=1}^{\omega(N)}{\sigma({P_i}^{\alpha_i})} > \displaystyle\prod_{i=1}^{\omega(N)}{(\alpha_i + 1){P_i}^{\displaystyle\frac{\alpha_i}{2}}}$. But $\displaystyle\sqrt{N} = \displaystyle\prod_{i=1}^{\omega(N)}{{P_i}^{\displaystyle\frac{\alpha_i}{2}}}$, which means that \\
$N = \displaystyle\frac{\displaystyle\prod_{i=1}^{\omega(N)}{\sigma({P_i}^{\alpha_i})}}{2} > \displaystyle\frac{\displaystyle\prod_{i=1}^{\omega(N)}{\left(\alpha_i + 1\right)}\displaystyle\prod_{i=1}^{\omega(N)}{{P_i}^{\displaystyle\frac{\alpha_i}{2}}}}{2} = \displaystyle\sqrt{ N}\frac{\displaystyle\prod_{i=1}^{\omega(N)}{\left(\alpha_i + 1\right)}}{2}$. \\
\end{center}
Solving this last inequality for $N$ gives us the desired result. 
\end{proof}
\begin{remrk}\label{remark3} In the canonical factorization $N = \displaystyle\prod_{i=1}^{\omega(N)} {{P_i}^{\alpha_i}}$  of an OPN $N$, since $\alpha_i \geq 1$ for all $i$, from Lemma \ref{lemma10} we have the crude lower bound $N > 2^{{2\omega(N)} - 2}$, which, together with Nielsen's lower bound of $\omega(N) \geq 9$ for the number of distinct prime factors of $N$, yields the lower bound $N > 2^{16} = 65536$ for the magnitude of the smallest possible OPN. This lower bound can, of course, be improved. Indeed, Brent, \emph{et al.} \cite{B3} in $1991$ showed that it must be the case that $N > 10^{300}$.
\end{remrk}
\begin{remrk}\label{remark4} Note that nowhere in the proof of Lemma \ref{lemma10} did we use the fact that $N$ is odd. Hence, Lemma \ref{lemma10} applies to even perfect numbers as well.
\end{remrk}
\begin{paragraph}\indent We can make use of the results on the lower bound for the number of distinct prime factors (latest result is at $9$ by Nielsen), lower bound for the smallest prime factor (currently at $3$ - mathematicians are still unable to rule out the possibility that an OPN may be divisible by $3$), and the nature of the exponents (the special/Euler prime has the sole odd exponent while the rest of the primes have even exponents) to derive a larger lower bound for an OPN.
\end{paragraph}
\begin{paragraph}\indent Since all, except for one, of the distinct prime factors of an OPN $N = \displaystyle\prod_{i=1}^{\omega(N)} {{P_i}^{\alpha_i}}$ have even exponents, then $\alpha_i \geq 2$ for all but one $i$, say $i = j$, for which $\alpha_j \geq 1$. (Note that $\alpha_j$ is the exponent of the special or Euler prime.)  Thereupon, we have the following improvements to the results in Remark \ref{remark3}:
\begin{center}
$\displaystyle\prod_{i=1}^{\omega(N)} {\left(\alpha_i + 1\right)} \geq 3^{\omega(N) - 1}\cdot2$ \\
$N > \displaystyle{\left(\frac{\displaystyle\prod_{i=1}^{\omega(N)} {\left(\alpha_i + 1\right)}}{2}\right)}^2 \geq 3^{2\omega(N) - 2}$ \\
$\omega(N) \geq 9$ [\emph{Nielsen}] $\Rightarrow N > 3^{16} = 43046721$
\end{center}
Note that there is approximately a $655.84\%$ improvement in the magnitude of the bound thus obtained for the smallest possible OPN as compared to the previous crude lower bound of $2^{16}$.  The novelty of the approach of Lemma \ref{lemma10} can be realized if we consider the fact that we did NOT need to check any of the odd numbers below $43046721$ to see if they could be perfect. 
\end{paragraph}
\begin{remrk}\label{remark5} We casually remark that the lower bound of $3^{16}$ obtained for an OPN here improves on the classical bound of $2\cdot{10}^6$ obtained by Turcaninov in $1908$. However, at that time, the best-known bound for the number of distinct prime factors of an OPN $N$ was $\omega(N) \geq 5$, which was shown to be true by Sylvester in $1888$, whereas we used the bound $\omega(N) \geq 9$ by Nielsen ($2006$) here.
\end{remrk}
\begin{paragraph}\indent As mentioned in Remark \ref{remark5}, A. Turcaninov showed in 1908 that no odd number less than two million can be perfect.  The figure $2\cdot{10}^6$ is generally accepted as the minimum in standard texts.  Nonetheless, it is easy to show by means of well-known proofs that the smallest possible OPN must be greater than ten billion (i.e. ${10}^{10}$).
\end{paragraph}
\begin{paragraph}\indent Let the prime factorization of an OPN $N_0$ be given by
\begin{center} $N_0 = {{P_1}^{a_1}}{{Q_1}^{b_1}}{{Q_2}^{b_2}}\cdots{{Q_m}^{b_m}}$
\end{center}
where $a_1$ is odd and $b_1, b_2, \ldots, b_m$ are even. The following conditions must hold: 
\begin{itemize}
\item{Euler had shown that $P_1 \equiv a_1 \equiv 1 \pmod{4}$.}
\item{Sylvester demonstrated that it must be the case that $m \geq 4$.}
\item{Steuerwald proved that $b_1 = b_2 = \cdots = b_m = 2$ is not possible.}
\item{Brauer extended the last result to $b_i \neq 4$, when $b_1 = b_2 = \cdots = b_{i-1} = b_{i+1} = \cdots = b_m = 2$.}
\item{Sylvester also showed that $105 = 3\cdot5\cdot7$ does not divide $N_0$. (We give a proof of this result in Section $3.4$.)}
\end{itemize}
\end{paragraph}
\begin{paragraph}\indent The only numbers less than ten billion which satisfy all these itemized conditions are ${3^6}\cdot{5^2}\cdot{{11}^2}\cdot{{13}^2}\cdot{17}$ and ${3^6}\cdot{5^2}\cdot{{11}^2}\cdot{13}\cdot{{17}^2}$.  One can verify that each of these two is abundant by directly computing the sum of its divisors.
\end{paragraph}  
\begin{paragraph}\indent We end this section with a copy of an email correspondence between the author and Richard Brent, one of the three co-authors of the $1991$ paper which showed that an OPN $N$ must be bigger than ${10}^{300}$.  The significance of the email lies with the fact that the author of this thesis was able to show that $N > p^k \sigma(p^k)$ where $k$ is unrestricted (i.e. $k$ can be even or odd).  In fact, the author was able to show the slightly stronger statement $N \geq \frac{3}{2} p^k \sigma(p^k)$ with $k$ unrestricted.  (Although Dr. Brent did not mention it in his reply, this result could give a higher lower bound for OPNs.)  We shall give a proof of this last result in Theorem $4.2.5$.  For now, let us take a look at the author's email inquiry and how Dr. Brent responded to it:
\end{paragraph} \\
\newpage
\begin{tabular}{ll}
From: & Richard Brent [rpbrent@gmail.com] \\
Sent: & Tuesday, December $04$, $2007$ 9:38 AM \\
To:   & Jose Dris \\
Subject: & Re: Inquiry regarding the lower bound of ${10}^{300}$ that you obtained for \\ 
~ & odd perfect numbers \\
\end{tabular} \\
~ \\
Dear Arnie, \\
~ \\
~ \\
On 22/11/2007, \emph{Jose Dris} $\left\langle{Jose.Dris@safeway.com}\right\rangle$ wrote: \\
\begin{paragraph}\indent \emph{Hi Dr. Brent,}
\end{paragraph} 
\begin{paragraph}\indent I am Arnie Dris, a candidate for the degree of MS in Mathematics at DLSU here at Manila, Philippines. I am currently in the process of writing up a thesis on odd perfect numbers, and I came across your lower bound of ${10}^{300}$ for odd perfect numbers that you obtained together with two co-authors.
\end{paragraph} 
\begin{paragraph}\indent In your paper, you stated that you used the simple observation that \\
$N > p^k \sigma(p^k)$ where $p^k || N$ and $k$ is even.  I would just like to ask whether this observation was used in ALL cases that you have considered, thereby proving that $N > {10}^{300}$ in each case?
\end{paragraph} \\
~ \\ 
It was a long time ago, but as I recall we used that observation in most cases.  There were a few ``hard cases" where we could not compute the sigma function $\sigma(p^k)$ because we did not know the complete factorisation that is needed to do this, e.g. $\sigma({3221}^{42})$ was a $148$-digit composite number that we could not factorise at the time (it may have been factored since then).  In such cases we had to use a more complicated method.  See the ``Proof of Theorem 1" in the paper. \\
~ \\
You can get the paper online at \\ 
http://wwwmaths.anu.edu.au/$\sim$brent/pub/pub116.html \\
and there's also a link there to the computer-generated ``proof tree". \\
~ \\
William Lipp has a project to extend the bound ${10}^{300}$ by much the same method but with more factorisations (since computers are faster now and algorithms such as the number field sieve are available).  He hopes to go at least to ${10}^{400}$ and possibly further. \\
~ \\
Regards, \\
Richard Brent \\
-- \\
Prof R. P. Brent, ARC Federation Fellow \\
MSI, ANU, Canberra, ACT 0200, Australia \\
rpbrent@gmail.com \\
http://wwwmaths.anu.edu.anu/$\sim$brent/ \\
phone: +61-4-18104021 \\
\section{Congruence Conditions for an OPN $N$}
In this section, we revisit a theorem of Jacques Touchard dating back from $1953$.  Touchard proved that any odd perfect number must have the form $12m + 1$ or $36m +9$.  His proof relied on the fact that the numbers $\sigma(k)$ satisfy
\begin{center} ${\displaystyle\frac{{n^2}(n - 1)}{12}}\hspace{0.03truein}{\sigma(n)} = {\displaystyle\sum_{k=1}^{n-1}{\left[5k(n - k) - n^2\right]\sigma(k)\sigma(n - k)}}$,
\end{center}
a recursion relation derived by Balth. van der Pol in 1951 using a nonlinear partial
differential equation.  We give here Judy Holdener's proof of the same result in $2002$, which is much shorter and more elementary than Touchard's proof.  The proof was inspired by Francis Coghlan's solution to Problem \#$10711$ published in the American Mathematical Monthly in $2001$ regarding the nonexistence of two consecutive perfect numbers.  
\begin{paragraph}\indent First, we briefly spell out a lemma on a congruence condition for an OPN $N$:
\end{paragraph}
\begin{lemm}\label{lemma11} If $N \equiv 5 \pmod{6}$, then $N$ is not perfect.
\end{lemm}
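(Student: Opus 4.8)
The plan is to argue by contradiction. Suppose $N$ is perfect and $N \equiv 5 \pmod 6$. The strategy is to exhibit a prime power $q^\alpha \| N$ for which $3 \mid \sigma(q^\alpha)$; since $\sigma$ is multiplicative (Theorem \ref{theorem4}) this forces $3 \mid \sigma(N)$, but perfection gives $\sigma(N) = 2N$, which will turn out to be coprime to $3$ — a contradiction.

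First I would unpack the hypothesis: $N \equiv 5 \pmod 6$ says at once that $N$ is odd and that $N \equiv 2 \pmod 3$, so in particular $3 \nmid N$. Hence $3 \nmid 2N = \sigma(N)$. Next I would locate the ``bad'' prime. Writing the canonical factorization $N = \prod_i P_i^{\alpha_i}$ and reducing modulo $3$, each $P_i$ is $\equiv 1$ or $\equiv 2 \pmod 3$ (as $3 \nmid N$), so $N \equiv 2^{s} \pmod 3$, where $s$ is the sum of the exponents $\alpha_i$ taken over those $i$ with $P_i \equiv 2 \pmod 3$. Since $2 \equiv -1 \pmod 3$, the requirement $N \equiv 2 \pmod 3$ forces $s$ to be \emph{odd}; in particular $s \geq 1$, so some prime $q \equiv 2 \pmod 3$ divides $N$, and because $s$ is odd not all such primes can occur to an even power — at least one such prime $q$ carries an odd exponent $\alpha$. (Alternatively, one can short‑circuit this with Euler's theorem, Theorem \ref{theorem10}: reducing $N = q^{4e+1}\prod_j p_j^{2a_j}$ mod $3$ and using Fermat's little theorem gives $N \equiv q \pmod 3$, so the Euler prime itself satisfies $q \equiv 2 \pmod 3$ and already has the odd exponent $4e+1$.)

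Finally I would compute $\sigma(q^\alpha) = 1 + q + q^2 + \cdots + q^\alpha \equiv 1 + (-1) + 1 + \cdots + (-1) \pmod 3$ using $q \equiv -1 \pmod 3$; since $\alpha$ is odd there are $\alpha + 1$ terms, an even number, and they cancel in pairs, so $3 \mid \sigma(q^\alpha)$. As $q^\alpha \| N$ we have $\sigma(q^\alpha) \mid \sigma(N) = 2N$, hence $3 \mid 2N$ and therefore $3 \mid N$, contradicting $N \equiv 2 \pmod 3$. Thus no perfect $N$ can satisfy $N \equiv 5 \pmod 6$.

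The step I expect to be the only real point of care is the middle one: the residue count must genuinely deliver a prime $\equiv 2 \pmod 3$ that appears to an \emph{odd} power, not merely some prime $\equiv 2 \pmod 3$ (an even power of such a prime is invisible mod $3$ and would make $\sigma(q^\alpha)\equiv 1$ rather than $0$). Tracking the parity of the exponent sum $s$ is exactly what closes that gap; everything else is a routine modular computation.
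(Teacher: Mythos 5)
Your proof is correct, but it takes a genuinely different route from the one in the paper. The paper's argument never touches the factorization of $N$: it notes that $N \equiv 2 \pmod{3}$ forces $N$ to be a non-square, pairs each divisor $d$ with $N/d$, observes that $d + \frac{N}{d} \equiv 0 \pmod{3}$ because the two cofactors of a product $\equiv -1 \pmod{3}$ must lie in opposite nonzero residue classes, and sums over the pairs to get $\sigma(N) \equiv 0 \not\equiv 2N \pmod{3}$. This is deliberately elementary --- Remark \ref{remark6} even advertises that it avoids unique factorization and the multiplicativity of $\sigma$ --- and the identical pairing trick is reused a few lines later modulo $4$ to recover Euler's $N \equiv 1 \pmod{4}$. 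You instead work through the canonical factorization: you track the parity of the total exponent $s$ carried by the primes $\equiv 2 \pmod{3}$, extract a single component $q^{\alpha}$ with $q \equiv -1 \pmod{3}$ and $\alpha$ odd, and let $3 \mid \sigma(q^{\alpha}) \mid \sigma(N) = 2N$ finish the job. The parity bookkeeping is exactly the point of care you flagged, and you close it correctly (an odd sum of nonnegative integers must contain an odd summand), so there is no gap. What your version buys is a localized witness --- you identify \emph{which} prime-power component of $N$ contributes the factor of $3$ --- and this is the same style of reasoning the paper uses in proving Theorem \ref{theorem10} and in its factor-chain arguments; your Euler-prime shortcut is particularly clean for odd $N$. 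What the paper's version buys is independence from multiplicativity and unique factorization, at the cost of needing the observation that $N$ is not a square so that the divisors pair up without a fixed point.
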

\begin{proof} Assume that $N \equiv 5 \pmod{6}$. Then $N$ is of the form $6k + 5 = 3(2k + 1) + 2$, so $N \equiv 2 \pmod{3}$.  Since all squares are congruent to $1$ modulo $3$, $N$ is not a square.  Further, note that for any divisor $d$ of $N$, $N = d\cdot(\frac{N}{d}) \equiv 2 \equiv -1 \pmod{3}$ implies that either $d \equiv -1 \pmod{3}$ and $\frac{N}{d} \equiv 1 \pmod{3}$, or $d \equiv 1 \pmod{3}$ and $\frac{N}{d} \equiv -1 \pmod{3}$.  Either way, $d + (\frac{N}{d}) \equiv 0 \pmod{3}$, and
\begin{center} $\sigma(N) = \displaystyle\sum_{d \mid N, \hspace{0.03truein} d < \sqrt{N}}{\displaystyle\left(d + \frac{N}{d}\right)} \equiv 0 \pmod{3}$.
\end{center}
Therefore, $\sigma(N) \equiv 0 \pmod{3}$ while $2N \equiv 4 \equiv 1 \pmod{3}$.  These computations show that $N = 6k + 5$ cannot be perfect.  
\end{proof}
\begin{paragraph}\indent Using a similar argument, we can also show Euler's result that any OPN is congruent to $1$ modulo $4$.  For suppose otherwise that $N \equiv 3 \pmod{4}$.  Then, again, $N$ is not a square and
\begin{center} $\sigma(N) = \displaystyle\sum_{d \mid N, \hspace{0.03truein} d < \sqrt{N}}{\displaystyle\left(d + \frac{N}{d}\right)} \equiv 0 \pmod{4}$.
\end{center}
Hence, $\sigma(N) \equiv 0 \pmod{4}$, while $2N \equiv 6 \equiv 2 \pmod{4}$.   
\end{paragraph}
\begin{paragraph}\indent Lemma \ref{lemma11} generalizes immediately to the following Corollary:
\end{paragraph}
\begin{cor}\label{corollary5} If $M$ is a number satisfying $M \equiv 2 \pmod{3}$, then $M$ is not perfect.
\end{cor}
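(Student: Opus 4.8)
The plan is to recycle the proof of Lemma~\ref{lemma11}, observing that its argument never genuinely used the hypothesis $N \equiv 5 \pmod{6}$ in full: only the weaker fact $N \equiv 2 \pmod{3}$ was actually needed. So I would simply extract and restate that portion of the reasoning, dropping every appeal to the parity of $N$.

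First I would note that $M \equiv 2 \pmod{3}$ implies $3 \nmid M$, so each positive divisor $d$ of $M$ satisfies $d \equiv 1 \pmod{3}$ or $d \equiv 2 \pmod{3}$. Next I would observe that $M$ cannot be a perfect square, since every square is congruent to $0$ or $1$ modulo $3$, whereas $M \equiv 2 \pmod{3}$. Hence the positive divisors of $M$ partition into disjoint pairs $\{d, M/d\}$ with $d < \sqrt{M} < M/d$.

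For each such pair, the relation $d\cdot(M/d) = M \equiv -1 \pmod{3}$ forces exactly one of $d$ and $M/d$ to be $\equiv 1 \pmod{3}$ and the other $\equiv -1 \pmod{3}$, so that $d + (M/d) \equiv 0 \pmod{3}$. Summing over all the pairs yields $\sigma(M) \equiv 0 \pmod{3}$, while $2M \equiv 4 \equiv 1 \pmod{3}$; therefore $\sigma(M) \neq 2M$ and $M$ is not perfect.

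There is essentially no obstacle here — the only point demanding care is the divisor-pairing step, which is valid precisely because $M$ is not a square, and that non-squareness is exactly what the congruence $M \equiv 2 \pmod{3}$ provides. A purely multiplicative alternative also works: writing $M = \prod {p_i}^{{\alpha}_i}$, the condition $M \equiv 2 \pmod{3}$ forces some prime $p \equiv 2 \pmod{3}$ to occur in $M$ with an odd exponent $\alpha$, whence $\sigma(p^{\alpha}) \equiv 1 - 1 + \cdots + 1 - 1 \equiv 0 \pmod{3}$ and so $3 \mid \sigma(M)$, again contradicting $3 \nmid 2M$; but the divisor-pairing proof is shorter and matches the style already used for Lemma~\ref{lemma11}, so I would present that one.
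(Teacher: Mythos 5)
Your proof is correct, but it takes a genuinely different route from the paper's. The paper proves the corollary by splitting on parity: the odd case is delegated to Lemma~\ref{lemma11}, and for the even case it invokes the Euclid--Euler theorem to write $M = 2^{p-1}(2^p-1)$ and checks that such a number is $\equiv 0$ or $1 \pmod{3}$, never $\equiv 2$. You instead observe that the divisor-pairing argument inside Lemma~\ref{lemma11} never used the parity of $N$ --- only $N \equiv 2 \pmod{3}$ --- and so the whole corollary follows from one uniform argument: $M \equiv 2 \pmod{3}$ forces $M$ to be a non-square coprime to $3$, the divisors pair off as $\{d, M/d\}$ with $d + M/d \equiv 0 \pmod{3}$, hence $\sigma(M) \equiv 0 \not\equiv 1 \equiv 2M \pmod{3}$. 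Your version is shorter, avoids the case split, and stays within the elementary "sum divisors in pairs" spirit that the paper itself highlights in Remark~\ref{remark6}; the paper's version, by contrast, gets the even case almost for free from structure theory it has already established, at the cost of importing the (deeper) Euclid--Euler classification. Both are valid; your secondary multiplicative argument (a prime $p \equiv 2 \pmod{3}$ must occur to an odd power, making $3 \mid \sigma(p^\alpha)$) is also sound, though it reintroduces the multiplicativity of $\sigma$ that the pairing proof deliberately avoids.
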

\begin{proof} Note that Lemma \ref{lemma11} takes care of the case when $M$ is odd.  We now show that the statement is true for even $M$. 
\begin{paragraph}\indent To this end, suppose $M \equiv 2 \pmod{3}$ is even.  We show here that $M$ cannot be perfect.  Suppose to the contrary that $M$ is even perfect.  Then by the Euclid-Euler Theorem, $M = {2^{p - 1}}(2^p - 1)$ for some prime number $p$.  If $p = 2$, then $M = 6$ which is divisible by 3.  Assume $p \geq 3$.  Then $p$ is an odd prime.  Thus, $2^p \equiv (-1)^p \equiv (-1)^{p - 1}(-1) \equiv 1\cdot(-1) \equiv -1 \equiv 2 \pmod{3}$.  Therefore, $2^p - 1 \equiv 1 \pmod{3}$.  Also, $2^{p - 1} \equiv (-1)^{p - 1} \equiv 1 \pmod{3}$ since $p$ is assumed to be an odd prime.  
\end{paragraph}
\begin{paragraph}\indent Thus, if $M$ is to be even perfect, either $M \equiv 0 \pmod{3}$ (which occurs only when $p = 2$ and $M = 6$) or $M \equiv 1 \pmod{3}$ (when $p$ is a prime $\geq 3$).  Consequently, if $M \equiv 2 \pmod{3}$ is even, it cannot be perfect.  Lemma \ref{lemma11} says that this is also true when $M$ is odd.  Hence, we have the general result: If a number $M$ satisfies $M \equiv 2 \pmod{3}$, then $M$ cannot be perfect.
\end{paragraph}
\end{proof}
\begin{paragraph}\indent We we will use the following formulation of the Chinese Remainder Theorem to prove the next major result:
\end{paragraph}
\begin{thm}\label{CRT} Chinese Remainder Theorem \\
Suppose $n_1, n_2, \ldots, n_k$ are integers which are pairwise relatively prime (or coprime).  Then, for any given integers $a_1, a_2, \ldots, a_k$, there exists an integer $x$ solving the system of simultaneous congruences
\begin{center} 
$x \equiv a_1 \pmod{n_1}$ \\
$x \equiv a_2 \pmod{n_2}$ \\
$\vdots$ \\
$x \equiv a_k \pmod{n_k}$
\end{center}
Furthermore, all solutions $x$ to this system are congruent modulo the product $n = {n_1}{n_2}\cdots{n_k}$.  Hence $x \equiv y \pmod{n_i}$ for all $1 \leq i \leq k$, if and only if $x \equiv y \pmod{n}$.  
\begin{paragraph}\indent Sometimes, the simultaneous congruences can be solved even if the $n_i$'s are not pairwise coprime.  A solution $x$ exists if and only if $a_i \equiv a_j \pmod{\gcd(n_i, n_j)}$ for all $i$ and $j$.  All solutions $x$ are then congruent modulo the \emph{least common multiple} of the $n_i$.
\end{paragraph}
\end{thm}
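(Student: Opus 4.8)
The plan is to establish the three assertions of Theorem~\ref{CRT} in turn: existence of a common solution when $n_1, \ldots, n_k$ are pairwise coprime, uniqueness of that solution modulo $n = n_1 n_2 \cdots n_k$, and finally the generalization allowing the $n_i$ to share common factors.

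For existence I would argue constructively. Put $N_i = n / n_i$. Because $n_i$ is coprime to each $n_j$ with $j \neq i$, no prime divisor of $n_i$ can divide the product $N_i$ (this is Euclid's lemma, Theorem~\ref{theorem3} together with Corollary~\ref{corollary1}), so $\gcd(n_i, N_i) = 1$ and there is an integer $M_i$ with $N_i M_i \equiv 1 \pmod{n_i}$, read off from a B\'ezout relation $N_i M_i + n_i t_i = 1$. I then claim that $x = \sum_{i=1}^{k} a_i N_i M_i$ solves the whole system: reducing modulo a fixed $n_j$, every summand with $i \neq j$ vanishes since $n_j \mid N_i$, and the surviving summand is $a_j N_j M_j \equiv a_j \pmod{n_j}$. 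That produces the required $x$.

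For uniqueness, if $x$ and $y$ both solve the system then $n_i \mid (x - y)$ for every $i$. I would then use the auxiliary fact that pairwise coprime integers each dividing a common integer have their product dividing it as well; this follows by induction on $k$ from the base case ``$a \mid m$, $b \mid m$, $\gcd(a,b)=1 \Rightarrow ab \mid m$'' (write $m = au$; then $b \mid au$ with $\gcd(a, b) = 1$ forces $b \mid u$ by Euclid's lemma). Hence $n \mid (x - y)$, i.e. $x \equiv y \pmod n$, and conversely $x \equiv y \pmod n$ trivially gives $x \equiv y \pmod{n_i}$ for every $i$ since each $n_i \mid n$ by Theorem~\ref{theorem2}. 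For the general (not necessarily coprime) case I would first dispose of $k = 2$: writing $x = a_1 + n_1 t$ turns the second congruence into the linear congruence $n_1 t \equiv a_2 - a_1 \pmod{n_2}$, solvable in $t$ precisely when $\gcd(n_1, n_2) \mid (a_2 - a_1)$, and when solvable its solutions $x$ form a single class modulo $\operatorname{lcm}(n_1, n_2)$. One then inducts on $k$, at each step merging the already-combined congruence modulo $\operatorname{lcm}(n_1, \ldots, n_{k-1})$ with the $k$-th congruence.

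The step I expect to be the main obstacle is the bookkeeping in that induction, namely verifying that the pairwise consistency hypotheses $a_i \equiv a_j \pmod{\gcd(n_i, n_j)}$ indeed force consistency of the partially merged congruence with each remaining one. The slickest way to dodge this is probably to pass to prime-power moduli: factor each $n_i$ canonically (Corollary~\ref{corollary3}), replace the congruence modulo $n_i$ by the equivalent system of congruences modulo the prime powers exactly dividing $n_i$, observe that two prime-power congruences to the same prime are consistent iff the one with the smaller modulus is implied by the one with the larger (which is exactly the $\gcd$ condition), keep only the strongest congruence at each prime, and then finish by the pairwise-coprime case already proved. I would write out whichever of the two routes turns out shorter.
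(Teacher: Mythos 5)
The paper does not prove this theorem at all: it states the Chinese Remainder Theorem as a known background result and immediately applies it (to Touchard's theorem and to Roberts's refinement), so there is no in-paper argument to compare yours against. Your proposal is the standard, correct proof and would fill that gap. The constructive existence argument with $N_i = n/n_i$ and the inverses $M_i$ is right, and the uniqueness argument is right, though in the induction for ``pairwise coprime divisors of $x-y$ have product dividing $x-y$'' you should state explicitly why $n_k$ is coprime to the partial product $n_1 n_2 \cdots n_{k-1}$ (a common prime divisor would divide $n_k$ and some $n_i$ with $i<k$, contradicting pairwise coprimality); that is the one step you lean on without saying so. For the non-coprime addendum, your instinct to avoid the merge-and-induct bookkeeping is sound: the reduction to prime-power moduli is the cleaner route, since the pairwise condition $a_i \equiv a_j \pmod{\gcd(n_i,n_j)}$ translates exactly into the compatibility of the two congruences at each prime $p$ modulo $p^{\min(e_i,e_j)}$, after which keeping the congruence with the largest exponent at each prime and invoking the already-proved coprime case yields both solvability and uniqueness modulo $\prod_p p^{\max_i e_i} = \mathrm{lcm}(n_1,\ldots,n_k)$. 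Either route, written out, would be a complete and correct proof of the statement as given.
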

\begin{paragraph}\indent We can now use Lemma \ref{lemma11} to prove Touchard's theorem.
\end{paragraph} 
\begin{thm}\label{theorem13} (Touchard) An OPN must have the form $12m + 1$ or $36m + 9$.
\end{thm}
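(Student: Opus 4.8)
The plan is to determine $N$ modulo $4$, modulo $3$, and --- in one of the two cases --- modulo $9$, and then to glue these congruences together with the Chinese Remainder Theorem (Theorem \ref{CRT}). First I would record the two facts already available. By Euler's characterization (Theorem \ref{theorem10}, or equivalently the remark following Lemma \ref{lemma11}), an OPN satisfies $N \equiv 1 \pmod{4}$; indeed $N = q^{4e+1}{p_1}^{2a_1}\cdots{p_r}^{2a_r}$ with $q \equiv 1 \pmod 4$ forces $N$ to be an odd square times $q^{4e+1} \equiv 1 \pmod 4$. And by Corollary \ref{corollary5}, an OPN cannot be $\equiv 2 \pmod 3$, so either $N \equiv 0 \pmod 3$ or $N \equiv 1 \pmod 3$. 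These two alternatives will produce the two advertised forms.

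In the case $N \equiv 1 \pmod 3$, since $\gcd(3,4) = 1$ the Chinese Remainder Theorem collapses $N \equiv 1 \pmod 3$ and $N \equiv 1 \pmod 4$ into the single congruence $N \equiv 1 \pmod{12}$, i.e. $N = 12m + 1$.

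The case $N \equiv 0 \pmod 3$ requires the one genuinely arithmetic observation. Here $3 \mid N$, but $3 \not\equiv 1 \pmod 4$, so by Theorem \ref{theorem10} the prime $3$ cannot be the special (Euler) prime of $N$; hence $3$ occurs in the canonical factorization of $N$ with an \emph{even} exponent $2a \geq 2$. Consequently $9 \mid N$, that is, $N \equiv 0 \pmod 9$. Applying the Chinese Remainder Theorem to $N \equiv 0 \pmod 9$ and $N \equiv 1 \pmod 4$ (again coprime moduli) yields the unique residue $N \equiv 9 \pmod{36}$, i.e. $N = 36m + 9$. Combining the two cases gives the theorem.

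I do not anticipate a serious obstacle: the only step carrying any content is the remark that $3$ is disqualified as the Euler prime and therefore appears to an even power, which forces $9 \mid N$; everything else is bookkeeping with residue classes. The two points to handle carefully are to \emph{invoke} the already-established facts (Corollary \ref{corollary5} and Euler's $N \equiv 1 \pmod 4$) rather than re-proving them, and to make sure each application of Theorem \ref{CRT} is with coprime moduli so that the glued residue is the unique one modulo $12$, respectively modulo $36$.
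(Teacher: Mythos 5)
Your proof is correct, and its overall skeleton (split on $N$ modulo $3$ using Corollary \ref{corollary5}, record $N \equiv 1 \pmod 4$ from Euler, glue with the Chinese Remainder Theorem) matches the paper's. The one substantive step --- showing that $3 \mid N$ forces $9 \mid N$ --- is where you diverge. You invoke the full Euler characterization (Theorem \ref{theorem10}): since $3 \not\equiv 1 \pmod 4$, the prime $3$ cannot be the special prime, so it carries an even exponent and $9 \mid N$. The paper instead writes $N = 12m_0 + 9 = 3(4m_0+3)$ and argues that if $3 \nmid m_0$ then $\sigma(N) = \sigma(3)\sigma(4m_0+3) = 4\sigma(4m_0+3) \equiv 0 \pmod 4$ while $2N \equiv 2 \pmod 4$, a contradiction. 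Both arguments are valid. The paper's version is deliberately lighter-weight: as Remark \ref{remark6} emphasizes, Holdener's proof uses sigma multiplicativity only in that single step and never needs the structure theorem on exponents, only the pairing argument that gives $N \equiv 1 \pmod 4$. Your version is shorter to state but leans on the heavier Theorem \ref{theorem10}; since that theorem is proved earlier in the paper anyway, nothing is lost logically, but the paper's route shows the result is more elementary than it first appears.
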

\begin{proof} Let $N$ be an OPN and apply Lemma \ref{lemma11}.  Any number of the form $6k + 5$ cannot be perfect, so $N$ must be of the form $6k + 1$ or $6k + 3$.  But from Euler's result, we know that $N$ is of the form $4j + 1$.  Hence either $N = 6k + 1$ and $N = 4j + 1$, or $N = 6k + 3$ and $N = 4j + 1$.  We now attempt to solve these two sets of simultaneous equations for $N$, thereby deriving congruence conditions for $N$:
\begin{paragraph}\indent Case 1: $N = 6k + 1$ and $N = 4j + 1$.  This means that $N - 1 = 6k = 4j = LCM(4, 6)m = 12m$ (by the Chinese Remainder Theorem) where $k = 2m$ and $j = 3m$, which implies that $N = 12m + 1$.
\end{paragraph}
\begin{paragraph}\indent Case 2: $N = 6k + 3$ and $N = 4j + 1$.  This means that $N + 3 = 6k + 6 = 4j + 4 = 6(k + 1) = 4(j + 1) = LCM(4, 6)m = 12p$ (by the Chinese Remainder Theorem) where $k + 1 = 2p$ and $j + 1 = 3p$, which implies that $N = 12p - 3$.  On setting $p = m_0 + 1$, we get $N = 12m_0 + 9$.
\end{paragraph}
\begin{paragraph}\indent Finally, note that in Case 2, if $N = 12m_0 + 9$ and $3 \nmid m_0$, then $\sigma(N) = \sigma(3(4m_0 + 3)) = \sigma(3)\sigma(4m_0 + 3) = 4\sigma(4m_0 + 3)$.  With this, we have $\sigma(N) \equiv 0 \pmod{4}$, while $2N = 2(12m_0 + 9) = 24m_0 + 18 = 4(6m_0 + 4) + 2 \equiv 2 \pmod{4}$.  Therefore, $N$ cannot be perfect if $3 \nmid m_0$ in Case 2, and we conclude that $3 \mid m_0$ in this case, and on setting $m_0 = 3m$, we get $N = 12m_0 + 9 = 12(3m) + 9 = 36m + 9$.
\end{paragraph}
\end{proof}
\begin{remrk}\label{remark6} We emphasize that Touchard's theorem is really simple.  Holdener's proof as presented here is indeed elementary; it does not make use of the concept of unique factorization nor of sigma multiplicativity (other than in showing that $3$ divides $m$ when $12m + 9$ is perfect).  Touchard's result emerges after summing divisors in pairs, and this can always be done because perfect numbers are never squares.
\end{remrk}
\begin{paragraph}\indent In January of 2008, Tim Roberts made a post at
\begin{center} http://www.unsolvedproblems.org/UP/Solutions.htm 
\end{center}
where he outlined an improvement to Theorem \ref{theorem13}.
\end{paragraph}
\begin{thm}\label{theorem14} (Roberts) Let $N$ be an OPN.  Then either one of the following three congruences must hold:
\begin{itemize}
\item{$N \equiv 1 \pmod{12}$.}
\item{$N \equiv 117 \pmod{468}$.}
\item{$N \equiv 81 \pmod{324}$.}
\end{itemize}
\end{thm}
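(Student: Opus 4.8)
The plan is to bootstrap from Touchard's theorem (Theorem~\ref{theorem13}), which already gives $N \equiv 1 \pmod{12}$ or $N \equiv 9 \pmod{36}$. The first alternative is exactly the first of the three listed congruences, so all the real work is to refine the case $N \equiv 9 \pmod{36}$. In that case $9 \mid N$, and I would immediately look at the exact power $3^{a} \,\|\, N$ appearing in the canonical factorization of $N$.

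The decisive first observation is that $3$ cannot be the special (Euler) prime of $N$: by Euler's characterization (Theorem~\ref{theorem10}) the Euler prime $q$ satisfies $q \equiv 1 \pmod 4$, whereas $3 \equiv 3 \pmod 4$. Hence $a$ is one of the \emph{even} exponents of $N$, and since $9 \mid N$ we have $a \geq 2$. This is what makes the analysis split cleanly into just two subcases, $a = 2$ and $a \geq 4$, with no odd value of $a$ to handle. In the subcase $a = 2$ I would write $N = 3^{2}M$ with $\gcd(3,M) = 1$; multiplicativity of $\sigma$ together with $\sigma(N) = 2N$ gives $13\,\sigma(M) = 18M$, and since $\gcd(13,18) = 1$ this forces $13 \mid M$, hence $13 \mid N$. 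Now $N$ is divisible by both $9$ and $13$ and satisfies $N \equiv 1 \pmod 4$ (Euler's congruence, also recovered in the discussion following Lemma~\ref{lemma11}); since $4$, $9$, $13$ are pairwise coprime with product $468$ and $117 \equiv 1 \pmod 4$, the Chinese Remainder Theorem (Theorem~\ref{CRT}) pins $N$ down to $N \equiv 117 \pmod{468}$. In the subcase $a \geq 4$ (equivalently, $27 \mid N$, since $a$ is even it cannot equal $3$) we simply have $81 = 3^{4} \mid N$; combining $81 \mid N$ with $N \equiv 1 \pmod 4$ — here $\gcd(81,4) = 1$, the product is $324$, and $81 \equiv 1 \pmod 4$ — the Chinese Remainder Theorem yields $N \equiv 81 \pmod{324}$. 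These two subcases exhaust $a \geq 2$ even, so the trichotomy is complete.

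I do not anticipate a serious obstacle: the whole argument is Touchard's theorem plus a $\sigma$-multiplicativity divisibility deduction plus two CRT computations. The one point that must not be glossed over is the parity of the exponent of $3$ — it is precisely the remark that $3$ is never the Euler prime that forbids odd $a$ and lets the case list close up; without it the putative trichotomy would leak. As a sanity check I would verify that $117 \equiv 81 \equiv 9 \pmod{36}$, so that both refined congruences are genuinely consistent with Touchard's $N \equiv 9 \pmod{36}$, confirming that no case has been lost or misrecorded.
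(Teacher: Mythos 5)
Your proposal is correct and follows essentially the same route as the paper's proof: Euler's characterization forces the exponent of $3$ to be even, the case $3^2\,\|\,N$ yields $13 = \sigma(3^2) \mid N$ and hence $N \equiv 117 \pmod{468}$ by CRT, and the case of exponent at least $4$ yields $81 \mid N$ and hence $N \equiv 81 \pmod{324}$. Your explicit derivation of $13 \mid N$ from $13\,\sigma(M) = 18M$ just spells out what the paper calls the factor chain step, so there is no substantive difference.
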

\begin{proof} Let $N$ be an OPN.  We note that, if $3 \mid N$, then $3^k \mid N$, where $k$ is even (Euler).  If $k = 0$, then by Theorem \ref{theorem13}, $N \equiv 1 \pmod{12}$.  Also, by the factor chain approach, if $N$ is an OPN and a factor of $N$ is $3^k$, then $N$ is also divisible by $\sigma(3^k) = 1 + 3 + 3^2 + \ldots + 3^k$. If $k = 2$, then again by Theorem \ref{theorem13}, $N \equiv 9 \pmod{36}$. Further, since $N$ is an OPN, we know that $\sigma(3^2) = 1 + 3 + 3^2 = 13$ divides $N$.  Hence, $N \equiv 0 \pmod{13}$.  From the Chinese Remainder Theorem, we can deduce that $N \equiv 117 \pmod{468}$.  If $k > 2$, then $N$ is divisible by $3^4 = 81$.  Thus, (again by Theorem \ref{theorem13}) $N$ must satisfy both $N \equiv 9 \pmod{36}$ and $N \equiv 0 \pmod{81}$.  Again, from the Chinese Remainder Theorem, we can deduce that $N \equiv 81 \pmod{324}$.  Thus, if $N$ is an OPN, then either $N \equiv 1 \pmod{12}$, $N \equiv 117 \pmod{468}$ or $N \equiv 81 \pmod{324}$.   
\end{proof}
\begin{paragraph}\indent It is, of course, similarly possible to further refine the last of these results, by separately considering even values of $k$ bigger than 4.  
\end{paragraph}
\section{Some Interesting Results on Perfect Numbers}
We conclude this chapter with the following (interesting) results on perfect numbers (with emphasis on OPNs):
\begin{itemize}
\item{No two consecutive integers can be both perfect.}
\item{An odd perfect number cannot be divisible by $105$.}
\item{An odd perfect number must be a sum of two squares.}
\end{itemize}
\subsection{Nonexistence of Consecutive Perfect Numbers}
\begin{paragraph}\indent From Corollary \ref{corollary5}, we see that a number $M$ (odd or even) satisfying $M \equiv 2 \pmod{3}$ cannot be perfect.  We make use of this observation to prove that no two consecutive integers can be both perfect.  (This was shown to be true by James Riggs and Judy Holdener through a joint undergraduate research project in $1998$.) 
\end{paragraph}
\begin{paragraph}\indent  First, suppose $N$ is an OPN.  By Euler's characterization of OPNs, $N \equiv 1 \pmod{4}$.  We claim that $N + 1$ cannot be an even perfect number.  Observe that $N + 1 \equiv 2 \pmod{4}$ means that $2 \mid (N + 1)$ but $4 \nmid (N + 1)$.  The only even perfect number of the form $N + 1 = {2^{p - 1}}(2^p - 1)$ satisfying these two conditions is the one for $p = 2$, i.e. $N + 1 = 6$.  But this means that, by assumption, $N = 5$ must be an OPN, contradicting the fact that $N = 5$ is deficient.
\end{paragraph}
\begin{paragraph}\indent Next, we also claim that $N - 1$ cannot be (even) perfect, if $N$ is an OPN, where $N - 1 \equiv 0 \pmod{4}$.  From the discussion of the proof of Corollary \ref{corollary5}, since $N - 1 \equiv 0 \pmod{4}$ it follows that $N - 1 = 2^{p - 1}(2^p - 1)$ for some primes $p$ and $2^p - 1$ with $p \geq 3$.  Thus, $N - 1 \equiv 1 \pmod{3}$, which implies that $N \equiv 2 \pmod{3}$.  But our original assumption was that $N$ is an OPN, contradicting the criterion in Corollary \ref{corollary5}.  Consequently, this means that $N - 1$ is not perfect in this case.
\end{paragraph}
\begin{paragraph}\indent We have shown in the preceding paragraphs that, if $N$ is an OPN of the form $4m + 1$, then it cannot be true that $N - 1$ or $N + 1$ are also (even) perfect.  To fully prove the assertion in the title of this section, we need to show that $N - 1$ and $N + 1$ cannot be OPNs if $N$ is an even perfect number.
\end{paragraph}
\begin{paragraph}\indent To this end, suppose $N$ is even perfect, that is, $N = 2^{p - 1}(2^p - 1)$ for some primes $p$ and $2^p - 1$.  If $p = 2$, then $N = 6$, and clearly, $N - 1 = 5$ and $N + 1 = 7$ are not perfect since they are both primes (and are therefore deficient).
\end{paragraph}
\begin{paragraph}\indent Now let $p$ be a prime which is at least $3$.  Then $N \equiv 0 \pmod{4}$, whence it follows that $N - 1 \equiv 3 \pmod{4}$ and $N - 1$ cannot be an OPN by Euler's characterization.  Also, from the proof of Corollary \ref{corollary5}, note that if $N$ is an even perfect number with $p \geq 3$, then $N \equiv 1 \pmod{3}$, which implies that $N + 1 \equiv 2 \pmod{3}$, which further means that $N + 1$ cannot be perfect by the criterion in Corollary \ref{corollary5}.
\end{paragraph}
\subsection{OPNs are Not Divisible by $105$}
\begin{paragraph}\indent Mathematicians have been unable, so far, to eliminate the possibility that an odd perfect number may be divisible by $3$. However, by use of Lemma \ref{lemma8}, we can show that an odd number divisible by $3$, $5$ and $7$ cannot be perfect. (This was proved by Sylvester in $1888$.)
\end{paragraph}
\begin{paragraph}\indent To this end, suppose that $N$ is an OPN that is divisible by $3$, $5$, and $7$.  Then $N$ takes the form $N = {3^a}{5^b}{7^c}\hspace{0.03truein}n$.  Suppose that $5$ is the special/Euler prime of $N$, so that $b \geq 1$. By Euler's characterization of OPNs, $a$ and $c$ must be even, so we may take $a \geq 2$ and $c \geq 2$.  Without loss of generality, since $\omega(N) \geq 9$ (see Remark \ref{remark2}) we may safely assume that $n > {11}^2$.  Then $I(n) > 1$, and consequently, by use of Lemma \ref{lemma8}, we have:
\begin{center} {$2 \geq \displaystyle\left(1 + \displaystyle\frac{1}{3} + \displaystyle\frac{1}{3^2}\right)\displaystyle\left(1 + \displaystyle\frac{1}{5}\right)\displaystyle\left(1 + \displaystyle\frac{1}{7} + \displaystyle\frac{1}{7^2}\right)\hspace{0.03truein}I(n)$} \\ 
{$ > \displaystyle\frac{13}{9}\frac{6}{5}\frac{57}{49}\cdot1 = \displaystyle\frac{4446}{2205} > 2.0163 > 2$}
\end{center}
resulting in the contradiction $2 > 2$.  
\end{paragraph}
\begin{paragraph}\indent If, in turn, we assume that $5$ is not the special/Euler prime of $N$ (so that $b \geq 2$), then without loss of generality, since $n$ must contain the special/Euler prime and $\omega(N) \geq 9$, we can assume that $n > 13$.  In this case, it is still true that $I(n) > 1$ (also that $a \geq 2$ and $c \geq 2$).  Hence, by use of Lemma \ref{lemma8}, we obtain:
\begin{center} {$2 \geq \displaystyle\left(1 + \displaystyle\frac{1}{3} + \displaystyle\frac{1}{3^2}\right)\displaystyle\left(1 + \displaystyle\frac{1}{5} + \displaystyle\frac{1}{5^2}\right)\displaystyle\left(1 + \displaystyle\frac{1}{7} + \displaystyle\frac{1}{7^2}\right)\hspace{0.03truein}I(n)$} \\ 
{$ > \displaystyle\frac{13}{9}\frac{31}{25}\frac{57}{49}\cdot1 = \displaystyle\frac{22971}{11025} > 2.0835 > 2$}
\end{center}
resulting, again, in the contradiction $2 > 2$.  
\end{paragraph}
\begin{paragraph}\indent We are therefore led to conclude that an OPN cannot be divisible by $3\cdot5\cdot7 = 105$.
\end{paragraph}
\subsection{OPNs as Sums of Two Squares}
\begin{paragraph}\indent We borrow heavily the following preliminary material from \\ http://en.wikipedia.org/wiki/Proofs\_of\_Fermat's\_theorem\_on\_sums\_of\_two\_squares. \\ 
This is in view of the fact that the special/Euler prime $p$ of an OPN $N = {p^k}{m^2}$ satisfies $p \equiv 1 \pmod{4}$.
\end{paragraph}
\begin{paragraph}\indent Fermat's theorem on sums of two squares states that an odd prime $p$ can be expressed as $p = x^2 + y^2$ with $x$ and $y$ integers \emph{if and only if} $p \equiv 1 \pmod{4}$. It was originally announced by Fermat in $1640$, but he gave no proof. The \emph{only if} clause is trivial: the squares modulo $4$ are $0$ and $1$, so $x^2 + y^2$ is congruent to $0$, $1$, or $2$ modulo $4$. Since $p$ is assumed to be odd, this means that it must be congruent to $1$ modulo $4$.
\end{paragraph}
\begin{paragraph}\indent Euler succeeded in proving Fermat's theorem on sums of two squares in $1747$, when he was forty years old. He communicated this in a letter to Goldbach dated $6$ May $1747$. The proof relies on infinite descent, and proceeds in five steps; we state the first step from that proof below as we will be using it in the next paragraph:
\begin{itemize}
\item{The product of two numbers, each of which is a sum of two squares, is itself a sum of two squares.}
\end{itemize}
\end{paragraph}
\begin{paragraph}\indent Given an OPN $N = {p^k}{m^2}$, since $p \equiv 1 \pmod{4}$, by Fermat's theorem we can write $p$ as a sum of two squares.  By Euler's first step above, $p^k$ can likewise be expressed as a sum of two squares, $p^k$ being the product of $k \hspace{0.05truein} p$'s.  Hence, we can write $p^k = r^2 + s^2$ for some positive integers $r$ and $s$.  Multiplying both sides of the last equation by $m^2$, we get $N = {p^k}{m^2} = {m^2}(r^2 + s^2) = (mr)^2 + (ms)^2$. Hence, an odd perfect number may be expressed as a sum of two squares.
\end{paragraph}
\begin{remrk}\label{remark7} Let $\theta(n)$ be the number of integers $k \leq n$ that can be expressed as $k = a^2 + b^2$, where $a$ and $b$ are integers. Does the limit $\displaystyle\lim_{n\to\infty} {\frac{\theta(n)}{n}}$ exist and what is its value?  Numerical computations suggest that it exists.  $\theta(n)$ is approximately $\displaystyle\left(\frac{3}{4}\right)\hspace{0.03truein}\cdot\displaystyle\frac{n}{\displaystyle\sqrt{\log(n)}}$, so that the limit exists and equals zero. More precisely, the number of integers less than $n$ that are sums of two squares behaves like  $K\hspace{0.03truein}\cdot\displaystyle\frac{n}{\displaystyle\sqrt{\log(n)}}$ where $K$ is the \emph{Landau-Ramanujan} constant.  Dave Hare from the Maple group has computed $10000$ digits of $K$.  The first digits are $K \approx 0.764223653...$.  See
http://www.mathsoft.com/asolve/constant/lr/lr.html for more information.  Therefore the sums of two squares have density $0$.  Since OPNs are expressible as sums of two squares, then OPNs have density $0$, too.
\end{remrk} 
\begin{paragraph}\indent In the next chapter, we shall take a closer look into the nature of abundancy outlaws (which were first described in Section $2.2$).  We shall also describe a systematic procedure on how to bound the prime factors of an OPN $N$, using the latest current knowledge on $N$ as well as some novel results.  Lastly, we shall discuss some of the original results of the author pertaining to inequalities between the components of an OPN.
\end{paragraph} 
\chapter{OPN Solution Attempts 2: \\ Some New Approaches}
In Chapter 3, we saw how increasing the lower bounds for $\omega(N)$ (the number of distinct prime factors of an OPN $N$) and $N$ itself could potentially prove or disprove the OPN Conjecture.  We also saw how the concept of divisibility may be used to derive congruence conditions for $N$.
\begin{paragraph}\indent Here, we shall take a closer look into the following new approaches for attempting to solve the OPN Problem:
\begin{itemize}
\item{What are abundancy outlaws?  How are they related to abundancy indices?  How could one use the concept of abundancy outlaws to (potentially) disprove the OPN Conjecture?}
\item{How can one bound the prime factors of an OPN $N$?  Is there a systematic procedure on how to do this? (We discuss the author's results on the relationships between the components of $N$ in Subsection $4.2.4$.)}
\item{Can we use the abundancy index concept to ``count" the number of OPNs? (We answer this question in the negative for a particular case.)}
\end{itemize}
\end{paragraph}
\begin{paragraph}\indent The reader is advised to review Section 2.2 of this thesis prior to commencing a study of this chapter.
\end{paragraph}
\section{Abundancy Outlaws and Related Concepts}
Modern treatments of problems involving the abundancy index have been concerned with two fundamental questions: \\
~ \\
\begin{tabular}{ll}
I. & Given a rational number $\displaystyle\frac{a}{b}$, does there exist some positive integer $x$ such that \\ 
~ & $I(x) = \displaystyle\frac{\sigma(x)}{x} = \displaystyle\frac{a}{b}$? \\
II. & When does the equation $I(x) = \displaystyle\frac{a}{b}$ have exactly one solution for $x$? \\
\end{tabular}
\begin{paragraph}\indent We give various answers to these two questions in the three subsections that follow.
\end{paragraph}
\subsection{Friendly and Solitary Numbers}
\begin{paragraph}\indent If $x$ is the unique solution of $I(x) = \displaystyle\frac{a}{b}$ (for a given rational number $\displaystyle\frac{a}{b}$) then
$x$ is called a \emph{solitary number}. On the other hand, if $x$ is one
of at least two solutions of $I(x) = \displaystyle\frac{a}{b}$ (for a given rational
number $\displaystyle\frac{a}{b}$) then $x$ is called a \emph{friendly number}.  We formalize these two concepts in the following definition:
\end{paragraph}
\begin{defn}\label{definition12} Let $x$ and $y$ be distinct positive integers.  If $x$ and $y$ satisfy the equation $I(x) = I(y)$ then $(x, y)$ is called a \emph{friendly pair}.  Each member of the pair is called a \emph{friendly number}.  A number which is not friendly is called a \emph{solitary number}.
\end{defn}
\begin{paragraph}\indent We illustrate these concepts with several examples.
\end{paragraph}
\begin{exmpl}\label{example11} Clearly, if $a$ and $b$ are perfect numbers with $a \neq b$ (i.e. $\sigma(a) = 2a$, $\sigma(b) = 2b$), then $(a, b)$ is a friendly pair.
\end{exmpl} 
\begin{exmpl}\label{example12} We claim that, given a positive integer $n$ satisfying $\gcd(n, 42) = 1$, $(6n, 28n)$ is a friendly pair.  To prove this, note that $\gcd(n, 42) = 1$ means that $\gcd(n, 2) = \gcd(n, 3) = \gcd(n, 7) = 1$.  Let us now compute $I(6n)$ and $I(28n)$ separately.  Since $\gcd(n, 2) = \gcd(n, 3) = 1$, then $\gcd(n, 6) = 1$ and $I(6n) = I(6)I(n) = 2I(n)$ since $6$ is a perfect number.  Similarly, since $\gcd(n, 2) = \gcd(n, 7) = 1$, then $\gcd(n, 28) = 1$ and $I(28n) = I(28)I(n) = 2I(n)$ since $28$ is a perfect number.  These computations show that $I(6n) = I(28n) = 2I(n)$ whenever $\gcd(n, 42) = 1$, and therefore $(6n, 28n)$ is a friendly pair for such $n$.
\end{exmpl}
\begin{remrk}\label{remark8} Since there exist infinitely many positive integers $n$ satisfying \\ $\gcd(n, 42) = 1$, Example \ref{example12} shows that there exist infinitely many friendly numbers.
\end{remrk}
\begin{exmpl}\label{example13} M. G. Greening showed in $1977$ that numbers $n$ such that \\ $\gcd(n, \sigma(n)) = 1$, are solitary.  For example, the numbers $1$ through $5$ are all solitary by virtue of Greening's criterion.  There are $53$ numbers less than $100$, which are known to be solitary, but there are some numbers, such as $10$, $14$, $15$, and $20$ for which we cannot decide ``solitude".  (This is because it is, in general, difficult to determine whether a particular number is solitary, since the only tool that we have so far to make such determination, namely Greening's result, is sufficient but not necessary.  In the other direction, if any numbers up to $372$ (other than those listed in the \emph{Online Encyclopedia of Integer Sequences}) are friendly, then the smallest corresponding values of the friendly pairs are $> {10}^{30}$ \cite{H2}.)  Also, we remark that there exist numbers such as $n = 18, 45, 48$ and $52$ which are solitary but for which $\gcd(n, \sigma(n)) \neq 1$.
\end{exmpl}
\begin{paragraph}\indent We give here a proof of Greening's criterion.  Suppose that a number $n$ with $\gcd(n, \sigma(n)) = 1$ is not solitary.  Then $n$ is friendly, i.e. there exists some number $x \neq n$ such that $I(n) = I(x)$ .  This is equivalent to $x\sigma(n) = n\sigma(x)$, which implies that, since $\gcd(n, \sigma(n)) = 1$, $n \mid x$ or $x$ is a multiple of $n$.  Thus any friend of $n$ must be a (nontrivial) multiple of it (since $n \neq x$).  Hence we can write $x = mn$ where $m \geq 2$.  Write $m = jk$ with $\gcd(j, n) = 1$ and $\gcd(k, n) > 1$. Then by virtue of Lemma \ref{lemma2}, $I(x) > I(kn)$ since $kn$ is a factor of $x$ (unless $j = 1$), which implies that $I(x) > I(n)$ (since $k > 1$ and this follows from $\gcd(k, n) > 1$).  This is a contradiction.  If $j = 1$, then we have $x = kn$ with $\gcd(k, n) > 1$.  Again, by virtue of Lemma \ref{lemma2} and similar considerations as before, $I(x) > I(n)$ (unless $k = 1$, but this cannot happen since $jk = m \geq 2$) which is again a contradiction.  Thus, numbers $n$ with $\gcd(n, \sigma(n)) = 1$ are solitary.     
\end{paragraph}
\begin{exmpl}\label{example14} We claim that primes and powers of primes are solitary. It suffices to show that $p^k$ and $\sigma(p^k)$ are relatively prime.  To this end, consider the equation $(p - 1)\sigma(p^k) = p^{k+1} - 1$.  Since this can be rewritten as $(1 - p)\sigma(p^k) + p\cdot{p^k} = 1$, then we have $\gcd(p^k, \sigma(p^k)) = 1$.  By Greening's criterion, $p^k$ is solitary.  Thus, primes and powers of primes are solitary.
\end{exmpl}
\begin{remrk}\label{remark9} Since there are infinitely many primes (first proved by Euclid), and therefore infinitely many prime powers, Example \ref{example14} shows that there are infinitely many solitary numbers.
\end{remrk}
\begin{remrk}\label{remark10} While not much is known about the nature of solitary numbers, we do know that the density of friendly numbers is positive, first shown by $Erd\ddot{o}s$ \cite{E}: The number of solutions of $I(a) = I(b)$ satisfying $a < b \leq x$ equals $Cx + o(x)$, where $C > 0$ is a constant (in fact, $C \geq \frac{8}{147}$ \cite{A2}). In $1996$, Carl Pomerance told Dean Hickerson that he could prove that the solitary numbers have positive density, disproving a conjecture by Anderson and Hickerson in $1977$. However, this proof seems not to ever have been published.
\end{remrk}
\subsection{Abundancy Indices and Outlaws}
\begin{paragraph}\indent On the other hand, rational numbers $\displaystyle\frac{a}{b}$ for which $I(x) = \displaystyle\frac{a}{b}$ has no solution for $x$ are called \emph{abundancy outlaws}. (Recall Definition \ref{definition11}.)  Of course, those rationals $\displaystyle\frac{a}{b}$ for which $I(x) = \displaystyle\frac{a}{b}$ has at least one solution for $x$ are called \emph{abundancy indices}.
\end{paragraph}
\begin{paragraph}\indent It is best to illustrate with some examples.
\end{paragraph}
\begin{exmpl}\label{example15} At once, Lemma \ref{lemma5} reveals a class of abundancy outlaws.  Since that lemma says that $\displaystyle\frac{m}{n}$ is an outlaw when $1 < \displaystyle\frac{m}{n} < \displaystyle\frac{\sigma(n)}{n}$ (with $\gcd(m, n) = 1$), then we have the class $\displaystyle\frac{\sigma(N) - t}{N}$ of outlaws (with $t \geq 1$). (We shall show later that, under certain conditions, $\displaystyle\frac{\sigma(N) + t}{N}$ is also an abundancy outlaw.) 
\end{exmpl}
\begin{exmpl}\label{example16} Let $a, b, c$ be positive integers, and let $p$ be a prime such that $\gcd(a, p) = 1$, $b = p^c$ (so that $b$ is a prime power, and $\gcd(a, b) = 1$), and $a \geq \sigma(b)$.  Suppose we want to find a positive integer, $n$, such that $\displaystyle\frac{\sigma(n)}{n} = \displaystyle\frac{a}{b}$.  (That is, we want to determine if $\displaystyle\frac{a}{b}$ is an abundancy index or not.)  This problem is equivalent to the problem of finding positive integers $m, k$ such that:
\begin{itemize}
\item{$n = mp^k, k \geq c,$ and $\gcd(m, p) = 1$ (or equivalently, $\gcd(m, b) = 1$)}
\item{$\displaystyle\frac{\sigma(m)}{m} = \displaystyle\frac{a{p}^{k - c}}{\sigma(p^k)}$}.
\end{itemize}
We will formally state this result as a lemma later (where we will then present a proof), but for now let us see how we may apply this result towards showing that the fraction $\displaystyle\frac{7}{2}$ is an abundancy index.  (Indeed, we are then able to construct an explicit $n$ satisfying $\displaystyle\frac{\sigma(n)}{n} = \displaystyle\frac{a}{b}$ for a given $\displaystyle\frac{a}{b}$.)
\begin{paragraph}\indent We now attempt to find a positive integer $n$ such that $\displaystyle\frac{\sigma(n)}{n} = \displaystyle\frac{7}{2}$.  This problem, is equivalent to the problem of finding positive integers $m$, $k$ such that: \\
~ \\
\begin{tabular}{ll}
$1.1$ & $n = {2^k}m$, where $\gcd(2, m) = 1$ (that is, $k$ is the largest power of $2$ to divide $n$) \\
$1.2$ & $\displaystyle\frac{\sigma(m)}{m} = \displaystyle\frac{7\cdot{2^{k - 1}}}{\sigma(2^k)}$. \\
\end{tabular} 
\end{paragraph}
\begin{paragraph}\indent We will now check different values of $k$, attempting each time to find $m$ satisfying these conditions subject to the choice of $k$.  For each $k$, we will proceed until one of the following happens:
\begin{itemize}
\item{We find $m$ satisfying ($1.1$) and ($1.2$). In this case, $n = {2^k}m$ is a solution to our problem.}
\item{We prove that there is no $m$ satisfying ($1.1$) and ($1.2$).  In this case, there is no solution to our problem of the form $n = {2^k}m$, where $\gcd(2, m) = 1$.}
\item{The problem becomes impractical to pursue.  Often a given value of $k$ will leave us with a problem which either cannot be solved with this method, or which is too complicated to be solved in a reasonable amount of time.}
\end{itemize}
\end{paragraph}
\begin{paragraph}\indent We start with $k = 1$.  Then our conditions are: \\
~ \\
\begin{tabular}{ll}
$2.1$ & $n = 2m$, and $\gcd(m, 2) = 1$ \\
$2.2$ & $\displaystyle\frac{\sigma(m)}{m} = \displaystyle\frac{7\cdot{2^{1 - 1}}}{\sigma(2^1)} = \displaystyle\frac{7}{3}$. \\
\end{tabular}
\end{paragraph}
\begin{paragraph}\indent Thus, our problem is to find $m$ satisfying ($2.1$) and ($2.2$).  Let us carry the process one step further for the case $k = 1$.  To do this, we will treat $m$ in the same manner in which we initially treated $n$.  Our goal is to find $m_1, k_1$ such that:
\\
~ \\
\begin{tabular}{ll}
$3.1$ & $m = {3^{k_1}}{m_1}$, and $\gcd(m_1, 3) = 1$.  (Note that since $m_1 \mid m$, and $\gcd(m, 2) = 1$, \\ ~ & we actually need $\gcd(m_1, 6) = 1$.) \\
$3.2$ & $\displaystyle\frac{\sigma(m_1)}{m_1} = \displaystyle\frac{7\cdot{3^{k_1 - 1}}}{\sigma(3^{k_1})}$. \\
\end{tabular}  
\end{paragraph}
\begin{paragraph}\indent Let us now check the case of $k = 1$, $k_1 = 1$.  Our goal is to find $m_1$ such that: \\
~ \\
\begin{tabular}{ll}
$4.1$ & $m = 3{m_1}$, and $\gcd(m_1, 6) = 1$ \\
$4.2$ & $\displaystyle\frac{\sigma(m_1)}{m_1} = \displaystyle\frac{7}{4}$. \\
\end{tabular}
\end{paragraph}
\begin{paragraph}\indent Thus, we want to find some positive integer $m_1$ such that $\gcd(m_1, 6) = 1$ and $\displaystyle\frac{\sigma(m_1)}{m_1} = \displaystyle\frac{7}{4}$.  However, if $\displaystyle\frac{\sigma(m_1)}{m_1} = \displaystyle\frac{7}{4}$, then $4 \mid m_1$ (since $\gcd(4, 7) = 1$).  If $4 \mid m_1$, then $\gcd(m_1, 6) \neq 1$, a contradiction.  Therefore, there is no such $m_1$.
\end{paragraph}
\begin{paragraph}\indent Our method has shown that, in the case of $k = 1$, $k_1 = 1$, there is no positive integer $n$ which solves our original problem.  In particular, $n$ is not of the form $n = {2^1}m = {2^1}({3^1}(m_1)) = 6{m_1}$, where $\gcd(m_1, 6) = 1$.  (Another way to say this is that $6$ is not a \emph{unitary divisor} of any solution to our problem.) 
\end{paragraph}
\begin{paragraph}\indent $($Note that this does not prove the nonexistence of a solution to our problem; it only disproves the existence of a solution of the form given in the last paragraph.  In order to disprove the existence of a solution of any given problem, we have to show that no solution exists for \emph{any} value of $k$.  Here we have not even eliminated the case of $k = 1$, but only the special case where $k_1 = 1$.$)$
\end{paragraph}
\begin{paragraph}\indent Let us move on to $k = 2$.  Our goal is to find $m$ such that: \\
~ \\
\begin{tabular}{ll}
$5.1$ & $n = {2^2}m = 4m$, with $\gcd(m, 2) = 1$ \\
$5.2$ & $\displaystyle\frac{\sigma(m)}{m} = \displaystyle\frac{7\cdot{2^{2 - 1}}}{\sigma(2^2)} = \displaystyle\frac{14}{7} = 2$. \\
\end{tabular}
\end{paragraph}
\begin{paragraph}\indent Here, we must find $m$ such that $m$ is odd and $\displaystyle\frac{\sigma(m)}{m} = 2$;  that is, $m$ must be an odd perfect number.  If $m$ is an odd perfect number, then $n = 4m$ is a solution to our problem.  This is not especially helpful in our search for a solution, so we will move on to another case.
\end{paragraph}
\begin{paragraph}\indent Here we will skip the cases $k = 3$ and $k = 4$, because they are not especially interesting compared to the next case we will deal with.
\end{paragraph}
\begin{paragraph}\indent Consider $k = 5$.  Our goal now is to find $m$ such that: \\
~ \\
\begin{tabular}{ll}
$6.1$ & $n = {2^5}m = 32m$, and $\gcd(m, 2) = 1$ \\
$6.2$ & $\displaystyle\frac{\sigma(m)}{m} = \displaystyle\frac{7\cdot{2^4}}{\sigma(2^5)} = \displaystyle\frac{112}{63} = \displaystyle\frac{16}{9}$. \\ 
\end{tabular}
\end{paragraph}
\begin{paragraph}\indent We can now apply our method to $m$.  Keep in mind that the process will be slightly different this time, since the denominator of $\displaystyle\frac{16}{9}$ is a prime power, not just a prime.  Here, $b = 9 = 3^2$, so we will use $p = 3$ and $c = 2$ (as they are used in the beginning of this example).  Our goal is to find positive integers $m_1, k_1$ such that: \\
~ \\
\begin{tabular}{ll}
$7.1$ & $m = {m_1}\cdot{3^{k_1}}, k_1 \geq 2$, and $\gcd(m_1, 3) = 1$. (Note that $m_1 \mid m$, and \\ ~ & $\gcd(m, 2) = 1$, so $\gcd(m_1, 6) = 1$.) \\
$7.2$ & $\displaystyle\frac{\sigma(m_1)}{m_1} = \displaystyle\frac{16\cdot{3^{k_1 - 2}}}{\sigma(3^{k_1})}$. \\ 
\end{tabular}
\end{paragraph}
\begin{paragraph}\indent We will consider two of the possible cases here: $k_1 = 2$ and $k_1 = 3$.
\end{paragraph}
\begin{paragraph}\indent First, let $k_1 = 2$.  We get
\begin{center} $\displaystyle\frac{\sigma(m_1)}{m_1} = \displaystyle\frac{16}{\sigma(3^2)} = \displaystyle\frac{16}{13}$.
\end{center}
Carrying the process one step further, we will search for such an $m_1$.  We must find positive integers $m_2, k_2$ such that: \\
~ \\
\begin{tabular}{ll}
$8.1$ & $m_1 = {m_2}\cdot{13^{k_2}}$, and $\gcd(m_2, 13) = 1$. (Note that $m_2 \mid m_1$, so \\ ~ & $\gcd(m_2, 6) = \gcd(m_2, 13) = 1$.) \\
$8.2$ & $\displaystyle\frac{\sigma(m_2)}{m_2} = \displaystyle\frac{16\cdot{13^{k_2 - 1}}}{\sigma(13^{k_2})}$. \\
\end{tabular}
\end{paragraph}
\begin{paragraph}\indent Let $k_2 = 1$.  Then $m_1 = 13{m_2}$, and
\begin{center} $\displaystyle\frac{\sigma(m_2)}{m_2} = \displaystyle\frac{16}{14} = \displaystyle\frac{8}{7}$.
\end{center}
\end{paragraph}
\begin{paragraph}\indent Let $m_2 = 7$.  Then $m_2$ satisfies both ($8.1$) and ($8.2$) of the case $k = 5, k_1 = 2, k_2 = 1$.  This gives us a solution; all we have to do now is work backwards until we get $n$.
\end{paragraph}
\begin{paragraph}\indent First, $m_1 = 13{m_2} = {13}\cdot7$.  Next, $m = {3^{k_1}}{m_1} = {13}\cdot{7}\cdot{3^2}$.  Finally, $n = {2^5}m = {2^5}\cdot{3^2}\cdot{7}\cdot{13} = 26208$.
\end{paragraph}
\begin{paragraph}\indent Thus, we have found a solution to the problem $\displaystyle\frac{\sigma(n)}{n} = \displaystyle\frac{7}{2}$.
\end{paragraph}
\begin{paragraph}\indent Now we will consider the case $k_1 = 3$, which will give us one more solution:
\begin{center} $\displaystyle\frac{\sigma(m_1)}{m_1} = \displaystyle\frac{{16}\cdot3}{\sigma(3^3)} = \displaystyle\frac{48}{40} = \displaystyle\frac{6}{5}$.
\end{center}
That is, we need to find $m_1$ such that $\displaystyle\frac{\sigma(m_1)}{m_1} = \displaystyle\frac{6}{5}$, and $\gcd(m_1, 6) = 1$.  If we let $m_1 = 5$, then we have solved this problem, and have thus discovered another solution of the problem $\displaystyle\frac{\sigma(n)}{n} = \displaystyle\frac{7}{2}$, this time for the case $k = 5, k_1 = 3$.  Again, we work backwards until we get $n$.
\end{paragraph}
\begin{paragraph}\indent First, $m = {3^{k_1}}{m_1} = {3^3}\cdot5$.  Next, $n = {2^5}m = {2^5}\cdot{3^3}\cdot{5} = 4320$.
\end{paragraph}
\begin{paragraph}\indent This is a second solution to the problem $\displaystyle\frac{\sigma(n)}{n} = \displaystyle\frac{7}{2}$.
\end{paragraph}
\end{exmpl}
\begin{remrk}\label{remark11} Example \ref{example16} shows that $\displaystyle\frac{7}{2}$ is an abundancy index.  Also, since the equation $I(n) = \displaystyle\frac{7}{2}$ has at least two solutions, namely $n_1 = 4320$ and $n_2 = 26208$, this implies that $n_1$ and $n_2$ here are friendly.
\end{remrk}
\begin{paragraph}\indent Prior to discussing the proof of the lemma outlined in Example \ref{example16}, we review some known properties of the abundancy index:
\end{paragraph}
\begin{lemm}\label{lemma12} Properties of the Abundancy Index
\begin{itemize}
\item{If $a = \displaystyle\prod_{j=1}^k {{p_j}^{n_j}}$, where $p_1, p_2, p_3, \ldots, p_k$ are distinct primes, $k$ is a positive integer, and the integral exponents $n_1, n_2, n_3, \ldots, n_k$ are nonnegative, then $I(a) = \displaystyle\prod_{j=1}^k {\displaystyle\frac{{p_j}^{n_j + 1} - 1}{{p_j}^{n_j}(p_j - 1)}}$, where $I$ is multiplicative.}
\item{If $p$ is prime then the least upper bound for the sequence $\displaystyle\left\{I(p^n)\right\}_{n = 0}^{\infty}$ is $\displaystyle\frac{p}{p - 1}$.}
\item{If $a_1 \mid a$ and $a_1 > 0$, then $I(a) \geq I(a_1)$.}
\item{Obviously, if $a_1 \mid a$ and $I(a) = I(a_1)$, then $a = a_1$.}
\item{If $\gcd(a, \sigma(a)) = 1$, then the unique solution to $I(x) = I(a)$ is $x = a$.}
\end{itemize}
\end{lemm}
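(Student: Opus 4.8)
The plan is to derive each of the five listed properties from results already established earlier in the excerpt, so that the bulk of the work is bookkeeping rather than new argument.

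First I would handle the multiplicativity claim (item one). Since $\sigma$ is multiplicative (Theorem~\ref{theorem4}) and the identity function $n\mapsto n$ is totally multiplicative (Example~\ref{example5}), their quotient $I(n)=\sigma(n)/n$ is multiplicative; hence for $a=\prod_{j=1}^k{p_j}^{n_j}$ with the $p_j$ distinct primes we get $I(a)=\prod_{j=1}^k I({p_j}^{n_j})$. Then Corollary~\ref{corollary4} gives $\sigma({p_j}^{n_j})=({p_j}^{n_j+1}-1)/(p_j-1)$, so $I({p_j}^{n_j})=({p_j}^{n_j+1}-1)/({p_j}^{n_j}(p_j-1))$, which is the stated formula. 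One should note that the exponents $n_j$ are allowed to be $0$, in which case the corresponding factor is $1$, consistent with $I(1)=1$. Next, item two: writing $I(p^n)=\dfrac{p^{n+1}-1}{p^n(p-1)}=\dfrac{p}{p-1}-\dfrac{1}{p^n(p-1)}$, I would observe that the subtracted term is positive and strictly decreasing in $n$, so $\{I(p^n)\}_{n=0}^{\infty}$ is strictly increasing and bounded above by $p/(p-1)$; since $1/(p^n(p-1))\to 0$, no number strictly smaller than $p/(p-1)$ is an upper bound, so $p/(p-1)$ is the least upper bound, and it is never attained. This sharpens the right-hand strict inequality of Lemma~\ref{lemma4} to a supremum statement.

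Items three and four are immediate: the inequality $I(a)\geq I(a_1)$ whenever $a_1\mid a$, together with equality precisely when $a=a_1$, is exactly the content of Lemma~\ref{lemma2} (applied with $m=a_1$ and $n=a$). Finally, item five is Greening's criterion, for which a complete proof already appears in the discussion following Example~\ref{example13}: if $\gcd(a,\sigma(a))=1$ and $I(x)=I(a)$, then $x\sigma(a)=a\sigma(x)$ forces $a\mid x$, and Lemma~\ref{lemma2} applied along the factor chain from $a$ up to $x$ yields $I(x)>I(a)$ unless $x=a$; I would simply cite that argument rather than repeat it here.

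The only place requiring genuine care --- and thus the \emph{main obstacle}, such as it is --- is the supremum claim in item two: one must verify both that the sequence never reaches $p/(p-1)$ and that nothing strictly smaller dominates it, i.e.\ carry out the limit computation rather than merely invoking the bound $I(P^\alpha)<P/(P-1)$ from Lemma~\ref{lemma4}. Everything else reduces to a direct appeal to Theorem~\ref{theorem4}, Corollary~\ref{corollary4}, Lemma~\ref{lemma2}, and the previously given proof of Greening's criterion.
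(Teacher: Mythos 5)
Your proposal is correct and, on the one item the paper actually proves (the fifth), follows essentially the same route: coprimality of $a$ and $\sigma(a)$ forces $a\mid x$, and the equality case of divisor monotonicity (Lemma~\ref{lemma2}, equivalently the fourth bullet) then forces $x=a$. The paper dismisses the first four items as obvious, so your careful derivations of them --- in particular the supremum computation $I(p^n)=\frac{p}{p-1}-\frac{1}{p^n(p-1)}$ for item two --- simply supply details the paper omits, all consistent with Theorem~\ref{theorem4}, Corollary~\ref{corollary4}, and Lemma~\ref{lemma2}.
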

\begin{proof} Only the last assertion is not so obvious.  Note that $I(x) = I(a)$ is equivalent to $a\cdot\sigma(x) = x\cdot\sigma(a)$.  If $a$ and $\sigma(a)$ are coprime, then $a \mid x$ (so that $a$ divides every solution).  By the fourth result of Lemma \ref{lemma12}, $x = a$ is the sole solution.
\end{proof}
\begin{paragraph}\indent We discuss some known properties that can help us decide whether a particular fraction $\displaystyle\frac{r}{s}$ is an abundancy index:
\end{paragraph}
\begin{lemm}\label{lemma13} When is $\displaystyle\frac{r}{s}$ an abundancy index? \\
For which rational numbers $\displaystyle\frac{r}{s}$ will
\begin{center} $I(x) = \displaystyle\frac{r}{s}$ (*)
\end{center}
have at least one solution?  In order for (*) to have solutions, $\displaystyle\frac{r}{s}$ must be greater than or equal to one.  If $\displaystyle\frac{r}{s} = 1$, then by Lemma \ref{lemma12}, $x = 1$ is the unique solution.  Throughout the rest of this lemma, it will be assumed that $r$ and $s$ represent given positive integers which are relatively prime, and that $r > s$.  Let us now state some known results:
\begin{itemize}
\item{Note that $s$ must divide (every solution for) $x$ in (*).  This property is easy to observe since $I(x) = \displaystyle\frac{r}{s}$ implies that $s\cdot\sigma(x) = r\cdot x$, and $\gcd(r, s) = 1$.}
\item{If a solution to (*) exists, then $r \geq \sigma(s)$ (since $\displaystyle\frac{r}{s} = I(x) \geq I(s) = \displaystyle\frac{\sigma(s)}{s}$).}
\item{If $I(a) > \displaystyle\frac{r}{s}$, then (*) has no solution which is divisible by $a$.  Additionally, if $s$ is divisible by $a$, then (*) has no solution.}
\item{$\displaystyle\left\{I(b): b \in {\mathbb Z}^{+}\right\}$ is dense in the interval $(1, \infty)$.}
\item{The set of values $\displaystyle\frac{r}{s}$ for which (*) has no solution is also dense in $(1, \infty)$.}
\end{itemize}                      
\end{lemm}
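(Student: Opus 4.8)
The plan is to dispatch the five bulleted assertions in order, observing that the last two are essentially restatements of results already recorded in Section 2.2, so the real content is the handful of one-line arguments behind the first three.

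For the first bullet, suppose $I(x) = \frac{r}{s}$ with $\gcd(r,s) = 1$. Clearing denominators in $\frac{\sigma(x)}{x} = \frac{r}{s}$ gives $s\,\sigma(x) = r\,x$, so $s \mid r\,x$, and since $\gcd(r,s) = 1$ this forces $s \mid x$; thus every solution of (*) is a multiple of $s$. For the second bullet, any solution $x$ then has $s \mid x$, so Lemma \ref{lemma2} yields $\frac{\sigma(s)}{s} = I(s) \le I(x) = \frac{r}{s}$, and multiplying by $s$ gives the necessary condition $r \ge \sigma(s)$. For the third bullet, if $a \mid x$ then Lemma \ref{lemma2} gives $I(x) \ge I(a) > \frac{r}{s}$, contradicting $I(x) = \frac{r}{s}$, so no solution is divisible by $a$; and if in addition $a \mid s$, then transitivity of divisibility together with $s \mid x$ (first bullet) forces $a \mid x$ on any solution, which we have just shown is impossible, so (*) has no solution whatsoever.

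The fourth bullet is exactly Laatsch's Theorem \ref{theorem6}, which may simply be invoked. If a self-contained argument is preferred, one notes that the squarefree abundancy indices $\prod_{p \le P}\frac{p+1}{p}$ form an increasing sequence diverging to $+\infty$ (because $\sum_p \frac1p$ diverges), while the multiplicative jump at each step, $1 + \frac1p$, tends to $1$; hence these values --- each of the form $I(n)$ --- eventually land inside any prescribed subinterval of $(1,\infty)$. The fifth bullet is Theorem \ref{theorem7} (Weiner--Ryan), and the mechanism behind it is Lemma \ref{lemma5}: given $1 < \alpha < \beta$, choose $n$ with $I(n) > \beta$ (a large primorial works, via Lemma \ref{lemma3}); then every $\frac{m}{n}$ with $\gcd(m,n) = 1$ and $n < m < \sigma(n)$ is an outlaw, and for $n$ large the numerators coprime to $n$ are plentiful enough in the range $(\alpha n, \beta n)$ that one of them places $\frac{m}{n}$ inside $(\alpha,\beta)$.

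Because items 1--3 are immediate consequences of Lemma \ref{lemma2} and elementary divisibility, and items 4--5 are quotations of already-stated theorems, there is no serious obstacle in this lemma. The only point demanding a little care --- and only if one insists on reproving rather than citing --- is verifying in the fifth bullet that an admissible numerator coprime to $n$ actually falls in the target interval; this is handled by taking $n$ a primorial, so that the count of such numerators in an interval of fixed proportional length grows without bound.
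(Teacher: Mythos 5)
Your argument for the third bullet --- the only one the paper actually proves --- is the same as the paper's: if $a \mid x$ then $I(x) \geq I(a) > \frac{r}{s} = I(x)$, a contradiction, and the ``additionally'' clause follows by combining $a \mid s$ with $s \mid x$ from the first bullet. The paper leaves the first two bullets as the parenthetical one-liners already embedded in the statement and simply cites Theorems \ref{theorem6} and \ref{theorem7} for the last two, so your write-up is a superset of what the paper does, and everything you prove is proved correctly. One caution about your optional self-contained sketch of Laatsch's theorem: the single increasing sequence $\prod_{p \leq P}\frac{p+1}{p}$ begins at $\frac{3}{2}$ and never re-enters $\left(1, \frac{3}{2}\right)$, so the fact that its multiplicative jumps tend to $1$ does not by itself give density in $(1,\infty)$; the correct version of the argument takes products over primes in a window $Q \leq p \leq P$, choosing $Q$ large enough that each factor $1 + \frac{1}{p} < 1 + \frac{1}{Q}$ keeps the first overshoot of $\alpha$ below $\beta$, and uses the divergence of $\sum_{p > Q} \frac{1}{p}$ to guarantee that the overshoot occurs. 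Since you offer that sketch only as an alternative to citing Theorem \ref{theorem6}, this does not affect the correctness of the proposal.
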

\begin{proof} Here, we prove the third assertion.  Suppose (*) has a solution which is divisible by $a \in {\mathbb Z}^+$.  Then $a \mid x$, and by the third result in Lemma \ref{lemma12}, $I(x) \geq I(a)$.  But then, by assumption $I(a) > \displaystyle\frac{r}{s}$, which implies that $I(a) > \displaystyle\frac{r}{s} = I(x) \geq I(a)$, resulting in the contradiction $I(a) > I(a)$.  Thus, (*) has no solution which is divisible by $a$.  The second statement follows from this and the first result in Lemma \ref{lemma13} (i.e. $s$ must divide every solution for $x$).  
\end{proof}
\begin{paragraph}\indent We now state and prove the following lemma (taken from \cite{L37}) which was used in Example \ref{example16}.
\end{paragraph}
\begin{lemm}\label{lemma14} (Ludwick)  Let $a, b, c \in {\mathbb Z}^+$, and let $p$ be a prime such that $\gcd(a, p) = 1$, $b = p^c$ (so that $b$ is a prime power, and $\gcd(a, b) = 1$), and $a \geq \sigma(b)$.  Suppose we want to find a positive integer, $n$, such that $\displaystyle\frac{\sigma(n)}{n} = \displaystyle\frac{a}{b}$.  This problem is equivalent to the problem of finding positive integers $m, k$ such that:
\begin{itemize}
\item{$n = mp^k, k \geq c,$ and $\gcd(m, p) = 1$ (or equivalently, $\gcd(m, b) = 1$)}
\item{$\displaystyle\frac{\sigma(m)}{m} = \displaystyle\frac{a{p}^{k - c}}{\sigma(p^k)}$}.
\end{itemize}
\end{lemm}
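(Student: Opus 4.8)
The plan is to prove this biconditional on \emph{solvability} by isolating the exact power of $p$ dividing a candidate solution $n$ and then applying the multiplicativity of $\sigma$ (Theorem \ref{theorem4}), together with the multiplicativity of the abundancy index.

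First I would handle the forward implication: assume some positive integer $n$ satisfies $\displaystyle\frac{\sigma(n)}{n} = \frac{a}{b} = \frac{a}{p^c}$. Cross-multiplying gives $p^c\,\sigma(n) = a\,n$, and since $\gcd(a,p) = 1$ forces $\gcd(a, p^c) = 1$, we conclude $p^c \mid n$ (this is exactly the first bullet of Lemma \ref{lemma13} applied with $s = p^c$). Hence, writing $n = m p^k$ with $p \nmid m$ --- so that $p^k$ is the \emph{exact} power of $p$ dividing $n$ --- we automatically get $k \geq c$ and $\gcd(m,p) = 1$, equivalently $\gcd(m,b) = 1$. Because $\gcd(m, p^k) = 1$, multiplicativity of $\sigma$ yields $\sigma(n) = \sigma(m)\sigma(p^k)$, so $\displaystyle\frac{a}{p^c} = \frac{\sigma(m)\sigma(p^k)}{m p^k}$; clearing denominators and cancelling $p^c$ gives $\sigma(m)\sigma(p^k) = a\,m\,p^{k-c}$, that is, $\displaystyle\frac{\sigma(m)}{m} = \frac{a p^{k-c}}{\sigma(p^k)}$, which is precisely the asserted pair of conditions.

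For the converse, suppose positive integers $m, k$ satisfy $n = m p^k$, $k \geq c$, $\gcd(m,p) = 1$, and $\displaystyle\frac{\sigma(m)}{m} = \frac{a p^{k-c}}{\sigma(p^k)}$. Since $\gcd(m, p^k) = 1$, multiplicativity of the abundancy index gives $\displaystyle\frac{\sigma(n)}{n} = \frac{\sigma(m)}{m}\cdot\frac{\sigma(p^k)}{p^k} = \frac{a p^{k-c}}{\sigma(p^k)}\cdot\frac{\sigma(p^k)}{p^k} = \frac{a p^{k-c}}{p^k} = \frac{a}{p^c} = \frac{a}{b}$, so $n$ solves the original problem. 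This closes the equivalence.

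There is no deep obstacle here; the statement is essentially a bookkeeping identity, and the only point requiring genuine care is the divisibility step that guarantees $k \geq c$: one must take $p^k$ to be the \emph{largest} power of $p$ dividing $n$ before invoking multiplicativity of $\sigma$, and note that $\gcd(a,p) = 1$ is exactly what makes $p^c \mid n$ unavoidable. The hypothesis $a \geq \sigma(b)$ plays no role in the equivalence itself; it is carried along because, by the second bullet of Lemma \ref{lemma13}, it is a necessary condition for the original equation to have \emph{any} solution (and it keeps the base case $k = c$, where the target ratio is $\tfrac{a}{\sigma(b)}$, from dropping below $1$), while the assumption that $b$ is a prime power $p^c$ is precisely what lets the factor $\tfrac{\sigma(p^k)}{p^k}$ split off cleanly so that the problem can be recursively reduced, as in Example \ref{example16}.
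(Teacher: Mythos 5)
Your proof is correct and follows essentially the same route as the paper's: establish $p^c \mid n$ (via the first bullet of Lemma \ref{lemma13}), extract the exact power $p^k \,\|\, n$ to get $k \geq c$ and $\gcd(m,p)=1$, and then transfer the condition back and forth using multiplicativity of $\sigma$. Your closing remark that $a \geq \sigma(b)$ plays no role in the equivalence itself is a fair observation the paper does not make explicit, but it does not change the argument.
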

\begin{proof} First, we will show that if we can find $n \in {\mathbb Z}^+$ satisfying $\displaystyle\frac{\sigma(n)}{n} = \displaystyle\frac{a}{b}$, then we can find $m, k \in {\mathbb Z}^+$ satisfying the two itemized conditions above.
\begin{paragraph}\indent Suppose we have $n \in {\mathbb Z}^+$ such that $\displaystyle\frac{\sigma(n)}{n} = \displaystyle\frac{a}{b}$.  Then by Lemma \ref{lemma13}, $b \mid n$; that is, $p^c \mid n$.  Since $b$ is a prime power, there is some $k \in {\mathbb Z}^+$ such that $n = mp^k$, with $\gcd(m, b) = 1$.  Here, $k$ is the largest power of $p$ that divides $n$, so clearly $k \geq c$.  This satisfies the first condition.  Now, we have 
\begin{center} $\displaystyle\frac{\sigma(n)}{n} = \displaystyle\frac{a}{b} = \displaystyle\frac{a}{p^c}$
\end{center} 
and we also have
\begin{center} $\displaystyle\frac{\sigma(n)}{n} = {\displaystyle\frac{\sigma(p^k)}{p^k}}{\displaystyle\frac{\sigma(m)}{m}}$.
\end{center}
Thus,
\begin{center} $\displaystyle\frac{a}{p^c} = {\displaystyle\frac{\sigma(p^k)}{p^k}}{\displaystyle\frac{\sigma(m)}{m}}$,
\end{center}
and so
\begin{center} $\displaystyle\frac{\sigma(m)}{m} = \displaystyle\frac{ap^{k - c}}{\sigma(p^k)}$.
\end{center}
This satisfies the second condition.  Therefore, solving $\displaystyle\frac{\sigma(n)}{n} = \displaystyle\frac{a}{b}$ for $n$ gives us $m, k \in {\mathbb Z}^+$ satisfying the two conditions.
\end{paragraph}
\begin{paragraph}\indent Conversely, we will show that if we can find $m, k \in {\mathbb Z}^+$ satisfying the two conditions, then we can find $n \in {\mathbb Z}^+$ such that $\displaystyle\frac{\sigma(n)}{n} = \displaystyle\frac{a}{b}$. 
\end{paragraph}
\begin{paragraph}\indent Suppose we have $m, k \in {\mathbb Z}^+$ satisfying the two conditions.  Let $n = mp^k$.  Then,
\begin{center} $\displaystyle\frac{\sigma(n)}{n} = {\displaystyle\frac{\sigma(m)}{m}}{\displaystyle\frac{\sigma(p^k)}{p^k}} = {\displaystyle\frac{ap^{k - c}}{\sigma(p^k)}}{\displaystyle\frac{\sigma(p^k)}{p^k}} = \displaystyle\frac{ap^{k - c}}{p^k} = \displaystyle\frac{a}{p^c} = \displaystyle\frac{a}{b}$.
\end{center}
\end{paragraph}
\end{proof}
\subsection{OPNs, Abundancy Outlaws and the Fraction $\displaystyle\frac{p + 2}{p}$}
\begin{paragraph}\indent After defining the abundancy index and exploring various known properties, we briefly discuss some related concepts.  Positive integers having integer-valued abundancy indices are said to be \emph{multiperfect numbers}.  One is the only odd multiperfect that has been discovered.  Richard Ryan hopes that his ``study of the abundancy index will lead to the discovery of other odd multiperfects", or to the proof of their nonexistence.  Since the abundancy index of a number $n$ can be thought of as a measure of its perfection (i. if $I(n) < 2$ then $n$ is deficient;  ii. if $I(n) = 2$ then $n$ is perfect;  and iii. if $I(n) > 2$ then $n$ is abundant), it is fitting to consider it a very useful tool in gaining a better understanding of perfect numbers.  In fact, Judy Holdener \cite{H24} proved the following theorem which provides conditions equivalent to the existence of an OPN:
\end{paragraph}
\begin{thm}\label{theorem15} There exists an odd perfect number if and only if there exist positive integers $p$, $n$ and $\alpha$ such that $p \equiv \alpha \equiv 1 \pmod{4}$, where $p$ is a prime not dividing $n$, and $I(n) = \displaystyle\frac{2p^{\alpha}(p - 1)}{p^{\alpha + 1} - 1}$.
\end{thm}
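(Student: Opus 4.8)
The plan is to prove the two implications separately: for the ``only if'' direction I would invoke Euler's characterization of odd perfect numbers (Theorem \ref{theorem10}), and for the ``if'' direction the Euclid--Euler classification of even perfect numbers (Theorem \ref{theorem9}). In both directions the workhorse is the multiplicativity of $I$ together with the closed form $\sigma(p^{\alpha}) = \frac{p^{\alpha+1}-1}{p-1}$ from Corollary \ref{corollary4}, which converts the stated rational identity into the statement ``$I(p^{\alpha}n) = 2$''.

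For the forward direction, I would start with an odd perfect $N$ and write it, via Theorem \ref{theorem10}, as $N = q^{4e+1}{p_1}^{2a_1}\cdots{p_r}^{2a_r}$ with $q \equiv 1 \pmod 4$; then I would set $p = q$, $\alpha = 4e+1$, and $n = {p_1}^{2a_1}\cdots{p_r}^{2a_r}$, so that $p \equiv \alpha \equiv 1 \pmod 4$, $p$ is prime, and $p \nmid n$. Since $\gcd(p^{\alpha}, n) = 1$, multiplicativity gives $2 = I(N) = \frac{\sigma(p^{\alpha})}{p^{\alpha}} I(n)$, and solving for $I(n)$ and substituting the formula for $\sigma(p^{\alpha})$ yields exactly $I(n) = \frac{2p^{\alpha}(p-1)}{p^{\alpha+1}-1}$. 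This half is essentially bookkeeping.

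For the reverse direction, given such $p$, $n$, $\alpha$ I would put $N = p^{\alpha}n$; since $p$ is prime and $p \nmid n$ we have $\gcd(p^{\alpha}, n) = 1$, so $I(N) = \frac{\sigma(p^{\alpha})}{p^{\alpha}} \cdot \frac{2p^{\alpha}}{\sigma(p^{\alpha})} = 2$ and $N$ is perfect. The one step that is not automatic is showing $N$ is \emph{odd}: $p^{\alpha}$ is odd (as $p \equiv 1 \pmod 4$), so I would need to rule out $n$ being even. If $N$ were even it would be even perfect, hence $N = 2^{t-1}(2^t - 1)$ with $2^t - 1$ prime and $t \geq 2$ by Theorem \ref{theorem9}; then the odd prime $p$ dividing $N$ cannot divide $2^{t-1}$, so $p \mid 2^t - 1$, and primality forces $p = 2^t - 1 \equiv 3 \pmod 4$, contradicting $p \equiv 1 \pmod 4$. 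Hence $N$ is an odd perfect number.

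I expect this last parity argument to be the only real obstacle — everything else is forced by multiplicativity of $I$. It is worth flagging that the hypothesis $p \equiv 1 \pmod 4$ is exactly what makes it work (it is what excludes the Mersenne/even case), whereas $\alpha \equiv 1 \pmod 4$ is not strictly needed to produce an OPN in the reverse direction; I would keep it only so that the two directions mesh cleanly with Euler's form, noting that $p^{\alpha}$ can always be taken to be the Euler factor of the resulting number.
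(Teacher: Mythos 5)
Your proposal is correct and follows essentially the same route as the paper's proof: the forward direction reads off $p$, $\alpha$, $n$ from Euler's form and uses multiplicativity of $I$, and the reverse direction forms $N = p^{\alpha}n$, verifies $I(N)=2$, and rules out the even case by forcing $p = 2^t - 1 \equiv 3 \pmod{4}$ against the hypothesis $p \equiv 1 \pmod{4}$. Your closing observation that $\alpha \equiv 1 \pmod 4$ is not needed for the reverse implication is also accurate and consistent with how the paper's argument actually uses its hypotheses.
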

\begin{proof} By Euler's characterization of an OPN $N = {p^{\alpha}}{m^2}$, it must be true that $p$ is a prime satisfying $\gcd(p, m) = 1$ and $p \equiv \alpha \equiv 1 \pmod{4}$.  Hence $\sigma(N) = \sigma({p^{\alpha}}{m^2}) = \sigma(p^{\alpha})\sigma(m^2) = 2{p^{\alpha}}{m^2}$, and
\begin{center} $I(m^2) = \displaystyle\frac{\sigma(m^2)}{m^2} = \displaystyle\frac{2p^{\alpha}}{\sigma(p^{\alpha})} = \displaystyle\frac{2{p^{\alpha}}(p - 1)}{p^{\alpha + 1} - 1}$.
\end{center}
This proves the forward direction of the theorem.
\begin{paragraph}\indent Conversely, assume there is a positive integer $n$ such that $I(n) = \displaystyle\frac{2p^{\alpha}(p - 1)}{p^{\alpha + 1} - 1}$, where $p \equiv \alpha \equiv 1 \pmod{4}$ and $p$ is a prime with $p \nmid n$.  Then
\begin{center} $I(n\cdot{p^{\alpha}}) = I(n)\cdot{I(p^{\alpha})} = {\displaystyle\frac{2p^{\alpha}(p - 1)}{p^{\alpha + 1} - 1}}{\displaystyle\frac{p^{\alpha + 1} - 1}{{p^{\alpha}}(p - 1)}} = 2$.
\end{center}
So $n\cdot{p^{\alpha}}$ is a perfect number.
\end{paragraph}
\begin{paragraph}\indent Next, we claim that $n\cdot{p^{\alpha}}$ cannot be even.  Suppose to the contrary that $n\cdot{p^{\alpha}}$ is even.  Then it would have the Euclid-Euler form for even perfect numbers:
\begin{center} $n\cdot{p^{\alpha}} = 2^{m - 1}(2^m - 1)$ 
\end{center}
where $2^m - 1$ is prime.  Since $2^m - 1$ is the only odd prime factor on the RHS, $p^{\alpha} = p^{1} = 2^m - 1$.  But $p \equiv 1 \pmod{4}$ and $2^m - 1 \equiv 3 \pmod{4}$ (because $m$ must be at least $2$ in order for $2^m - 1$ to be prime).  This is clearly a contradiction, and thus $n\cdot{p^{\alpha}}$ is not even.  Consequently, $n\cdot{p^{\alpha}}$ is an OPN. 
\end{paragraph} 
\end{proof}
\begin{paragraph}\indent By Theorem \ref{theorem15}, it follows that if one could find an integer $n$ having abundancy index equal to $\displaystyle\frac{5}{3}$ (which occurs as a special case of the theorem, specifically for $p = 5$ and $\alpha = 1$), then one would be able to produce an odd perfect number.  Here we then realize the usefulness of characterizing fractions in $(1, \infty)$ that are \emph{abundancy outlaws}. (Recall Definition \ref{definition11}.)
\end{paragraph}
\begin{paragraph}\indent Let us now consider the sequence of rational numbers in $(1, \infty)$. (Note that, since the number $1$ is solitary and $I(1) = 1$, the equation $I(x) = 1$ has the lone solution $x = 1$.)  For each numerator $a > 1$, we list the fractions $\displaystyle\frac{a}{b}$, with $\gcd(a, b) = 1$, so that the denominators $1 \leq b < a$ appear in increasing order:
\begin{center}
$\displaystyle\frac{2}{1}, \displaystyle\frac{3}{1}, \displaystyle\frac{3}{2}, \displaystyle\frac{4}{1}, \displaystyle\frac{4}{3}, \displaystyle\frac{5}{1}, \displaystyle\frac{5}{2}, \displaystyle\frac{5}{3}, \displaystyle\frac{5}{4}, \displaystyle\frac{6}{1}, \displaystyle\frac{6}{5}, \displaystyle\frac{7}{1}, \displaystyle\frac{7}{2}, \displaystyle\frac{7}{3}, \displaystyle\frac{7}{4}, \displaystyle\frac{7}{5}, \displaystyle\frac{7}{6}, \ldots$
\end{center}
It is intuitive that each term in this sequence must be either an abundancy index or an abundancy outlaw, but it is, in general, difficult to determine the status of a given fraction.  We may thus partition the sequence into three (3) categories:  $(I)$ those fractions that are known to be abundancy indices,  $(II)$ those that are known to be abundancy outlaws, and $(III)$ those whose abundancy index/outlaw status is unknown.  We wish to capture outlaws from the third category, thereby increasing the size of the second category.  Since fractions of the form $\displaystyle\frac{\sigma(N) - t}{N}$ for $t \geq 1$ belong to the first category (by Lemma \ref{lemma5} and Example \ref{example15}), it is tempting to consider fractions of the form $\displaystyle\frac{\sigma(N) + t}{N}$.  Judy Holdener and William Stanton proved in $2007$ \cite{H26} that, under certain conditions, $\displaystyle\frac{\sigma(N) + t}{N}$ is an abundancy outlaw.  They noted that their original interest in such fractions stemmed from their interest in the fraction $\displaystyle\frac{5}{3} = \displaystyle\frac{\sigma(3) + 1}{3}$.  Unfortunately, the results they obtained do not allow them to say anything about fractions of the form $\displaystyle\frac{\sigma(p) + 1}{p} = \displaystyle\frac{p + 2}{p}$.  Such elusive fractions remain in category three.
\end{paragraph}
\begin{paragraph}\indent Equivalently, we may ask:  Does there exist an odd number $s \in {\mathbb Z}^+$ (with $s > 1$) such that $I(x) = \displaystyle\frac{s + 2}{s}$ has at least one solution?  The answer to this question is unknown, but we can go ahead and discuss some properties.  
\end{paragraph}
\begin{paragraph}\indent By Lemma \ref{lemma13}, $s \mid x$.  Assume $s$ is an odd composite;  then $\sigma(s) \geq 1 + s + d$, where $d$ is a divisor of $s$ satisfying $1 < d < s$.  Since $s$ is odd, $d \geq 3$, which means that $\sigma(s) \geq s + 4 > s + 2$, or $I(s) > \displaystyle\frac{s + 2}{s}$, a contradiction.  Hence, $s$ must be prime.  If $1 < c < s$, then $\gcd(c, s) = 1$ (since $s$ is prime), and we have:
\begin{center} $I(cs) = I(c)\cdot{I(s)} \geq {\displaystyle\frac{c + 1}{c}}{\displaystyle\frac{s + 1}{s}} > \displaystyle\frac{(s + 1)^2}{s^2} = {\displaystyle\frac{s(s + 2)}{s^2}} + {\displaystyle\left(\frac{1}{s}\right)}^2 > \displaystyle\frac{s + 2}{s}$.
\end{center}
Thus, $x$ does not have a factor between $1$ and $s$.  Moreover, $x$ is a perfect square;  otherwise $\sigma(x)$ would have a factor of $2$ that cannot be ``canceled" since the denominator, $x$, is odd.  (For the same reason, whenever $r$ and $s$ are both odd, any odd solution to $I(x) = \displaystyle\frac{r}{s}$ must be a perfect square.)  We also claim that $(s + 2) \nmid x$ and we prove this by showing that $I\left({s^2(s + 2)^2}\right) > \displaystyle\frac{s + 2}{s}$:
\end{paragraph}
{
\small
\begin{displaymath}
I\left({s^2(s + 2)^2}\right) \geq 1 + \frac{1}{s} + \frac{1}{s^2} + \frac{1}{s + 2} + \frac{1}{(s + 2)^2} + \frac{1}{s(s + 2)} + \frac{1}{{s^2}(s + 2)} + \frac{1}{s(s + 2)^2} + \frac{1}{{s^2}(s + 2)^2}
\end{displaymath}
\begin{displaymath}
= \frac{s^2 + s + 1}{s^2} + \frac{1}{s + 2}\left[1 + \frac{1}{s + 2} + \frac{1}{s}\right] + \frac{1}{s(s + 2)}\left[\frac{1}{s} + \frac{1}{s + 2} + \frac{1}{s(s + 2)}\right]
\end{displaymath}
\begin{displaymath} 
= \frac{s^2 + s + 1}{s^2} + \frac{s^2 + 4s + 2}{s(s + 2)^2} + \frac{2s + 3}{{s^2}(s + 2)^2}
\end{displaymath}
\begin{displaymath} 
= \frac{\left(s^2 + s + 1\right){\left(s + 2\right)}^2 + s{\left(s + 2\right)}^2 + 3}{{s^2}(s + 2)^2}
\end{displaymath}
\begin{displaymath} 
= \frac{{\left(s + 1\right)}^2{\left(s + 2\right)}^2 + 3}{{s^2}(s + 2)^2}
\end{displaymath}
\begin{displaymath} 
= {\left(\frac{s + 1}{s}\right)}^2 + \frac{3}{{s^2}(s + 2)^2} > \frac{s^2 + 2s + 1}{s^2} > \frac{s + 2}{s}
\end{displaymath}
}
\normalsize
\begin{paragraph}\indent Using some of the principles in the last paragraph, Richard Ryan wrote a simple computer program which verified that $I(x) = \displaystyle\frac{s + 2}{s}$ has no solution less than ${10}^{16}$ (when $s \in {\mathbb Z}^+$ is odd with $s > 1$) \cite{R46}.
\end{paragraph}
\begin{paragraph}\indent We now state (without proof) the conditions obtained by Holdener and Stanton \cite{H26} in order for the fraction $\displaystyle\frac{\sigma(N) + t}{N}$ to be an abundancy outlaw.
\end{paragraph}
\begin{thm}\label{theorem16} For a positive integer $t$, let $\displaystyle\frac{\sigma(N) + t}{N}$ be a fraction in lowest terms, and let $N = \displaystyle\prod_{i = 1}^{n} {{p_i}^{k_i}}$ for primes $p_1, p_2, \ldots, p_n$.  If there exists a positive integer $j \leq n$ such that $p_j < \frac{1}{t}\sigma(\frac{N}{{p_j}^{k_j}})$ and $\sigma({p_j}^{k_j})$ has a divisor $D > 1$ such that at least one of the following is true: \\
\begin{tabular}{ll}
1. & $I({p_j}^{k_j})I(D) > \displaystyle\frac{\sigma(N) + t}{N}$ and $\gcd(D, t) = 1$ \\
2. & $\gcd(D, Nt) = 1$ \\
\end{tabular} \\
then $\displaystyle\frac{\sigma(N) + t}{N}$ is an abundancy outlaw. 
\end{thm}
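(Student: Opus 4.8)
The plan is to argue by contradiction. Suppose the fraction $\frac{\sigma(N)+t}{N}$, written in lowest terms, is an abundancy index, so that $I(x) = \frac{\sigma(N)+t}{N}$ for some positive integer $x$. Since $\gcd(\sigma(N)+t,\,N) = 1$, the divisibility in the first bullet of Lemma \ref{lemma13} forces $N \mid x$; write $x = Nm$, whence $\sigma(x) = \frac{\sigma(N)+t}{N}\,x = (\sigma(N)+t)m$. Put $p = p_j$, $k = k_j$ and $M = N/p^{k}$, so that $N = p^{k}M$ with $\gcd(p,M) = 1$, $\sigma(N) = \sigma(p^{k})\sigma(M)$, and the hypothesis $p_j < \frac{1}{t}\sigma(N/p_j^{k_j})$ becomes the clean inequality $tp < \sigma(M)$. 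The only computation I would do up front is the elementary identity $I(p^{k+1}) - I(p^{k}) = 1/p^{k+1}$, obtained by cancelling the geometric sums for $\sigma(p^{k+1})$ and $\sigma(p^{k})$.

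Next I would split on the exact power of $p$ dividing $x$. If $p \mid m$, then $p^{k+1}M = Np$ divides $x$, so by Lemma \ref{lemma2} and the multiplicativity of $I$ we get $I(x) \geq I(Np) = I(p^{k+1})I(M) = I(p^{k})I(M) + \frac{I(M)}{p^{k+1}} = \frac{\sigma(N)}{N} + \frac{\sigma(M)}{p^{k+1}M}$; since $\sigma(M) > tp$ the last summand exceeds $\frac{tp}{p^{k+1}M} = \frac{t}{N}$, contradicting $I(x) = \frac{\sigma(N)+t}{N}$. Hence $p \nmid m$, so $p^{k}$ is the exact power of $p$ in $x$ and therefore $\sigma(p^{k}) \mid \sigma(x) = (\sigma(N)+t)m$. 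As $\sigma(p^{k}) \mid \sigma(N)m$ as well, we obtain $\sigma(p^{k}) \mid tm$, hence $D \mid tm$, and since $\gcd(D,t) = 1$ this gives $D \mid m$, so $D \mid x$. Because $\sigma(p^{k}) \equiv 1 \pmod p$ we have $p \nmid D$, so $\gcd(p^{k},D) = 1$ and thus $p^{k}D \mid x$.

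It remains to see that either alternative closes the argument. Under hypothesis (1), $I(x) \geq I(p^{k}D) = I(p^{k})I(D) > \frac{\sigma(N)+t}{N} = I(x)$, a contradiction. Under hypothesis (2), $\gcd(D,N) = 1$, so $ND \mid x$ and $I(ND) = I(N)I(D)$; it then suffices to check $I(N)I(D) > \frac{\sigma(N)+t}{N}$, which rearranges to $\sigma(N)\cdot\frac{\sigma(D)-D}{D} > t$, and this follows from $\sigma(N) = \sigma(p^{k})\sigma(M) \geq D\,\sigma(M) > D\,tp \geq Dt$ together with $\sigma(D)-D \geq 1$ (valid since $D > 1$). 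Every branch yields a contradiction, so no such $x$ exists and $\frac{\sigma(N)+t}{N}$ is an abundancy outlaw.

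The part I expect to require the most care is the case analysis on $v_p(x)$: the slick step $\sigma(p^{k}) \mid tm \Rightarrow D \mid m$ is only legitimate when $p^{k}$ is the \emph{exact} power of $p$ in $x$, so the alternative $p \mid m$ genuinely has to be handled on its own, and it is exactly there — and again when forcing $I(N)I(D)$ to overshoot the target fraction under hypothesis (2) — that the numerical hypothesis $tp_j < \sigma(N/p_j^{k_j})$ gets consumed. One should also be attentive to the fact that hypotheses (1) and (2) finish the second case by different mechanisms: (1) by a direct bound on $I(p^{k}D)$, and (2) by exploiting $\gcd(D,N) = 1$ to split $I(ND)$ multiplicatively before invoking the inequality.
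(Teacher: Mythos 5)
The paper states this theorem explicitly \emph{without proof} (it is quoted from Holdener and Stanton \cite{H26}), so there is no internal argument to compare yours against; I can only assess your proposal on its own terms, and it checks out as a complete and correct proof. The skeleton is sound: lowest-terms plus Lemma \ref{lemma13} forces $N \mid x$, and writing $x = Nm$ gives $\sigma(x) = (\sigma(N)+t)m$. Your case split on whether $p_j \mid m$ is exactly the right pivot. When $p_j \mid m$, the identity $I(p^{k+1}) = I(p^k) + p^{-(k+1)}$ and the hypothesis $tp_j < \sigma(N/p_j^{k_j})$ give $I(x) \geq I(Np_j) = I(N) + \frac{\sigma(M)}{p^{k+1}M} > I(N) + \frac{t}{N} = I(x)$, a genuine contradiction. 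When $p_j \nmid m$, the exactness of the power is what legitimizes $\sigma(p^k) \mid \sigma(x)$, hence $\sigma(p^k) \mid tm$, hence $D \mid m$ via $\gcd(D,t)=1$ (which holds under either alternative), and $p \nmid \sigma(p^k)$ gives $\gcd(p^k, D) = 1$ so that $p^kD \mid x$; alternative (1) then contradicts monotonicity of $I$ directly, while alternative (2) reduces to $\sigma(N)\frac{\sigma(D)-D}{D} > t$, which your chain $\sigma(N) \geq D\sigma(M) > Dtp \geq Dt$ together with $\sigma(D) - D \geq 1$ delivers. One small presentational point: you invoke $\gcd(D,t)=1$ as the common thread of both alternatives before branching, which is worth stating explicitly since alternative (2) only lists $\gcd(D,Nt)=1$; but mathematically nothing is missing. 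Your proof would be a worthwhile addition to the thesis precisely because the source states the result bare.
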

\begin{paragraph}\indent The following are some sequences of abundancy outlaws which can be constructed from Theorem \ref{theorem16}:
\begin{itemize}
\item{For all natural numbers $m$ and nonnegative integers $n$, and for all odd primes $p$ such that $\gcd(p, \sigma(2^m)) = 1$, the fraction $\displaystyle\frac{\sigma({2^m}{p^{2n + 1}}) + 1}{{2^m}{p^{2n + 1}}}$ is an abundancy outlaw.}
\item{For all primes $p > 3$, $\displaystyle\frac{\sigma(2p) + 1}{2p}$ is an abundancy outlaw.  If $p = 2$ or $p = 3$ then $\displaystyle\frac{\sigma(2p) + 1}{2p}$ is an abundancy index.}
\item{If $N$ is an even perfect number, $\displaystyle\frac{\sigma(2N) + 1}{2N}$ is an abundancy outlaw.}
\item{Let $M$ be an odd natural number, and let $p$, $\alpha$, and $t$ be odd natural numbers such that $p \nmid M$ and $p < \frac{1}{t}\sigma(M)$.  Then, if $\displaystyle\frac{\sigma({p^{\alpha}}M) + t}{{p^{\alpha}}M}$ is in lowest terms, $\displaystyle\frac{\sigma({p^{\alpha}}M) + t}{{p^{\alpha}}M}$ is an abundancy outlaw.}
\item{For primes $p$ and $q$, with $3 < q$, $p < q$, and $\gcd(p, q + 2) = \gcd(q, p + 2) = 1$, $\displaystyle\frac{\sigma(pq) + 1}{pq}$ is an abundancy outlaw.}
\end{itemize}
\end{paragraph}
\begin{remrk}\label{remark12} The last assertion in the preceding paragraph produces outlaws with ease.  We illustrate this using odd primes $p$ and $q$ satisfying $3 < p < q$ and $q \equiv 1 \pmod{p}$.  It follows that $p \nmid (q + 2)$ and $q \nmid (p + 2)$.  By Dirichlet's theorem on arithmetic progressions of primes, we are assured of the existence of an infinite sequence of primes $q$ satisfying $q \equiv 1 \pmod{p}$.  Thus, there is an \emph{infinite} class of abundancy outlaws corresponding to each odd prime $p > 3$.
\end{remrk}
\begin{paragraph}\indent Judy Holdener, with Laura Czarnecki, also obtained the following results in the summer of 2007 \cite{H27}:
\end{paragraph}
\begin{thm}\label{theorem17} If $\displaystyle\frac{a}{b}$ is a fraction greater than 1 in reduced form, $\displaystyle\frac{a}{b} = I(N)$ for some $N \in \mathbb N$, and $b$ has a divisor $D = \displaystyle\prod_{i = 1}^{n} {{p_i}^{k_i}}$ such that $I({p_i}D) > \displaystyle\frac{a}{b}$ for all $1 \leq i \leq n$, then $\displaystyle\frac{D}{\sigma(D)}\displaystyle\frac{a}{b}$ is an abundancy index as well.
\end{thm}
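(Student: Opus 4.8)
The plan is to produce an explicit witness for the claimed abundancy index, namely the integer $M = N/D$, after first establishing that $D$ occurs in $N$ as a \emph{unitary} divisor (that is, $\gcd(D, N/D) = 1$). The only nontrivial ingredient is getting that unitarity out of the hypothesis $I(p_iD) > \frac{a}{b}$; once that is in hand, the conclusion is a one‑line application of multiplicativity of $I$.

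First I would record the standard divisibility fact. Since $\frac{a}{b} = I(N) = \frac{\sigma(N)}{N}$ with $\gcd(a,b) = 1$, the relation $b\,\sigma(N) = aN$ forces $b \mid N$ (this is the first bullet of Lemma \ref{lemma13}). As $D \mid b$, we get $D \mid N$, so we may write $N = D\cdot M$ for some positive integer $M$; what remains in doubt is whether $\gcd(D, M) = 1$.

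The heart of the argument is to show $\gcd(D,M) = 1$, equivalently that for each $i$ the exact power of $p_i$ dividing $N$ is precisely $p_i^{k_i}$. Suppose to the contrary that $p_i^{k_i+1} \mid N$ for some $i$. Since $D = \prod_{j=1}^n p_j^{k_j}$ is the canonical factorization of $D$ into distinct primes, we have $p_iD = p_i^{k_i+1}\prod_{j\neq i} p_j^{k_j}$; comparing $p$-adic valuations prime by prime (using $p_i^{k_i+1}\mid N$ at the prime $p_i$, and $p_j^{k_j}\mid D\mid N$ at each prime $p_j$ with $j\neq i$) shows $p_iD \mid N$. Then by monotonicity of the abundancy index under divisibility (the third bullet of Lemma \ref{lemma12}, itself a consequence of Lemma \ref{lemma2}), $I(N) \geq I(p_iD)$. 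But the hypothesis says $I(p_iD) > \frac{a}{b} = I(N)$, a contradiction. Hence $p_i^{k_i} \,\|\, N$ for every $i$, so $D$ is a unitary divisor of $N$ and $\gcd(D, M) = 1$.

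Finally, multiplicativity of $I$ gives $\frac{a}{b} = I(N) = I(D)\,I(M) = \frac{\sigma(D)}{D}\,I(M)$, so $I(M) = \frac{D}{\sigma(D)}\cdot\frac{a}{b}$; since $M = N/D$ is a genuine positive integer, $\frac{D}{\sigma(D)}\cdot\frac{a}{b}$ lies in the range of $I$, i.e. it is an abundancy index, as claimed. I expect the only real obstacle to be the valuation bookkeeping in the third paragraph — one must be careful that ``$p_i^{k_i+1}\mid N$ and $D\mid N$'' really does imply $p_iD\mid N$, which relies on the $p_j$ being distinct primes so that the valuations can be checked independently; everything else is a routine invocation of the already‑listed properties of the abundancy index.
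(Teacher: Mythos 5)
Your proposal is correct and follows essentially the same route as the paper's own proof: use Lemma \ref{lemma13} to get $D \mid N$, use the hypothesis $I(p_iD) > \frac{a}{b}$ together with monotonicity of $I$ under divisibility to rule out $p_i^{k_i+1} \mid N$ (so that $D$ is a unitary divisor), and then apply multiplicativity to exhibit $N/D$ as the witness. The only difference is cosmetic: you argue the contrapositive with explicit valuation bookkeeping where the paper states the implication directly.
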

\begin{proof} Suppose that $I(N) = \displaystyle\frac{a}{b}$ for some $N \in \mathbb N$.  Then by Lemma \ref{lemma13}, $b \mid N$ since $\gcd(a, b) = 1$.  Because $I({p_i}D) > \displaystyle\frac{a}{b}$ for all $1 \leq i \leq n$, we know that it is impossible that ${{p_i}D} \mid N$ by the contrapositive of the third result in Lemma \ref{lemma12}.  However, we know (by Lemma \ref{lemma13}) that $D \mid N$, so ${p_i}^{{k_i} + 1} \nmid N$.  Thus we may write $N = \displaystyle\left(\displaystyle\prod_{i = 1}^{n} {{p_i}^{k_i}}\right)\cdot{r} = Dr$, where $\gcd(p_i, r) = 1$ for all $1 \leq i \leq n$, that is, $\gcd(D, r) = 1$.  Then, since $\sigma(N)$ is multiplicative, we may write $\displaystyle\frac{a}{b} = \displaystyle\frac{\sigma(N)}{N} = \displaystyle\frac{\sigma(D)}{D}\displaystyle\frac{\sigma(r)}{r}$.  Therefore, $I(r) = \displaystyle\frac{\sigma(r)}{r} = \displaystyle\frac{a}{b}\displaystyle\frac{D}{\sigma(D)}$.  Thus, if $I(N) = \displaystyle\frac{a}{b}$ for some $N \in \mathbb N$, then $I(r) = \displaystyle\frac{a}{b}\displaystyle\frac{D}{\sigma(D)}$ for some $r \in \mathbb N$. 
\end{proof}
\begin{cor}\label{corollary6} Let $m, n, t \in \mathbb N$.  If $\displaystyle\frac{\sigma(mn) + \sigma(m)t}{mn}$ is in reduced form with $m = \displaystyle\prod_{i = 1}^{l} {{p_i}^{k_i}}$ and $I({p_i}m) > \displaystyle\frac{\sigma(mn) + \sigma(m)t}{mn}$ for all $1 \leq i \leq l$, then $\displaystyle\frac{\sigma(n) + t}{n}$ is an abundancy index if $\displaystyle\frac{\sigma(mn) + \sigma(m)t}{mn}$ is an index.
\end{cor}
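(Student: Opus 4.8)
The plan is to derive Corollary~\ref{corollary6} as a direct application of Theorem~\ref{theorem17}. First I would set $\frac{a}{b} := \frac{\sigma(mn) + \sigma(m)t}{mn}$, which is assumed to be in lowest terms, and take the divisor $D$ appearing in Theorem~\ref{theorem17} to be $D := m = \prod_{i=1}^{l} p_i^{k_i}$. Then $D = m \mid mn = b$, and $\frac{a}{b} > 1$ since $\sigma(mn) + \sigma(m)t \geq mn + 1 > mn$. Moreover the hypothesis of the corollary, namely $I(p_i m) > \frac{\sigma(mn) + \sigma(m)t}{mn}$ for all $1 \leq i \leq l$, is \emph{verbatim} the condition $I(p_i D) > \frac{a}{b}$ demanded by Theorem~\ref{theorem17}. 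Hence, provided $\frac{a}{b} = I(N)$ for some $N \in \mathbb{N}$ --- which is exactly the standing hypothesis that $\frac{\sigma(mn) + \sigma(m)t}{mn}$ is an index --- Theorem~\ref{theorem17} yields that $\frac{D}{\sigma(D)}\,\frac{a}{b}$ is again an abundancy index.

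The second step is the bookkeeping identification of $\frac{D}{\sigma(D)}\,\frac{a}{b}$ with the target fraction. Substituting $D = m$,
\begin{displaymath} \frac{D}{\sigma(D)}\cdot\frac{a}{b} = \frac{m}{\sigma(m)}\cdot\frac{\sigma(mn) + \sigma(m)t}{mn} = \frac{\sigma(mn) + \sigma(m)t}{\sigma(m)\,n}. \end{displaymath}
Using the multiplicativity of $\sigma$ to replace $\sigma(mn)$ by $\sigma(m)\sigma(n)$, the numerator factors as $\sigma(m)\bigl(\sigma(n) + t\bigr)$, the common factor $\sigma(m)$ cancels with the denominator, and we are left with precisely $\frac{\sigma(n)+t}{n}$. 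Therefore $\frac{\sigma(n)+t}{n}$ is an abundancy index, which is the assertion of the corollary.

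The one point that needs care --- and the main (mild) obstacle --- is the step $\sigma(mn) = \sigma(m)\sigma(n)$, which requires $\gcd(m,n) = 1$; this coprimality should be read into the hypotheses, and it is natural in the intended setting, since $m$ is playing the role of a prime-power-type factor of the witness $N = mr$ with $\gcd(m,r)=1$ produced internally by Theorem~\ref{theorem17}. Apart from that cancellation, no new argument is required: all three structural conditions needed to invoke Theorem~\ref{theorem17} with $D = m$ are either immediate or are exactly the stated hypotheses, so the corollary is essentially a specialization of the theorem with the substitution $D = m$ made explicit.
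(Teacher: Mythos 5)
Your proposal matches the paper's proof in structure: the paper likewise treats this as Theorem~\ref{theorem17} specialized to $D = m$, with the only additional remark being that $\gcd(m,n) = 1$ under the corollary's assumptions. The one place you fall short is in saying the coprimality ``should be read into the hypotheses'': it does not need to be assumed, because it already follows from what is given. Indeed, since $\frac{\sigma(mn)+\sigma(m)t}{mn}$ is an index in reduced form, any witness $N$ with $I(N) = \frac{\sigma(mn)+\sigma(m)t}{mn}$ satisfies $mn \mid N$ by Lemma~\ref{lemma13}; and the hypothesis $I(p_i m) > I(N)$ forces $p_i m \nmid N$, hence ${p_i}^{k_i+1} \nmid N$ for each $i$. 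If some $p_i$ divided $n$, then ${p_i}^{k_i+1}$ would divide $mn$ and hence $N$, a contradiction; so $\gcd(m,n)=1$ and the factorization $\sigma(mn) = \sigma(m)\sigma(n)$ in your cancellation step is justified from the stated hypotheses alone. With that observation supplied, your argument is complete and coincides with the paper's.
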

\begin{proof} The proof is very similar to that of Theorem \ref{theorem17}.  We only need to observe that, under the assumptions given in this corollary, $\gcd(m, n) = 1$.
\end{proof}
\begin{remrk}\label{remark13} If, in Corollary \ref{corollary6}, we have $t = 1$ and $n = p$ for some prime $p$, then the corollary tells us that if $\displaystyle\frac{\sigma(mp) + \sigma(m)}{mp}$ is an abundancy index, then $\displaystyle\frac{\sigma(p) + 1}{p}$ is as well.  The fractions $\displaystyle\frac{27}{14}, \displaystyle\frac{39}{22}, \displaystyle\frac{45}{26}$ and $\displaystyle\frac{57}{34}$ all illustrate this fact.  If we could determine that these are indeed indices, then we could say that $\displaystyle\frac{9}{7}, \displaystyle\frac{13}{11}, \displaystyle\frac{15}{13}$ and $\displaystyle\frac{19}{17}$, all of the form $\displaystyle\frac{\sigma(p) + 1}{p}$, are indices as well.  Fractions of the form $\displaystyle\frac{\sigma(p) + 1}{p}$ continue to elude characterization as indices or outlaws!  This is significant because Paul Weiner \cite{W58} proved that if there exists an integer $N$ with abundancy $\displaystyle\frac{5}{3}$, then $5N$ is an odd perfect number.  The fraction $\displaystyle\frac{5}{3}$ is of the form $\displaystyle\frac{\sigma(p) + 1}{p}$ for $p = 3$.
\end{remrk}
\begin{thm}\label{theorem18} If $p > q > 2$ are primes satisfying $p > q^2 - q - 1$, then $\displaystyle\frac{\sigma(qp) + q - 1}{qp}$ is an abundancy outlaw.
\end{thm}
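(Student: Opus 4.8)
My plan is to argue directly: suppose some $x \in {\mathbb Z}^{+}$ satisfies $I(x) = \displaystyle\frac{\sigma(qp)+q-1}{qp}$ and derive a contradiction. Since $q$ and $p$ are distinct odd primes with $2 < q < p$, Theorem \ref{theorem5} gives $\sigma(qp) = (q+1)(p+1)$, so the target fraction equals $\displaystyle\frac{pq+p+2q}{pq} = 1 + \frac1q + \frac2p$. First I would check this is in lowest terms: $pq+p+2q \equiv 2q \pmod{p}$ and $pq+p+2q \equiv p \pmod{q}$, and neither residue vanishes (as $p,q$ are distinct odd primes), so $\gcd(pq+p+2q,\,pq) = 1$; hence by Lemma \ref{lemma13} any solution $x$ is divisible by $pq$. (One could instead try to quote Theorem \ref{theorem16} with $N = qp$, $t = q-1$, and $p_j = q$ — the hypothesis $p > q^2-q-1$ is precisely its condition $p_j < \frac1t\sigma(N/p_j^{k_j})$ — but that route also needs $\sigma(q) = q+1$ to have a divisor $>1$ coprime to $q-1$, which fails exactly when $q$ is a Mersenne prime; so I would proceed without it.)

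The crucial step is to pin down the exact power of $q$ dividing $x$, and here is where the hypothesis enters. I claim $q^2 \nmid x$. Otherwise, since $p \mid x$ and $\gcd(q^2,p)=1$ we have $pq^2 \mid x$, so Lemma \ref{lemma2} gives
\[ I(x) \;\geq\; I(pq^2) \;=\; \frac{p+1}{p}\cdot\frac{q^2+q+1}{q^2} \;=\; 1 + \frac1q + \frac2p + \frac{p+q+1-q^2}{pq^2}, \]
and the last summand is strictly positive precisely because $p > q^2-q-1$; this yields $I(x) > I(x)$, a contradiction. So the largest power of $q$ dividing $x$ is $q^1$, and this is the only place the hypothesis is used.

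Now write $x = qy$ with $\gcd(q,y)=1$. Since $I$ is multiplicative, $I(x) = I(q)I(y) = \frac{q+1}{q}I(y)$, so $I(y) = \frac{q}{q+1}(1+\frac1q+\frac2p) = 1 + \frac{2q}{p(q+1)}$. Because $q$ is odd, $u := \frac{q+1}{2}$ is an integer with $u \geq 2$, and $I(y) = 1 + \displaystyle\frac{q}{pu} = \displaystyle\frac{pu+q}{pu}$; since $q$ is prime, $q \neq p$, and $0 < u < q$, this fraction is already in lowest terms, so $pu \mid y$ by Lemma \ref{lemma13}. Let $\ell$ be the smallest prime factor of $u$; then $\ell \leq u = \frac{q+1}{2} < q < p$, while $\ell \mid u \mid pu \mid y$, so Lemma \ref{lemma2} gives $I(y) \geq I(\ell) = 1 + \frac1\ell$. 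Combining with $I(y) = 1 + \frac{q}{pu}$ gives $q\ell \geq pu$; but $\ell \leq u$ and $q < p$ force $q\ell \leq qu < pu$, a contradiction. Therefore no such $x$ exists, and $\displaystyle\frac{\sigma(qp)+q-1}{qp}$ is an abundancy outlaw.

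The step I expect to demand the most care is the reduction after peeling off the factor $q$: I must make sure $\frac{pu+q}{pu}$ is genuinely in lowest terms (so that Lemma \ref{lemma13} forces $pu \mid y$) and that $u = \frac{q+1}{2} \geq 2$, since it is the presence of a prime $\ell < p$ inside $y$ that pushes $I(y)$ above the value it is supposed to have. Everything else is routine bookkeeping, and the argument is uniform in the residue of $q$ modulo $4$; in particular it disposes of the Mersenne-prime case that obstructs the Theorem \ref{theorem16} route.
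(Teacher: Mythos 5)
Your proof is correct and follows essentially the same route as the paper's: establish $qp \mid x$, use the hypothesis $p > q^2 - q - 1$ to rule out $q^2 \mid x$, peel off the factor $q$, and show that $I(y) = \displaystyle\frac{p\cdot\frac{q+1}{2} + q}{p\cdot\frac{q+1}{2}}$ is impossible. The only divergence is the final step, where the paper applies Weiner's criterion (Lemma \ref{lemma5}) to $m = p\cdot\frac{q+1}{2}+q$ and $n = p\cdot\frac{q+1}{2}$, whereas you re-derive that special case directly via the smallest prime factor of $\frac{q+1}{2}$ --- an equally valid, slightly more self-contained substitute.
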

\begin{proof} Suppose that $I(N) = \displaystyle\frac{\sigma(qp) + q - 1}{qp}$ for some $N \in \mathbb N$.  Because $\sigma(qp) + q - 1 = (q + 1)(p + 1) + q - 1 = qp + 2q + p$ and $p > q > 2$ are primes, we know that $\gcd(\sigma(qp) + q - 1, qp) = 1$, and by Lemma \ref{lemma13} we have $qp \mid N$.  If ${q^2}p \mid N$, then $\displaystyle\frac{\sigma(qp) + q - 1}{qp} \geq \displaystyle\frac{\sigma({q^2}p)}{{q^2}p}$.  Simplifying, this becomes
\begin{center} $\displaystyle\frac{(q + 1)(p + 1) + q - 1}{qp} \geq \displaystyle\frac{(q^2 + q + 1)(p + 1)}{{q^2}p}$,
\end{center}
and then $q^2 - q - 1 \geq p$.  This contradicts our hypothesis that $p > q^2 - q - 1$, and therefore ${q^2}p \nmid N$.  Consequently, we may write $N = qK$, where $\gcd(q, K) = 1$.  Then, we have $I(N) = I(q)I(K)$, which gives $I(K) = \displaystyle\frac{I(N)}{I(q)} = {\displaystyle\frac{\sigma(qp) + q - 1}{qp}}\cdot{\displaystyle\frac{q}{q + 1}} = \displaystyle\frac{p\cdot{\displaystyle\frac{q + 1}{2}} + q}{p\cdot{\displaystyle\frac{q + 1}{2}}}$.  On setting $m = p\cdot{\displaystyle\frac{q + 1}{2}} + q$ and $n = p\cdot{\displaystyle\frac{q + 1}{2}}$ and observing that $n < m < \sigma(n)$ with $\gcd(m, n) = 1$, then by Lemma \ref{lemma5} (and Example \ref{example15}), $\displaystyle\frac{m}{n} = {\displaystyle\frac{\sigma(qp) + q - 1}{qp}}\cdot{\displaystyle\frac{q}{q + 1}}$ is an abundancy outlaw.  But this contradicts the fact that $I(K) = \displaystyle\frac{I(N)}{I(q)} = {\displaystyle\frac{\sigma(qp) + q - 1}{qp}}\cdot{\displaystyle\frac{q}{q + 1}}$ for some $K \in \mathbb N$, whence it follows that $I(N) \neq \displaystyle\frac{\sigma(qp) + q - 1}{qp}$ for all $N \in \mathbb N$.  Consequently, $\displaystyle\frac{\sigma(qp) + q - 1}{qp}$ must be an abundancy outlaw under the conditions specified for the primes $p$ and $q$.
\end{proof}
\begin{paragraph}\indent These results allow us to move a few more fractions in $(1, \infty)$ from the set of infinitely many fractions that we are unable to classify (category III), into the infinite set of fractions that are abundancy outlaws (category II).  Furthermore, we can see that certain fractions are linked to others in important ways: determining the status of a given fraction can lead to the classification of new abundancy outlaws and indices.  If the converse of Theorem \ref{theorem17} could be proved, then we would be able to divide certain fractions into equivalence classes of sorts, that is, sets of fractions with the same abundancy index/outlaw status.  However, the question of the existence of an OPN (e.g. the status of the fraction $\displaystyle\frac{\sigma(p) + 1}{p}$ for an odd prime $p$) remains as elusive as ever. 
\end{paragraph}
\section{Bounds for the Prime Factors of OPNs}
In this section, bounds for each of the distinct prime factors of an OPN $N$ are derived, drawing heavily from existing works.  We do this using cases based on the total number of distinct prime factors of $N$ (i.e. $\Omega(N)$).  We also study further results in the field and give examples of various techniques used, including an in-depth and detailed discussion of the factor chain approach.  We give in Subsection $4.2.3$ explicit double-sided bounds for each of the prime factors of an OPN $N$ with $\omega(N) = 9$.  We end the section with a discussion of the author's results on the relationships between the components of an OPN $N$.
\subsection{Results on OPNs}
\begin{paragraph}\indent Let $N = {{q_1}^{a_1}}{{q_2}^{a_2}}\cdots{{q_t}^{a_t}}$ be the canonical factorization of an OPN $N$ (i.e. $q_1, q_2, \ldots, q_t$ are distinct primes with $q_1 < q_2 < \ldots < q_t$ and $t = \omega(N)$).  Then the following statements are true:
{
\small
{
\begin{itemize}
\item{$q_t \geq 100000007$ from Goto and Ohno, improving on Jenkins}
\item{$q_{t - 1} \geq 10007$ from Iannucci, improving on Pomerance}
\item{$q_{t - 2} \geq 101$ from Iannucci}
\item{$q_{i} < {2^{2^{i - 1}}}(t - i + 1)$ for $2 \leq i \leq 6$ from Kishore}
\item{$q_1 < \frac{2t + 6}{3}$ from Gr$\ddot{u}$n}
\item{${q_k}^{a_k} > {10}^{20}$ for some $k$ from Cohen, improving on Muskat}
\item{$N > {10}^{300}$ from Brent, et. al., improving on Brent and Cohen (A search is currently on in http://www.oddperfect.org to prove that $N > {10}^{500}$.)}
\item{$N \equiv 1 \pmod{12}, N \equiv 81 \pmod{324}$ or $N \equiv 117 \pmod{468}$ from Roberts, improving on Touchard and Holdener}
\item{$N < 2^{4^t}$ from Nielsen, improving on Cook}
\item{$\displaystyle\sum_{i = 1}^{t} {a_i} = \Omega(N) \geq 75$ from Hare, improving on Iannucci and Sorli}
\item{$t \geq 9$ from Nielsen, improving on Hagis and Chein}
\item{$t \geq 12$ if $q_1 \geq 5$ from Nielsen, improving on both Hagis and Kishore}
\item{$t \geq 17$ if $q_1 \geq 7$ from Greathouse, improving on Norton}
\item{$t \geq 29$ if $q_1 \geq 11$ from Greathouse, improving on Norton}  
\end{itemize}
}
}
\normalsize
\end{paragraph}
\begin{paragraph}\indent Suryanarayana and Hagis \cite{H20} showed that, in all cases, $0.596 < \sum_{p \mid N} {\frac{1}{p}} < 0.694$.  Their paper gives more precise bounds when $N$ is divisible by $3$ or $5$ (or both).  Cohen \cite{C5} also gave more strict ranges for the same sum, including an argument that such bounds are unlikely to be improved upon significantly.   
\end{paragraph}
\begin{paragraph}\indent We use the preceding facts about OPNs to derive explicit double-sided bounds for the prime factors of an OPN $N$ with $\omega(N) = 9$, in Subsection $4.2.3$.
\end{paragraph}
\subsection{Algorithmic Implementation of Factor Chains}
\begin{paragraph}\indent (We borrow heavily the following material from \cite{S}.)
\end{paragraph}
\begin{paragraph}\indent In the discussion that follows, we will let $N$ denote an OPN, assuming one exists, with the prime decomposition
\begin{center}
$N = {\displaystyle\prod_{i = 1}^{u}{{p_i}^{a_i}}}\cdot{\displaystyle\prod_{i = 1}^{v}{{q_i}^{b_i}}}\cdot{\displaystyle\prod_{i = 1}^{w}{{r_i}^{c_i}}} = {\lambda}\cdot{\mu}\cdot{\nu}$ \\
\end{center}
which we interpret as follows: each ${p_i}^{a_i}$ is a known component of $N$, each $q_i$ is a known prime factor of $N$ but the exponent $b_i$ is unknown, and each prime factor $r_i$ of $N$ and exponent $c_i$ are unknown.  By ``known", we mean either explicitly postulated or the consequence of such an assumption.  Any of $u$, $v$, $w$ may be zero, in which case we set $\lambda$, $\mu$, $\nu$, respectively, equal to $1$.  We also let $\bar{m}$ denote a proper divisor of $m$ (except $\bar{1} = 1$).
\end{paragraph}
\begin{paragraph}\indent We can now illustrate the factor chain approach via an algorithmic implementation that can be used to test a given lower bound for $\omega(N)$.  We assume that $N$ is an OPN with $\omega(N) = t$ distinct prime factors. In brief, the algorithm may be described as a progressive sieve, or ``coin-sorter", in which the sieve gets finer and finer, so that eventually nothing is allowed through.  We shall use the terminology of graph theory to describe the branching process.  Since $3 \mid N$ if $t \leq 11$, for our present purposes the even powers of $3$ are the roots of the trees.  If $3^2$ is an exact divisor of $N$, then, since $\sigma(N) = 2N$, $\sigma(3^2) = 13$ is a divisor of $N$, and so the children of the root $3^2$ are labelled with different powers of $13$.  The first of these is ${13}^1$, meaning that we assume that $13$ is an exact divisor of $N$ (and hence that $13$ is the special prime), the second ${13}^2$, then ${13}^4, {13}^5, \ldots$.  Each of these possibilities leads to further factorizations and further subtrees.  Having terminated all these, by methods to be described, we then assume that $3^4$ is an exact divisor, beginning the second tree, and we continue in this manner.  Only prime powers as allowed in Subsection $4.2.1$ are considered, and notice is taken of whether the special prime has been specified earlier in any path.  These powers are called Eulerian.
\end{paragraph}
\begin{paragraph}\indent We distinguish between \emph{initial components}, which label the nodes and comprise \emph{initial primes} and \emph{initial exponents}, and \emph{consequent primes}, which arise within a tree through factorization.  It is necessary to maintain a count of the total number of distinct initial and consequent primes as they arise within a path, and we let $k$ be this number.
\end{paragraph}
\begin{paragraph}\indent Often, more than one new prime will arise from a single factorization.  All are included in the count, within $k$, and, whenever further branching is required, the smallest available consequent prime is used as the new initial prime.  This preferred strategy will give the greatest increase in $I(\lambda\bar{\mu})$.  On the other hand a strategy of selecting the largest available consequent prime will usually give a significant increase in $k$.
\end{paragraph}
\begin{paragraph}\indent To show that $t \geq \omega$, say, we build on earlier results which have presumably shown that $t \geq \omega - 1$, and we suppose that $t = \omega - 1$.  (The reader may want to review Section $3.1$ at this point.)  If, within any path, we have $k > \omega - 1$, then there is clearly a contradiction, and that path is terminated.  This is one of a number of possible contradictions that may arise and which terminate a path.  The result will be proved when every path in every tree has been terminated with a contradiction (unless an OPN has been found).  The different possible contradictions are indicated with upper case letters.
\end{paragraph}
\begin{paragraph}\indent In the contradiction just mentioned, we have too Many distinct prime factors of $N$: this is Contradiction \emph{M1}.  If there are too Many occurrences of a single prime this is Contradiction \emph{M2};  that is, within a path an initial prime has occurred as a consequent prime more times than the initial exponent.  (So counts must also be maintained within each path of the occurrences of each initial prime as a consequent prime.) 
\end{paragraph}
\begin{paragraph}\indent If $k = \omega - 3$ but none of these $k$ primes exceeds $100$, then Iannucci's result must be (about to be) violated: this is Contradiction \emph{P3}.  If $k = \omega - 2$ and none of these primes exceeds ${10}^4$, then again, Iannucci's result is violated: Contradiction \emph{P2}.  Or if, in this case, one exceeds ${10}^4$ but no other exceeds $100$, then this is another version of Contradiction \emph{P3}.  If $k = \omega - 1$ and none of these primes exceeds ${10}^8$, then Goto/Ohno's result is violated: Contradiction \emph{P1}.  In this case, there are the following further possibilities: one prime exceeds ${10}^8$ but no other exceeds ${10}^4$, or one exceeds ${10}^8$, another exceeds ${10}^4$, but no other exceeds $100$.  These are other versions of contradictions \emph{P2} and \emph{P3}, respectively.  These, and some of the other forms of contradiction below, require only counts or comparisons, and no calculations.
\end{paragraph}
\begin{paragraph}\indent At the outset, a number $B$ is chosen, then the number of subtrees with a given initial prime $p$ is bounded by taking as initial components Eulerian powers $p^a$ with $p^{a + 1} \leq B$.  If possible, these trees are continued by factorizing $\sigma(p^a)$.  When $a$ becomes so large that $p^{a + 1} > B$, which may occur with $a = 0$, then we write $q^b$ for $p^a$ and we have one more subtree with this initial prime; it is distinguished by writing its initial component as $q^{\infty}$.  This tree must be continued differently.  In the first place, the smallest available consequent prime, which is not already an initial prime, is used to begin a new subtree.  If no such primes are available, then we opt to use the procedure that follows.
\end{paragraph}
\begin{paragraph}\indent The product of the $u$ initial components $p^a$ within a path is the number $\lambda$.  Those initial primes $q$ with exponents $\infty$, and all consequent primes which are not initial primes, are the $v$ prime factors of $\mu$.  If $k < \omega - 1$ then there are $w = \omega - k - 1$ remaining prime factors of $N$, still to be found or postulated.  These are the prime factors $r$ of $\nu$.  The numbers $u$, $v$, $w$ are not fixed;  they vary as the path develops, for example, by taking a consequent prime as another initial prime.
\end{paragraph}
\begin{paragraph}\indent If factorization can no longer be used to provide further prime factors of $N$, so, in particular, there are no consequent primes which are not initial primes, then the following result (with proof omitted) is used:
\end{paragraph} 
\begin{lemm}\label{LemmaX} Suppose $w \geq 1$, and assume $r_1 < r_2 < \cdots < r_w$.  Then
\begin{center} 
$\displaystyle\frac{I(\lambda\bar{\mu}{r_1}^{c_1 - 1})}{2 - I(\lambda\bar{\mu})} \leq r$
\end{center} 
for $r = r_1$, with strict inequality if $v \geq 1$ or $w \geq 2$.  Further, if $I(\lambda{\mu}^{\infty}) < 2$, then 
\begin{center}
$r < \displaystyle\frac{2 + I(\lambda{\mu}^{\infty})(w - 1)}{2 - I(\lambda{\mu}^{\infty})}$
\end{center}
for $r = r_1$.
\end{lemm}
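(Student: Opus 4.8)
The plan is to reduce each of the two claimed inequalities to a statement about the abundancy index of a divisor of $N$, using only multiplicativity of $I$, Lemma \ref{lemma2}, and the prime-power bound $I(P^{\alpha}) < P/(P-1)$ of Lemma \ref{lemma4}. The starting point in both halves is $I(\lambda)I(\mu)I(\nu) = I(N) = 2$, which holds because $\lambda$, $\mu$, $\nu$ are products over three disjoint sets of primes.

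For the first inequality, I would first observe that $\lambda\bar{\mu}$ is a \emph{proper} divisor of $N$ (because $w \ge 1$ forces $\nu > 1$), so by Lemma \ref{lemma2} we have $I(\lambda\bar{\mu}) < 2$ and the denominator $2 - I(\lambda\bar{\mu})$ is positive; clearing it, the claim becomes $I(\lambda\bar{\mu})\,I(r_1^{c_1-1}) \le r_1\bigl(2 - I(\lambda\bar{\mu})\bigr)$, where I used $\gcd(r_1,\lambda\bar{\mu}) = 1$ to write $I(\lambda\bar{\mu} r_1^{c_1-1}) = I(\lambda\bar{\mu})I(r_1^{c_1-1})$. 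The key algebraic fact is the identity $r_1\,I(r_1^{c_1}) = r_1 + I(r_1^{c_1-1})$, immediate from $I(r_1^{c}) = \sum_{j=0}^{c} r_1^{-j}$. Using it, the target rearranges successively into $I(\lambda\bar{\mu})\bigl(r_1 + I(r_1^{c_1-1})\bigr) \le 2r_1$, then $I(\lambda\bar{\mu})\,r_1\,I(r_1^{c_1}) \le 2r_1$, and finally $I(\lambda\bar{\mu} r_1^{c_1}) \le 2$. Since $\lambda\bar{\mu} r_1^{c_1} \mid N$, this is exactly Lemma \ref{lemma2}; and $\lambda\bar{\mu} r_1^{c_1}$ is a proper divisor of $N$ unless $\bar{\mu} = \mu$ and $\nu = r_1^{c_1}$, that is, unless $v = 0$ and $w = 1$, so the equality clause of Lemma \ref{lemma2} gives strictness precisely when $v \ge 1$ or $w \ge 2$.

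For the second inequality, write $J = I(\lambda\mu^{\infty}) = I(\lambda)\prod_{i=1}^{v} q_i/(q_i-1)$, which is an upper bound for $I(\lambda\mu)$ by Lemma \ref{lemma4}; hence $2 = I(\lambda\mu)\,I(\nu) \le J\,I(\nu)$, so $I(\nu) \ge 2/J$. On the other side $I(\nu) = \prod_{i=1}^{w} I(r_i^{c_i}) < \prod_{i=1}^{w} r_i/(r_i-1)$, and since $r_1 < \cdots < r_w$ are distinct primes we have $r_i \ge r_1 + i - 1$; as $x \mapsto x/(x-1)$ is decreasing this forces the telescoping estimate $\prod_{i=1}^{w} r_i/(r_i-1) \le \prod_{i=1}^{w} (r_1+i-1)/(r_1+i-2) = (r_1+w-1)/(r_1-1)$. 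Combining, $2/J < (r_1+w-1)/(r_1-1)$; cross-multiplying (legal since the hypothesis $I(\lambda\mu^{\infty}) < 2$ makes $2 - J > 0$) and solving for $r_1$ gives exactly $r_1 < \bigl(2 + J(w-1)\bigr)/(2 - J)$.

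The computations are otherwise routine; the two places I expect to need care are spotting the identity $r_1 I(r_1^{c_1}) = r_1 + I(r_1^{c_1-1})$ that collapses the first estimate onto a divisibility fact, and the telescoping bound $\prod r_i/(r_i-1) \le (r_1+w-1)/(r_1-1)$ coming from $r_i \ge r_1 + i - 1$. The likeliest source of a slip is the strictness bookkeeping in the first part --- correctly handling the degenerate case $\mu = 1$ (so that $\bar{\mu} = \mu$) when deciding whether $\lambda\bar{\mu} r_1^{c_1}$ is a proper divisor of $N$.
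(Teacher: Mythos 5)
Your proof is correct. Note that the paper itself states this lemma ``with proof omitted'' (it is imported from Sorli's algorithmic framework), so there is no in-paper argument to compare against; your write-up is a complete, self-contained justification. Both halves check out: the identity $r_1 I(r_1^{c_1}) = r_1 + I(r_1^{c_1-1})$ does collapse the first inequality to $I(\lambda\bar{\mu}r_1^{c_1}) \leq 2$, which is exactly Lemma \ref{lemma2} applied to the divisor $\lambda\bar{\mu}r_1^{c_1}$ of $N$, and since $\bar{\mu}$ is by definition a \emph{proper} divisor of $\mu$ whenever $\mu > 1$, equality forces $v = 0$ and $w = 1$, matching the stated strictness condition; the telescoping estimate $\prod_{i=1}^{w} r_i/(r_i-1) \leq (r_1+w-1)/(r_1-1)$ from $r_i \geq r_1 + i - 1$ then yields the second bound after the cross-multiplication you describe, which is indeed legitimate under the hypothesis $I(\lambda\mu^{\infty}) < 2$.
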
 
Here, $\bar{\mu}$ is taken to be the product of powers $q^{\beta}$, where $q \mid \mu$ and $\beta$ is given as follows.  Let $b_0 = \min{\left\{b : q^{b + 1} > B\right\}}$.  If $b_0 = 0$, then we proceed in a manner to be described later.  Otherwise, let
\begin{displaymath}
\beta = \left\{
\begin{array}{ll}
b_0, & ~if~b_0~is~even~(b_0 > 0), \\
b_0 + 1, & ~if~b_0~is~odd, \\
\end{array}
\right\}
\end{displaymath}
with one possible exception.  If $\pi$ is the special prime, $\pi \nmid \lambda$ and the set $Q_1 = \left\{q : q \equiv b_0 \equiv 1 \pmod{4}\right\}$ is nonempty, then take $\beta = b_0$ for $q = \min{Q_1}$.  Values of $I(p^a)$ and $I(q^{\beta})$ must be maintained, along with their product.  This is the value of $I(\lambda\bar{\mu})$ to be used in the result mentioned in this paragraph.  We shall refer to Lemma \ref{LemmaX} as \emph{Lemma X}.
\begin{paragraph}\indent \emph{Lemma X} is used to provide an interval, the primes within which are considered in turn as possible divisors of $\nu$.  If there are No primes within the interval that have not been otherwise considered, then this is Contradiction \emph{N}.  New primes within the interval are taken in increasing order, giving still further factors of $N$ either through factorization or through further applications of \emph{Lemma X}.  There will be occasions when no new primes arise through factorization, all being used earlier in the same path.  Then again \emph{Lemma X} is used to provide further possible prime factors of $N$ (or, if $k = \omega - 1$, we may have found an OPN).  This lemma specifically supplies the smallest possible candidate for the remaining primes; a still Smaller prime subsequently arising through factorization gives us Contradiction \emph{S}.
\end{paragraph}
\begin{paragraph}\indent We also denote by $q$ any consequent prime which is not an initial prime, and, for such primes, we let $Q_2 = \left\{q : q \equiv 1 \pmod{4}\right\}$.  Then, for such primes, we let $\beta = 2$ with the possible exception that, considering all primes $q$, we let $\beta = b_0$ or $1$, as relevant, for $q = \min{\left(Q_1 \cup Q_2\right)}$, if this set is nonempty.  Again, the value of $I(\lambda\bar{\mu})$, defined as before, must be maintained.  If this value exceeds $2$, we have an Abundant divisor of $N$, and the path is terminated: Contradiction \emph{A}.  This may well occur with $k < \omega - 1$.  Values of $I(q^{\infty})$ must also be maintained.  These, multiplied with the values of $I(p^a)$, give values of $I(\lambda{\mu}^{\infty})$.  If this is less than $2$ and $k = \omega - 1$ then, for all possible values of the exponents $b$, the postulated number $N$ is Deficient: Contradiction \emph{D}. 
\end{paragraph}
\begin{paragraph}\indent Contradictions \emph{A} and \emph{D} are in fact contradictions of the following lemma, which we shall refer to as \emph{Lemma Y}:
\end{paragraph}
\begin{lemm}\label{LemmaY} For any OPN $N = \lambda\mu\nu$, as given as before, we have $I(\lambda\bar{\mu}) \leq 2 \leq I(\lambda\nu{\mu}^{\infty})$.  Both inequalities are strict if $v > 0$; the left-hand inequality is strict if $w > 0$.
\end{lemm}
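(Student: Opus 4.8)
The plan is to derive both halves of \emph{Lemma Y} directly from the monotonicity of the abundancy index (Lemma \ref{lemma2}) together with the strict prime-power bound $I(q^{\beta}) < \frac{q}{q-1}$ of Lemma \ref{lemma4}, using throughout that $I$ is multiplicative and that, because $N$ is perfect, $I(N) = 2$, while $N = \lambda\mu\nu$ is the \emph{complete} prime factorization of $N$, so that $\lambda$, $\mu$, $\nu$ are pairwise coprime.

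For the left inequality $I(\lambda\bar\mu) \le 2$, the first step is to observe that $\lambda\bar\mu$ is a divisor of $N$ on the current path. Every $p_i^{a_i}$ is an exact divisor of $N$ by construction, so $\lambda \mid N$; and each prime power $q_i^{\beta_i}$ appearing in $\bar\mu$ divides $N$ because $\beta_i$ is, by the recipe given for $\bar\mu$, either $b_0$ or $b_0+1$ for the corresponding $q_i$, and along the path it has been established (by postulate or consequence, and using that $q_i^{\infty}$ only arises once all smaller Eulerian exponents have been branched on elsewhere) that the relevant power of $q_i$ divides $N$. Since $\gcd(\lambda,\mu)=1$, it follows that $\lambda\bar\mu \mid N$, and Lemma \ref{lemma2} gives $I(\lambda\bar\mu) \le I(N) = 2$. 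The same lemma upgrades this to a strict inequality whenever $\lambda\bar\mu$ is a \emph{proper} divisor of $N$: if $w > 0$ then $\nu > 1$, so $\lambda\bar\mu \le \lambda\mu < \lambda\mu\nu = N$; and if $v > 0$ then $\bar\mu$ is a proper divisor of $\mu$, so $\lambda\bar\mu < \lambda\mu \le N$.

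For the right inequality $2 \le I(\lambda\nu\mu^{\infty})$, recall that $I(\lambda\nu\mu^{\infty})$ is understood as $I(\lambda)\,I(\nu)\,\prod_{i=1}^{v}\frac{q_i}{q_i-1}$. By multiplicativity, $2 = I(N) = I(\lambda)\,I(\nu)\,\prod_{i=1}^{v} I(q_i^{b_i})$, and Lemma \ref{lemma4} gives $I(q_i^{b_i}) < \frac{q_i}{q_i-1}$ for each $i$; replacing each such factor by its supremum therefore only increases the product, yielding $2 \le I(\lambda\nu\mu^{\infty})$. If $v > 0$, at least one factor is strictly enlarged, so the inequality is strict.

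The inequalities themselves are thus short. The hard part is not the analysis but the bookkeeping: making sure that the particular $\bar\mu$ manufactured by the algorithm --- through $b_0$ and the special-prime/$\bmod 4$ exception defining the exponents $\beta_i$ --- never overshoots the true exponent of $q_i$ in $N$, so that $\lambda\bar\mu$ really does divide $N$ on the current path. Once that is checked, Lemma \ref{lemma2} and Lemma \ref{lemma4} finish the argument, and Contradictions \emph{A} and \emph{D} are precisely the assertions $I(\lambda\bar\mu) > 2$ and $I(\lambda\nu\mu^{\infty}) < 2$, each of which is impossible by \emph{Lemma Y}.
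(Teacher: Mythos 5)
Your argument is correct, and it is surely the intended one: the paper states \emph{Lemma Y} without proof (the whole subsection is borrowed from Sorli's thesis, and the lemma is left unjustified there), so there is no written proof to compare against. Your two halves are exactly what the statement calls for: the left inequality is Lemma \ref{lemma2} applied to the divisor $\lambda\bar{\mu}$ of $N$, with strictness coming from $\lambda\bar{\mu}$ being a \emph{proper} divisor when $v>0$ or $w>0$; the right inequality follows from writing $2=I(\lambda)I(\mu)I(\nu)$ and replacing each $I(q_i^{b_i})$ by its supremum $\frac{q_i}{q_i-1}$ via Lemma \ref{lemma4}, strictly so when $v>0$. You are also right to flag the one genuine point of care, namely that $\lambda\bar{\mu}\mid N$ depends on the algorithm's choice of the exponents $\beta$ never exceeding the true (Eulerian) exponents $b_i$; under the paper's convention that $\bar{\mu}$ denotes a proper divisor of $\mu$, this is built into the bookkeeping rather than something the inequality itself must establish. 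No gaps.
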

\begin{paragraph}\indent If, on the other hand, we have a postulated set of prime powers $p^a$ and $q^b$, for which $I(\lambda\bar{\mu}) \leq 2 \leq I(\lambda{\mu}^{\infty})$, then the main inequality in \emph{Lemma Y} is satisfied and we have candidates for an OPN.  If $v = w = 0$, so that we are talking only of known powers $p^a$, then their product \emph{is} an OPN.  Our sieving principle arises when $v > 0$.
\end{paragraph}
\begin{paragraph}\indent In every such case where we have a set of prime powers satisfying the main inequality of \emph{Lemma Y}, with $v > 0$, we increase the value of $B$ and investigate that set more closely.  With the larger value of $B$, some prime powers shift from $\mu$ to $\lambda$, and allow further factorization, often resulting quickly in Contradiction \emph{M1} or \emph{S}.  The value of $I(\bar{\mu})$ increases, so the interval given by \emph{Lemma X} shortens, and hopefully the case which led to our increasing $B$ is no longer exceptional, or Contradiction \emph{A} or \emph{D} may be enforced.  In that case, we revert to the earlier value of $B$ and continue from where we are.  Alternatively, it may be necessary to increase $B$ still further, and later perhaps further again.  When $w = 0$, since $I(\bar{\mu}) \rightarrow I({\mu}^{\infty})$ as $B \rightarrow \infty$, such cases must eventually be dispensed with, one way or the other.  
\end{paragraph}
\begin{paragraph}\indent We summarize the various contradictions in the following table: \\
\begin{tabular}{|c||l|}
\hline
\emph{A} & There is an Abundant divisor. \\
\hline
\emph{D} & The number is Deficient. \\
\hline
\emph{M1} & There are too Many prime factors. \\
\hline
\emph{M2} & A single prime has occurred too Many times (an excess of that prime). \\
\hline
\emph{N} & There is no New prime within the given interval. \\
\hline
\emph{P1} & There is no Prime factor exceeding ${10}^8$. \\
\hline
\emph{P2} & There is at most one Prime factor exceeding ${10}^4$. \\
\hline
\emph{P3} & There are at most two Prime factors exceeding $100$. \\
\hline
\emph{S} & There is a prime Smaller than the purportedly smallest remaining prime. \\
\hline
\emph{$\Pi$} & None of the primes can be the special prime. \\
\hline
\end{tabular}
\end{paragraph}
\begin{paragraph}\indent One of these, Contradiction \emph{$\Pi$}, was not discussed previously.  Within any path with $k = \omega - 1$, if $\pi$ is not implicit in an initial component and if there is no prime $q \equiv 1 \pmod{4}$, then Contradiction \emph{$\Pi$} may be invoked.
\end{paragraph}
\begin{remrk}\label{SigmaChainAlgorithm} Notice that the algorithm as presented in this subsection could be programmed directly to run on a high-speed computer (even desktop PCs).  Prior experience with such algorithmic programs, however, has shown that it can take months (or years even), to check and/or test a particular value for $t = \omega(N)$.  Current computer architecture limits our capability to carry out these tasks at a reasonable amount of time.
\end{remrk}
\begin{exmpl}\label{FactorChainApproach} \emph{Sigma chains} (otherwise known as \emph{factor chains}) are an easily automated system for proving facts about OPNs. Each line of the proof starts with a prime factor known or assumed to divide an OPN $N$, along with its exponent. Since $\sigma$ is multiplicative, knowledge of this prime power leads to knowledge of other prime powers of $N$. If an impossibility arises (see below), that chain of the proof is terminated and the next possibility is considered.
\begin{paragraph}\indent The following is the start of a proof that no OPN has a component less than ${10}^{30}$. It would take many thousands of pages to complete this proof (\textbf{and this has currently not been completed}); this merely serves as an example of how one constructs such proofs. (The best-known result in this direction is that of Cohen \cite{C51}: An OPN has a component bigger than ${10}^{20}$.)
\end{paragraph}
\begin{paragraph}\indent The factor chains are terminated (the succeeding line is not to be indented further than the preceding) if it fails in one of the following ways:
\end{paragraph}
\begin{paragraph}\indent \emph{xs}: The indicated prime appears more times than it is allowed. (e.g. If the chain assumes that $3^6 \| N$ then a chain with $7$ or more factors of $3$ is terminated.)
\end{paragraph}
\begin{paragraph}\indent \emph{overabundant}: The abundancy of the prime factors already exceeds $2$, so regardless of the other factors, $N$ will fail to be perfect.
\end{paragraph}
\begin{paragraph}\indent The factorizations of the largest half-dozen composites are due to the WIMS (WWWInteractive Multipurpose Server) 'Factoris' at wims.unice.fr. (This proof is taken from \cite{G14}.)
\end{paragraph} 
\\
{
\tiny
{
\begin{tabbing}
$3^6$ \= $-> 1093$\\
\> $1093$ \= $-> 2 * 547$ \+ \\
\> ${547}^2$ \= $-> 3 * 163 * 613$ \+ \\
\> ${163}^2$ \= $-> 3 * 7 * 19 * 67$ \+ \\
\> $7^2$ \= $-> 3 * 19$ \+ \\
\> ${19}^2$ \= $-> 3 * 127$ \+ \\
\> ${127}^2$ \= $-> 3 * 5419$ \+ \\
\> ${5419}^2$ \= $-> 3 * 31 * 313 * 1009$ \+ \\
\> ${31}^2 -> 3xs * 331$ \\
\> ${31}^4 -> 5 * 11 * 17351$ overabundant \\
\> ${31}^6$ \= $-> 917087137$ \+ \\
\> ${917087137}^2 -> 841048817767943907 = 3xs * 43 * ...$ \\
\> ${917087137}^4$ \= $-> 707363113097541065394066657400343621 = 31747594185191 * 2228\ldots9731$ \+ \\
\> ${31747594185191}^2$ \= $-> 1007\ldots1673 = 2671 * ...$ \+ \\
\> ${2671}^2 -> 3xs * 7 * 19 * 31 * 577$ \\
\> ${2671}^4 -> 5 * 11^2 * 571 * 147389551$ overabundant \\
\> ${2671}^6$ \= $-> 127 * 2860238405785894351$ \+ \\
\> ${2860238405785894351}^2 -> 8180\ldots1201 = 3xs * ...$ \\
\> ${2860238405785894351}^4 ->$ \\ 
\> $66928167\ldots72862401$ \\
\> $= 5 * 11 * 27362961781 * ...$ overabundant \\
\> ${2860238405785894351}^6 -> 5475369\ldots$ \\
\> $\ldots5453601 = 7xs * 2339 * 337498477 * 1013\ldots2827 * ...$ \- \- \\
\> ${31747594185191}^4 -> 1015882037027398808700619554107312555810842320401403361$ \\
\> $= 5 * 11 * 27581 * ...$ overabundant \\
\> ${31747594185191}^6 -> $ \\ 
\> $1023917\ldots0882641$ \= $= 29 * 68279 * 17581747 * ...$ \+ \\
\> ${29}^2$ \= $-> 13 * 67$ \+ \\
\> ${13}^2 -> 3xs * 61$ \\
\> ${13}^4$ \= $-> 30941$ \+ \\
\end{tabbing}
}
}
\normalsize
\end{exmpl}
\subsection{Explicit Double-Sided Bounds for the Prime Factors}
\begin{paragraph}\indent The results from Subsection $4.2.1$ give some restrictions on the magnitude of the prime factors of an OPN $N$.  For instance, we saw from Subsection $3.5.2$ that $N$ is not divisible by $3\cdot5\cdot7 = 105$.  Consequently, it must be true that the third smallest prime factor $q_3 \geq 11$.  To further derive bounds for the other prime factors, we will use some of the many published results on OPNs, a compendium of which has been presented in Subsection $4.2.1$.
\end{paragraph}
\begin{paragraph}\indent For the largest prime factors of an OPN, Iannucci and Jenkins have worked to find lower bounds. The largest three factors must be at least $100000007$, $10007$, and $101$.  Goto and Ohno verified that the largest factor $q_t$ must be at least $100000007$ using an extension of the methods of Jenkins.
\end{paragraph}
\begin{paragraph}\indent Nielsen, improving the bound of Hagis and Kishore, showed that if an OPN is not divisible by $3$, it must have at least $12$ distinct prime factors. Nielsen also showed that a general odd perfect number, if it exists, must have at least $9$ distinct prime factors.  Therefore, for $9 \leq t \leq 11$, we have $q_1 = 3$.
\end{paragraph}
\begin{paragraph}\indent A result by Gr$\ddot{u}$n (and perhaps, independently too, by Perisastri) will be useful for our purposes later: $q_1 < {\frac{2}{3}t} + 2$.  Results similar to those previously mentioned reduce the practicality of Gr$\ddot{u}$n's findings.  In fact, a paper by Norton published about two years after supersedes Gr$\ddot{u}$n's inequality, except that Norton's method is slightly more computationally intensive.
\end{paragraph}
\begin{paragraph}\indent With an application of Goto/Ohno's and Iannucci's results, we can modestly improve on these bounds with an otherwise straightforward utilization of the abundancy index function:
\end{paragraph}
\begin{paragraph}\indent Let $q_1 \geq 7$.  Now, suppose that $t = \omega(N) = 17$. Using the lower bounds indicated for the three largest prime factors of $N$ as before, we have:
{
\small
{
\begin{center}
$q_1 \geq 7$ \\
$q_2 \geq 11$ \\
$q_3 \geq 13$ \\
$q_4 \geq 17$ \\
$q_5 \geq 19$ \\
$q_6 \geq 23$ \\
$q_7 \geq 29$ \\
$q_8 \geq 31$ \\
$q_9 \geq 37$ \\
$q_{10} \geq 41$ \\
$q_{11} \geq 43$ \\
$q_{12} \geq 47$ \\
$q_{13} \geq 53$ \\
$q_{14} \geq 59$ \\
$q_{15} \geq 101$ \\
$q_{16} \geq 10007$ \\
$q_{17} \geq 100000007$ \\
\end{center}
}
}
\normalsize
Recall from Lemma \ref{lemma7} that $2 = \displaystyle\frac{\sigma(N)}{N} < \displaystyle\prod_{i = 1}^{t} {\left(\displaystyle\frac{q_i}{q_i - 1}\right)}$.  Also, note that
\begin{center} 
$q_i \geq a_i$ for all $i$ implies that $\displaystyle\frac{q_i}{q_i - 1} \leq \displaystyle\frac{a_i}{a_i - 1}$ for all $i$,
\end{center} 
so that we have 
\begin{center}
$2 = \displaystyle\frac{\sigma(N)}{N} < \displaystyle\prod_{i = 1}^{t} {\left(\displaystyle\frac{q_i}{q_i - 1}\right)} \leq \displaystyle\prod_{i = 1}^{t} {\left(\displaystyle\frac{a_i}{a_i - 1}\right)}$.
\end{center}
The numerator of the rightmost fraction is approximately \\  $6.4778249375254265314282935191886 \cdot {10}^{33}$, while the denominator is approximately $3.2172767985350308489460711424 \cdot {10}^{33}$, which gives a ratio of approximately \\ $2.0134496790810999533771971435786$, which is larger than $2$.  No contradiction at this point.
\end{paragraph}
\begin{paragraph}\indent As before, let $q_1 \geq 7$, but now suppose that $t = \omega(N) = 16$. Proceeding similarly as before, we have:
{
\small
{
\begin{center}
$q_1 \geq 7$ \\
$q_2 \geq 11$ \\
$q_3 \geq 13$ \\
$q_4 \geq 17$ \\
$q_5 \geq 19$ \\
$q_6 \geq 23$ \\
$q_7 \geq 29$ \\
$q_8 \geq 31$ \\
$q_9 \geq 37$ \\
$q_{10} \geq 41$ \\
$q_{11} \geq 43$ \\
$q_{12} \geq 47$ \\
$q_{13} \geq 53$ \\
$q_{14} \geq 101$ \\
$q_{15} \geq 10007$ \\
$q_{16} \geq 100000007$ \\
\end{center}
}
}
\normalsize
{
\small
{
Again, we have
\begin{center}
$2 = \displaystyle\frac{\sigma(N)}{N} < \displaystyle\prod_{i = 1}^{t} {\left(\displaystyle\frac{q_i}{q_i - 1}\right)} \leq \displaystyle\prod_{i = 1}^{t} {\left(\displaystyle\frac{a_i}{a_i - 1}\right)}$.
\end{center}
where $q_i \geq a_i$ for each $i$.  Our computations show that:
\begin{center}
$\displaystyle\prod_{i = 1}^{t} {\left(\displaystyle\frac{a_i}{a_i - 1}\right)} = \displaystyle\frac{1.0979364300890553443098802574896 \cdot {10}^{32}}{55470289629914324981828812800000}$ \\
$ = 1.9793234133339626660318209208061 < 2$. \\
\end{center}
This results in the contradiction $2 < 2$.  We therefore conclude that $t \geq 17$ if $q_1 \geq 7$.
}
}
\normalsize
\end{paragraph}
\begin{paragraph}\indent We may likewise prove, using the same method, that $t \geq 29$ if $q_1 \geq 11$.
\end{paragraph}
\begin{paragraph}\indent A result of great utility here is an earlier work of Kishore \cite{K35}, where he proves that
\begin{center}
$q_{i} < {2^{2^{i - 1}}}(t - i + 1)$ for $2 \leq i \leq 6$. \\
\end{center}
These results (by Gr$\ddot{u}$n/Perisastri and Kishore) allow us to give explicitly reduced bounds for the lowest six ($6$) prime factors for an OPN $N$ with a given number $t = \omega(N)$ of distinct prime factors.  For example, an OPN with nine ($9$) distinct divisors has $q_1 \leq 7$ (the smallest prime strictly less than $\displaystyle\frac{2 \cdot 9}{3} + 2 = 8$), $q_2 \leq 31$ (the smallest prime less than ${2^{2^{1}}}\cdot{\left(9 - 2 + 1\right)} = 32$), $q_3 \leq 109$, $q_4 < {2^8}\cdot{6} = 3\cdot{2^9}$, $q_5 < 5\cdot{2^{16}}$, and $q_6 < {2^{32}}\cdot{4} = 2^{34}$.
\end{paragraph}
\begin{paragraph}\indent By using the abundancy index function, we can further reduce the bound for $q_2$.  If $q_2 \geq 13$, then
{
\small
{
\begin{center}
$2 = I(N) \leq I({3}^{\alpha}{13}^{\beta}{17}^{\chi}{19}^{\delta}{23}^{\epsilon}{29}^{\phi}{101}^{\gamma}{10007}^{\eta}{100000007}^{\iota}) < \displaystyle\frac{3\cdot{13}\cdot{17}\cdot{19}\cdot{23}\cdot{29}\cdot{101}\cdot{10007}\cdot{100000007}}{2\cdot{12}\cdot{16}\cdot{18}\cdot{22}\cdot{28}\cdot{100}\cdot{10006}\cdot{100000006}} = \displaystyle\frac{849216193914429412851}{426034693082080051200} = 1.9933029110162608467608441119731 < 2$
\end{center}
}
}
\normalsize
so that $q_2 \leq 11$.
\end{paragraph}
\begin{paragraph}\indent We may also try reducing the bound for $q_3$.  Proceeding in the same manner as before, if $q_1 \geq 3$, $q_2 \geq 5$ and $q_3 \geq 53$, then
{
\small
{
\begin{center}
$2 = I(N) \leq I({3}^{\alpha}{5}^{\beta}{53}^{\chi}{59}^{\delta}{61}^{\epsilon}{67}^{\phi}{101}^{\gamma}{10007}^{\eta}{100000007}^{\iota}) < \displaystyle\frac{3\cdot{5}\cdot{53}\cdot{59}\cdot{61}\cdot{67}\cdot{101}\cdot{10007}\cdot{100000007}}{2\cdot{4}\cdot{52}\cdot{58}\cdot{60}\cdot{68}\cdot{100}\cdot{10006}\cdot{100000006}} = \displaystyle\frac{19375328833237423387515}{9850131125407832064000} = 1.9670122749188491347643364823597 < 2$
\end{center}
}
}
\normalsize
so that $q_3 \leq 47$.
\end{paragraph}
\begin{paragraph}\indent Notice that a major problem with the abundancy index function is that it is not capable of determining upper bounds on the prime factors of an OPN beyond the smallest three (except in some special cases).  This is due to the fact that the first three primes could be $3$, $5$, \emph{and} $11$, in which case the sigma bounds would allow an arbitrary number of additional prime divisors, but \emph{require no more}.   
\end{paragraph}
\begin{paragraph}\indent In a preprint published in the electronic journal INTEGERS in $2003$, Nielsen improved on Cook's bound by showing that $N < 2^{4^{t}}$.  Since $q_i < N$ for all $i$, Nielsen's bound is an implicit upper limit on $q_t$.  If $q_t$ is the special (or Euler) prime factor with an exponent of $1$ and the other $q_i$'s are small, then we can say little else about $q_t$.  We can, however, give tighter limits for the other prime factors. 
\end{paragraph}
\begin{paragraph}\indent Since only one of $q_t, q_{t - 1}$ can be the special prime, at least one exponent is even.  Consequently, ${q_t}{q_{t - 1}}^2 < N < 2^{4^{t}}$, so that $q_{t - 1} < 2^{\displaystyle\frac{4^t}{3}}$.  Likewise, for $1 \leq i \leq t$, we have $q_i < 2^{\displaystyle\frac{4^t}{2(t - i) + 1}}$.  This range can be limited further by considering the other prime factors.  Hare proved that there are at least 75 total primes (not distinct), so we may take the other primes to be as small as possible and raise the smallest prime to the appropriate power (and the others to the $2$nd power).  By using such a method, we may be able to reduce the bound by perhaps a million, depending on $t = \omega(N)$. 
\end{paragraph}
\begin{paragraph}\indent We have thus given explicit formulations (through Gr$\ddot{u}$n/Perisastri and Kishore) for the upper limits on the smallest $6$ prime divisors, which are augmented with sigma conditions for the lowest $3$.  The higher prime divisors are likewise restricted (through Nielsen), though not as tightly.
\end{paragraph}
\begin{paragraph}\indent Going beyond the results previously mentioned is not easy, considering the fact that both Iannucci and Jenkins used proofs based on the divisibility of cyclotomic polynomials ${F_p}(x)$ to find lower bounds for the highest prime factors, and that topic is quite hard to follow, to my knowledge.  The method used in most modern proofs is that of \emph{factor/sigma chains}.  Consider an OPN $N$ with a component $5^{4k + 1}$.  (By \emph{component}, we mean either a prime power that divides $N$, or simply a factor of $N$, which may not necessarily be a prime power.)  We know that $\sigma(5) \mid \sigma(5^{4k + 1})$ for all positive integers $k$, so we may conclude that $\sigma(5) \mid 2N$. But $\sigma(5) = 6$, so this is an indication that $3 \mid N$. This illustrates how knowledge of a particular prime power leads to knowledge of other prime powers, for an OPN, by virtue of the fact that the $\sigma$ function is multiplicative.
\end{paragraph}
\begin{paragraph}\indent We summarize the results in this subsection (which are taken from Greathouse \cite{G14}) as follows:
\begin{center}
\scriptsize
{
$\omega(N) = 9$ \\
$100000007 \leq q_9 < 2^{4^9}$ \\
$10007 \leq q_8 < 2^{\displaystyle\frac{4^9}{3}}$ \\
$101 \leq q_7 < 2^{\displaystyle\frac{4^9}{5}}$ \\
$23 \leq q_6 \leq 17179869143$ \\
$19 \leq q_5 \leq 327673$ \\ 
$13 \leq q_4 \leq 1531$ \\
$11 \leq q_3 \leq 47$ \\
$5 \leq q_2 \leq 11$ \\
$3 \leq q_1 \leq 3$ \\
}
\normalsize
\end{center}
\end{paragraph}
\begin{remrk}\label{OPNFactorBounds} These bounds, together with the algorithm presented in Subsection $4.2.2$, can (potentially) prove the conjecture that $\omega(N) \geq 10$, for a general OPN $N$.
\end{remrk}
\begin{paragraph}\indent In the next subsection, we shall discuss some of the author's own results on the relationships between the components of an OPN $N$.
\end{paragraph}
\subsection{Relationships Between OPN Components}
\begin{paragraph}\indent Throughout this subsection, we let $N = {p^k}{m^2}$ be an OPN with special/Euler prime $p$ with $p \equiv k \equiv 1 \pmod{4}$ and $\gcd(p, m) = 1$.  (Recall from Section $2.4$ that $p^k$ is called the \emph{Euler's factor} of the OPN $N$.)  It will also be useful later to consider the canonical factorization $N = \displaystyle\prod_{i = 1}^{\omega(N)}{{p_i}^{{\alpha}_i}}$, where $p_1 < p_2 < \cdots < p_t$, $t = \omega(N)$ and ${\alpha}_i > 0$ for all $i$.
\end{paragraph}
\begin{paragraph}\indent We begin with some numerical results:
\end{paragraph}
\begin{lemm}\label{IndexInequalities1}
$1 < \displaystyle\frac{\sigma(p^k)}{p^k} < \displaystyle\frac{5}{4} < \displaystyle\frac{8}{5} < \displaystyle\frac{\sigma(m^2)}{m^2} < 2$
\end{lemm}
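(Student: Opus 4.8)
\section*{Proof proposal}

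The plan is to exploit the multiplicativity of the abundancy index together with Euler's characterization of the Euler prime. Since $N = p^k m^2$ is perfect, $I(N) = 2$, and because $\gcd(p^k, m^2) = 1$ and $I$ is multiplicative (as noted after Definition \ref{definition9}), we have the master relation
\begin{displaymath}
\frac{\sigma(p^k)}{p^k}\cdot\frac{\sigma(m^2)}{m^2} = I(p^k)\,I(m^2) = 2.
\end{displaymath}
So it suffices to pin down $I(p^k)$ tightly; the bounds on $I(m^2)$ will then follow by dividing $2$ by those on $I(p^k)$.

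First I would bound $I(p^k)$ from below: trivially $\sigma(p^k) = 1 + p + \cdots + p^k > p^k$, so $I(p^k) > 1$. Next I would bound it from above. The point is that $p$ is the special prime with $p \equiv 1 \pmod 4$, hence $p$ is an odd prime congruent to $1$ modulo $4$, so the smallest value $p$ can take is $5$. By Lemma \ref{lemma4} applied to the prime power $p^k$, we get $I(p^k) < \frac{p}{p-1}$, and since $x \mapsto \frac{x}{x-1}$ is decreasing on $(1,\infty)$, $\frac{p}{p-1} \le \frac{5}{4}$. Combining, $1 < I(p^k) < \frac{5}{4}$, which establishes the left half of the chain.

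For the right half, I would substitute into the master relation: $I(m^2) = 2/I(p^k)$. From $I(p^k) < \frac{5}{4}$ we get $I(m^2) > 2 \cdot \frac{4}{5} = \frac{8}{5}$, and from $I(p^k) > 1$ we get $I(m^2) < 2$. Finally the middle inequality $\frac{5}{4} < \frac{8}{5}$ is just $25 < 32$. Stringing these together yields $1 < \frac{\sigma(p^k)}{p^k} < \frac{5}{4} < \frac{8}{5} < \frac{\sigma(m^2)}{m^2} < 2$.

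Honestly there is no serious obstacle here; the only thing to be careful about is the justification that the special prime satisfies $p \ge 5$ (so that $\frac{p}{p-1} \le \frac{5}{4}$), which is immediate from $p \equiv 1 \pmod 4$ and $p$ prime. Everything else is a one-line consequence of Lemma \ref{lemma4} and the multiplicativity relation $I(p^k)I(m^2) = 2$.
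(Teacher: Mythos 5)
Your proposal is correct and follows essentially the same route as the paper's own proof: bound $I(p^k)$ between $1$ and $\tfrac{p}{p-1}\le\tfrac54$ using $p\ge 5$, then transfer to $I(m^2)$ via $I(p^k)I(m^2)=2$. The only cosmetic difference is that you cite Lemma \ref{lemma4} for the upper bound where the paper re-derives it inline.
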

\begin{proof} ~ \\
{
\small
{
\begin{center}
$\displaystyle\frac{\sigma(p^k)}{p^k} = \displaystyle\frac{p^k + p^{k - 1} + \ldots + p^2 + p + 1}{p^k} = 1 + \displaystyle\frac{1}{p} + {\displaystyle\left(\displaystyle\frac{1}{p}\right)}^2 + \cdots {\displaystyle\left(\displaystyle\frac{1}{p}\right)}^{k - 1} + {\displaystyle\left(\displaystyle\frac{1}{p}\right)}^k \geq 1 + \displaystyle\frac{1}{p}$
\end{center}
}
}
\normalsize
\begin{paragraph}\indent
In particular, $\displaystyle\frac{\sigma(p^k)}{p^k} > 1$.
\end{paragraph}
{
\small
{
\begin{center}
$\displaystyle\frac{\sigma(p^k)}{p^k} = \displaystyle\frac{p^{k + 1} - 1}{{p^k}\left(p - 1\right)} < \displaystyle\frac{p^{k + 1}}{{p^k}\left(p - 1\right)} = \displaystyle\frac{p}{p - 1} = \displaystyle\frac{1}{1 - {\displaystyle\frac{1}{p}}}$
\end{center}
}
}
\normalsize
\begin{paragraph}\indent But since ${p^k}{m^2}$ is an OPN, $p$ is prime and $p \equiv 1 \pmod{4}$.  This implies that $p \geq 5$, from which it follows that $1 - \displaystyle\frac{1}{p} \geq 1 -  \displaystyle\frac{1}{5} = \displaystyle\frac{4}{5}$.  Thus, $\displaystyle\frac{p}{p - 1} = \displaystyle\frac{1}{1 - {\displaystyle\frac{1}{p}}} \leq \displaystyle\frac{5}{4}$, and we have $\displaystyle\frac{\sigma(p^k)}{p^k} < \displaystyle\frac{5}{4}$.  Note that, for $k \geq 1$ and prime $p$, we have:
\begin{center} $\displaystyle\frac{p + 1}{p} \leq \displaystyle\frac{\sigma(p^k)}{p^k} < \displaystyle\frac{p}{p - 1}$
\end{center}
\end{paragraph}
\begin{paragraph}\indent Also, since ${p^k}{m^2}$ is an OPN, ${\displaystyle\left[\displaystyle\frac{\sigma(p^k)}{p^k}\right]}{\displaystyle\left[\displaystyle\frac{\sigma(m^2)}{m^2}\right]} = 2$, which implies that \\ $\displaystyle\frac{\sigma(p^k)}{p^k} = \displaystyle\frac{2{m^2}}{\sigma(m^2)}$.  But $1 < \displaystyle\frac{\sigma(p^k)}{p^k} = \displaystyle\frac{2{m^2}}{\sigma(m^2)} < \displaystyle\frac{5}{4}$.  Consequently, we have \\ $\displaystyle\frac{4}{5} < \displaystyle\frac{\sigma(m^2)}{2{m^2}} < 1$, and thus, $\displaystyle\frac{8}{5} < \displaystyle\frac{\sigma(m^2)}{m^2} < 2$. 
\end{paragraph}
\end{proof}
\begin{cor}\label{IndexInequalities2}
If $N = {p^k}{m^2}$ is an OPN with Euler's factor $p^k$, then
\begin{center} $\displaystyle\frac{p + 1}{p} \leq \displaystyle\frac{\sigma(p^k)}{p^k} < \displaystyle\frac{p}{p - 1} < \displaystyle\frac{2\left(p - 1\right)}{p} < \displaystyle\frac{\sigma(m^2)}{m^2} \leq \displaystyle\frac{2p}{p + 1}$
\end{center} 
\end{cor}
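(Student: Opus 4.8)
The plan is to build directly on Lemma \ref{IndexInequalities1} together with the single extra arithmetic fact that the Euler prime $p$ satisfies $p \equiv 1 \pmod 4$, hence $p \geq 5$. The first two inequalities in the chain, $\frac{p+1}{p} \leq \frac{\sigma(p^k)}{p^k} < \frac{p}{p-1}$, were in fact already derived inside the proof of Lemma \ref{IndexInequalities1}: the left bound comes from writing $\frac{\sigma(p^k)}{p^k} = 1 + \frac1p + \cdots + \frac{1}{p^k}$ and discarding the terms beyond $1 + \frac1p$, and the right bound comes from $\sigma(p^k) = \frac{p^{k+1}-1}{p-1} < \frac{p^{k+1}}{p-1}$. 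So for this part it suffices to quote Lemma \ref{IndexInequalities1}.

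For the central inequality $\frac{p}{p-1} < \frac{2(p-1)}{p}$ I would simply clear denominators. Since $p - 1 > 0$ and $p > 0$, the inequality is equivalent to $p^2 < 2(p-1)^2$, i.e. to $p^2 - 4p + 2 > 0$. The quadratic $p^2 - 4p + 2$ has roots $2 \pm \sqrt 2$, so it is positive for every $p > 2 + \sqrt 2 \approx 3.41$; in particular it is positive for every prime $p \geq 5$. This is the one spot where the hypothesis $p \equiv 1 \pmod 4$ is genuinely used (the inequality is false for $p = 2$ and $p = 3$).

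For the last two inequalities I would invoke the defining identity of a perfect number. Because $N = p^k m^2$ is perfect and $\gcd(p,m) = 1$, multiplicativity of $\sigma$ gives $\left[\frac{\sigma(p^k)}{p^k}\right]\left[\frac{\sigma(m^2)}{m^2}\right] = 2$, hence $\frac{\sigma(m^2)}{m^2} = \frac{2p^k}{\sigma(p^k)}$. Taking reciprocals throughout $\frac{p+1}{p} \leq \frac{\sigma(p^k)}{p^k} < \frac{p}{p-1}$ yields $\frac{p-1}{p} < \frac{p^k}{\sigma(p^k)} \leq \frac{p}{p+1}$, and multiplying by $2$ gives precisely $\frac{2(p-1)}{p} < \frac{\sigma(m^2)}{m^2} \leq \frac{2p}{p+1}$. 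Concatenating the four facts produces the full displayed string.

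I do not anticipate any real obstacle: the argument is a short chain of cross-multiplications and one application of the perfect-number identity. The only delicate point is the middle inequality $\frac{p}{p-1} < \frac{2(p-1)}{p}$, which is false for the smallest primes and is salvaged exactly by the Eulerian congruence forcing $p \geq 5$; I would flag this explicitly so the reader sees where that hypothesis enters.
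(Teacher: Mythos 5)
Your proposal is correct and follows essentially the same route as the paper: the outer bounds are lifted from the proof of Lemma \ref{IndexInequalities1}, the bounds on $\sigma(m^2)/m^2$ come from the identity $\left[\sigma(p^k)/p^k\right]\left[\sigma(m^2)/m^2\right]=2$ by taking reciprocals, and the middle inequality reduces to the sign of $p^2-4p+2$ using $p\geq 5$. The only cosmetic difference is that the paper argues the middle inequality by contradiction (supposing $p^2-4p+2\leq 0$ and noting $p(p-4)+2\geq 7$ for $p\geq 5$) while you argue it directly via the roots $2\pm\sqrt{2}$; these are the same computation.
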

\begin{proof} The proof is similar to that for Lemma \ref{IndexInequalities1}.  We give here a proof of the inequality in the middle.  Suppose to the contrary that $\displaystyle\frac{p}{p - 1} \geq \displaystyle\frac{2\left(p - 1\right)}{p}$.  Since $p \geq 5 > 0, p^2 \geq 2(p - 1)^2$.  This implies that $p^2 - 4p + 2 \leq 0$.  This last inequality is a contradiction since it implies that $p(p - 4) + 2 \leq 0$, whereas $p \geq 5$ implies that $p(p - 4) + 2 \geq 7$.
\end{proof}
\begin{paragraph}\indent In what follows, we set $X = \displaystyle\frac{\sigma(p^k)}{p^k}$ and $Y = \displaystyle\frac{\sigma(m^2)}{m^2}$.
\end{paragraph}
\begin{lemm}\label{IndexInequalities3} $\displaystyle\frac{57}{20} < {\displaystyle\frac{\sigma(p^k)}{p^k}} + {\displaystyle\frac{\sigma(m^2)}{m^2}} < 3$
\end{lemm}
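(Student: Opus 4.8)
The plan is to reduce the whole statement to the single constraint $XY = 2$ together with the bounds on $X$ already proved in Lemma \ref{IndexInequalities1}, and then to clear denominators so that each of the two desired inequalities becomes a quadratic inequality in $X$ alone.

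First I would recall that, since $N = {p^k}{m^2}$ is perfect and $\gcd(p^k, m^2) = 1$, multiplicativity of $\sigma$ gives $\sigma(p^k)\sigma(m^2) = 2{p^k}{m^2}$, that is, $XY = 2$; hence $Y = 2/X$ and $X + Y = X + 2/X$. Multiplying the target upper bound $X + Y < 3$ by the positive quantity $X$ turns it into $X^2 + XY < 3X$, i.e. $X^2 + 2 < 3X$, i.e. $(X - 1)(X - 2) < 0$, which holds precisely when $1 < X < 2$. By Lemma \ref{IndexInequalities1} we have $1 < X < \tfrac{5}{4} < 2$, so this is immediate, and since the bounds on $X$ are strict, so is the conclusion $X + Y < 3$.

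For the lower bound I would proceed the same way: $X + Y > \tfrac{57}{20}$ is equivalent (again multiplying by $X > 0$) to $X^2 + 2 > \tfrac{57}{20}X$, i.e. to $20X^2 - 57X + 40 > 0$. The left-hand side factors as $(4X - 5)(5X - 8)$ (its discriminant is $57^2 - 3200 = 49$, a perfect square, with roots $\tfrac{5}{4}$ and $\tfrac{8}{5}$), so the inequality holds exactly when $X < \tfrac{5}{4}$ or $X > \tfrac{8}{5}$. Again Lemma \ref{IndexInequalities1} supplies $X < \tfrac{5}{4}$, and strictness of that bound forces $5X - 8 < 0$ as well (since $\tfrac{5}{4} < \tfrac{8}{5}$), so both factors are negative and their product is strictly positive, giving $X + Y > \tfrac{57}{20}$.

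There is no serious obstacle here: the whole statement is a short consequence of Lemma \ref{IndexInequalities1} and the identity $XY = 2$. The one point to watch is that one must not use the bounds on $X$ and on $Y$ as though they were independent — they are tied together by $XY = 2$ — so the cleanest route is to eliminate $Y$ and argue purely about $X$, as above; one should also note that the numbers $\tfrac{5}{4}$ and $\tfrac{8}{5}$ which appear are exactly the roots of the relevant quadratic, which is precisely why the constant $\tfrac{57}{20} = \tfrac{5}{4} + \tfrac{8}{5}$ is the natural sharp bound.
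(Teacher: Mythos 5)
Your proof is correct and is essentially the paper's argument in a lightly disguised form: the paper analyzes the signs of $(X-1)(Y-1)$ and $\left(X-\tfrac{5}{4}\right)\left(Y-\tfrac{5}{4}\right)$ and expands using $XY=2$, which is exactly your quadratics $(X-1)(X-2)$ and $(4X-5)(5X-8)$ multiplied by $-\tfrac{1}{X}$ (resp.\ $-\tfrac{1}{16X}$). Both routes rest only on $XY=2$ and the strict bound $1<X<\tfrac{5}{4}$ from Lemma \ref{IndexInequalities1}, so nothing further is needed.
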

\begin{proof} By Lemma \ref{IndexInequalities1}, $1 < X < \displaystyle\frac{5}{4} < \displaystyle\frac{8}{5} < Y < 2$.  Consider $(X - 1)(Y - 1)$.  This quantity is positive because $1 < X < Y$.  Thus,  
\begin{center}
$(X - 1)(Y - 1) = XY - (X + Y) + 1 > 0$,
\end{center}
which implies that $X + Y < XY + 1$.  But $XY = 2$.  Thus, $X + Y < 3$.  Now, consider $\displaystyle\left(X - \displaystyle\frac{5}{4}\right)\displaystyle\left(Y - \displaystyle\frac{5}{4}\right)$.  This quantity is negative because $X < \displaystyle\frac{5}{4} < Y$.  Thus,
\begin{center}
$\displaystyle\left(X - \displaystyle\frac{5}{4}\right)\displaystyle\left(Y - \displaystyle\frac{5}{4}\right) = XY - {\displaystyle\frac{5}{4}}\displaystyle\left(X + Y\right) + \displaystyle\frac{25}{16} < 0$,
\end{center}
which implies that ${\displaystyle\frac{5}{4}}\displaystyle\left(X + Y\right) > XY + \displaystyle\frac{25}{16}$.  But again, $XY = 2$.  Consequently, ${\displaystyle\frac{5}{4}}\displaystyle\left(X + Y\right) > \displaystyle\frac{57}{16}$, and hence $X + Y > \displaystyle\frac{57}{20}$. 
\end{proof}
\begin{cor}\label{IndexInequalities4}
$\displaystyle\frac{3{p^2} - 4p + 2}{p(p - 1)} < {\displaystyle\frac{\sigma(p^k)}{p^k}} + {\displaystyle\frac{\sigma(m^2)}{m^2}} \leq \displaystyle\frac{3{p^2} + 2p + 1}{p(p + 1)}$
\end{cor}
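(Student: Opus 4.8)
The plan is to deduce this corollary from Corollary~\ref{IndexInequalities2} in exactly the way Lemma~\ref{IndexInequalities3} was deduced from Lemma~\ref{IndexInequalities1}: namely, replace the numerical pivots $1$ and $\frac{5}{4}$ used there by the $p$-dependent bounds $\frac{p+1}{p}$ and $\frac{p}{p-1}$. Recall that $XY = \left[\frac{\sigma(p^k)}{p^k}\right]\left[\frac{\sigma(m^2)}{m^2}\right] = 2$ because $N = p^k m^2$ is perfect and $\sigma$ is multiplicative, that $p \geq 5$, and that Corollary~\ref{IndexInequalities2} supplies $\frac{p+1}{p} \leq X < \frac{p}{p-1}$ together with $\frac{2(p-1)}{p} < Y \leq \frac{2p}{p+1}$ and the chain $\frac{p}{p-1} < \frac{2(p-1)}{p}$.

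For the upper bound, I would examine the product $\left(X - \frac{p+1}{p}\right)\left(Y - \frac{p+1}{p}\right)$. The first factor is $\geq 0$ by Corollary~\ref{IndexInequalities2}, and equals $0$ exactly when $k = 1$; the second factor is strictly positive since $Y > \frac{2(p-1)}{p} > \frac{p+1}{p}$ for $p \geq 5$. Hence the product is $\geq 0$, and expanding it while substituting $XY = 2$ gives $2 - \frac{p+1}{p}(X+Y) + \frac{(p+1)^2}{p^2} \geq 0$, that is, $X + Y \leq \frac{2p}{p+1} + \frac{p+1}{p} = \frac{3p^2 + 2p + 1}{p(p+1)}$, with equality precisely when $k = 1$. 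For the lower bound I would instead use $\left(X - \frac{p}{p-1}\right)\left(Y - \frac{p}{p-1}\right)$: here $X < \frac{p}{p-1}$ strictly, while $Y > \frac{2(p-1)}{p} > \frac{p}{p-1}$ strictly, so the product is negative; expanding and using $XY = 2$ yields $2 - \frac{p}{p-1}(X+Y) + \frac{p^2}{(p-1)^2} < 0$, hence $X + Y > \frac{2(p-1)}{p} + \frac{p}{p-1} = \frac{3p^2 - 4p + 2}{p(p-1)}$.

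An equivalent and perhaps cleaner route is to eliminate $Y = \frac{2}{X}$ and study $g(X) = X + \frac{2}{X}$ on the interval $\left[\frac{p+1}{p}, \frac{p}{p-1}\right)$: since $g'(X) = 1 - \frac{2}{X^2} < 0$ whenever $X < \sqrt{2}$, and since $\frac{p}{p-1} \leq \frac{5}{4} < \sqrt{2}$ for all $p \geq 5$, the function $g$ is strictly decreasing there, so $X + Y = g(X)$ attains its maximum $g\left(\frac{p+1}{p}\right) = \frac{3p^2 + 2p + 1}{p(p+1)}$ at the left endpoint (the value $X = \frac{p+1}{p}$, i.e.\ $k=1$) and is bounded below by the infimum $g\left(\frac{p}{p-1}\right) = \frac{3p^2 - 4p + 2}{p(p-1)}$, which is not attained because $X < \frac{p}{p-1}$ is strict. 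Either way, the only place needing care is the bookkeeping of strict versus non-strict inequalities --- in particular tracking the equality case $k=1$ in the upper bound, and confirming that the pivot $\frac{p}{p-1}$ genuinely lies below $Y$, which is just the middle link of the chain in Corollary~\ref{IndexInequalities2}. No real obstacle is expected; this is a routine $p$-refined analogue of Lemma~\ref{IndexInequalities3}.
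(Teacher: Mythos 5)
Your proposal is correct and matches the paper's own proof essentially verbatim: the paper likewise considers the sign of $\left(X - \frac{p+1}{p}\right)\left(Y - \frac{p+1}{p}\right)$ (nonnegative) and $\left(X - \frac{p}{p-1}\right)\left(Y - \frac{p}{p-1}\right)$ (negative), expands using $XY = 2$, and obtains the two stated bounds. Your alternative route via the decreasing function $g(X) = X + \frac{2}{X}$ on $\left[\frac{p+1}{p}, \frac{p}{p-1}\right)$ is a clean equivalent, and your tracking of the equality case $k=1$ is a small refinement the paper does not spell out.
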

\begin{proof} From Corollary \ref{IndexInequalities2}:
\begin{center}
$\displaystyle\frac{p + 1}{p} \leq X < \displaystyle\frac{p}{p - 1} < \displaystyle\frac{2\left(p - 1\right)}{p} < Y \leq \displaystyle\frac{2p}{p + 1}$
\end{center}
\begin{paragraph}\indent Consider $\displaystyle\left(X - \displaystyle\frac{p + 1}{p}\right)\displaystyle\left(Y - \displaystyle\frac{p + 1}{p}\right)$.  This quantity is nonnegative because $\displaystyle\frac{p + 1}{p} \leq X < Y$.  Thus,
\begin{center}
$\displaystyle\left(X - \displaystyle\frac{p + 1}{p}\right)\displaystyle\left(Y - \displaystyle\frac{p + 1}{p}\right) = XY - {\displaystyle\frac{p + 1}{p}}\displaystyle\left(X + Y\right) + \displaystyle\frac{(p + 1)^2}{p^2} \geq 0$
\end{center}
which implies that $\displaystyle\frac{p + 1}{p}\displaystyle\left(X + Y\right) \leq 2 + \displaystyle\frac{p^2 + 2p + 1}{p^2} = \displaystyle\frac{3{p^2} + 2p + 1}{p^2}$.  Consequently, $X + Y \leq \displaystyle\frac{3{p^2} + 2p + 1}{p(p + 1)}$.  Now, consider $\displaystyle\left(X - \displaystyle\frac{p}{p - 1}\right)\displaystyle\left(Y - \displaystyle\frac{p}{p - 1}\right)$.  This quantity is negative because $X < \displaystyle\frac{p}{p - 1} < Y$.  Thus,
\begin{center}
$\displaystyle\left(X - \displaystyle\frac{p}{p - 1}\right)\displaystyle\left(Y - \displaystyle\frac{p}{p - 1}\right) = XY - {\displaystyle\frac{p}{p - 1}}\displaystyle\left(X + Y\right) + \displaystyle\frac{p^2}{(p - 1)^2} < 0$
\end{center}
which implies that $\displaystyle\frac{p}{p - 1}\displaystyle\left(X + Y\right) > 2 + \displaystyle\frac{p^2}{p^2 - 2p + 1} = \displaystyle\frac{3{p^2} - 4p + 2}{(p - 1)^2}$.  Consequently, $\displaystyle\frac{3{p^2} - 4p + 2}{p(p - 1)} < X + Y$.
\end{paragraph}
\begin{paragraph}\indent Finally, we need to check that, indeed,
\begin{center}
$\displaystyle\frac{3{p^2} - 4p + 2}{p(p - 1)} = 3 - \displaystyle\frac{p - 2}{p(p - 1)} < 3 - \displaystyle\frac{p - 1}{p(p + 1)} = \displaystyle\frac{3{p^2} + 2p + 1}{p(p + 1)}$ 
\end{center}
Suppose to the contrary that
\begin{center}
$\displaystyle\frac{3{p^2} - 4p + 2}{p(p - 1)} \geq \displaystyle\frac{3{p^2} + 2p + 1}{p(p + 1)}$.
\end{center}
This last inequality implies that $3 - \displaystyle\frac{p - 2}{p(p - 1)} \geq 3 - \displaystyle\frac{p - 1}{p(p + 1)}$, or equivalently, $\displaystyle\frac{p - 1}{p(p + 1)} \geq \displaystyle\frac{p - 2}{p(p - 1)}$.  Since $p \geq 5 > 0$, we have $(p - 1)^2 \geq (p + 1)(p - 2)$, or equivalently, $p^2 - 2p + 1 \geq p^2 - p - 2$, resulting in the contradiction $p \leq 3$. 
\end{paragraph}
\begin{paragraph}\indent Hence, we have
\begin{center}
$\displaystyle\frac{3{p^2} - 4p + 2}{p(p - 1)} < X + Y \leq \displaystyle\frac{3{p^2} + 2p + 1}{p(p + 1)}$,
\end{center}
and we are done. 
\end{paragraph}
\end{proof}
\begin{paragraph}\indent If we attempt to improve the results of Corollary \ref{IndexInequalities4} using Corollary \ref{IndexInequalities2}, we get the following result:
\end{paragraph}
\begin{thm}\label{IndexInequalities5} The series of inequalities
\begin{center}
$L(p) < {\displaystyle\frac{\sigma(p^k)}{p^k}} + {\displaystyle\frac{\sigma(m^2)}{m^2}} \leq U(p)$
\end{center} 
with 
\begin{center}
$L(p) = \displaystyle\frac{3{p^2} - 4p + 2}{p(p - 1)}$ 
\end{center}
and 
\begin{center}
$U(p) = \displaystyle\frac{3{p^2} + 2p + 1}{p(p + 1)}$ 
\end{center} 
is best possible, for a given Euler prime $p \equiv 1 \pmod{4}$ of an OPN $N = {p^k}{m^2}$.
\end{thm}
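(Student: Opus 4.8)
The plan is to reduce the two-sided estimate to the behaviour of a single one-variable function. Since $N = p^k m^2$ is perfect we have $XY = \left(\frac{\sigma(p^k)}{p^k}\right)\left(\frac{\sigma(m^2)}{m^2}\right) = 2$, so $Y = 2/X$ and hence $X + Y = g(X)$ where $g(t) = t + \frac{2}{t}$. By Lemma \ref{IndexInequalities1} and Corollary \ref{IndexInequalities2} the quantity $X = \sigma(p^k)/p^k = 1 + \frac{1}{p} + \cdots + \frac{1}{p^k}$ satisfies $\frac{p+1}{p} \le X < \frac{p}{p-1} \le \frac{5}{4}$; moreover $X$ is a strictly increasing function of $k$ and $X \to \frac{p}{p-1}$ as $k \to \infty$. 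In particular the entire admissible range of $X$ lies inside the interval $\left(1, \sqrt{2}\right)$, since $\frac{5}{4} < \sqrt{2}$.

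Next I would study $g$ on $\left(1, \sqrt{2}\right)$. Here $g'(t) = 1 - \frac{2}{t^2} < 0$, so $g$ is strictly decreasing on this interval; consequently $g$ restricted to the admissible range of $X$ attains its maximum at the left endpoint $X = \frac{p+1}{p}$ and has infimum (not attained) as $X \to \left(\frac{p}{p-1}\right)^{-}$. A short computation gives
\[
g\!\left(\tfrac{p+1}{p}\right) = \frac{(p+1)^2 + 2p^2}{p(p+1)} = \frac{3p^2 + 2p + 1}{p(p+1)} = U(p),
\qquad
g\!\left(\tfrac{p}{p-1}\right) = \frac{p^2 + 2(p-1)^2}{p(p-1)} = \frac{3p^2 - 4p + 2}{p(p-1)} = L(p),
\]
so the bounds of Corollary \ref{IndexInequalities4} are precisely the extreme values of $g$ on the admissible range of $X$.

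It then remains to argue sharpness. For the upper bound: the value $X = \frac{p+1}{p}$ occurs exactly when $k = 1$, which is an admissible Euler exponent ($1 \equiv 1 \pmod 4$), so $X + Y = U(p)$ is actually achieved, and $U(p)$ cannot be replaced by any smaller constant. For the lower bound: given $\varepsilon > 0$, continuity of $g$ at $\frac{p}{p-1}$ yields $\delta > 0$ with $g(t) < L(p) + \varepsilon$ whenever $\frac{p}{p-1} - \delta < t < \frac{p}{p-1}$; choosing $k \equiv 1 \pmod 4$ large enough that $X = \sigma(p^k)/p^k$ lies in this range (possible since $X \uparrow \frac{p}{p-1}$), we get $L(p) < X + Y < L(p) + \varepsilon$, so no constant exceeding $L(p)$ can serve as a lower bound valid for all admissible $k$. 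Together these show $L(p) < X + Y \le U(p)$ is best possible.

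The main obstacle is conceptual rather than computational: making precise the sense in which the bound is ``best possible'' given that $k$ is restricted to $k \equiv 1 \pmod 4$ and, more seriously, that the existence of an OPN is itself the open problem. I would resolve this by noting that Corollary \ref{IndexInequalities4} used only the structural facts $X \in \left[\frac{p+1}{p}, \frac{p}{p-1}\right)$ and $XY = 2$, both of which hold for every admissible exponent $k$, and that the set $\left\{ \sigma(p^k)/p^k : k \equiv 1 \pmod 4 \right\}$ is an infinite strictly increasing set with minimum $\frac{p+1}{p}$ and supremum $\frac{p}{p-1}$; ``best possible'' then means that $U(p)$ and $L(p)$ are respectively the maximum and the infimum of $g$ over this set, independently of whether a genuine OPN with Euler prime $p$ exists. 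One should also verify that no additional constraint on $X$ (for instance from $Y$ itself needing to be a realizable abundancy index, cf.\ Section $2.2$) is being used silently --- it is not, so the argument stands.
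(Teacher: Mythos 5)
Your proposal is correct, and it takes a genuinely different route from the paper. The paper's proof never substitutes $Y = 2/X$: it takes the individual sharp bounds $\frac{p+1}{p} \leq X < \frac{p}{p-1}$ and $\frac{2(p-1)}{p} < Y \leq \frac{2p}{p+1}$ from Corollary \ref{IndexInequalities2}, adds them termwise to get $\frac{3p-1}{p} < X + Y < \frac{p(3p-1)}{(p+1)(p-1)}$, and then observes that $L(p)$ and $U(p)$ from Corollary \ref{IndexInequalities4} are respectively the larger lower bound and the smaller upper bound of the two candidate pairs, concluding that the Corollary \ref{IndexInequalities4} bounds are best possible; the sharpness itself is dispatched with the single remark that $k \equiv 1 \pmod{4}$ forces $k \geq 1$. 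Your argument instead parametrizes everything by $X$ alone via $g(t) = t + 2/t$, checks that $g$ is strictly decreasing on the admissible interval $\left[\frac{p+1}{p}, \frac{p}{p-1}\right) \subset (1,\sqrt{2})$, and identifies $U(p) = g\left(\frac{p+1}{p}\right)$ as an attained maximum (at $k=1$) and $L(p) = g\left(\frac{p}{p-1}\right)$ as an unattained infimum (as $k \to \infty$ through $k \equiv 1 \pmod 4$). What your approach buys is a genuinely complete optimality proof: comparing two pairs of bounds, as the paper does, shows only that one pair dominates the other, not that the dominating pair cannot be improved further, whereas your monotonicity argument pins $U(p)$ and $L(p)$ down as the exact supremum and infimum of $X+Y$ over the admissible exponents. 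Your closing caveat --- that ``best possible'' must be read relative to the structural constraints $X \in \left[\frac{p+1}{p}, \frac{p}{p-1}\right)$ and $XY = 2$ rather than to actually existing OPNs --- is also a point the paper glosses over, and it is the right way to make the statement meaningful.
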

\begin{proof} From Corollary \ref{IndexInequalities2}, we have:
\begin{center}
$\displaystyle\frac{p + 1}{p} \leq X < \displaystyle\frac{p}{p - 1}$,
\end{center}
and
\begin{center}
$\displaystyle\frac{2\left(p - 1\right)}{p} < Y \leq \displaystyle\frac{2p}{p + 1}$.
\end{center}
We remark that such bounds for $X$ and $Y$ are best possible by observing that $k \equiv 1 \pmod{4}$ implies $k \geq 1$.  Adding the left-hand and right-hand inequalities give rise to:
\begin{center}
$\displaystyle\frac{3p - 1}{p} < X + Y < \displaystyle\frac{p(3p - 1)}{(p + 1)(p - 1)}$
\end{center}
Comparing this last result with that of Corollary \ref{IndexInequalities4}, the result immediately follows if we observe that
\begin{center}
$\max{\displaystyle\left\{\displaystyle\frac{3{p^2} - 4p + 2}{p(p - 1)}, \displaystyle\frac{3p - 1}{p}\right\}} = \displaystyle\frac{3{p^2} - 4p + 2}{p(p - 1)}$
\end{center} 
and
\begin{center}
$\min{\displaystyle\left\{\displaystyle\frac{3{p^2} + 2p + 1}{p(p + 1)}, \displaystyle\frac{p(3p - 1)}{(p + 1)(p - 1)}\right\}} = \displaystyle\frac{3{p^2} + 2p + 1}{p(p + 1)}$,
\end{center}
with both results true when $p \geq 5$ (specifically when $p$ is a prime with $p \equiv 1 \pmod{4}$).
\end{proof}
\begin{paragraph}\indent The reader might be tempted to try to improve on the bounds in Lemma \ref{IndexInequalities3} using Theorem \ref{IndexInequalities5}, but such efforts are rendered futile by the following theorem:
\end{paragraph}
\begin{thm}\label{BestPossibleBounds} The bounds in Lemma \ref{IndexInequalities3} are best possible.
\end{thm}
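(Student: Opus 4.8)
The plan is to reduce the two-variable statement to an elementary one-variable problem. Since $XY = 2$ by Euler's characterization, I would substitute $Y = 2/X$ and study $g(X) = X + Y = X + \dfrac{2}{X}$. First I would note that, under this substitution, the chain $1 < X < \tfrac54 < \tfrac85 < Y < 2$ collapses to the single condition $1 < X < \tfrac54$: indeed $Y < 2 \iff 2/X < 2 \iff X > 1$, and $Y > \tfrac85 \iff 2/X > \tfrac85 \iff X < \tfrac54$, so the admissible values of $X$ fill the \emph{entire} open interval $\left(1,\tfrac54\right)$. This is the crucial observation that makes ``best possible'' attainable as a theorem, since it rules out any hidden extra restriction on $(X,Y)$ that could have sharpened the bounds.

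Next I would analyze $g$ on $\left(1,\tfrac54\right)$. We have $g'(X) = 1 - \dfrac{2}{X^2}$, and since $X < \tfrac54 < \sqrt{2}$ we get $g'(X) < 0$ throughout; hence $g$ is strictly decreasing on $\left(1,\tfrac54\right)$. Therefore $g\!\left(\left(1,\tfrac54\right)\right) = \left(g\!\left(\tfrac54\right),\, g(1)\right)$, and the direct computations $g\!\left(\tfrac54\right) = \tfrac54 + \tfrac85 = \tfrac{57}{20}$ and $g(1) = 1 + 2 = 3$ show that the range of $X+Y$ over all admissible pairs is exactly the open interval $\left(\tfrac{57}{20},\,3\right)$: every value strictly between $\tfrac{57}{20}$ and $3$ occurs, while the endpoints themselves do not. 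Consequently neither the constant $\tfrac{57}{20}$ in Lemma \ref{IndexInequalities3} can be increased nor the constant $3$ decreased without falsifying the statement for some admissible $(X,Y)$.

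To confirm that these extremes are genuinely approached along the parameters Euler's theorem permits, I would record the two limiting families: taking $k = 1$ gives $X = \tfrac{p+1}{p} \to 1^{+}$ as the Euler prime $p \to \infty$ through primes $\equiv 1 \pmod 4$ (infinitely many, by Dirichlet), forcing $X + Y \to 3^{-}$; and taking $p = 5$ gives $X = \sigma(5^k)/5^k \to \tfrac54^{-}$ as $k \to \infty$, forcing $X + Y \to \tfrac{57}{20}^{+}$. Hence $\tfrac{57}{20}$ and $3$ are the best constants possible (with the standing caveat that this asserts nothing about whether any odd perfect number realizes parameters near these extremes). I do not expect a real obstacle: the only care needed is in spelling out the equivalence of the $X$- and $Y$-constraints so that the admissible set is the full interval $\left(1,\tfrac54\right)$, and in the routine verification that $g$ is decreasing on it with endpoint values $\tfrac{57}{20}$ and $3$.
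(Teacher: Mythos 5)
Your proof is correct, and it takes a genuinely different route from the one in the paper. The paper does not substitute $Y = 2/X$; instead it works with the $p$-parametrized bounds $L(p) = \frac{3p^2 - 4p + 2}{p(p-1)}$ and $U(p) = \frac{3p^2 + 2p + 1}{p(p+1)}$ from Theorem \ref{IndexInequalities5}, computes $L'(p)$ and $U'(p)$, shows both are positive on $[5,\infty)$, and concludes that $\min L = L(5) = \frac{57}{20}$ while $\sup U = \lim_{p\to\infty} U(p) = 3$. In effect the paper evaluates your $g(X) = X + \frac{2}{X}$ at the two endpoints of the per-prime range $\frac{p+1}{p} \leq X < \frac{p}{p-1}$ (one checks $U(p) = g(\frac{p+1}{p})$ and $L(p) = g(\frac{p}{p-1})$) and then optimizes over the prime $p$, whereas you collapse everything into a single application of the monotonicity of $g$ on $\left(1,\frac{5}{4}\right) \subset (1,\sqrt{2})$. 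Your version is shorter and more transparent about what the content really is, and it has the virtue of explicitly exhibiting the two approximating families ($k=1$, $p\to\infty$ for the upper bound; $p=5$, $k\to\infty$ for the lower), which the paper leaves implicit behind the phrase ``best possible for a given Euler prime.'' What the paper's detour buys is the finer per-prime statement of Theorem \ref{IndexInequalities5} itself, which feeds Remark \ref{JoshuaZelinsky} (e.g.\ that $X+Y > \frac{43}{15}$ would force $p > 5$); your argument, optimizing over all $p$ at once, does not recover that intermediate information. One small caution: your phrase that the admissible $X$ ``fill the entire open interval'' should be read as describing the constraint region only --- the actually realized values $\sigma(p^k)/p^k$ form a countable subset --- but since you separately supply parameter families approaching both endpoints, the best-possible claim is fully justified.
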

\begin{proof} It suffices to get the minimum value for $L(p)$ and the maximum value for $U(p)$ in the interval $[5, \infty)$, or if either one cannot be obtained, the greatest lower bound for $L(p)$ and the least upper bound for $U(p)$ for the same interval would likewise be useful for our purposes here.
\begin{paragraph}\indent From basic calculus, we get the first derivatives of $L(p), U(p)$ and determine their signs in the interval $[5, \infty)$:
\end{paragraph}
\begin{center} $L'(p) = \displaystyle\frac{p(p - 4) + 2}{p^2(p - 1)^2} > 0$
\end{center}
and
\begin{center} $U'(p) = \displaystyle\frac{p(p - 2) - 1}{p^2(p + 1)^2} > 0$
\end{center}
which means that $L(p), U(p)$ are increasing functions of $p$ on the interval $[5, \infty)$.  Hence, $L(p)$ attains its minimum value on that interval at $L(5) = \displaystyle\frac{57}{20}$, while $U(p)$ has no maximum value on the same interval, but has a least upper bound of \\
$\displaystyle\lim_{p \rightarrow \infty}{U(p)} = 3$.
\begin{paragraph}\indent This confirms our earlier findings that
\begin{center} $\displaystyle\frac{57}{20} < {\displaystyle\frac{\sigma(p^k)}{p^k}} + {\displaystyle\frac{\sigma(m^2)}{m^2}} < 3$,
\end{center}
with the further result that such bounds are best possible.
\end{paragraph}  
\end{proof}
\begin{remrk}\label{MathematicaX+Y} Let
\begin{center}
$f(p, k) = X + Y = \displaystyle\frac{\sigma(p^k)}{p^k} + \displaystyle\frac{2p^k}{\sigma(p^k)}$.
\end{center}
Using Mathematica, we get the partial derivative:
\begin{center}
$\displaystyle\frac{\partial}{\partial p} f(p, k) = \displaystyle\frac{p^{-1 - k}\displaystyle\left(k - kp + p(-1 + p^k)\right)\displaystyle\left(-1 + p^k(2p + p^k(2 + (-4 + p)p))\right)}{(-1 + p)^2(-1 + p^{1 + k})^2}$ 
\end{center}
which is certainly positive for prime $p \equiv 1 \pmod{4}$ and $k$ a fixed positive integer satisfying $k \equiv 1 \pmod{4}$.  This means that $f(p, k) = X + Y$ is a strictly monotonic increasing function of $p$, for such primes $p$ and fixed integer $k$.  Lastly, $\displaystyle\lim_{p \rightarrow \infty}{X + Y} = 3$.
\end{remrk}
\begin{remrk}\label{JoshuaZelinsky} Why did we bother to focus on improving the bounds for $X + Y$ in the first place?  This is because Joshua Zelinsky, in response to one of the author's posts at the Math Forum
{
\scriptsize
{(http://www.mathforum.org/kb/message.jspa?messageID=4140071\&tstart=0),}
}
\normalsize 
said that ``[he does not] know if this would be directly useful for proving that no [OPNs] exist, [although he is not] in general aware of any sharp bounds [for $X + Y$].  Given that there are odd primitive abundant numbers $n$ of the form $n = {P^K}{M^2}$ with $P$ and $K$ congruent to $1$ modulo $4$ and $\gcd(P, M) = 1$, [he] would be surprised if one could substantially improve on these bounds.  Any further improvement of the lower bound would be equivalent to showing that there are no [OPNs] of
the form $5{m^2}$ which would be a very major result. Any improvement on
the upper bound of $3$ would have similar implications for all
arbitrarily large primes and thus [he thinks] would be a very major result.  [He's] therefore highly curious as to what [the author had] done."  In particular, by using Mathematica, if one would be able to prove that $\displaystyle\frac{43}{15} < X + Y$, then this would imply that $p > 5$ and we arrive at Zelinsky's result that ``there are no OPNs of the form $5{m^2}$".  Likewise, if one would be able to derive an upper bound for $X + Y$ smaller than $3$, say $2.9995$, so that $X + Y < 2.9995$, then this would imply that $p \leq 1999$, confirming Zelinsky's last assertion.
\end{remrk}
\begin{paragraph}\indent Our first hint at one of the relationships between the components of an OPN is given by the following result:
\end{paragraph}
\begin{lemm}\label{UnequalOPNComponents}
If $N = {p^k}{m^2}$ is an OPN, then $p^k \neq m^2$. 
\end{lemm}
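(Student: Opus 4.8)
The plan is to argue by contradiction: suppose $N = p^k m^2$ is an OPN with Euler's factor $p^k$ (so $p$ is prime, $p \equiv k \equiv 1 \pmod 4$, $\gcd(p,m)=1$) and suppose, contrary to the claim, that $p^k = m^2$. I would then extract a contradiction from the abundancy machinery already in place. Since $\sigma$ is a function, the hypothesis $p^k = m^2$ forces $\sigma(p^k) = \sigma(m^2)$, and dividing both sides of $p^k = m^2$ appropriately gives
\begin{center}
$\displaystyle\frac{\sigma(p^k)}{p^k} = \displaystyle\frac{\sigma(m^2)}{m^2}$,
\end{center}
that is, $X = Y$ in the notation fixed before Lemma \ref{IndexInequalities3}. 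But Lemma \ref{IndexInequalities1} asserts $1 < X < \frac{5}{4} < \frac{8}{5} < Y < 2$, and in particular $X < Y$, so $X = Y$ is impossible. This contradiction establishes $p^k \neq m^2$.

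An alternative, purely arithmetic, route would be to note that $p^k = m^2$ implies $p \mid m^2$, hence $p \mid m$ since $p$ is prime, contradicting $\gcd(p,m) = 1$ (as $p \geq 5 > 1$). I would probably present the abundancy-index version as the main argument, since it fits the theme of the subsection and reuses Lemma \ref{IndexInequalities1}, and mention the divisibility argument as a one-line remark. One could also observe that $p^k = m^2$ would make $N = (p^k)^2 = p^{2k}$ a prime power, hence deficient by Lemma \ref{lemma4}, which is incompatible with $N$ being perfect; this gives yet a third phrasing.

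There is no real obstacle here — the statement is a short lemma and every tool needed has already been proved. The only point requiring a moment's care is that the contradiction relies on the \emph{strict} inequality $X < \frac{5}{4} < \frac{8}{5} < Y$ from Lemma \ref{IndexInequalities1}, which in turn rests on the fact that the Euler prime satisfies $p \geq 5$; I would make sure that dependence is visible so the reader sees why, for instance, the analogous statement can fail for a hypothetical "Euler prime" equal to $3$. Beyond that, the write-up is essentially the two displayed lines above.
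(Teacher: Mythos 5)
Your proposal is correct, and it overlaps substantially with the paper's treatment: the paper actually gives four independent proofs of this lemma, and two of your "alternative" remarks reproduce two of them verbatim (the divisibility argument $p \mid m^2 \Rightarrow p \mid m$ contradicting $\gcd(p,m)=1$, and the observation that $p^k = m^2$ forces $N = p^{2k}$ to be a deficient prime power). Your headline argument — that $p^k = m^2$ would force $X = Y$, contradicting the strict separation $X < \tfrac{5}{4} < \tfrac{8}{5} < Y$ of Lemma \ref{IndexInequalities1} — is not among the paper's four, but it is sound and arguably the most natural one given the placement of the lemma immediately after the $X,Y$ inequalities; it is a genuine fifth route, logically distinct from the deficiency argument (which bounds $I(p^{2k})$ below $2$ rather than separating $I(p^k)$ from $I(m^2)$). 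The paper's remaining two proofs, which you do not mention, use $\omega(p^k) = 1 < 8 \leq \omega(m^2)$ via Nielsen's bound, and a parity computation showing $\sigma(p^{2k}) \equiv 3 \pmod 4$ cannot equal $2N$; these are heavier tools for the same light conclusion. Your closing remark about the dependence on $p \geq 5$ is well taken but worth noting is not actually essential: the gcd and prime-power arguments you also sketch need nothing about the size of $p$, so the lemma does not in fact hinge on the Euler prime exceeding $3$.
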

\begin{proof} We will give four (4) proofs of this same result, to illustrate the possible approaches to proving similar lemmas:
\begin{itemize}
\item{If $p^k = m^2$, then necessarily $\omega(p^k) = \omega(m^2)$.  But $\omega(p^k) = 1 < 8 \leq \omega(m^2)$, where the last inequality is due to Nielsen.}
\item{Suppose $p^k = m^2$.  This can be rewritten as $p\cdot{p^{k - 1}} = m^2$, which implies that $p \mid m^2$ since $p$ is a prime.  This contradicts $\gcd(p, m) = 1$.}
\item{Assume $p^k = m^2$.  Then $N = p^{2k}$ is an OPN.  This contradicts the fact that prime powers are deficient.}
\item{Let $p^k = m^2$.  As before, $N = p^{2k}$ is an OPN.  This implies that $\sigma(N) = \sigma(p^{2k}) = 1 + p + p^2 + ... + p^{2k - 1} + p^{2k} \equiv (2k + 1) \pmod{4} \equiv 3 \pmod{4}$ (since $p \equiv k \equiv 1 \pmod{4}$).  But, since $N$ is an OPN, $\sigma(N) = 2N$.  The parity of LHS and RHS of the equation do not match, a contradiction.}
\end{itemize}
\end{proof}
\begin{paragraph}\indent By Lemma \ref{UnequalOPNComponents}, either $p^k < m^2$ or $p^k > m^2$.
\end{paragraph}
\begin{paragraph}\indent We now assign the values of the following fractions to the indicated variables, for ease of use later on:
\end{paragraph}
\begin{center}
$\rho_1 = \displaystyle\frac{\sigma(p^k)}{p^k}$ \\
$\rho_2 = \displaystyle\frac{\sigma(p^k)}{m^2}$ \\
$\mu_1 = \displaystyle\frac{\sigma(m^2)}{m^2}$ \\
$\mu_2 = \displaystyle\frac{\sigma(m^2)}{p^k}$ \\
\end{center}
From Lemma \ref{IndexInequalities1}, we have $1 < \rho_1 < \displaystyle\frac{5}{4} < \displaystyle\frac{8}{5} < \mu_1 < 2$.  Note that ${\rho_1}{\mu_1} = {\rho_2}{\mu_2} = 2$.  Also, from Lemma \ref{UnequalOPNComponents}, we get $\rho_1 \neq \rho_2$ and $\mu_1 \neq \mu_2$.
\begin{paragraph}\indent The following lemma is the basis for the assertion that ``Squares cannot be perfect", and will be extremely useful here:
\end{paragraph}
\begin{lemm}\label{SquaresNotPerfect}
Let $A$ be a positive integer.  Then $\sigma(A^2)$ is odd.
\end{lemm}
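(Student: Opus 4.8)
The plan is to exploit the multiplicativity of $\sigma$ (Theorem \ref{theorem4}) to reduce the claim to the case of a single prime power, and then simply count summands. Writing the canonical factorization $A = \prod_{i=1}^r {P_i}^{a_i}$, we have $A^2 = \prod_{i=1}^r {P_i}^{2a_i}$, and since the prime powers ${P_i}^{2a_i}$ are pairwise relatively prime, $\sigma(A^2) = \prod_{i=1}^r \sigma({P_i}^{2a_i})$. A finite product of integers is odd if and only if every factor is odd, so it suffices to prove that $\sigma(P^{2a})$ is odd for every prime $P$ and every nonnegative integer $a$.

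By Corollary \ref{corollary4}, $\sigma(P^{2a}) = \sum_{j=0}^{2a} P^j = 1 + P + P^2 + \cdots + P^{2a}$, a sum of exactly $2a + 1$ terms. If $P = 2$, every term except the leading $1$ is even, so the sum is odd. If $P$ is an odd prime, then each of the $2a+1$ terms is odd, and a sum of an odd number of odd integers is odd. Either way $\sigma(P^{2a})$ is odd, and the reduction above then gives that $\sigma(A^2)$ is odd.

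There is no deep obstacle here; the only point needing care is the separate treatment of the even prime $P = 2$, for which the ``odd number of odd summands'' argument does not apply. One could instead sidestep the case split entirely: $\sigma(n)$ is congruent modulo $2$ to the number of odd divisors of $n$, and if $B$ denotes the odd part of $A$, then the odd divisors of $A^2$ are precisely the divisors of $B^2$, so $\sigma(A^2) \equiv d(B^2) \pmod 2$; since $B^2$ is a perfect square, $d(B^2)$ is odd (each exponent in its canonical factorization is even, so each factor $2a_i + 1$ in Theorem \ref{theorem5} is odd), and the lemma follows. This second route also makes transparent \emph{why} squaring is exactly what forces the parity, which is the feature that will be used in the sequel.
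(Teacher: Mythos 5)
Your proof is correct and follows essentially the same route as the paper's: reduce via multiplicativity of $\sigma$ to the prime-power case and observe that each factor $1 + P + \cdots + P^{2a}$ is odd. You are in fact more careful than the paper, which asserts this parity ``regardless of whether the $q_i$'s are odd or even'' without separating the case $P = 2$; your explicit case split (and the alternative odd-divisor-count argument) fills in that detail.
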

\begin{proof}
Let $A = \displaystyle\prod_{j = 1}^{R}{{q_i}^{{\beta}_i}}$ be the canonical factorization of $A$, where $R = \omega(A)$.  Then 
\begin{center}
$\displaystyle\sigma(A^2) = \displaystyle\sigma(\displaystyle\prod_{j = 1}^{R}{{q_i}^{2{\beta}_i}}) = \displaystyle\prod_{j = 1}^{R}{\displaystyle\sigma({q_i}^{2{{\beta}_i}})}$,
\end{center} 
since $\sigma$ is multiplicative.  But 
\begin{center}
$\displaystyle\prod_{j = 1}^{R}{\displaystyle\sigma({q_i}^{2{{\beta}_i}})} = \displaystyle\prod_{j = 1}^{R}{\displaystyle\left(1 + {q_i} + {q_i}^2 + \ldots + {q_i}^{2{{\beta}_i} - 1} + {q_i}^{2{\beta}_i}\right)}$.
\end{center}  
But this last product is odd regardless of whether the $q_i$'s are odd or even, i.~e.~ regardless of whether $A$ is odd or even.  Consequently, $\displaystyle\sigma(A^2)$ is odd. 
\end{proof}
\begin{paragraph}\indent A reasoning similar to the proof for Lemma \ref{SquaresNotPerfect} gives us the following lemma:
\end{paragraph}
\begin{lemm}\label{UnequalSigmas1}
Let $N = {p^k}{m^2}$ be an OPN with Euler's factor $p^k$.  Then 
\begin{center}
$\sigma(p^k) \neq \sigma(m^2)$.
\end{center}  
\end{lemm}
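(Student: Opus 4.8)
The plan is to reproduce, essentially verbatim, the parity argument behind Lemma~\ref{SquaresNotPerfect}. First I would observe that, since $m^2$ is a perfect square, Lemma~\ref{SquaresNotPerfect} applies directly and yields that $\sigma(m^2)$ is \emph{odd}. Next I would turn to $\sigma(p^k) = 1 + p + p^2 + \cdots + p^{k-1} + p^k$; because $p$ is the special/Euler prime we have $p \equiv k \equiv 1 \pmod 4$, so in particular $k$ is odd and hence $k+1$ is even. Since $p$ is an odd prime, every summand $p^j$ is odd, so $\sigma(p^k)$ is a sum of an even number of odd integers and is therefore \emph{even}. Having one quantity even and the other odd, they cannot be equal, which is the claim.

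A slightly more structural route, which I would prefer to record because it ties the result to the perfectness of $N$ rather than merely to $k$ being odd, starts from $\sigma(p^k)\sigma(m^2) = \sigma(N) = 2N = 2\,p^k m^2$. The right-hand side is even while $\sigma(m^2)$ is odd by Lemma~\ref{SquaresNotPerfect}, so $\sigma(p^k)$ must absorb the factor of $2$; indeed the argument in the proof of Theorem~\ref{theorem10} shows more precisely that $2 \mid \sigma(p^k)$ but $4 \nmid \sigma(p^k)$. Either way, $\sigma(p^k)$ is even and $\sigma(m^2)$ is odd, so $\sigma(p^k) \neq \sigma(m^2)$, completing the proof.

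There is really no obstacle in this argument; the only point requiring care is that the Euler-prime hypothesis ($p \equiv k \equiv 1 \pmod 4$, in particular $k$ odd) is genuinely used — for a general prime power $p^k$ the two $\sigma$-values could have the same parity, so the conclusion is special to the structure of an odd perfect number. I would also note in passing that this lemma, combined with Lemma~\ref{UnequalOPNComponents}, is consistent with the one-sided inequalities $\rho_1 \neq \rho_2$ and $\mu_1 \neq \mu_2$ recorded just before the statement, and will feed into the later analysis of the ratios $\sigma(p^k)/m$ and $\sigma(m^2)/p^k$.
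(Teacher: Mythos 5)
Your proposal is correct and follows essentially the same route as the paper: invoke Lemma~\ref{SquaresNotPerfect} to get that $\sigma(m^2)$ is odd, then use $p \equiv k \equiv 1 \pmod 4$ to conclude that $\sigma(p^k)$, a sum of $k+1$ odd terms with $k+1$ even, is even (the paper sharpens this to $\sigma(p^k) \equiv 2 \pmod 4$, but only evenness is needed). The parity mismatch gives the claim exactly as in the paper.
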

\begin{proof}
By Lemma \ref{SquaresNotPerfect}, $\sigma(m^2)$ is odd.  If we could show that $\sigma(p^k)$ is even, then we are done.  To this end, notice that
\begin{center}
$\sigma(p^k) = 1 + p + p^2 + ... + p^k \equiv (k + 1) \pmod{4}$
\end{center}
since $p \equiv 1 \pmod{4}$.  But $k \equiv 1 \pmod{4}$.  This means that $\sigma(p^k) \equiv 2 \pmod{4}$, i.~e.~ $\sigma(p^k)$ is divisible by $2$ but not by $4$.
\end{proof}
\begin{paragraph}\indent From Lemma \ref{UnequalSigmas1}, we get at once the following: $\rho_1 = \displaystyle\frac{\sigma(p^k)}{p^k} \neq \displaystyle\frac{\sigma(m^2)}{p^k} = \mu_2$ and $\rho_2 = \displaystyle\frac{\sigma(p^k)}{m^2} \neq \displaystyle\frac{\sigma(m^2)}{m^2} = \mu_1$.  Also, by a simple parity comparison, we get: $\rho_2 = \displaystyle\frac{\sigma(p^k)}{m^2} \neq \displaystyle\frac{\sigma(m^2)}{p^k} = \mu_2$.  Lastly, $\rho_2 \neq 1$ and $\mu_2 \neq 2$. 
\end{paragraph}
\begin{paragraph}\indent From the equation $\sigma(p^k)\sigma(m^2) = 2{p^k}{m^2}$ and Example \ref{example14}, we know that $p^k \mid \sigma(m^2)$.  This means that $\mu_2 \in {\mathbb Z}^+$.  Suppose that $\mu_2 = 1$.  This implies that $\rho_2 = 2$, and therefore, $\sigma(m^2) = p^k$ and $\sigma(p^k) = 2{m^2}$.  However, according to the paper titled ``Some New Results on Odd Perfect Numbers" by G.~G.~Dandapat, J.~L.~Hunsucker and Carl Pomerance: \emph{No OPN satisfies $\sigma(p^k) = 2{m^2}, \sigma(m^2) = p^k$}. \cite{D10}  This result implies that $\rho_2 \neq 2$ and $\mu_2 \neq 1$. But $\rho_2 \neq 1$ and $\mu_2 \neq 2$.  Since $\mu_2 \in {\mathbb Z}^+$, we then have $\mu_2 \geq 3$.  (Note that $\mu_2$ must be odd.)  Consequently, we have the series of inequalities:
\begin{center}
$0 < \rho_2 \leq \displaystyle\frac{2}{3} < 1 < \rho_1 < \displaystyle\frac{5}{4} < \displaystyle\frac{8}{5} < \mu_1 < 2 < 3 \leq \mu_2$.
\end{center}
In particular, we get the inequalities $p^k < \sigma(p^k) \leq {\displaystyle\frac{2}{3}}{m^2}$ and $\displaystyle\frac{\sigma(p^k)}{\sigma(m^2)} = \displaystyle\frac{\rho_1 + \rho_2}{\mu_1 + \mu_2} < \displaystyle\frac{5}{12}$.
\end{paragraph}
\begin{paragraph}\indent Recall that, from Lemma \ref{IndexInequalities3}, $\displaystyle\frac{57}{20} < \rho_1 + \mu_1 < 3$.  Consider $(\rho_2 - 3)(\mu_2 - 3)$. This quantity is nonpositive because $\rho_2 < 3 \leq \mu_2$.  But $(\rho_2 - 3)(\mu_2 - 3) \leq 0$ implies that ${\rho_2}{\mu_2} - 3(\rho_2 + \mu_2) + 9 \leq 0$, which means that $11 \leq 3(\rho_2 + \mu_2)$ since ${\rho_2}{\mu_2} = 2$.  Consequently, we have the series of inequalities $\displaystyle\frac{57}{20} < \rho_1 + \mu_1 < 3 < \displaystyle\frac{11}{3} \leq \rho_2 + \mu_2$.  In particular, $\rho_1 + \mu_1 \neq \rho_2 + \mu_2$.
\end{paragraph}
\begin{paragraph}\indent We summarize our results from the preceding paragraphs in the theorem that follows:
\end{paragraph}
\begin{thm}\label{OPNComponentsTheorem1} ~
\begin{center}
$0 < \rho_2 \leq \displaystyle\frac{2}{3} < 1 < \rho_1 < \displaystyle\frac{5}{4} < \displaystyle\frac{8}{5} < \mu_1 < 2 < 3 \leq \mu_2$
\end{center}
\begin{center} and
\end{center}
\begin{center}
$\displaystyle\frac{57}{20} < \rho_1 + \mu_1 < 3 < \displaystyle\frac{11}{3} \leq \rho_2 + \mu_2$
\end{center}
\end{thm}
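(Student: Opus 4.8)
The plan is to assemble Theorem~\ref{OPNComponentsTheorem1} entirely from results proved (or cited) earlier in this subsection, so the real task is to order the inequalities correctly rather than to carry out new estimates. I would begin by recording, straight from Lemma~\ref{IndexInequalities1} and the definitions $\rho_1 = \sigma(p^k)/p^k$, $\mu_1 = \sigma(m^2)/m^2$, the inner chain $1 < \rho_1 < \frac54 < \frac85 < \mu_1 < 2$. Next I would establish that $\mu_2 = \sigma(m^2)/p^k$ is a positive integer: from $\sigma(p^k)\sigma(m^2) = 2p^k m^2$ and the coprimality $\gcd(p^k,\sigma(p^k)) = 1$ (Example~\ref{example14}, since prime powers are solitary) one gets $p^k \mid \sigma(m^2)$, so $\mu_2 \in {\mathbb Z}^+$; moreover $\rho_2\mu_2 = \rho_1\mu_1 = 2$.

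The second step is to upgrade this to $\mu_2 \geq 3$. Lemma~\ref{SquaresNotPerfect} gives that $\sigma(m^2)$ is odd, and the computation in the proof of Lemma~\ref{UnequalSigmas1} gives $\sigma(p^k)\equiv 2\pmod 4$; hence $\mu_2 = \sigma(m^2)/p^k$ is a quotient of odd integers and is therefore odd, and the parity mismatch rules out $\rho_2 = \sigma(p^k)/m^2 = 1$ and $\mu_2 = 2$. To exclude the remaining small value $\mu_2 = 1$ I would invoke the Dandapat--Hunsucker--Pomerance theorem \cite{D10}: if $\mu_2 = 1$ then $\sigma(m^2) = p^k$, and $\rho_2\mu_2 = 2$ forces $\sigma(p^k) = 2m^2$, contradicting the fact that no OPN satisfies $\sigma(p^k) = 2m^2$, $\sigma(m^2) = p^k$. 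Being an odd positive integer that is neither $1$ nor $2$, we get $\mu_2 \geq 3$, whence $\rho_2 = 2/\mu_2 \leq \frac23$ and $\rho_2 > 0$ trivially. Splicing in the trivial steps $\frac23 < 1$ and $2 < 3$ produces the first displayed chain.

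For the second displayed chain I would take $\frac{57}{20} < \rho_1 + \mu_1 < 3$ verbatim from Lemma~\ref{IndexInequalities3}, note $3 < \frac{11}{3}$, and derive $\rho_2 + \mu_2 \geq \frac{11}{3}$ from the sign of $(\rho_2 - 3)(\mu_2 - 3)$: since $\rho_2 < 3 \leq \mu_2$ this product is nonpositive, and expanding while using $\rho_2\mu_2 = 2$ gives $2 - 3(\rho_2+\mu_2) + 9 \leq 0$, i.e.\ $\rho_2 + \mu_2 \geq \frac{11}{3}$. Concatenating the two halves finishes the proof.

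The main obstacle, within the part of the argument that is genuinely carried out here, is the short chain of parity and integrality observations that pushes $\mu_2$ past the values $1$ and $2$: one must be careful that $\mu_2$ is an integer at all (needing solitude of prime powers), that it is odd (needing $\sigma(p^k)\equiv 2\pmod 4$), and that the value $\mu_2 = 1$ is impossible. The last of these is where the real difficulty is hidden, and it is imported wholesale from \cite{D10}; without that external input one could only conclude $\mu_2 \geq 1$ and hence the much weaker $\rho_2 < 2$. Everything else is bookkeeping of inequalities already in hand.
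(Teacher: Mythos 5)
Your proposal is correct and follows essentially the same route as the paper: the inner chain from Lemma \ref{IndexInequalities1}, integrality of $\mu_2$ via $\gcd(p^k,\sigma(p^k))=1$, the parity exclusions of $\rho_2=1$ and $\mu_2=2$, the appeal to Dandapat--Hunsucker--Pomerance to exclude $\mu_2=1$, and the sign of $(\rho_2-3)(\mu_2-3)$ for the sum. The only cosmetic difference is that you derive the oddness of $\mu_2$ explicitly as a quotient of odd integers, which the paper notes only parenthetically.
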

\begin{remrk}\label{Motivation} We remark that the results here were motivated by the initial finding that $I(p^k) < I(m^2)$, i.~e.~ $\rho_1 < \mu_1$.  We prove here too that $I(p^k) < I(m)$.  We start with: For all positive integers $a$ and $b$, $\sigma(ab) \leq \sigma(a)\sigma(b)$ with equality occurring if and only if $\gcd(a, b) = 1$.  (For a proof, we refer the interested reader to standard graduate textbooks in number theory.)  It is evident from this statement that for any positive integer $x > 1$, $I(x^2) < (I(x))^2$.  In particular, from Lemma \ref{IndexInequalities1}, we have $\displaystyle\frac{8}{5} < I(m^2) < (I(m))^2$, which implies that $\displaystyle\frac{2\sqrt{10}}{5} < I(m)$.  But $I(p^k) < 1.25$ (again from Lemma \ref{IndexInequalities1}), and $\displaystyle\frac{2\sqrt{10}}{5} \approx 1.26491106406735$.  Consequently, $I(p^k) < I(m)$.  (Note that $\gcd(p, m) = 1$.)  This should motivate the succeeding discussion, which attempts to improve the result $p^k < {\displaystyle\frac{2}{3}}{m^2}$ to $p^k < m$, where again $N = {p^k}{m^2}$ is an OPN with Euler's factor $p^k$. 
\end{remrk}
\begin{paragraph}\indent We now attempt to obtain the improvement mentioned in Remark \ref{Motivation}.  Since proper factors of a perfect number are deficient, we have $I(m) < 2$.  Consequently, we have the bounds $\displaystyle\frac{2\sqrt{10}}{5} < I(m) < 2$. Likewise, since ${p^k}m$ is a proper factor of $N$ which is perfect, and from Lemma \ref{IndexInequalities1} we have $1 < I(p^k)$, hence the following must be true:
\end{paragraph}
\begin{lemm}\label{IndexInequalities6} Let $N = {p^k}{m^2}$ be an OPN with Euler's factor $p^k$.  Then
\begin{center} 
$\displaystyle\frac{2\sqrt{10}}{5} < I(p^k)I(m) = \displaystyle\frac{\sigma(p^k)}{m}\displaystyle\frac{\sigma(m)}{p^k} < 2$.
\end{center}
\end{lemm}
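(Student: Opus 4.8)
The plan is to derive both inequalities directly from Lemma~\ref{IndexInequalities1}, Lemma~\ref{lemma2}, and the computations already recorded in Remark~\ref{Motivation}; no new machinery should be needed. First I would dispose of the displayed equality: since $I(p^k) = \sigma(p^k)/p^k$ and $I(m) = \sigma(m)/m$, we have
\begin{center}
$I(p^k)I(m) = \dfrac{\sigma(p^k)\sigma(m)}{p^k m} = \dfrac{\sigma(p^k)}{m}\cdot\dfrac{\sigma(m)}{p^k}$,
\end{center}
so the middle term in the statement is merely a rearrangement of the product of abundancy indices, and the real content lies in the two outer bounds.

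For the lower bound, Remark~\ref{Motivation} has already shown $\tfrac{2\sqrt{10}}{5} < I(m)$, obtained from $\tfrac{8}{5} < I(m^2) < \bigl(I(m)\bigr)^2$ (which itself rests on $\sigma(ab) \le \sigma(a)\sigma(b)$). Combining this with $1 < I(p^k)$ from Lemma~\ref{IndexInequalities1} and multiplying the two strict inequalities between positive quantities gives $I(p^k)I(m) > 1\cdot\tfrac{2\sqrt{10}}{5} = \tfrac{2\sqrt{10}}{5}$.

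For the upper bound, the key remark is that $\gcd(p^k, m) = 1$ (from $\gcd(p,m) = 1$), so $I$ is multiplicative on the factors $p^k$ and $m$: $I(p^k m) = I(p^k)I(m)$. Since $m > 1$ (if $m = 1$ then $N = p^k$ would be a deficient prime power, contradicting perfection), $p^k m$ is a proper divisor of the perfect number $N = p^k m^2$, so Lemma~\ref{lemma2} forces $I(p^k m) < I(N) = 2$. Hence $I(p^k)I(m) < 2$, completing the chain.

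I do not expect a genuine obstacle in proving this lemma; the only points meriting a sentence of care are the verification that $m > 1$ and that every inequality invoked from Lemma~\ref{IndexInequalities1} and Remark~\ref{Motivation} is strict, so that the resulting product inequalities remain strict. The hard part lies further downstream: this lemma is a stepping-stone toward the conjectured sharpening $p^k < m$, and closing that gap will require the separate observations that $\sigma(p^k)/m \ne 1$ and $\sigma(p^k)/m \ne \sigma(m)/p^k$, which is where the actual difficulty resides.
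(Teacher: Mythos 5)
Your proposal is correct and follows essentially the same route as the paper: the lower bound comes from combining $1 < I(p^k)$ (Lemma~\ref{IndexInequalities1}) with $\frac{2\sqrt{10}}{5} < I(m)$ (Remark~\ref{Motivation}), and the upper bound from noting that $p^k m$ is a proper divisor of the perfect number $N$, so $I(p^k)I(m) = I(p^k m) < 2$ by multiplicativity and Lemma~\ref{lemma2}. Your explicit checks that $m > 1$ and that all inequalities are strict are minor additions the paper leaves implicit.
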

\begin{paragraph}\indent Now, as before, let
\begin{center}
$\rho_1 = \displaystyle\frac{\sigma(p^k)}{p^k}$ \\
$\rho_3 = \displaystyle\frac{\sigma(p^k)}{m}$ \\
$\mu_3 = \displaystyle\frac{\sigma(m)}{m}$ \\
$\mu_4 = \displaystyle\frac{\sigma(m)}{p^k}$ \\
\end{center}
From the preceding results, we get $1 + \displaystyle\frac{2\sqrt{10}}{5} < \rho_1 + \mu_3 < 3$, where $1 + \displaystyle\frac{2\sqrt{10}}{5} \approx 2.26491$ and the rightmost inequality is obtained via a method similar to that used for Lemma \ref{IndexInequalities3}.  Also, from Lemma \ref{IndexInequalities6} and Lemma \ref{lemma9} (\emph{Arithmetic Mean-Geometric Mean Inequality}), we get the lower bound $\displaystyle\frac{2{\sqrt[4]{1000}}}{5} < \rho_3 + \mu_4$, where $\displaystyle\frac{2{\sqrt[4]{1000}}}{5} \approx 2.24937$.
\end{paragraph}
\begin{paragraph}\indent We observe that $\rho_3 \neq 1$ since $\sigma(p^k) \equiv 2 \pmod{4}$ while $m \equiv 1 \pmod{2}$ since $N$ is an OPN.  Hence, we need to consider two ($2$) separate cases:
\end{paragraph}
\begin{paragraph}\indent \textbf{Case 1:} $\rho_3 < 1$.  Multiplying both sides of this inequality by $\mu_4 > 0$, we get ${\rho_3}{\mu_4} < \mu_4$.  But from Lemma \ref{IndexInequalities6}, we have $\displaystyle\frac{2\sqrt{10}}{5} < {\rho_3}{\mu_4}$.  Therefore: $\rho_3 < 1 < 1.26491106406735 \approx \displaystyle\frac{2\sqrt{10}}{5} < \mu_4$. (In particular, $\rho_3 = \displaystyle\frac{\sigma(p^k)}{m} \neq \displaystyle\frac{\sigma(m)}{p^k} = \mu_4$.)  Note that, from Lemma \ref{IndexInequalities1}: $\displaystyle\frac{p^k}{m} < \rho_3 < 1$, which implies that $p^k < m$. 
\end{paragraph}
\begin{paragraph}\indent \textbf{Case 2:} $1 < \rho_3$.  Similar to what we did in \emph{Case 1}, we get from Lemma \ref{IndexInequalities6}: $\mu_4 < 2$.  Note that, from Lemma \ref{IndexInequalities1}: $1 < \rho_3 < \displaystyle\frac{(5/4)p^k}{m}$, which implies that $p^k > {\displaystyle\frac{4}{5}}{m}$. 
\end{paragraph}
\begin{paragraph}\indent We now claim that $\rho_3 \neq \mu_4$ also holds in \emph{Case 2}.  For suppose to the contrary that $1 < \rho_3 = \mu_4 < 2$.  Since these two ratios are rational numbers between two consecutive integers, this implies that $m \nmid \sigma(p^k)$ and $p^k \nmid \sigma(m)$.  But $\rho_3 = \mu_4 \Rightarrow {p^k}{\sigma(p^k)} = {m}{\sigma(m)}$, which, together with $\gcd(p, m) = \gcd(p^k, m) = 1$, results to a contradiction.  This proves our claim.  Hence, we need to consider two ($2$) further subcases:
\end{paragraph}
\begin{paragraph}\indent \textbf{Subcase 2.1:} $1 < \rho_3 < \mu_4 < 2$
\begin{center}
$1 < \displaystyle\frac{\sigma(p^k)}{m} < \displaystyle\frac{\sigma(m)}{p^k} < 2 \Rightarrow {p^k}{\sigma(p^k)} < {m}{\sigma(m)} \Rightarrow \displaystyle\frac{{p^k}{\sigma(p^k)}}{({p^k}m)^2} < \displaystyle\frac{{m}{\sigma(m)}}{({p^k}m)^2}$ \\
$\displaystyle\frac{1}{m^2}\displaystyle\frac{\sigma(p^k)}{p^k} < \displaystyle\frac{1}{p^{2k}}\displaystyle\frac{\sigma(m)}{m} \Rightarrow \displaystyle\frac{1}{m^2} < \displaystyle\frac{1}{m^2}\displaystyle\frac{\sigma(p^k)}{p^k} < \displaystyle\frac{1}{p^{2k}}\displaystyle\frac{\sigma(m)}{m} < \displaystyle\frac{2}{p^{2k}}$ \\
$p^{2k} < 2{m^2} \Rightarrow p^k < {\sqrt{2}}{m}$ \\
\end{center}
\end{paragraph}
\begin{paragraph}\indent Thus, for \emph{Subcase 2.1}, we have: ${\displaystyle\frac{4}{5}}{m} < p^k < {\sqrt{2}}{m}$.  (Note that it may still be possible to prove either $p^k < m$ or $m < p^k$ in this subcase.  We just need to further develop the methods used.)
\end{paragraph}
\begin{paragraph}\indent Note that we can improve the bounds for $\rho_3$ and $\mu_4$ in this subcase to:
\begin{center}
$1 < \rho_3 < \sqrt{2}$ and $\displaystyle\frac{\sqrt[4]{1000}}{5} < \mu_4 < 2$, 
\end{center}
and that we can derive the upper bound
\begin{center} $\rho_3 + \mu_4 < 3$.
\end{center}
\end{paragraph}
\begin{paragraph}\indent \textbf{Subcase 2.2:} $\mu_4 < \rho_3$, $1 < \rho_3$ and $\mu_4 < 2$
\begin{center}
$\displaystyle\frac{\sigma(m)}{p^k} < \displaystyle\frac{\sigma(p^k)}{m} \Rightarrow {m}{\sigma(m)} < {p^k}{\sigma(p^k)} \Rightarrow \displaystyle\frac{{m}{\sigma(m)}}{({p^k}m)^2} < \displaystyle\frac{{p^k}{\sigma(p^k)}}{({p^k}m)^2}$ \\
$\displaystyle\frac{1}{p^{2k}}\displaystyle\frac{\sigma(m)}{m} < \displaystyle\frac{1}{m^2}\displaystyle\frac{\sigma(p^k)}{p^k} < \displaystyle\frac{1}{m^2}\displaystyle\frac{\sigma(m)}{m}$ \\
$\displaystyle\frac{1}{p^{2k}} < \displaystyle\frac{1}{m^2} \Rightarrow m^2 < p^{2k} \Rightarrow m < p^k$ \\
\end{center}
\end{paragraph}
\begin{paragraph}\indent It is here that the author's original conjecture that $p^k < m$ in all cases is disproved.
\end{paragraph}
\begin{paragraph}\indent Note also the following improvements to the bounds for $\rho_3$ and $\mu_4$ in this subcase: $\displaystyle\frac{\sqrt[4]{1000}}{5} < \rho_3$ and $\mu_4 < \sqrt{2}$.
\end{paragraph}
\begin{paragraph}\indent At this point, the author would like to set the following goals to treat this \emph{Subcase 2.2} further:
\begin{itemize}
\item{Obtain an upper bound for $\rho_3$.}
\item{Obtain a lower bound for $\mu_4$.}
\item{Obtain an upper bound for $\rho_3 + \mu_4$.}
\end{itemize}
\end{paragraph}
\begin{paragraph}\indent We summarize our results in the following theorem:
\end{paragraph}
\begin{thm}\label{OPNComponentsTheorem2} Let $N = {p^k}{m^2}$ be an OPN with Euler's factor $p^k$.  Then $\rho_3 \neq \mu_4$, and the following statements hold:
\begin{itemize}
\item{If $\rho_3 < 1$, then $p^k < m$.}
\item{Suppose that $1 < \rho_3$.
			\begin{itemize}
			\item{If $\rho_3 < \mu_4$, then ${\displaystyle\frac{4}{5}}{m} < p^k < 					{\sqrt{2}}{m}$.}
			\item{If $\mu_4 < \rho_3$, then $m < p^k$.}
			\end{itemize}
			}
\end{itemize}
\end{thm}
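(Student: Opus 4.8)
The plan is to follow the position of $\rho_3 = \sigma(p^k)/m$ relative to $1$, which is the natural split since $\rho_3\mu_4 = I(p^k)I(m)$ satisfies $\frac{2\sqrt{10}}{5} < \rho_3\mu_4 < 2$ by Lemma \ref{IndexInequalities6}. First I would record the parity observation that drives the dichotomy: since $N$ is an OPN with $p\equiv k\equiv 1\pmod 4$, the computation in Lemma \ref{UnequalSigmas1} gives $\sigma(p^k)\equiv 2\pmod 4$, so $\sigma(p^k)$ is even while $m$ is odd; hence $\sigma(p^k)\neq m$ and therefore $\rho_3\neq 1$, forcing $\rho_3<1$ or $\rho_3>1$.

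In the case $\rho_3<1$, multiply through by $\mu_4>0$ and use $\frac{2\sqrt{10}}{5}<\rho_3\mu_4$ to obtain $\mu_4>\frac{2\sqrt{10}}{5}>1>\rho_3$; in particular $\rho_3\neq\mu_4$. Then, since $\rho_1=\sigma(p^k)/p^k>1$ by Lemma \ref{IndexInequalities1}, we get $\rho_3=\sigma(p^k)/m>p^k/m$, and combining with $\rho_3<1$ yields $p^k<m$. In the case $\rho_3>1$, multiplying $1<\rho_3$ by $\mu_4>0$ and using $\rho_3\mu_4<2$ gives $\mu_4<2$, while $\rho_1<5/4$ (Lemma \ref{IndexInequalities1}) gives $\rho_3<\frac{5p^k}{4m}$, hence $p^k>\frac45 m$. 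To see $\rho_3\neq\mu_4$ here, argue by contradiction: if $1<\rho_3=\mu_4<2$ then neither ratio is an integer, so $m\nmid\sigma(p^k)$; but $\rho_3=\mu_4$ means $p^k\sigma(p^k)=m\sigma(m)$, so $m\mid p^k\sigma(p^k)$, and $\gcd(m,p^k)=1$ (Euler's characterization) forces $m\mid\sigma(p^k)$, a contradiction.

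It then remains to split the case $\rho_3>1$ into $\rho_3<\mu_4$ and $\mu_4<\rho_3$. If $\rho_3<\mu_4$, cross-multiply to get $p^k\sigma(p^k)<m\sigma(m)$; dividing by $(p^km)^2$ gives $\rho_1/m^2<\mu_3/p^{2k}$, and then bounding $\rho_1>1$ on the left and $\mu_3=I(m)<2$ on the right (since $m$ is a proper divisor of the perfect number $N$) yields $1/m^2<2/p^{2k}$, i.e.\ $p^k<\sqrt2\,m$; together with $p^k>\frac45 m$ this gives the claimed sandwich $\frac45 m<p^k<\sqrt2\,m$. If instead $\mu_4<\rho_3$, cross-multiplying gives $m\sigma(m)<p^k\sigma(p^k)$, and dividing by $(p^km)^2$ gives $\mu_3/p^{2k}<\rho_1/m^2$; replacing $\rho_1$ by $\mu_3$ via the inequality $\rho_1<\mu_3$ (that is, $I(p^k)<I(m)$, established in Remark \ref{Motivation}) yields $1/p^{2k}<1/m^2$, i.e.\ $m<p^k$.

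The argument is almost entirely bookkeeping with abundancy inequalities already in hand; the step I would be most careful about is the use of $\rho_1<\mu_3$ in the final subcase, which rests on $I(m^2)<(I(m))^2$ together with the numerical gap $\frac{2\sqrt{10}}{5}>\frac54$ from Remark \ref{Motivation} — without it the subcase $\mu_4<\rho_3$ could not be closed. A secondary point needing care is that coprimality $\gcd(p^k,m)=1$, rather than any size comparison, is what rules out $\rho_3=\mu_4$, so that hypothesis must be invoked explicitly there.
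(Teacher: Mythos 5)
Your proposal is correct and follows essentially the same route as the paper: the parity argument $\sigma(p^k)\equiv 2\pmod 4$ to exclude $\rho_3=1$, the use of Lemma \ref{IndexInequalities6} together with $1<\rho_1<\frac{5}{4}$ in each case, the divide-by-$(p^k m)^2$ trick in both subcases, and the appeal to $I(p^k)<I(m)$ from Remark \ref{Motivation} to close the subcase $\mu_4<\rho_3$. Your only departure is that you spell out the contradiction in the $\rho_3=\mu_4$ step (via $m\mid p^k\sigma(p^k)$ and coprimality forcing $m\mid\sigma(p^k)$), which the paper leaves implicit; this is a welcome clarification rather than a change of method.
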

\begin{paragraph}\indent We now state and prove here the generalization to $\rho_2 \leq \displaystyle\frac{2}{3}$ mentioned in Section $3.2$:
\end{paragraph}
\begin{thm}\label{OPNComponentsTheorem3}
Let $N = \displaystyle\prod_{i = 1}^{\omega(N)}{{p_i}^{{\alpha}_i}}$ be the canonical factorization of an OPN $N$, where $p_1 < p_2 < \cdots < p_t$ are primes, $t = \omega(N)$ and ${\alpha}_i > 0$ for all $i$.  Then $\sigma({p_i}^{{\alpha}_i}) \leq {\displaystyle\frac{2}{3}}{\displaystyle\frac{N}{{p_i}^{{\alpha}_i}}}$ for all $i$.
\end{thm}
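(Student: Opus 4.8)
The plan is to fix an index $i$, set $A = p_i^{\alpha_i}$ and $B = N/A$, and reduce the whole statement to a single integrality inequality. Since $\gcd(A,B) = 1$ and $N$ is perfect, $\sigma(A)\sigma(B) = \sigma(N) = 2N = 2AB$. Prime powers are solitary (Example~\ref{example14}), so $\gcd(A,\sigma(A)) = 1$; combined with $\sigma(A)\sigma(B) = 2AB$ this forces $A \mid \sigma(B)$, so I may write $\sigma(B) = Ac$ with $c \in {\mathbb Z}^+$. Substituting back gives $\sigma(A)c = 2B$, i.e. $\sigma(A) = 2B/c$. Hence the target inequality $\sigma(A) \le \frac{2}{3}\,(N/A) = \frac{2}{3}B$ is \emph{equivalent} to $c \ge 3$, and the theorem becomes: for every $i$, the integer $c := \sigma(N/p_i^{\alpha_i})/p_i^{\alpha_i}$ is at least $3$.

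First I would clear away the generic indices by a crude size estimate. If $p_i^{\alpha_i} \le \sqrt{N/3}$, then $B = N/A \ge 3A$, so $c = \sigma(B)/A > B/A \ge 3$ and we are done, even strictly. At most one index can fail $p_i^{\alpha_i} \le \sqrt{N/3}$: if two components both exceeded $\sqrt{N/3}$ their product would exceed $N/3$, and multiplying by a third component — at least $3$, since $\omega(N) \ge 3$ by Theorem~\ref{theorem11} and every odd prime power is $\ge 3$ — would force $N > N$. So only the unique oversized component, if one exists, needs further work.

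For that component I would run a parity argument against Euler's form $N = q^{4e+1}\prod_\ell p_\ell^{2a_\ell}$ (Theorem~\ref{theorem10}), using $\sigma(q^{4e+1}) \equiv 2 \pmod 4$ (from the proof of Lemma~\ref{UnequalSigmas1}) and that $\sigma$ of a perfect square is odd (Lemma~\ref{SquaresNotPerfect}). If $A = q^{4e+1}$ is the Euler factor, then $B$ is a perfect square, so $\sigma(B)$ is odd, hence $c = \sigma(B)/A$ is odd; the case $c = 1$ would say $\sigma(q^{4e+1}) = 2B$ and $\sigma(B) = q^{4e+1}$ simultaneously, which is exactly the configuration ruled out by Dandapat, Hunsucker and Pomerance~\cite{D10}, so $c \ge 3$. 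If instead $A = p_i^{\alpha_i}$ is a non-Euler component, then $\alpha_i$ is even, $\sigma(A)$ is odd, while $B$ carries the Euler factor, so $\sigma(B) \equiv 2 \pmod 4$; since $A$ is odd this forces $c$ to be even, so automatically $c \ne 1$. The sole remaining possibility is $c = 2$.

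The case $c = 2$ with $A$ non-Euler amounts to $\sigma(A) = B = N/A$ and $\sigma(B) = 2A = 2p_i^{\alpha_i}$. I would attack it as follows: $q^{4e+1}$ exactly divides $B$ while every other prime of $B$ occurs to an even power, so in the multiplicative expansion of $\sigma(B)$ the factor $\sigma(q^{4e+1})$ carries the unique power of $2$ and every other local factor $\sigma(p_\ell^{2a_\ell})$ is odd; comparing with $\sigma(B) = 2p_i^{\alpha_i}$ forces $\frac{1}{2}\sigma(q^{4e+1})$ and every $\sigma(p_\ell^{2a_\ell})$ to be a power of the single prime $p_i$. Since $\omega(N) \ge 9$ (Remark~\ref{remark2}) there are at least seven such primes $p_\ell$, and I would derive a contradiction from the impossibility of $1 + r + r^2 + \cdots + r^{2a}$ being a pure power of one fixed prime for that many distinct odd primes $r$ — most plausibly via Zsygmondy-type primitive-prime-divisor bounds on $r^{2a+1} - 1$, or via a refinement of~\cite{D10} adapted to this configuration. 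I expect this exclusion of $c = 2$ to be the genuine obstacle; once it is secured, everything else is bookkeeping.
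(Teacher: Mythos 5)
Your reduction is sound and, up to the case $c=2$, matches the paper's proof: the paper also writes $\sigma(M) = h\,p_i^{\alpha_i}$, rules out $h=1$ via the Dandapat--Hunsucker--Pomerance result \cite{D10} exactly as you do (after the same parity observation that $h=1$ forces $\alpha_i$ odd, hence $p_i^{\alpha_i}$ is the Euler factor), and then must dispose of $h=2$. But the $c=2$ case is precisely where your proposal stops being a proof. You correctly isolate the configuration $\sigma(A) = B$, $\sigma(B) = 2A$ with $A = p_i^{\alpha_i}$ a non-Euler component, and then offer only a plan --- forcing each local factor $\sigma(p_\ell^{2a_\ell})$ to be a power of $p_i$ and hoping to contradict this via Zsygmondy-type primitive-divisor bounds or ``a refinement of \cite{D10} adapted to this configuration'' --- while conceding that this is ``the genuine obstacle.'' Deciding when $1 + r + \cdots + r^{2a}$ is a perfect prime power is a hard Nagell--Ljunggren-type question, and nothing in your sketch actually closes it, so the theorem is not proved.

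The paper's resolution of this case is much shorter and is the one step worth internalizing: from $\sigma(A) = B$ and $\sigma(B) = 2A$ one gets $\sigma(\sigma(p_i^{\alpha_i})) = 2\,p_i^{\alpha_i}$, i.e.\ $p_i^{\alpha_i}$ is an odd \emph{superperfect} number; since $\alpha_i$ is even it has the form $p^{2\alpha}$, and Suryanarayana \cite{S54} proved no odd superperfect number of that form exists. Substituting that one citation for your Zsygmondy paragraph completes your argument; your preliminary size reduction (at most one component exceeds $\sqrt{N/3}$) is correct but unnecessary, since the parity dichotomy already covers every index.
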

\begin{proof} Let $N = {{p_i}^{{\alpha}_i}}M$ for a particular $i$.  Since ${{p_i}^{{\alpha}_i}} || N$ and $N$ is an OPN, then $\sigma({p_i}^{{\alpha}_i})\sigma(M) = 2{{p_i}^{{\alpha}_i}}M$.  From Example \ref{example14}, we know that ${{p_i}^{{\alpha}_i}} \mid \sigma(M)$ and we have $\sigma(M) = h{{p_i}^{{\alpha}_i}}$ for some positive integer $h$.  Assume $h = 1$.  Then $\sigma(M) = {p_i}^{{\alpha}_i}$, forcing $\sigma({{p_i}^{{\alpha}_i}}) = 2M$.  Since $N$ is an OPN, $p_i$ is odd, whereupon we have an odd $\alpha_i$ by considering parity conditions from the last equation.  But this means that ${p_i}^{{\alpha}_i}$ is the Euler's factor of $N$, and we have ${p_i}^{{\alpha}_i} = p^k$ and $M = m^2$.  Consequently, $\sigma(m^2) = \sigma(M) = {p_i}^{{\alpha}_i} = p^k$, which contradicts the fact that $\mu_2 \geq 3$.  Now suppose that $h = 2$.  Then we have the equations $\sigma(M) = 2{p_i}^{{\alpha}_i}$ and $\sigma({{p_i}^{{\alpha}_i}}) = M$.  (Note that, since $M$ is odd, $\alpha_i$ must be even.)  Applying the $\sigma$ function to both sides of the last equation, we get $\displaystyle\sigma(\sigma({{p_i}^{{\alpha}_i}})) = \sigma(M) = 2{p_i}^{{\alpha}_i}$, which means that ${p_i}^{{\alpha}_i}$ is an odd superperfect number.  But Kanold \cite{S53} showed that odd superperfect numbers must be perfect squares (no contradiction at this point, since $\alpha_i$ is even), and Suryanarayana \cite{S54} showed in $1973$ that ``There is no odd super perfect number of the form ${p}^{2{\alpha}}$" (where $p$ is prime).  Thus $h = \displaystyle\frac{\sigma(M)}{{p_i}^{{\alpha}_i}} \geq 3$, whereupon we have the result $\sigma({p_i}^{{\alpha}_i}) \leq {\displaystyle\frac{2}{3}}M = {\displaystyle\frac{2}{3}}{\displaystyle\frac{N}{{p_i}^{{\alpha}_i}}}$ for the chosen $i$.  Since $i$ was arbitrary, we have proved our claim in this theorem.
\end{proof}
\begin{paragraph}\indent The following corollary is a direct consequence of Theorem \ref{OPNComponentsTheorem3}:
\end{paragraph}
\begin{cor}\label{OPNComponentsCorollary1}
Let $N$ be an OPN with $r = \omega(N)$ distinct prime factors.  Then 
\begin{center}
$N^{2 - r} \leq \displaystyle\left(\displaystyle\frac{1}{3}\right)\displaystyle\left(\displaystyle\frac{2}{3}\right)^{r - 1}$.
\end{center}
\end{cor}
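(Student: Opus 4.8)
The plan is to obtain the inequality as an immediate consequence of Theorem \ref{OPNComponentsTheorem3}, by multiplying the per-prime bound over all indices $i$ and then feeding in the defining identity $\sigma(N) = 2N$ together with the multiplicativity of $\sigma$. Write the canonical factorization $N = \displaystyle\prod_{i=1}^{r}{{p_i}^{{\alpha}_i}}$, so that $\displaystyle\prod_{i=1}^{r}{{p_i}^{{\alpha}_i}} = N$ and, since $N$ is perfect, $\displaystyle\prod_{i=1}^{r}{\sigma({p_i}^{{\alpha}_i})} = \sigma(N) = 2N$.

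First I would invoke Theorem \ref{OPNComponentsTheorem3}, which supplies $\sigma({p_i}^{{\alpha}_i}) \leq {\displaystyle\frac{2}{3}}{\displaystyle\frac{N}{{p_i}^{{\alpha}_i}}}$ for every $i$ with $1 \leq i \leq r$. All quantities in these $r$ inequalities are strictly positive, so their product is again a valid inequality:
\[
2N = \prod_{i=1}^{r}{\sigma({p_i}^{{\alpha}_i})} \leq \prod_{i=1}^{r}{\frac{2}{3}\frac{N}{{p_i}^{{\alpha}_i}}} = \left(\frac{2}{3}\right)^{r}\frac{N^{r}}{\displaystyle\prod_{i=1}^{r}{{p_i}^{{\alpha}_i}}} = \left(\frac{2}{3}\right)^{r}\frac{N^{r}}{N} = \left(\frac{2}{3}\right)^{r}N^{r-1}.
\]
Dividing both sides by the positive number $2N^{r-1}$ (which preserves the inequality) then gives $N^{2-r} = \displaystyle\frac{N}{N^{r-1}} \leq \displaystyle\frac{1}{2}\left(\frac{2}{3}\right)^{r} = \displaystyle\frac{1}{3}\left(\frac{2}{3}\right)^{r-1}$, which is precisely the asserted bound.

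There is essentially no obstacle here: the whole force of the corollary is already contained in Theorem \ref{OPNComponentsTheorem3}, and the remaining work is only bookkeeping with the exponents of $N$ and the observation that the product of inequalities between positive reals is legitimate. The only point worth flagging is that the conclusion is consistent for every admissible value of $r = \omega(N)$ (recall $\omega(N) \geq 9$ by Nielsen \cite{N40}, and $N > 10^{300}$ by Brent \emph{et al.} \cite{B3}, so the left side $N^{2-r}$ is already minuscule), so the inequality, while always valid, does not on its own yield a contradiction to the existence of an odd perfect number; I would close the proof with a short remark to that effect.
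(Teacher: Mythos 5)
Your proof is correct and is exactly the argument the paper intends: the corollary is stated there as a ``direct consequence'' of Theorem \ref{OPNComponentsTheorem3} with the proof omitted, and multiplying the $r$ per-prime bounds, using multiplicativity of $\sigma$ and $\sigma(N) = 2N$, is precisely that consequence. The algebra checks out, including the simplification $\frac{1}{2}\left(\frac{2}{3}\right)^{r} = \frac{1}{3}\left(\frac{2}{3}\right)^{r-1}$.
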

\begin{paragraph}\indent In the next section, we attempt to ``count" the number of OPNs by trying to establish a bijective map between OPNs and points on a certain hyperbolic arc.  We prove there that such a mapping (which is based on the concept of the abundancy index) is neither surjective nor injective, utilizing results on solitary numbers in the process.
\end{paragraph}
\section{``Counting" the Number of OPNs}
In this section, we will be disproving the following conjecture:
\begin{conj}\label{ConjectureCountOPNs}
For each $N = {p^k}{m^2}$ an OPN with $N > {10}^{300}$, there corresponds exactly one ordered pair of rational numbers $\displaystyle\left(\displaystyle\frac{\sigma(p^k)}{p^k}, \displaystyle\frac{\sigma(m^2)}{m^2}\displaystyle\right)$ lying in the region $1 < \displaystyle\frac{\sigma(p^k)}{p^k} < \displaystyle\frac{5}{4}$, $\displaystyle\frac{8}{5} < \displaystyle\frac{\sigma(m^2)}{m^2} < 2$, and $\displaystyle\frac{57}{20} < \displaystyle\frac{\sigma(p^k)}{p^k} + \displaystyle\frac{\sigma(m^2)}{m^2} < 3$, and vice-versa.
\end{conj}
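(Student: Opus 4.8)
The plan is to disprove Conjecture \ref{ConjectureCountOPNs} by attacking the claimed bijectivity on two fronts: first showing the correspondence $N \mapsto \left(\frac{\sigma(p^k)}{p^k}, \frac{\sigma(m^2)}{m^2}\right)$ fails to be injective, and then showing it fails to be surjective onto the described region of the arc $XY = 2$. For the failure of injectivity, I would exploit the fact that the ordered pair depends only on the abundancy indices of the two components $p^k$ and $m^2$, not on the components themselves. If there were two distinct OPNs $N_1 = p^k m_1^2$ and $N_2 = p^k m_2^2$ with the same Euler prime power $p^k$ and with $I(m_1^2) = I(m_2^2)$, they would map to the same point; this is exactly the statement that $m_1^2$ and $m_2^2$ form a friendly pair (Definition \ref{definition12}). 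However, the cleaner route — and the one I would actually carry out — is to observe that the conjecture as stated already presupposes a \emph{function} from OPNs to points, so to disprove it I only need the map to be non-surjective, or to exhibit that even the domain/codomain bookkeeping is wrong. The decisive observation is that $p^k$ and $m^2$ are \emph{solitary}: by Example \ref{example14}, prime powers $p^k$ are solitary, and by Lemma \ref{SquaresNotPerfect} together with Greening's criterion one checks whether $\gcd(m^2, \sigma(m^2)) = 1$ — when it is, $m^2$ is solitary too. Solitude of $p^k$ means the $X$-coordinate $\frac{\sigma(p^k)}{p^k}$ determines $p^k$ uniquely, so distinct points would be forced by distinct $N$; this is what makes the injectivity direction subtle rather than automatic.

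The main thrust, then, is \textbf{non-surjectivity}. I would argue that the region
\[
\mathcal{R} = \left\{(X,Y) : XY = 2,\ 1 < X < \tfrac{5}{4},\ \tfrac{8}{5} < Y < 2,\ \tfrac{57}{20} < X + Y < 3\right\}
\]
contains rational points of the form $\left(\frac{\sigma(p^k)}{p^k}, \frac{2p^k}{\sigma(p^k)}\right)$ that cannot arise from any OPN, because the forced second coordinate $Y = \frac{2p^k}{\sigma(p^k)}$ need not be realizable as $\frac{\sigma(m^2)}{m^2}$ for any $m$ coprime to $p$. Concretely: pick a prime $p \equiv 1 \pmod 4$ and an exponent $k \equiv 1 \pmod 4$ so that $X = I(p^k)$ lands in $(1, \tfrac54)$ (Lemma \ref{IndexInequalities1} guarantees this always happens); then $Y = 2/X = \frac{2p^k}{\sigma(p^k)}$ automatically satisfies $\tfrac85 < Y < 2$ and $\tfrac{57}{20} < X+Y < 3$ by Lemmas \ref{IndexInequalities1} and \ref{IndexInequalities3}. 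So $(X,Y) \in \mathcal{R}$. For this to be hit by the map, there must exist an OPN with Euler factor $p^k$, equivalently (Theorem \ref{theorem15}) an integer $n$ coprime to $p$ with $I(n) = \frac{2p^\alpha(p-1)}{p^{\alpha+1}-1}$ — and for \emph{most} choices of $p, k$ this fraction is a Weiner-type outlaw (Lemma \ref{lemma5}) or is otherwise unattainable, since the set of abundancy outlaws is dense in $(1,\infty)$ (Theorem \ref{theorem7}). Thus $\mathcal{R}$ contains infinitely many rational points with no OPN preimage, killing surjectivity.

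For the failure of injectivity I would then complete the picture using the solitary-number machinery directly: I would show that if the map \emph{were} a bijection, then for each OPN $N = p^k m^2$ the pair $(p^k, m^2)$ would be recoverable from $(X,Y)$, and conversely every admissible $(X,Y)$ would determine a unique such pair; but then the existence question for OPNs would reduce to a point-counting statement on the arc, and one can derive a contradiction by noting that the arc $XY=2$ restricted to $\mathcal{R}$ carries points whose coordinates, when written in lowest terms, have denominators that cannot simultaneously be an odd prime power ($p^k$) and an odd square ($m^2$) with the compatibility $\sigma(p^k)\sigma(m^2) = 2 p^k m^2$ forced — and Lemma \ref{UnequalSigmas1} together with the parity analysis $\sigma(p^k) \equiv 2 \pmod 4$, $\sigma(m^2)$ odd pins down exactly which points are even candidates. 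The hard part, I expect, will be making the non-surjectivity argument fully rigorous without assuming the non-existence of OPNs: I must produce a \emph{specific} $(X,Y) \in \mathcal{R}$ and prove unconditionally that no OPN maps to it, which forces me to find $p, k$ for which $\frac{2p^\alpha(p-1)}{p^{\alpha+1}-1}$ is provably an outlaw — most naturally by arranging $1 < \frac{2p^\alpha(p-1)}{p^{\alpha+1}-1} < I(s)$ for the square-free kernel $s$ of its denominator, so that Lemma \ref{lemma5} applies. Getting the inequalities to bite for an explicit small prime (e.g. testing $p = 5, 13, 17$ with small $\alpha$) is the one place real computation is unavoidable, and it is where I would focus the detailed work.
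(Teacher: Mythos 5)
There is a genuine gap in the non-surjectivity half, which you yourself designate as the decisive part of the disproof. By insisting that your witness point have $X$-coordinate of the form $I(p^k)$ for an actual prime power, you reduce the problem to proving that $\frac{2p^{\alpha}(p-1)}{p^{\alpha+1}-1}$ is an abundancy outlaw for some explicit $p \equiv \alpha \equiv 1 \pmod{4}$ --- and you leave exactly that step undone. The appeal to density of outlaws (Theorem \ref{theorem7}) proves nothing about any particular fraction, and the test cases you propose to start with ($p = 5, 13, 17$ with small $\alpha$) fail for $\alpha = 1$: writing $\frac{2p}{\sigma(p)} = \frac{p}{u}$ with $u = \frac{p+1}{2}$, Lemma \ref{lemma5} requires $p < \sigma(u)$, which is false for $p = 5, 13, 17$ (indeed $\frac{5}{3}$ is precisely the fraction whose index/outlaw status the paper emphasizes is open and equivalent to the existence question itself). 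The paper sidesteps all of this: its witness is $X_0 = I(pq)$ for primes $5 < p < q$, which lies in $(1, \frac{5}{4})$, and which cannot equal $I(q'^{j})$ for any prime power because prime powers are solitary (Example \ref{example14}); hence $(X_0, 2/X_0)$ lies in the region but is the image of no OPN, with no outlaw machinery needed. Your route is salvageable --- for instance $p = 5$, $\alpha = 5$ gives $\frac{2\cdot 5^5}{\sigma(5^5)} = \frac{3125}{1953}$ with $1953 = 3^2\cdot 7\cdot 31$ and $\sigma(1953) = 3328 > 3125 > 1953$, so Lemma \ref{lemma5} does apply --- but as written the proposal does not deliver the required explicit point.

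A secondary issue: your final paragraph on non-injectivity does not contain an argument; it drifts back into surjectivity-style considerations about which points are "even candidates." The paper's non-injectivity argument is entirely different --- it seeks two OPNs sharing the same Euler factor with $I(m_1^2) = I(m_2^2)$ and invokes Erd\H{o}s's positive-density result on friendly pairs (itself only a heuristic/conditional argument). Since failure of surjectivity alone already refutes the conjectured bijection, this omission would be harmless if your non-surjectivity argument were complete; as it stands, neither half closes.
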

\begin{paragraph}\indent We begin our disproof by observing that, for prime $p$, $\displaystyle\frac{\sigma(p^k)}{p^k}$ is a decreasing function of $p$ (for constant $k$) and is also an increasing function of $k$ (for constant $p$).  These observations imply that $\displaystyle\frac{\sigma({p_1}^{k_1})}{{p_1}^{k_1}} \neq \displaystyle\frac{\sigma({p_2}^{k_2})}{{p_2}^{k_2}}$ for the following cases: \\ \textbf{(1)} $p_1 \neq p_2, k_1 = k_2$ and \textbf{(2)} $p_1 = p_2, k_1 \neq k_2$.  To show that the same inequality holds for the case \textbf{(3)} $p_1 \neq p_2, k_1 \neq k_2$, we proceed as follows:  Suppose to the contrary that $\displaystyle\frac{\sigma({p_1}^{k_1})}{{p_1}^{k_1}} = \displaystyle\frac{\sigma({p_2}^{k_2})}{{p_2}^{k_2}}$ but $p_1 \neq p_2, k_1 \neq k_2$. Then 
\begin{center}
${{p_2}^{k_2}}({p_2} - 1)({p_1}^{{k_1} + 1} - 1) = {{p_1}^{k_1}}({p_1} - 1)({p_2}^{{k_2} + 1} - 1)$.  
\end{center}
Since $p_1$ and $p_2$ are distinct (odd) primes, $\gcd(p_1, p_2) = 1$ which implies that 
\begin{center} 
${p_1}^{k_1} \mid ({p_2} - 1)({p_1}^{{k_1} + 1} - 1)$ and ${p_2}^{k_2} \mid ({p_1} - 1)({p_2}^{{k_2} + 1} - 1)$.
\end{center}
Now, let us compute $\gcd({p^k}, p^{k + 1} - 1)$ using the Euclidean Algorithm:
\begin{center}
$p^{k + 1} - 1 = p\cdot{p^k} - 1 = (p - 1)\cdot{p^k} + (p^k - 1)$ \\
$p^k = 1\cdot(p^k - 1) + 1 \longrightarrow$ \textbf{last nonzero remainder} \\
$p^k - 1 = (p^k - 1)\cdot{1} + 0$ \\
\end{center}
Consequently, $\gcd({p^k}, p^{k + 1} - 1) = 1$.  Hence, we have that 
\begin{center}
${p_1}^{k_1} \mid ({p_2} - 1)$ and ${p_2}^{k_2} \mid ({p_1} - 1)$.
\end{center}
Since $p_1$ and $p_2$ are primes, this means that ${p_1}{p_2} \mid ({p_1} - 1)({p_2} - 1)$, which further implies that ${p_1}{p_2} \leq ({p_1} - 1)({p_2} - 1) = {p_1}{p_2} - {(p_1 + p_2)} + 1$, resulting in the contradiction $p_1 + p_2 \leq 1$.
\end{paragraph}
\begin{remrk}\label{OPNMapping1} An alternative proof of the results in the preceding paragraph may be obtained by using the fact that prime powers are solitary (i.~e.~ $I(p^k) = I(x)$ has the sole solution $x = p^k$). 
\end{remrk}
\begin{paragraph}\indent Let $X = \displaystyle\frac{\sigma(p^k)}{p^k}$ and $Y = \displaystyle\frac{\sigma(m^2)}{m^2}$.  It is straightforward to observe that, since the abundancy index is an arithmetic function, then for each $N = {p^k}{m^2}$ an OPN (with $N > {10}^{300}$), there corresponds exactly one ordered pair of rational numbers $(X, Y)$ lying in the hyperbolic arc $XY = 2$ bounded as follows: $1 < X < 1.25$, $1.6 < Y < 2$, and $2.85 < X + Y < 3$.  (Note that these bounds are the same ones obtained in Subsection $4.2.4$.)
\end{paragraph}
\begin{paragraph}\indent We now disprove the backward direction of Conjecture \ref{ConjectureCountOPNs}.  We do this by showing that the mapping $X = \displaystyle\frac{\sigma(p^k)}{p^k}$ and $Y = \displaystyle\frac{\sigma(m^2)}{m^2}$ is neither surjective nor injective in the specified region.
\end{paragraph}
\begin{paragraph}\indent \textbf{(X, Y) is not surjective.}  We prove this claim by producing a rational point $(X_0, Y_0)$ lying in the specified region, and which satisfies $X_0 \neq \displaystyle\frac{\sigma(p^k)}{p^k}$ for all primes $p$ and positive integers $k$.  It suffices to consider the example $X_0 = \displaystyle\frac{\sigma(pq)}{pq}$ where $p$ and $q$ are primes satisfying $5 < p < q$.  Notice that $1 < X_0 = \displaystyle\frac{(p + 1)(q + 1)}{pq} = \displaystyle\left({1 + \displaystyle\frac{1}{p}}\displaystyle\right)\displaystyle\left({1 + \displaystyle\frac{1}{q}}\displaystyle\right) \leq \displaystyle\frac{8}{7}\displaystyle\frac{12}{11} < 1.2468 < 1.25$.  Now, by setting $Y_0 = \displaystyle\frac{2}{X_0}$, the other two inequalities for $Y_0$ and $X_0 + Y_0$ would follow.  Thus, we now have a rational point $(X_0, Y_0)$ in the specified region, and which, by a brief inspection, satisfies $X_0 \neq \displaystyle\frac{\sigma(p^k)}{p^k}$ for all primes $p$ and positive integers $k$ (since prime powers are solitary).  Consequently, the mapping defined in the backward direction of Conjecture \ref{ConjectureCountOPNs} is not surjective.  
\end{paragraph}
\begin{remrk}\label{OPNMapping2} Since the mapping is not onto, there are rational points in the specified region which do not correspond to any OPN.
\end{remrk}
\begin{paragraph}\indent \textbf{(X, Y) is not injective.}  It suffices to construct two distinct OPNs \\ $N_1 = {{p_1}^{k_1}}{{m_1}^2}$ and $N_2 = {{p_2}^{k_2}}{{m_2}^2}$ (assuming at least two such numbers exist) that correspond to the same rational point $(X, Y)$.  Since it cannot be the case that ${p_1}^{k_1} \neq {p_2}^{k_2}, {m_1}^2 = {m_2}^2$, we consider the scenario ${p_1}^{k_1} = {p_2}^{k_2}, {m_1}^2 \neq {m_2}^2$.  Thus, we want to produce a pair $(m_1, m_2)$ satisfying $I({m_1}^2) = I({m_2}^2)$. (A computer check by a foreign professor using Maple produced no examples for this equation in the range $1 \leq m_1 < m_2 \leq 300000$.  But then again, in pure mathematics, absence of evidence is not evidence of absence.)  Now, from the inequalities $p^k < m^2$ and $N = {p^k}{m^2} > {10}^{300}$, we get $m^2 > {10}^{150}$.  A nonconstructive approach to finding a solution to $I({m_1}^2) = I({m_2}^2)$ would then be to consider ${10}^{150} < {m_1}^2 < {m_2}^2$ and Erd$\ddot{o}$s' result that ``\textbf{The number of solutions of $I(a) = I(b)$ satisfying $a < b \leq x$ equals $Cx + o(x)$ where $C \geq \displaystyle\frac{8}{147}$.}" (\cite{E}, \cite{A2}) (Note that $C$ here is the same as the (natural) density of friendly integers.)  Given Erd$\ddot{o}$s' result then, this means that eventually, as $m_2 \rightarrow \infty$, there will be at least ${{10}^{150}}{\displaystyle\frac{8}{147}}$ solutions $(m_1, m_2)$ to $I({m_1}^2) = I({m_2}^2)$, a number which is obviously greater than 1.  This finding, though nonconstructive, still proves that the mapping defined in the backward direction of Conjecture \ref{ConjectureCountOPNs} is not injective.   
\end{paragraph} 
\begin{paragraph}\indent Thus, we have failed our attempt to ``count" the number of OPNs by showing that our conjectured correspondence is not actually a bijection.  However, this should not prevent future researchers from conceptualizing other correspondences that may potentially be bijections.  The author is nonetheless convinced that, ultimately, such bijections would have to make use of the concept of the abundancy index one way or another.
\end{paragraph}
\chapter{Analysis and Recommendations}
Up to this point, the OPN problem is still open.  We have discussed some of the old as well as new approaches to solving this problem, with the \textbf{factor chain approach} taking center stage in mainstream OPN research.  But it appears that, for the ultimate solution of the problem using such approaches, we need fresh ideas and what may be called a ``paradigm shift".  For example, the index/outlaw status of the fraction $\displaystyle\frac{\sigma(p) + 1}{p}$, which may potentially disprove the OPN Conjecture, might probably require concepts from other disparate but related fields of mathematics, such as real analysis.  But then again, this is just an educated guess.
\begin{paragraph}\indent Two open problems necessitating further investigations would come to mind, if the reader had read and fully understood the contents of this thesis:
\begin{itemize}
\item{Prove or disprove: If $N = {p^k}{m^2}$ is an OPN with Euler's factor $p^k$, then $p^k < m$.}
\item{There exists a bijection $(X, Y)$ from the set of OPNs to the hyperbolic arc $XY = 2$ lying in the region $1 < X < 2$, $1 < Y < 2$, $2\sqrt{2} < X + Y < 3$, and this bijection makes use of the concept of abundancy index to define the mappings $X$ and $Y$.}
\end{itemize}
The underlying motivation for these open problems have been described in sufficient detail in Chapter $4$ and should set the mood for extensive investigation into their corresponding solutions.
\end{paragraph}
\begin{paragraph}\indent Lastly, we warn future researchers who would be interested in pursuing this topic that while a multitude of evidence certainly suggests that no OPNs exist, \textbf{neither heuristic nor density arguments} but only a (mathematical) proof of their nonexistence could lay the problem to a ``conclusive" rest.  The author has tried, and although he failed like the others, eventually somebody will give a final end to this problem that had defied solution for more than three centuries. 
\end{paragraph}

\end{document}